\definecolor{darkgreen}{rgb}{0.0,0,0.9}
\newcommand\reallywidehat[1]{%
\savestack{\tmpbox}{\stretchto{%
  \scaleto{%
    \scalerel*[\widthof{\ensuremath{#1}}]{\kern.1pt\mathchar"0362\kern.1pt}%
    {\rule{0ex}{\textheight}}
  }{\textheight}%
}{2.4ex}}%
\stackon[-6.9pt]{#1}{\tmpbox}%
}
\DeclareSymbolFont{rsfs}{U}{rsfs}{m}{n}
\DeclareSymbolFontAlphabet{\mathscrsfs}{rsfs}
\numberwithin{equation}{section}
\newtheoremstyle{myexample} 
    {\topsep}                    
    {\topsep}                    
    {\rm }                   
    {}                           
    {\bf }                   
    {.}                          
    {.5em}                       
    {}  
\newtheoremstyle{myremark} 
    {\topsep}                    
    {\topsep}                    
    {\rm}                        
    {}                           
    {\bf}                        
    {.}                          
    {.5em}                       
    {}  
\newtheorem{claim}{Claim}[section]
\newtheorem{lemma}[claim]{Lemma}
\newtheorem{theorem}[claim]{Theorem}
\newtheorem{corollary}[claim]{Corollary}
\newtheorem{definition}[claim]{Definition}
\newtheorem*{theorem*}{Theorem}
\newtheorem*{proposition*}{Proposition}
\theoremstyle{myremark}
\newtheorem{remark}{Remark}[section]
\theoremstyle{myremark}
\theoremstyle{myexample}
\definecolor{darkgreen}{rgb}{0.0, 0.5, 0.0}
\newcommand{\comm}[1]{\textcolor{black}{#1}}
\newcommand{\bea}{\begin{eqnarray}}
\newcommand{\eea}{\end{eqnarray}}
\newcommand{\<}{\langle}
\renewcommand{\>}{\rangle}
\newcommand{\E}{{\mathbb E}}
\def\sTV{\mbox{\tiny \rm TV}}
\def\sinit{\mbox{\tiny \rm init}}
\def\Proj{{\sf P}}
\def\Unif{{\sf Unif}}
\def\eps{{\varepsilon}}
\def\id{{\boldsymbol{I}}}
\def\supp{{\rm supp}}
\def\Cont{\mathscrsfs{C}}
\def\Lp{\mathscrsfs{L}}
\def\cuD{\mathscrsfs{D}}
\def\cuP{\mathscrsfs{P}}
\def\cuF{\mathscrsfs{F}}
\def\cuU{\mathscrsfs{U}}
\def\oDelta{\overline{\Delta}}
\def\Law{{\rm Law}}
\def\hf{\hat{f}}
\def\hrho{\hat{\rho}}
\def\bZ{{\boldsymbol{Z}}}
\def\bN{{\boldsymbol{N}}}
\def\bPhi{{\boldsymbol{\Phi}}}
\def\tbPhi{\tilde{\boldsymbol{\Phi}}}
\def\bq{{\boldsymbol{q}}}
\def\bM{{\boldsymbol{M}}}
\def\obM{\overline{\boldsymbol{M}}}
\def\bV{{\boldsymbol{V}}}
\def\obV{\overline{\boldsymbol{V}}}
\def\bg{{\boldsymbol{g}}}
\def\bzero{{\mathbf 0}}
\def\cF{{\mathcal F}}
\def\cX{{\mathcal X}}
\def\blambda{{\boldsymbol \lambda}}
\def\naturals{{\mathbb N}}
\def\reals{{\mathbb R}}
\def\normal{{\sf N}}
\def\sT{{\sf T}}
\def\bz{{\boldsymbol{z}}}
\def\bx{{\boldsymbol{x}}}
\def\ba{{\boldsymbol{a}}}
\def\bb{{\boldsymbol{b}}}
\def\tbb{\tilde{\boldsymbol{b}}}
\def\bB{\boldsymbol{B}}
\def\bK{\boldsymbol{K}}
\def\de{{\rm d}}
\def\tbX{\tilde{\boldsymbol{X}}}
\def\bX{\boldsymbol{X}}
\def\bY{\boldsymbol{Y}}
\def\bW{\boldsymbol{W}}
\def\prob{{\mathbb P}}
\def\E{{\mathbb E}}
\def\<{\langle}
\def\>{\rangle}
\def\Ball{{\sf B}}
\def\cv{{*}}
\def\by{{\boldsymbol{y}}}
\def\bw{{\boldsymbol{w}}}
\def\cP{{\mathcal P}}
\def\blambda{{\boldsymbol{\lambda}}}
\def\bD{{\boldsymbol{D}}}
\def\cD{{\cal D}}
\def\hcD{\widehat{\cal D}}
\def\bphi{{\boldsymbol{\varphi}}}
\def\obphi{\overline{\boldsymbol{\varphi}}}
\def\b0{{\boldsymbol{0}}}
\def\bfone{{\boldsymbol 1}}
\def\bF{{\boldsymbol F}}
\def\bG{{\boldsymbol G}}
\def\obw{\overline{\boldsymbol w}}
\def\hbw{\hat{\boldsymbol w}}
\def\tbx{\tilde{\boldsymbol x}}
\def\bdelta{{\boldsymbol \delta}}
\def\cD{{{\mathcal D}}}
\def\trho{\tilde{\rho}}
\def\bn{{\boldsymbol n}}
\def\cP{{\mathcal P}}
\def\cB{{\mathcal B}}
\def\ess{{\rm ess}}
\def\balpha{{\boldsymbol \alpha}}
\def\err{{\sf err}}
\def\bfzero{\boldsymbol{0}}
\def\cX{{\cal X}}
\def\bfzero{{\boldsymbol 0}}
\def\hR{\widehat{R}}
\def\hE{\widehat{\mathbb E}}
\title{Analysis of a Two-Layer Neural Network via\\
 Displacement Convexity}
\author{Adel Javanmard\thanks{Data Science and Operations Department, Marshall School of Business, University of Southern California},  \;\;\;Marco Mondelli\thanks{Department of Electrical Engineering, Stanford University}  \;\;\;and\;\;\; Andrea Montanari\thanks{Department of Electrical Engineering and
  Department of Statistics, Stanford University}}
\begin{document}
\maketitle

\begin{abstract}
Fitting a function by using linear combinations of a large number $N$ of `simple' components is 
one of the most fruitful ideas in statistical learning. This idea lies at the core of a variety of methods, from two-layer neural networks
to kernel regression, to boosting. 
In general, the resulting risk minimization problem is non-convex and  is solved by gradient descent or
its variants. Unfortunately, little is known about global convergence properties of
these approaches.

Here we consider the problem of learning a concave function $f$ on a compact convex domain  $\Omega\subset \reals^d$, using linear combinations of
`bump-like' components (neurons). The parameters to be fitted are the centers of $N$ bumps, and the resulting empirical risk minimization 
problem is highly non-convex.
We prove that, in the limit in which the number of neurons diverges, the evolution of 
gradient descent converges to a Wasserstein gradient flow in the space of probability distributions over $\Omega$.
Further, when the bump width $\delta$ tends to $0$, this gradient flow has a
limit which is a viscous porous medium equation. 
 Remarkably, the cost function optimized by this gradient flow exhibits a special property known as \emph{displacement convexity},
which implies exponential convergence  rates for $N\to\infty$, $\delta\to 0$.

Surprisingly, this asymptotic theory appears to capture well the behavior for moderate values of $\delta, N$. Explaining  this
phenomenon, and understanding the dependence on $\delta,N$ in a quantitative manner remains an outstanding challenge.

\end{abstract}



\section{Introduction}\label{sec:intro}

In supervised learning, we are given data $\{(y_j,\bx_j)\}_{j\le n}$ which are often assumed to be independent and identically distributed from a common
law $\prob$ on $\reals\times\reals^d$ (here $\bx_j\in \reals^d$ is a feature vector, and $y_j\in\reals$ is a label or response variable).
We would like to find a function $\hf:\reals^d\to\reals$ to predict the labels at new points $\bx\in\reals^d$. Throughout this paper,
we will quantify the quality of our prediction by square loss, hence we are interested in minimizing $R(\hf) = \E\{(y-\hf(\bx))^2\}$.
 
One of the most fruitful ideas in this context is to use functions that are linear combinations of simple components:
\begin{align}
\hf(\bx;\bw) = \frac{1}{N}\sum_{i=1}^Na_i\sigma(\bx;\bw_i)\, .  \label{eq:GeneralF_Composition}
\end{align}
Here $\sigma:\reals^d\times\reals^D\to\reals$ is a component function (a `neuron'  or `unit' in the neural network parlance),
and $\bw=(\bw_1,\dots,\bw_N)\in\reals^{D\times N}$, $\ba =
(a_1,\dots,a_N)\in\reals^N$ are parameters to be learnt from data. 
\comm{Standard choices for the activation function are $\sigma(\bx;\bw) = (1+\exp(-\<\bw,\bx\>))^{-1}$ (sigmoid) or $\sigma(\bx;\bw) = \max(\<\bw,\bx\>;0)$
(ReLU). In this paper we will instead study a class of activation that depends on the difference $\bx-\bw$.}
The objective is to minimize the population (prediction) risk
\begin{align}\label{eq:poprisk}
R_N(\ba,\bw) = \E\Big\{\Big[y-\frac{1}{N}\sum_{i=1}^Na_i\sigma(\bx;\bw_i)\Big]^2\Big\}\, .
\end{align}
Special instantiations of this idea include (we provide only pointers to the immense literature on each topic):
\begin{itemize}
\item Two-layer neural networks \cite{rosenblatt1962principles,anthony2009neural};
\item Sparse deconvolution \cite{donoho1992superresolution,candes2014towards};
\item Kernel ridge regression  and related random feature methods \cite{cristianini2000introduction,rahimi2008random};
\item Boosting \cite{schapire2003boosting,friedman2001greedy,buhlmann2003boosting}.
\end{itemize}
Despite the impressive practical success of these methods, the risk function $R_N(\bw)$ is highly non-convex and
little is known about global convergence of algorithms that try to minimize it (we refer to Section \ref{sec:Related} for further discussion of the 
related literature). 

Notable exceptions to the last statement are provided by random
features and by boosting algorithms. In random feature methods, the  parameters $\bw_i$ are not optimized over (they are drawn i.i.d.
from some common distribution), and the resulting risk function becomes convex in the weights $(a_1,\dots,a_N)$ to be learnt.
While this is a fruitful idea, it gives up the degrees of freedom afforded by the $\bw_i$'s.

Boosting overcomes non-convexity by fitting the components $\bw_1$, \dots, $\bw_N$ one at the time, sequentially. The underlying
assumption is that the problem of minimizing $R_N(\bw)$ with respect to one of the hidden units $\bw_i$ is tractable. However,
this is generally not the case when the parameters $\bw_i$ belong to a high-dimensional space. 

\comm{The risk function \eqref{eq:poprisk} crystalizes a central conundrum in statistical learning. In a number of applications
(especially at low noise),  it is rarely the case that low prediction error can be achieved through a function that is linear in the raw covariates, e.g.
$\hf(x) = \<\bw,\bx\>$. In a  classical setting, the statistician would craft nonlinear features out of the covariates on the basis of expert knowledge. 
For the model of Eq.~\eqref{eq:GeneralF_Composition}, this amounts to constructing vectors $\bw_1,\dots,\bw_N$. Statistical methods would then be  confined to 
the convex task of fitting the coefficients $a_1,\dots,a_N$.  This step is well understood from a statistical and computational perspective.}

\comm{Modern machine learning approaches (boosting, neural networks, etc.) hold the promise of automatizing feature extraction,
hence producing superior performances in a wide variety of applications. Unfortunately, we are still far from understanding in which cases optimizing
over the  $\bw_i$'s yields a significant improvement over --say-- choosing them randomly. 
This central challenge intertwines statistical and computational aspects. It is not hard to see that varying
the weights $\bw_i$'s produces a significantly larger function class \cite{bach2017breaking}. The relevant question is what part of this class can be accessed using gradient
descent or other practical algorithms.}

\comm{The main objective of this paper is to introduce a nonparametric regression model in which these questions can be
addressed rigorously.  
The model is interesting for at least two reasons: $(i)$ From a theoretical point of view, 
global convergence can be proved in the limit of a large neurons. The proof relies on a mathematical mechanism that has not been 
explored in the statistics or machine learning literature before. $(ii)$ From a practical point of view, the model is nontrivial enough
to illustrate the potential advantage of fitting the features $\bw_i$ (we demonstrate this numerically in Section \ref{sec:Numerical}.)}

Let $\Omega\subset\reals^d$ be a compact convex set with $\Cont^2$ boundary.
We assume $\{(y_j,\bx_j)\}_{j\ge 1}$ to be i.i.d.  where $\bx_j\sim\Unif(\Omega)$ and
\begin{align}
\E(y_j|\bx_j) = f(\bx_j) \, ,
\end{align}
with $f:\Omega\to \reals$ a smooth function. We try to fit these data using a combination of bumps, namely
\begin{align}
\hf(\bx;\bw)= \frac{1}{N}\sum_{i=1}^NK^\delta(\bx-\bw_i)\, , \label{eq:GeneralFhat}
\end{align}
where $K^\delta(\bx) = \delta^{-d}K(\bx/\delta)$, $K:\reals^d\to\reals_{\ge 0}$ is a first order kernel with compact support,
and $\bw_i\in \Omega^{\delta}$ for $i\le N$. Here $\Omega^{\delta}$ is a slightly smaller compact set, with $\Omega^{\delta}\to\Omega$
as $\delta\to 0$.  (Note that in our setting the hidden units $\bw_i$ and input data $\bx_j$ have same dimensions, i.e., $d = D$.) We refer to Section \ref{sec:Main} for a formal statement of our assumptions.
\comm{From Eq.~\eqref{eq:poprisk}, we have
\begin{align*}
R_N(\bw) = R_{\#} +\E\big\{\big[f(\bx)-\frac{1}{N}\sum_{i=1}^NK^\delta(\bx-\bw_i)\big]^2\big\}\,,
\end{align*}
where $R_{\#} = \E[(y- f(\bx))^2]$ and we use the fact that $\E[y - f(\bx)|\bx] = 0$.
Since the constant $R_{\#}$ does not depend on parameters $\bw$, it does not matter in optimizing $R_N(\bw)$ over $\bw$ and henceforth we write, with a slight abuse of notation, 
\[
R_N(\bw) = \E\big\{\big[f(\bx)-\frac{1}{N}\sum_{i=1}^NK^\delta(\bx-\bw_i)\big]^2\big\}\,.
\]}

The model (\ref{eq:GeneralFhat}) is general enough to include a broad class of radial-basis function (RBF) networks
which are known to be universal function approximators
\cite{park1991universal}. 
To the best of our knowledge, there is no result on the global convergence of stochastic gradient descent for learning RBF networks, 
and this paper establishes the first result of this type.

It is important to emphasize a few differences
with respect to standard RBF networks. First of all, we do not require the kernel $K(\bx)$ to be radial, i.e. to depend uniquely on the norm $|\bx|$. 
Second, we require $K$ to have compact support. This is mainly a technical requirement that simplifies some arguments: we expect 
our results to be generalizable to kernels that decay rapidly enough. 
Finally, and most crucially, the form (\ref{eq:GeneralFhat}) does not include non-uniform weights for the $N$ components. 
A more standard formulation would posit
$\hf(\bx;\bw)= \sum_{i=1}^Na_iK^\delta(\bx-\bw_i)$ and learn the weights $a_i$ from data,  see Eq.~\eqref{eq:GeneralF_Composition}.
We deliberately set the weights to a fixed value because the risk function is convex in  $\ba = (a_i)_{i\le N}$, 
and hence fitting $\ba$'s to global optimality is `easy.'   Indeed, universal approximation could be achieved by keeping the centers $\bw_i$
fixed (and sufficiently dense in $\Omega$) and only adjusting $\ba$. 
As discussed above, our focus is on the role of the $\bw_i$'s. 

Our main result is a proof that,  for sufficiently large $N$ and small $\delta$,  gradient descent algorithms  converge 
to weights $\bw$ with nearly optimum prediction error, provided $f$ is strongly concave.
Let us emphasize that the  resulting population risk $R_N(\bw)$ is non-convex regardless of the concavity properties of $f$.
Our proof unveils a novel mechanism by which global convergence 
takes place. Convergence results for non-convex empirical risk minimization are generally proved by carefully ruling out local minima 
in the cost function (see Section \ref{sec:Related} for pointers to this literature). Instead we prove that, as $N\to\infty$, $\delta\to 0$,
the gradient descent dynamics converges to a gradient flow in Wasserstein space, and that the corresponding cost function is `displacement convex.'
Breakthrough results in optimal transport theory guarantee dimension-free convergence rates for this limiting dynamics \cite{carrillo2001entropy,carrillo2003kinetic,carrillo2006contractions}. \comm{In particular, we expect the cost function $R_N(\bw)$ to have many local minima, which are however completely neglected by the gradient descent dynamics.}

More specifically, our first step is to show that -- for large $N$ -- the 
evolution of the weights $\bw_1,\dots,\bw_N$ under gradient descent can be replaced by the evolution of a probability 
distribution\footnote{Throughout,$\cuP_2(\cX)$ denotes the space of probability distributions on $\cX$, endowed with Wasserstein metric $W_2$.} $\rho^{\delta}\in \cuP_2(\Omega)$,
which approximates their empirical distribution.  Namely, if $(\bw^k_1,\dots,\bw^k_N)$ denote the weights after $k$ iterations with
step size $\eps$, and $\hrho^{(N)}_k = \sum_{i=1}^N\delta_{\bw_i^k}/N$ is their empirical distribution, then we have
\begin{align}
\lim_{N\to\infty,\eps\to 0}\hrho^{(N)}_{t/\eps} = \rho^{\delta}_t \, ,\label{eq:PDE-Approx}
\end{align}
where the limit holds in the sense of weak convergence \comm{or in $W_1$ distance (the two are equivalent since $\Omega$ is compact).} The limit evolution $(\rho^{\delta}_t)_{t\ge 0}$ satisfies a partial differential equation (PDE)
that can also be described as the Wasserstein $W_2$ gradient flow (i.e. gradient flow in $\cuP_2(\Omega)$), 
for the following effective  risk
\begin{align}
R^{\delta}(\rho) &= \nu_0 \, \int_{\Omega} \big[f(\bx) - K^\delta\ast \rho(\bx)\big]^2\de \bx\, , \label{eq:Rdelta-first}
\end{align}
where $\nu_0 = 1/|\Omega|$ and $|\Omega|$ denotes the volume of the set $\Omega$.
Here $\ast$ denotes the usual convolution. 
\comm{Let us emphasize that the convergence to Wasserstein gradient flow holds regardless of the concavity of $f$.}

The use of $W_2$ gradient flows to analyze two-layer neural networks was recently developed in several 
papers \cite{mei2018mean,rotskoff2018neural,chizat2018global,sirignano2018mean}. 
However, we cannot rely on earlier results because of the specific boundary conditions in our problem. \comm{We constrain the $\bw_i\in\Omega^{\delta}$ by
running projected stochastic gradient descent (SGD): at each step $\bw_i$ moves in the direction of a stochastic gradient of $R_N(\bw)$ and then projected back
to $\Omega^{\delta}$. This results in a PDE with Neumann boundary condition on $\Omega^{\delta}$, which is not covered by previous theory.}
We establish a quantitative version of the limit (\ref{eq:PDE-Approx}) via propagation-of-chaos techniques.

Even if the cost  (\ref{eq:Rdelta-first}) is quadratic and convex in $\rho$, its $W_2$ gradient flow can have multiple fixed points,
and hence global convergence cannot be guaranteed. Global convergence results were proven in \cite{mei2018mean} and in \cite{chizat2018global}
by showing that, for all $t\ge 0$ $\rho^{\delta}_t$ has a density that is either smooth, or strictly positive everywhere. However,
these convergence results are non-quantitative, and do not provide convergence rates\footnote{An argument indicating convergence 
in a time polynomial in $d$ was put forward in \cite{wei2018margin},
but for a different type of continuous flow.}.

Indeed, the mathematical property that controls global convergence of $W_2$ gradient flow is not ordinary convexity but \emph{displacement convexity}.
Roughly speaking, displacement convexity is convexity along geodesics of the $W_2$ metric, see Section \ref{subsec:PDEdelta0}.
\comm{The risk function (\ref{eq:Rdelta-first}) is not displacement convex. Indeed,  its quadratic term reads 
$\nu_0\int K_{\delta}\ast K_{\delta}(\bx-\bx')\rho(\bx)\rho(\bx') \de\bx\de\bx'$ which is not displacement convex unless $K_{\delta}\ast K_{\delta}$ is 
convex (see Lemma \ref{lem:DisplCvx}),
which cannot be in our setting}. However, for small $\delta$,
we can formally approximate $K^\delta\ast \rho\approx \rho$, and hence hope to replace  the risk function
(\ref{eq:Rdelta-first}) with a simpler one
\begin{align}
R(\rho) &= \nu_0\int_{\Omega} \big[f(\bx) - \rho(\bx)\big]^2\de \bx\, .
\label{eq:R-delta-0}
\end{align}
Most of our technical work is devoted to making rigorous this $\delta\to 0$ approximation. Namely, we prove that,
as $\delta\to 0$,  $\rho^{\delta}_t\Rightarrow \rho_t$ where $\rho_t$ follows the $W_2$ gradient flow for the risk $R(\rho)$. 

Remarkably, the risk function $R(\rho)$ is strongly displacement convex (provided $f$ is strongly concave). 
A long line of work in PDE and optimal transport theory establishes dimension-free convergence rates for its $W_2$ gradient flow 
\cite{carrillo2001entropy,carrillo2003kinetic,carrillo2006contractions}.
Namely, if $f$ is $\alpha$-strongly concave, then $R(\rho_t) \le R(\rho_0)\, e^{-2\alpha t}$. By using the approximation results outlined above,
we obtain global convergence for SGD. With high probability, 
\begin{align}
R_N(\bw^{k}) \le R_N(\bw^0)\, e^{-2\alpha k\eps} +\, \err(N,d,\eps,\delta)\, ,\label{eq:RoughApproximation}
\end{align}
where the error term $\err$ vanishes as $N\to\infty$, $\eps,\delta\to 0$ in a suitable order. 

This result implies that SGD converges exponentially fast to a near-global optimum with
a rate that is controlled by the convexity parameter $\alpha$. 

Our bounds are not sharp enough to provide quantitative control on the error term  $\err(N,d,\eps,\delta)$,
especially in high dimension. Nevertheless, the convergence rate predicted by our asymptotic theory
is in excellent agreement with numerical simulations, cf. Section \ref{sec:Numerical}.
Explaining this surprising quantitative agreement is an outstanding challenge.

\section{Related literature}
\label{sec:Related}

The present work ties in several lines of research, some of which were already mentioned in the introduction. 
A substantial amount of work has been devoted to analyzing two-layer neural networks and developing algorithms with convergence guarantees,
see e.g. \cite{zhong2017recovery,tian2017symmetry,bakshi2018learning}. However these approaches are typically based on tensor factorization
or similar initialization steps that are not used in practice, and do not scale well (although polynomially) in high dimension.

The landscape of empirical risk minimization was also studied in a number of papers, see e.g. \cite{li2017convergence,soltanolkotabi2018theoretical}.
However, global convergence was only proved in the extremely overparametrized regime in which the neural network essentially behaves as kernel
ridge regression \cite{du2018gradient}.

Classical theory of neural networks was largely devoted to the two-layer case \cite{anthony2009neural}, although the focus was on representation
and approximation  questions \cite{cybenko1989approximation,barron1993universal}, as well as on generalization error.
It was already clear in that context that a two-layer network is conveniently characterized by the empirical distribution of the hidden neurons, and
that it is useful to relax this from a distribution with $N$ atoms, to a general probability measure. This representation plays an important
role, for instance, in \cite{bartlett1998sample}, and was exploited
again under the label of `convex neural networks' in \cite{bengio2006convex}.

Over the last year, several groups independently revisited this connection, with the objective of understanding the landscape structure of two-layer networks, and
the dynamics of gradient descent methods \cite{nitanda2017stochastic,mei2018mean,rotskoff2018neural,sirignano2018mean,chizat2018global,mei2019mean}.  
In particular, it was proven in
\cite{mei2018mean} that, under certain smoothness condition on the underlying data distribution, the gradient descent evolution is
well approximated by a Wasserstein gradient flow, provided that the number of neurons exceeds the data dimensions.
\comm{As mentioned above, the algorithm treated here differs from the ones analyzed in earlier work, because the weights $\bw_i$ are constrained to
lie in the convex set $\Omega^{\delta}$. We enforce this constraint by using projected SGD, i.e. projecting at each step the weights onto the set $\Omega^{\delta}$.
We generalize the analysis of \cite{mei2018mean}, obtaining convergence to a PDE with Neumann (reflecting) boundary conditions. As in
\cite{mei2018mean}, we build on ideas that were first developed in the context of interacting particle systems \cite{dobrushin1979vlasov,sznitman1991topics}.}

The Wasserstein gradient flow  approach was used in \cite{mei2018mean,chizat2018global} to establish global convergence results. However, these results fall short of our 
objectives for several  reasons:
\begin{itemize}
\item The global convergence result of \cite{chizat2018global} rely on certain homogeneity properties of the neurons that are lacking here.
We could obtain homogeneity by adding coefficients to Eq.~(\ref{eq:GeneralFhat}), i.e. considering $\hf(\bx;\bw)= \sum_{i=1}^Na_iK^\delta(\bx-\bw_i)$ and minimizing
the risk with respect to the coefficients $a_i$. 
As mentioned above, we refrain from introducing coefficients not to oversimplify the problem:
when $N\to\infty$, it is sufficient to fit the coefficients $a_i$ to achieve vanishing risk. Fitting the $a_i$'s is a least squares problem.
\item Most importantly, the techniques \cite{mei2018mean,chizat2018global} do not establish any convergence rates.
This is not surprising, as those results hold under weak assumptions on the data distribution and the activation function. 
\comm{In particular, \cite{mei2018mean,chizat2018global,mei2019mean}  cover general risk functions of the form 
\eqref{eq:poprisk} under certain smoothness and boundedness conditions on $\sigma$ and on the functions 
$V(\bw) = -\E\{f(\bx)\sigma(\bx;\bw)\}$, $U(\bw_1,\bw_2) = \E\{\sigma(\bx;\bw_1)
 \sigma(\bx;\bw_2)\}$. In such a general setting \cite{mei2018mean} provides examples in which the Wasserstein gradient flow has multiple fixed points,
which are singular with respect to the Lebesgue measure. Global convergence is established in \cite{mei2018mean,chizat2018global} by proving
that PDE solution $\rho_t$ has a strictly positive density. However, it is difficult to imagine this condition to hold in a quantitative dimension-independent 
manner.}

\comm{In contrast, our results are a first step towards dimension-independent convergence rate, in a more restricted setting than 
\cite{mei2018mean,chizat2018global,mei2019mean}.}
\end{itemize}
\comm{In summary, our results do not subsume earlier work, that assumes a more general setting, but rather establish stronger results in 
narrower context. 
Indeed, we believe that specific structural conditions must be imposed on the data distribution and activation function
for the Wasserstein gradient flow approach to yield quantitative convergence rates. This paper presents one
specific set of assumptions. Although our results are not strong enough to establish non-asymptotic convergence rates, they point clearly in that direction.}

\section{Model and assumptions}
\label{sec:Model}
\subsection{Notations}
We will use lowercase boldface for vectors, e.g. $\bx,\by,\dots $,  uppercase for random variables, e.g. $X,Y,\dots$, and uppercase
boldface for random vectors, e.g. $\bX,\bY,\dots$. The scalar product of two vectors is denoted by $\<\bx,\by\> =\sum_{i=1}^d x_i y_i$,
and the $\ell_2$ norm of a vector is denoted by  $|\bx|$. 
The Euclidean ball in $\reals^d$ with center $\bx$ and radius $r$ is denoted by $\Ball(\bx;r)$.
Given a set $\Omega\subseteq\reals^d$, we denote by $|\Omega|$ its
volume.

We will refer to several function spaces in what follows. The most
common is the space of $p$-th integrable functions  $\Lp^p(\cX)$ on a measure space   $(\cX,\cF,\mu)$.
Given a function $f:{\cal X}\to\reals$, we denote by $\|f\|_{\Lp^p(\cX)}$ its $\Lp^p$ norm, namely $\|f\|_{\Lp^p(\cX)}^p = \int_{\cX} |f(x)|^p\, \mu(\de x)$. 
For $S\subseteq\reals^m$, $\Cont^k(S)$ denotes the space of continuous functions $f:S\to\reals$ with continuous derivatives up to order $k$. In particular, $\Cont(S)$ denotes the space of continuous real-valued functions defined on $S$. In addition, for $T\in \reals_{+}$ and a metric space $\mathcal{M}$ (with distance $d_{\mathcal{M}}$), $\Cont([0,T], \mathcal{M})$ denotes the set of continuous functions $f:[0,T]\to \mathcal{M}$, endowed with the distance between two functions $f,g\in \Cont([0,T],\mathcal{M})$  
defined as $d_{\Cont([0,T],\mathcal{M})}(f,g) \equiv \sup_{t\in[0,T]} d_{\mathcal{M}}(f(t),g(t))$. For a function $f:S\to \reals$, we let $\|f\|_{\rm Lip}\equiv \sup_{\bx\neq\by \in S} |f(\bx)-f(\by)|/|\bx-\by|$ be the Lipschitz constant of the function $f$.
Finally, as mentioned above, $\cuP_2(\cX)$ denotes the space of probability distributions on $\cX$, endowed with the Wasserstein metric $W_2$

Throughout the paper, we use $C$ to denote finite constants, which can vary from point to point. When these constants can depend on
some  of the problem parameters, e.g.  $a,b,c$, we will write $C(a,b,c)$. When they are absolute numerical constants, we will emphasize this by 
writing $C_*$. 

\subsection{Data}

As mentioned above, we are given data $(y_j,\bx_j)\sim_{\rm i.i.d.}\prob$ where $\bx_j\sim\Unif(\Omega)$, \comm{with $\Omega\subset \reals^d$ a compact convex set,}
and 
$y_j = f(\bx_j) +\eps_j$, with $f:\Omega\to\reals_{\ge 0}$.
We assume the $\eps_j$ to be i.i.d. $\sigma^2$-subgaussian random variables with $\E(\eps_j|\bx_j) = 0$. 
We assume the function $f$ to be  concave and smooth. 

Our formal assumptions on the set $\Omega$ and the function $f$ are as follows:
\begin{enumerate}
\item[\sf (A1)] $\Omega\supseteq\Ball(\b0;r)$, with $r>0$, is a compact convex set with $\Cont^2$ boundary. 

\item[\sf (A2)] $f : \Omega \to \reals_{\ge 0}$ uniformly concave, i.e., there exists $\alpha>0$ such that
\begin{equation}\label{eq:concavityf}
\<\by , \nabla^2 f(\bx)\by \> \le -\alpha |\by|^2, \qquad \,\forall \bx\in \Omega, \; \by\in\reals^d\, ,
\end{equation}
where $\nabla^2 f$ denotes the Hessian of $f$.

\item[\sf (A3)] $f\in \Cont^{\infty}(\Omega)$, with $\|f\|_{\Lp^\infty(\Omega)}, \|\nabla f\|_{\Lp^\infty(\Omega)}\le C_*$ for an absolute constant $C_*$.
\end{enumerate}
Without loss of generality, we can also assume that $\int_{\Omega}f(\bx)\, \de\bx = 1$. As a running example, we will use $\Omega=\Ball(\b0;r)$, \comm{where we remind $r$ is defined in Assumption {{\sf (A1)}}}.
\comm{\begin{remark}
The assumption  $\bx_j\sim\Unif(\Omega)$ is quite strong but simplifies our analysis. We believe our approach can be generalized to a broader family of probability
distribution for the covariates $\bx_j$, but defer these generalizations to future work. 
\end{remark}}

\subsection{Neural network and SGD}

Let $K\in \Cont^2(\reals^d)$ be a non-negative symmetric first order kernel with compact support. Formally, we assume that
\begin{align}
{\sf (A4)} \quad \qquad
&\int K(\bx)\, \de\bx =1 \, ,\;\;\;\;   K(\bx)\ge 0 ,\;\;\;\; \int K(\bx)\, \bx \, \de\bx =0 ,\label{eq:defK}\\
&K(-\bx) = K(\bx)\, ,\;\;\;\;\;\; \supp(K) \subseteq \Ball(\bzero,c_0)\, .
\end{align}
The assumptions of symmetry and compact support are not crucial, but simplify some of the technical details later.
We will further assume $\|\nabla K\|_{\Lp^\infty(\mathbb R^d)}$, $\|\nabla^2 K\|_{\Lp^\infty(\mathbb R^d)}$ and $c_0$ to be independent of the ambient dimension $d$. Notice that this requirement 
follows from the differentiability and compact support assumptions if 
$K(\bx) =\kappa(\|\bx\|_2)$ is a radial function.

For $\delta>0$, let $K^\delta(\bx) = \delta^{-d} K(\bx/\delta)$.
We try to fit the function (\ref{eq:GeneralFhat})  with parameters $\bw = (\bw_1,\dots,\bw_N)$.
These parameters are constrained to $\bw_i\in\Omega^\delta$ which is a suitable scaling of $\Omega$, as defined in the following. Given $\delta < r/c_0$, with $r$ defined in {\sf (A1)}, define
$$\Omega^\delta = \lambda_\delta \,\Omega,$$
where
\begin{equation}\label{eq:lambda}
\lambda_{\delta}  =\sup \big\{\lambda\ge 0 :\; \lambda \Omega \oplus \Ball(\b0, c_0 \, \delta)\subseteq \Omega\big\}\, .
\end{equation} 
For two sets $A,B\subseteq\reals^d$, their Minkowski sum is defined as $A\oplus B=\{\bx+\by : \bx\in A, \by \in B \}$. Note that $\lambda_{\delta}\in [0,1]$ for all $\delta$. Furthermore, $\Omega\supseteq\Ball(\b0;r)$ implies $\lambda_\delta >0$ for all $\delta<r/c_0$. 
Finally, $\lambda_{\delta=0}=1$, whence $\Omega^{\delta=0}=\Omega$. In our running example, $\Omega^\delta = \Ball(\bzero;r-c_0\delta)$ is a ball of slightly smaller radius. 
Clearly, since $\Omega$ is convex, $\Omega^\delta$ is convex as well.

We use stochastic gradient descent to minimize the population risk (\ref{eq:poprisk}). \comm{At each step, we use a new data point $(y_k,\bx_k)$, thus the sample size is equal to the number of iterations of the algorithm.} Assuming for simplicity constant step size $\eps>0$, we update the parameters by
\begin{align}\label{eq:SGDup}
\bw_i^{k+1} = \Proj\left\{\bw_i^k-\eps\nabla K^\delta(\bx_{k+1}-\bw^k_i) \, \left(y_{k+1}-\hf(\bx_{k+1};\bw^k)\right) 
+\sqrt{2\eps\tau}\, \bg_i^{k+1}\right\}\, .
\end{align}
Here $\bg_i^{k+1}\sim\normal(0,\id_d)$ is Gaussian noise which we take to be i.i.d. across time and neuron indices, $k$ and $i$,
and $\Proj$ is the orthogonal  projector onto $\Omega^\delta$:
\begin{align}
\Proj(\bz) = \arg\min\big\{ |\bz-\bx|:\; \;\;\bx\in\Omega^\delta\big\} \, .
\end{align}
The noise term $\sqrt{2\eps\tau}\, \bg_i^{k+1}$ is added mainly for technical reasons. Namely, it allows us to control the smoothness
of the solutions of the resulting PDE. In simulations we do not find it useful, and we believe that a more careful analysis would be able to establish smoothness without
the noise term.

Again, in our running example, we have 
\begin{align}
\Proj(\bz) = \begin{cases}
\bz & \mbox{ if $|\bz|\le r-c_0\delta$,}\\
(r-c_0\delta)\bz/|\bz| & \mbox{ if $|\bz|> r-c_0\delta$.}
\end{cases}
\end{align}
We initialize SGD with $(\bw_i^0)_{i\le N}\sim_{\rm i.i.d.}\rho^{\delta}_{\sinit}\in \cuP_2(\Omega^\delta)$, where 
$\rho^{\delta}_{\sinit}$ is a scaling of a fixed distribution $\rho_{\sinit}\in \cuP_2(\Omega)$, i.e. $\rho^{\delta}_{\sinit}(S) = \rho_{\sinit}(S/\lambda_{\delta})$.
We assume that the initialization is smooth:
\begin{enumerate}
\item[\sf (A5)] $\rho_{\sinit}\in \Cont^\infty(\Omega^\delta)$.
\end{enumerate}

\subsection{PDE Model, $\delta>0$}\label{subsec:PDEdeltag0}

In the $N\to\infty$ limit the population risk is approximated  by
the effective risk $R^{\delta}:\cuP_2(\Omega^\delta)\to \reals$ defined in Eq.~\eqref{eq:Rdelta-first}.
We emphasize that $\rho$ is a probability distribution supported on $\Omega^\delta$.
Note that
\begin{align}
\inf_{\rho}R^{\delta}(\rho) \le R^{\delta}(f) = \nu_0\int_{\Omega}\big[f(\bx)-K^{\delta}*f(\bx)\big]^2 \de\bx\, .
\end{align}
In particular $\lim_{\delta\to 0}\inf_{\rho\in\cuP_2(\Omega)} R^{\delta}(\rho) = 0$.

Our first main result is that the dynamics of SGD is well approximated by the following PDE
(see Section \ref{subsec:convPDESGD} for a formal statement):
\begin{equation}\label{EQ:PDEDELTA}
\begin{split}
\partial_t \rho_t(\bw) &= \nabla\cdot\big(\rho_t(\bw)  \nabla\Psi(\bw;\rho_t)\big) +\tau\Delta\rho_t(\bw)\, ,\\
\Psi(\bw;\rho) & \equiv -\nu_0 \, K^{\delta}\ast f(\bw) + \nu_0\, K^{\delta}\ast K^{\delta}\ast\rho(\bw)\, ,
\end{split}
\end{equation}
with initial and boundary conditions
\begin{equation}\label{eq:PDEdelta-BD}
\begin{split}
&\rho_0 = \rho^{\delta}_{\sinit},\\
 \<\bn(\bw), \rho_t(\bw)\nabla\Psi(\bw;\rho_t)&+\tau\nabla\rho_t(\bw)\> = 0\, \;\;\;\;\; \forall \bw\in\partial\Omega^\delta\, ,
\end{split}
\end{equation}
where $\bn(\bx)$ denotes the inward normal vector to $\partial\Omega^\delta$ at $\bx$.

A rigorous definition of solutions of this PDE, along with some of their properties, is given in Appendix \ref{app:genPDE}. 
In Appendix \ref{app:nonlinear}, we discuss the connection between the PDE  \eqref{EQ:PDEDELTA} and the 
so-called ``nonlinear dynamics'', i.e. a stochastic differential equation that captures the trajectories of the weights $\bw_i^k$. 
Using this connection, we prove existence and uniqueness of weak solutions of Eq.~(\ref{EQ:PDEDELTA}).   In the proofs, we will often assume 
$\nu_0=1$, which amounts to a rescaling of time $t$.

For $\tau=0$, the evolution defined by Eq.~(\ref{EQ:PDEDELTA}) corresponds to the gradient flow in Wasserstein metric 
for the risk function $R^{\delta}(\rho)$. For $\tau>0$, it is the gradient flow for the free energy functional 
$F^{\delta}(\rho)$ defined below
\begin{align}
F^{\delta}(\rho) = \frac{1}{2}R^{\delta}(\rho)-\tau \,S(\rho)\, ,\;\;\; S(\rho) = -\int \rho(\bw)\log\rho(\bw) \, \de\bw\,  .
\end{align}

\subsection{Limit PDE, $\delta=0$}\label{subsec:PDEdelta0}

As mentioned above, in the limit $\delta\to 0$ the risk function
$R^{\delta}(\rho)$ is well approximated by $R: \Lp^2(\Omega)\to\reals$, where $R(\rho) = \nu_0 \|f-\rho\|_{\Lp^2(\Omega)}^2$, cf. Eq.~(\ref{eq:R-delta-0}).

The corresponding Wasserstein gradient flow is also known as \emph{viscous porous medium equation} \cite{vazquez2007porous}
and it is given by
\begin{align}
\partial_t \rho_t(\bw) &= -\nu_0\nabla\cdot\big(\rho_t(\bw)  \nabla f(\bw)\big) +
\frac{\nu_0}{2}\Delta(\rho_t^2(\bw)) +\tau\Delta \rho_t(\bw)\, ,\label{eq:PME}
\end{align}
with initial and boundary conditions
\begin{equation}\label{eq:PME-BC}
\begin{split}
\rho_0&=\rho_{\sinit},\\
\<\bn(\bw), \nu_0\rho_t(\bw)\, \nabla (f(\bw)-\rho_t(\bw))& - \tau\nabla\rho_t(\bw)\> = 0\, \;\;\;\;\; \forall \bw\in\partial\Omega\, .\\
\end{split}
\end{equation}
In Appendix \ref{app:PDEdelta0}, we give the definition of a weak solution for the PDE \eqref{eq:PME} with initial and boundary conditions \eqref{eq:PME-BC}. We also prove that the weak solution of the PDE \eqref{eq:PME} is unique, under a mild integrability condition. Again, in proofs we will assume without loss
of generality $\nu_0=1$.

As in the $\delta>0$ case, the evolution defined by Eq.~(\ref{eq:PME}) is the gradient flow for the free energy $F(\rho) = (1/2)R(\rho)-\tau S(\rho)$.
Our analysis uses a key property of the risk function $R(\rho) = \nu_0 \|f-\rho\|_{\Lp^2(\Omega)}^2$ (and the free energy): displacement convexity \cite{mccann1997convexity}.
For the reader's convenience, we recall its definition here, 
referring to \cite{ambrosio2008gradient,villani2008optimal,santambrogio2015optimal} for further background.
Given two probability measures  $\rho_0,\rho_1\in\cuP_2(\Omega)$, their $W_2$ distance is defined by
\begin{align}
W_2(\rho_0,\rho_1)^2 = \inf_{\gamma\in \Gamma(\rho_0,\rho_1)}\int \|\bx-\by\|_2^2\, \gamma(\de\bx,\de\by)\, ,
\end{align}
where the infimum is taken over the set $\Gamma(\rho_0,\rho_1)$ of couplings of $\rho_0$, $\rho_1$ (i.e. probability  measures on 
$\Omega\times\Omega$ whose first marginal coincides with $\rho_0$, and second with $\rho_1$). 
The infimum is achieved by weak compactness of $\cuP_2(\Omega)$.

The metric space $(\cuP_2(\Omega),W_2)$
is a `length space,' and  in particular it is possible to construct geodesics, i.e. paths of minimum length connecting any two probability measures 
$\rho_0,\rho_1$.  Geodesics have a simple description. Let $\gamma_*$ be the coupling achieving the infimum in the definition of $W_2(\rho_0,\rho_1)$.
Letting $(\bX_0,\bX_1)\sim \gamma_*$,  we define $\rho_t$ to be the distribution of $\bX_t = (1-t)\bX_0+t\bX_1$.
The curve $t\mapsto \rho_t$, indexed by $t\in [0,1]$ turns out to be the geodesic between $\rho_0$ and $\rho_1$ in $(\cuP_2(\Omega),W_2)$.

Displacement convexity is convexity along geodesics. Namely, a function $\cF:\cuP_2(\Omega) \to \reals$ is $\lambda$-strongly displacement convex
if
\begin{align}\label{eq:strong-dis}
(1-t)\cF(\rho_0)+t\, \cF(\rho_1) - \cF(\rho_t) \ge \frac{1}{2}\lambda\, t(1-t) W_2(\rho_0,\rho_1)^2\, .
\end{align}
%


A useful observation is that displacement convexity implies that all local minima of $\cF$ are global minimizer. Indeed, by~~\eqref{eq:strong-dis} it is straightforward to see that $\cF$ has at most one global minimizer $\rho^*$. Also, for every other point $\rho$, the geodesic between $\rho$ and $\rho_*$ is a strictly decreasing path for the function $\cF$. Now, suppose that $\bar{\rho}\neq \rho_*$ is a local minimum. Then, there exists a neighborhood $U$ around $\bar{\rho}$ such that, for any $\rho \in U$, $\cF(\rho) \ge \cF(\bar{\rho})$. However, the strictly decreasing path between $\bar{\rho}$ and $\rho_*$ passes through the neighborhood $U$, which leads to a contradiction and so $\rho = \rho_*$ 

It follows from \cite{mccann1997convexity} that the risk function $R(\rho)$ and the free energy $F(\rho)$ are strongly displacement convex. 

\begin{remark}
\comm{The concavity assumption on the regression function $f$ (Assumption {\sf(A2)}) defines a nonparametric class under which 
global convergence can be established, with convergence rates uniquely determined by the curvature  $\alpha$
(in the limit $N\to\infty$, $\delta\to 0$). Nonparametric estimation of concave functions has attracted considerable attention over 
recent years, see e.g. \cite{hannah2013multivariate,chen2016generalized}, and is --by itself-- an interesting domain of applicability.}

\comm{However, our projected SGD algorithm is potentially applicable to any data set, and will return a meaningful estimate $\hf$ regardless whether
$f$ is concave or not. Indeed, in the next section we present numerical simulations indicating convergence to a near-global optimum even for non-concave functions $f$.}

\comm{From mathematical point of view,  Assumption ({\sf A2}) is only used to show the convergence of the solution of the viscous porous medium equation 
(limit PDE, $\delta = 0$) to the unique global minimizer of the free energy 
$F(\rho) = (1/2)R(\rho) - \tau S(\rho)$, as formally stated in Theorem \ref{th:convminfree}. Concavity is not needed for the other results in the paper, 
namely approximating the SGD trajectory with the solution of the PDE ($\delta>0$), see Theorem \ref{TH:CONVPDE}, 
and the convergence of the solution of the PDE ($\delta>0$) to the solution
 of the viscous porous medium equation, see Theorem \ref{TH:CONVPME}. It is therefore foreseeable a more general analysis that relaxes the concavity assumption.}
\end{remark}

\section{Numerical illustrations}
\label{sec:Numerical}

In this section we provide some simple numerical illustrations of our setting, and compare numerical results with the predictions of the
Wasserstein gradient flow theory.

\comm{It is  easy to construct examples of strongly concave functions, satisfying our assumptions. One can start from any strongly concave continuous function
$f_0$ on a compact convex set $\Omega$,  add a constant to make it non-negative, and multiply it by a constant to normalize its integral. The resulting 
function $f(\bx) = (c_1+f_0(\bx))/c_2$ satisfies our conditions. Notable examples of concave functions are given by log-moment generating functions
$f_0(\bx) = -\log \E_{\bZ}\exp\{\<\bx,\bZ\>\}$, where the random variable $\bZ$ satisfies mild assumptions (e.g., it is bounded and its distribution is not supported 
on a proper subspace of $\reals^d$). In general, given any twice differentiable function
$g_0$, the function $f_0(\bx) = g_0(\bx)-c_*\|\bx\|^2_2$ is strongly concave for $c_*$ large enough.}

\subsection{A  one-dimensional concave function} 
\label{sec:OnedConcave}

We set $\Omega = [-1, 1]$ and $f(x) = (1-e^{x-1})/(1-e^{-2})$ (we choose the normalization so that $\int_{-1}^1f(x) \de x=1$). Note that $f$ is uniformly concave in $[-1, 1]$. We set the kernel $K$ as follows:
\begin{align}\label{eq:Kex}
K(x) = C_d\kappa(|x|)\, ,\;\;\;  \kappa(t) =\begin{cases}
1-t^2-2t^3+2t^4&\mbox{ for $t\le c_0=1$,}\\
0  &\mbox{ otherwise,}
\end{cases}
\end{align}
where $C_d$ is a normalization constant ensuring that $\int_{-1}^1 K(x)\,\de x = 1$. The initialization $\rho_{\sinit}$ is a 
truncated  Gaussian: $\rho_{\sinit}(x) = c\cdot\exp(-x^2/(2\sigma^2))\, \bfone_{[-1,1]}(x)$, with $\sigma=1/3$.

We find empirically that standard stochastic gradient descent (SGD) without the projection $\Proj$ onto $\Omega^{\delta}$ works well in this example,
and consider this algorithm for simplicity in our first illustrations.
We pick $N=200$, $\tau=0$ (noiseless SGD), and constant step size $\varepsilon = 10^{-6}$.
 In Figure \ref{fig:SGD}, left column, we plot the true function $f(\,\cdot\,)$  together with the neural network estimate $\hf(\,\cdot\, ;\bw^k)$ at several points in time
 $t$ (time is related to the number of iterations $k$ via $t=k\eps$). Different plots correspond to different values of $\delta$ with $\delta\in\{1/5, 1/10, 1/20\}$. 
We observe that the network estimates $\hf(\,\cdot\, ;\bw^k)$ seem to converge to a limit curve which
is an approximation of the true function $f$. As expected, the quality of the approximation improves as $\delta$ gets smaller.

\begin{figure}[p]
    \centering
    \subfloat[Function $f$ and SGD estimates, $\delta=1/5$.]{\includegraphics[width=.5\columnwidth]{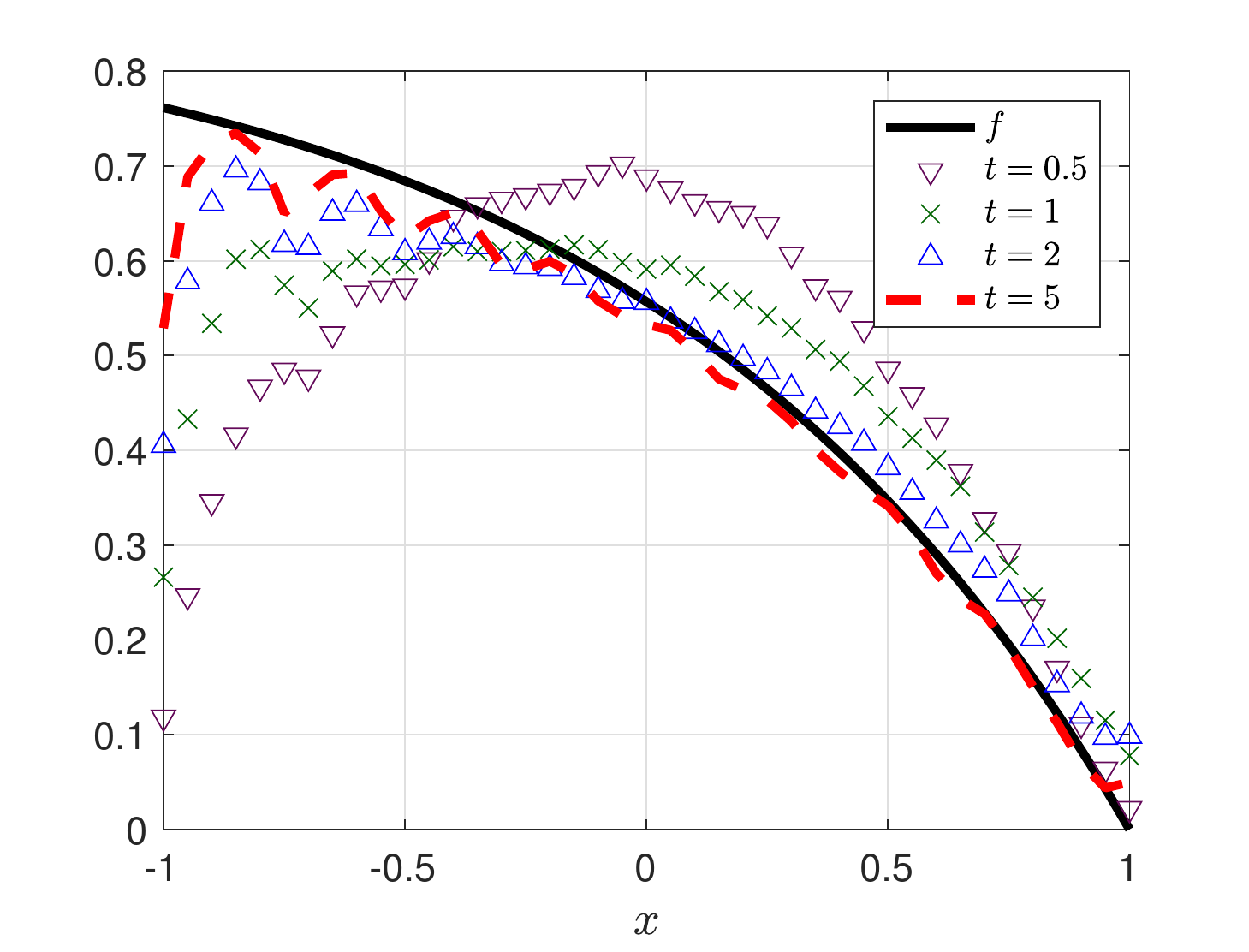}}
    \subfloat[Normalized risk, $\delta=1/5$.]{\includegraphics[width=.5\columnwidth]{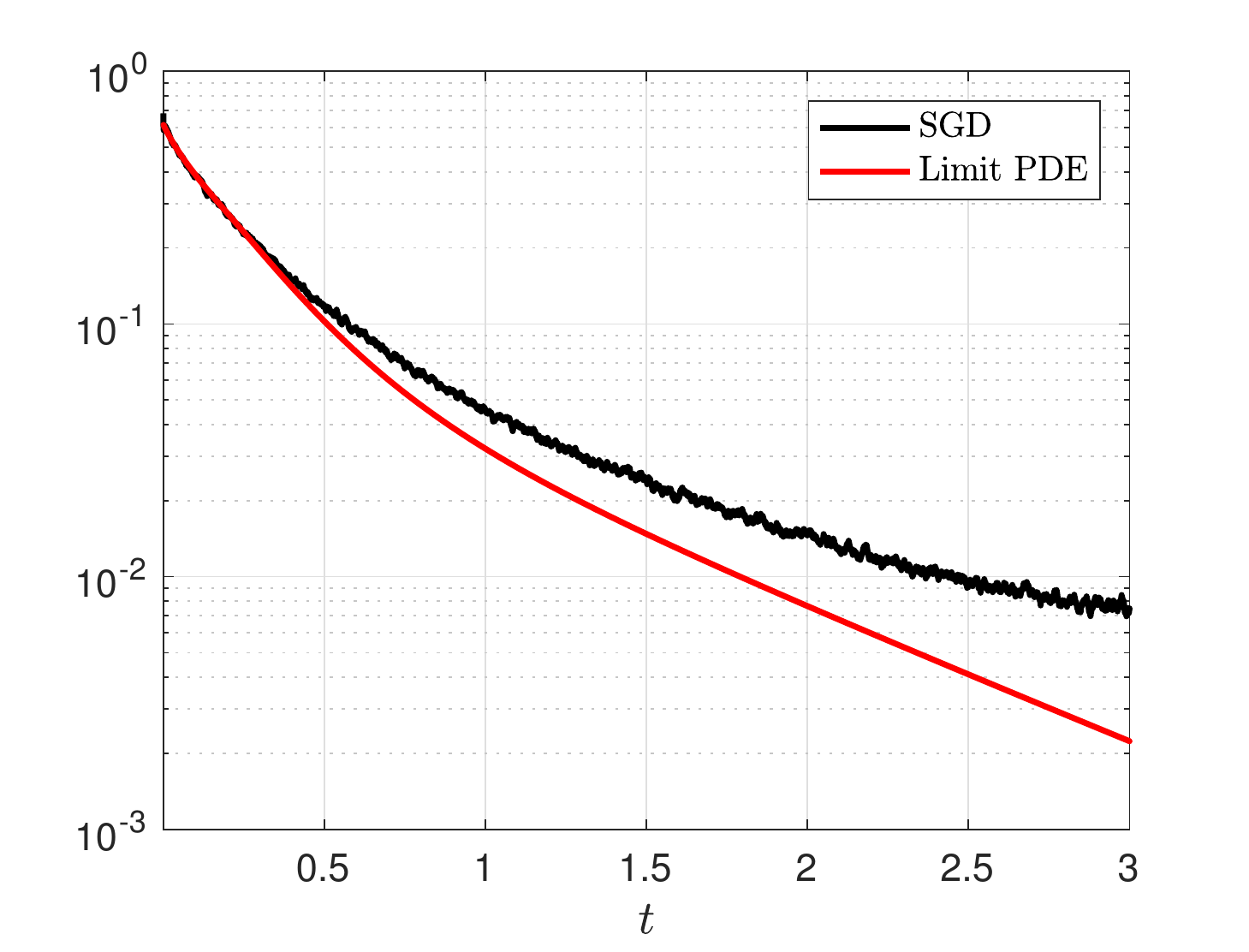}}\\
    \subfloat[Function $f$ and SGD estimates, $\delta=1/10$.]{\includegraphics[width=.5\columnwidth]{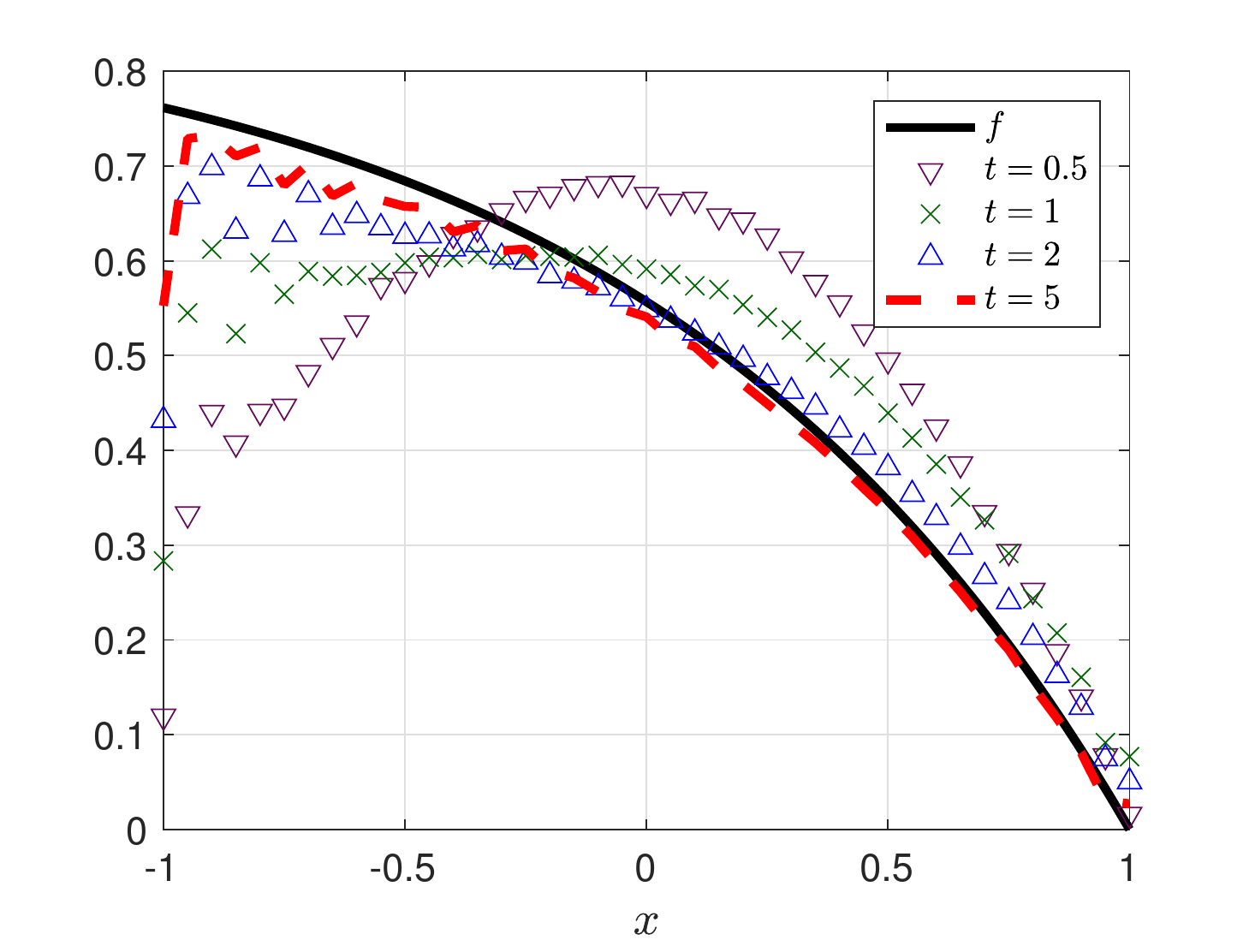}} 
    \subfloat[Normalized risk, $\delta=1/10$.]{\includegraphics[width=.5\columnwidth]{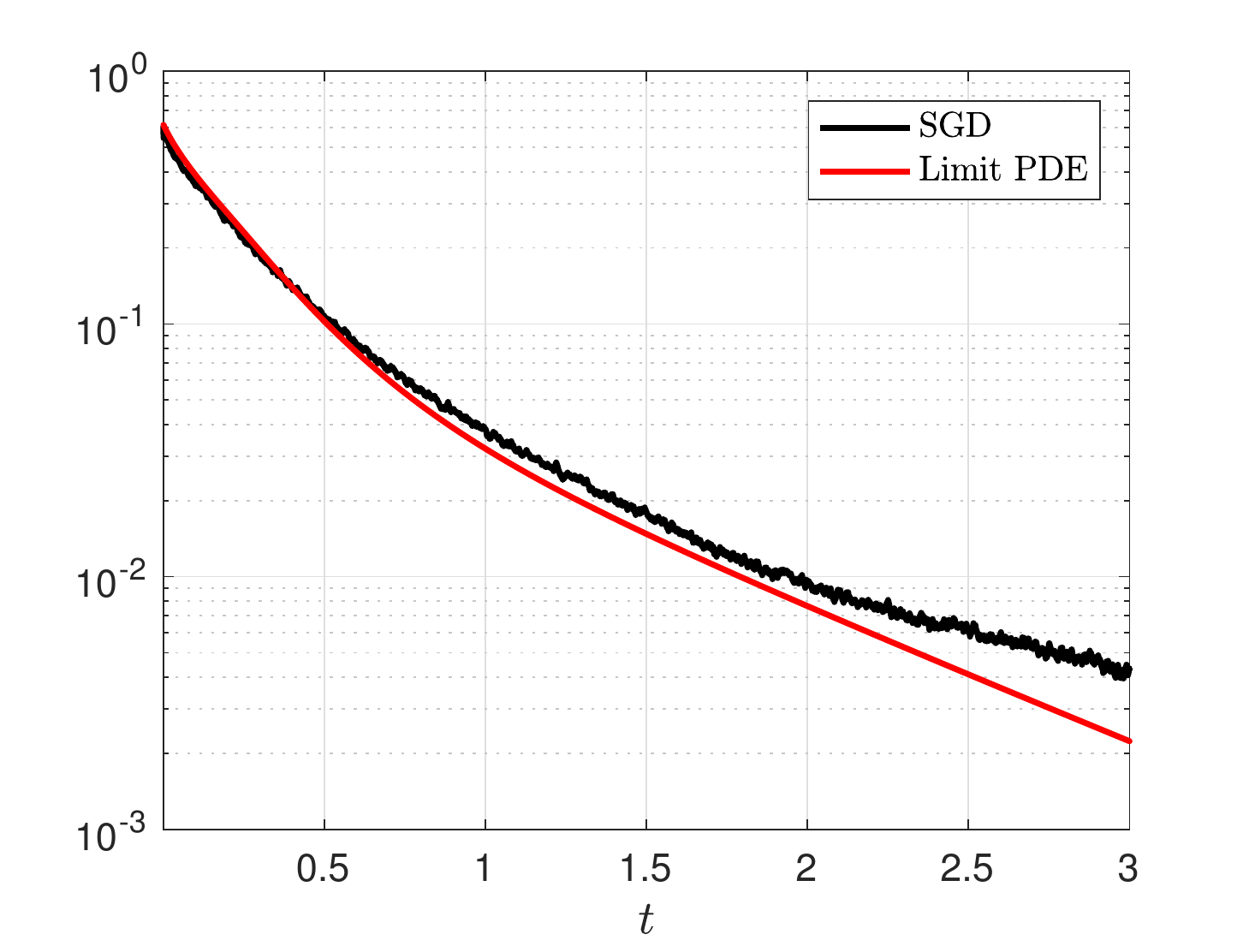}}\\
    \subfloat[Function $f$ and SGD estimates, $\delta=1/20$.]{\includegraphics[width=.5\columnwidth]{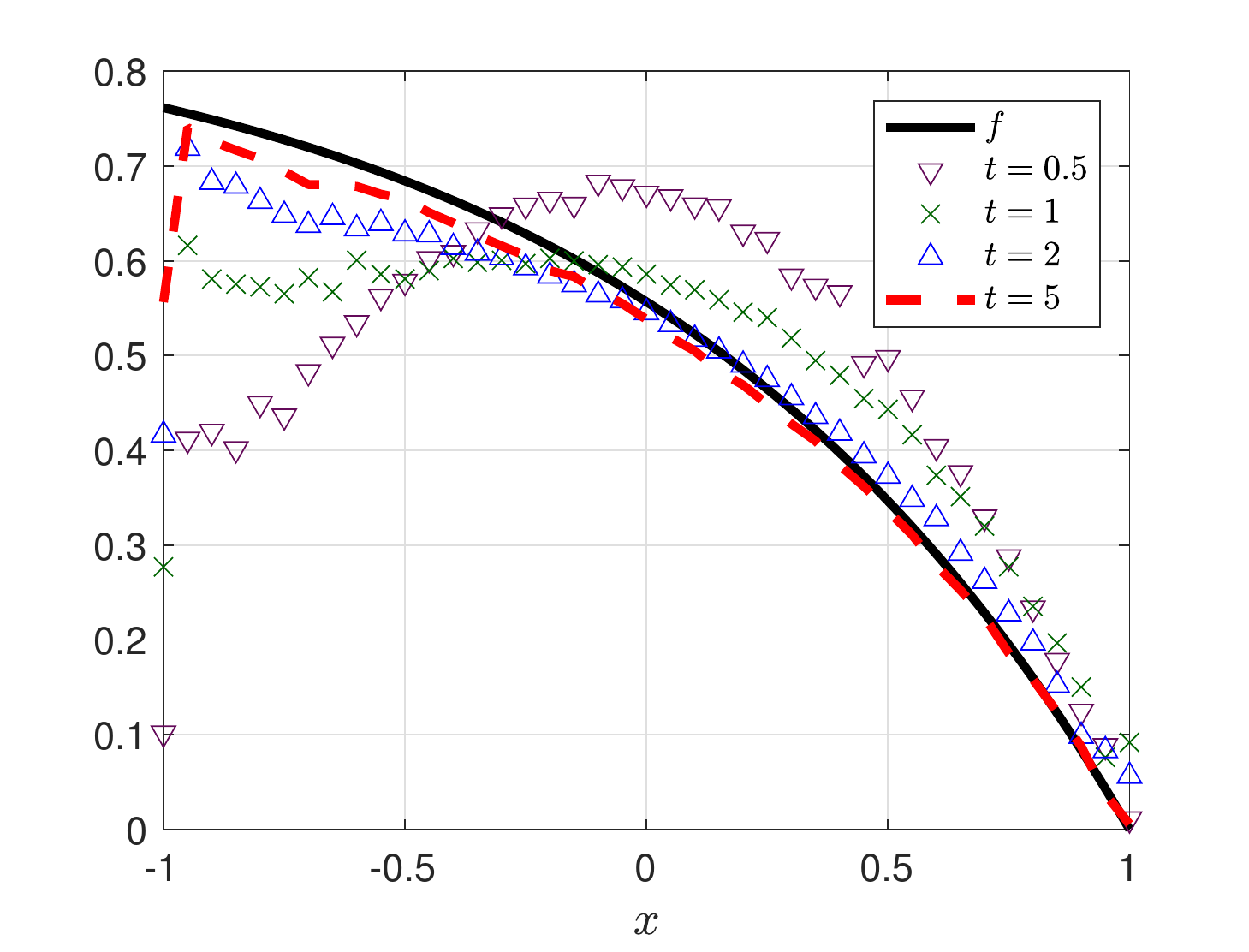}}
    \subfloat[Normalized risk, $\delta=1/20$.]{\includegraphics[width=.5\columnwidth]{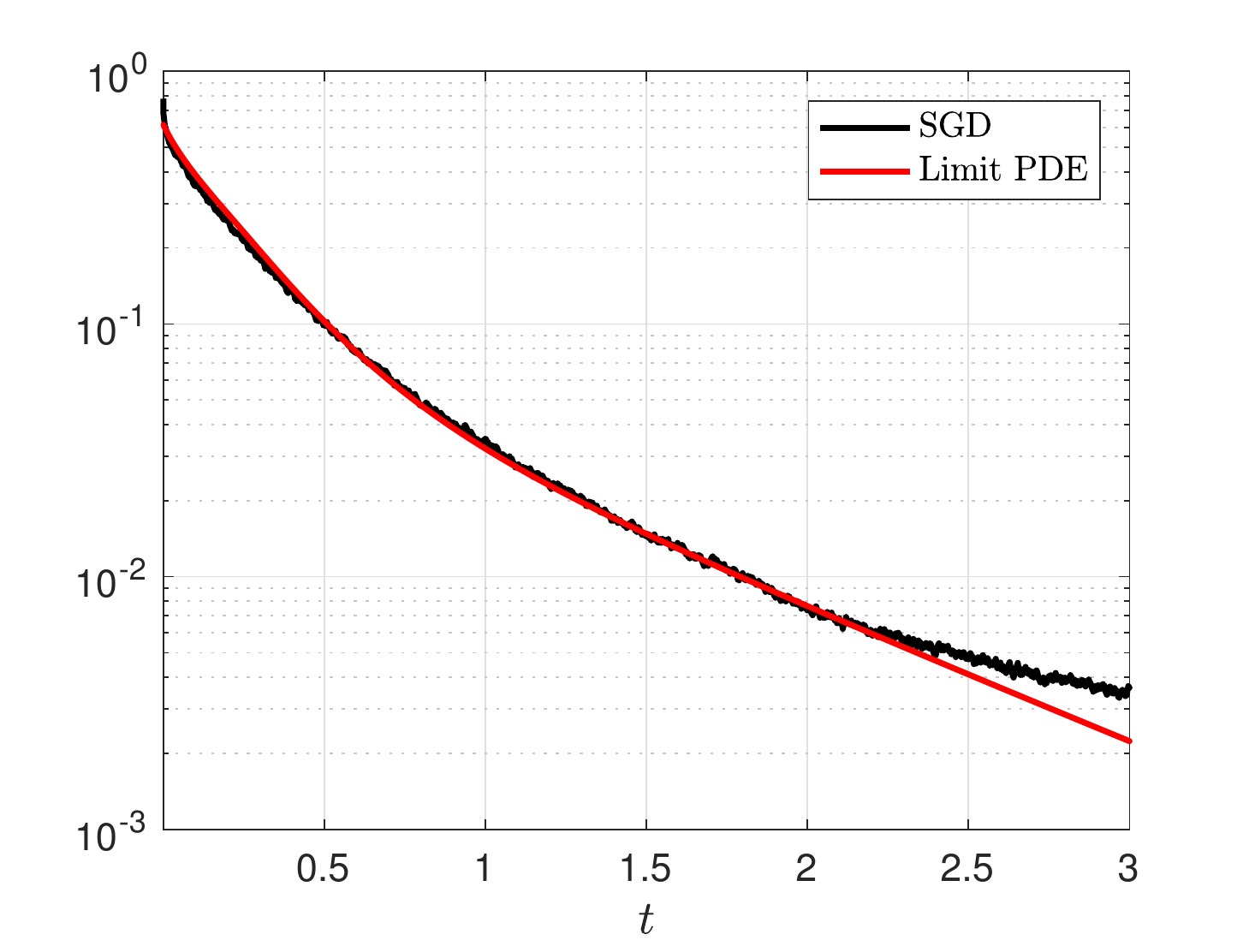}}\\
\caption{Dynamics of SGD update \eqref{eq:SGDup} at different times $t$ and for different values of $\delta$.}
\label{fig:SGD}
\end{figure}

\begin{figure}[t]
    \centering
    \subfloat[Function $f$ and PDE solution.]{\includegraphics[width=.5\columnwidth]{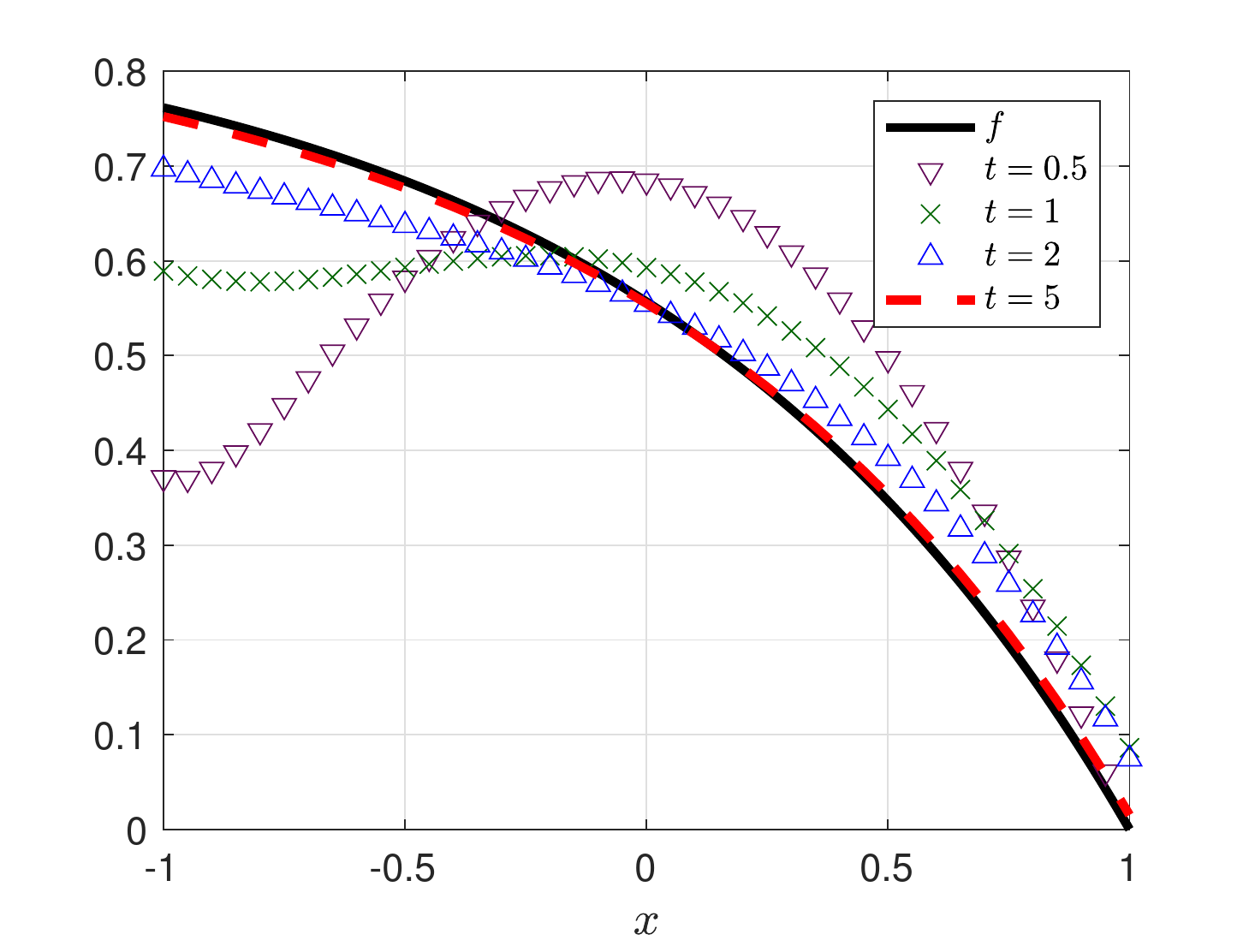}}
    \subfloat[Normalized risk.]{\includegraphics[width=.5\columnwidth]{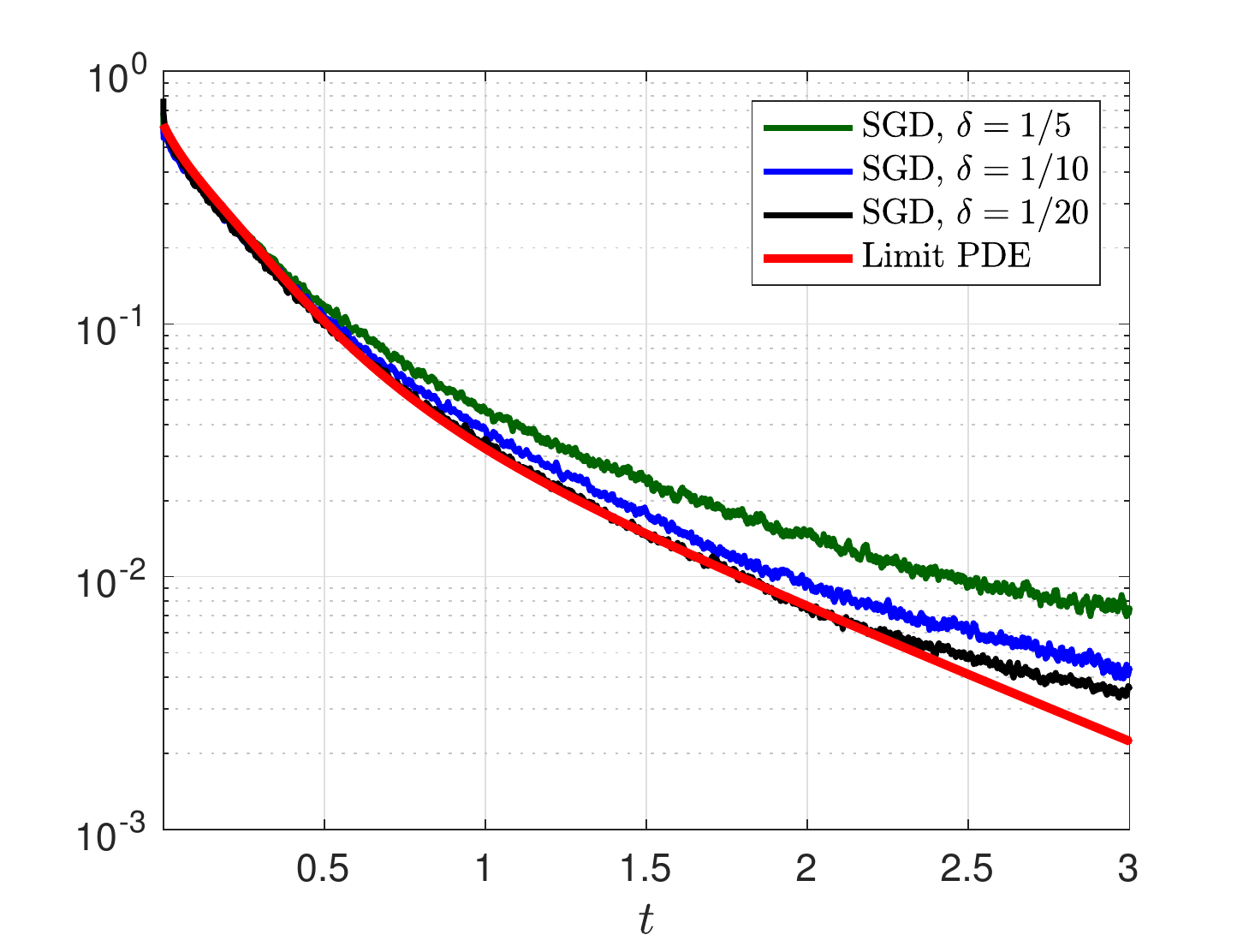}}\\
\caption{Dynamics of limit PDE \eqref{eq:PME} at different times $t$.}
\label{fig:PDE}
\end{figure}

\begin{figure}[t]
    \centering
\includegraphics[width=.5\columnwidth]{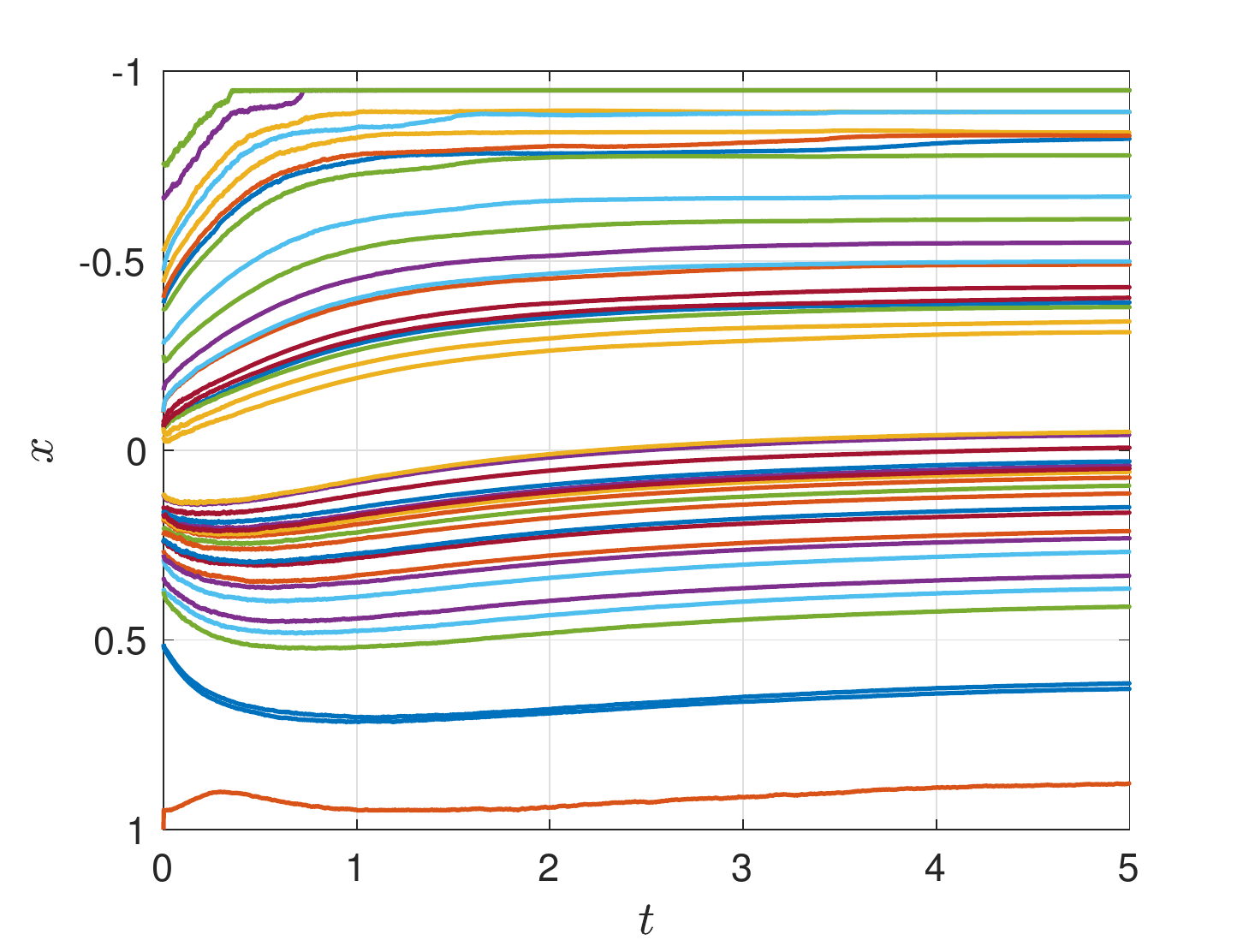}\hspace{-1em}
\includegraphics[width=.5\columnwidth]{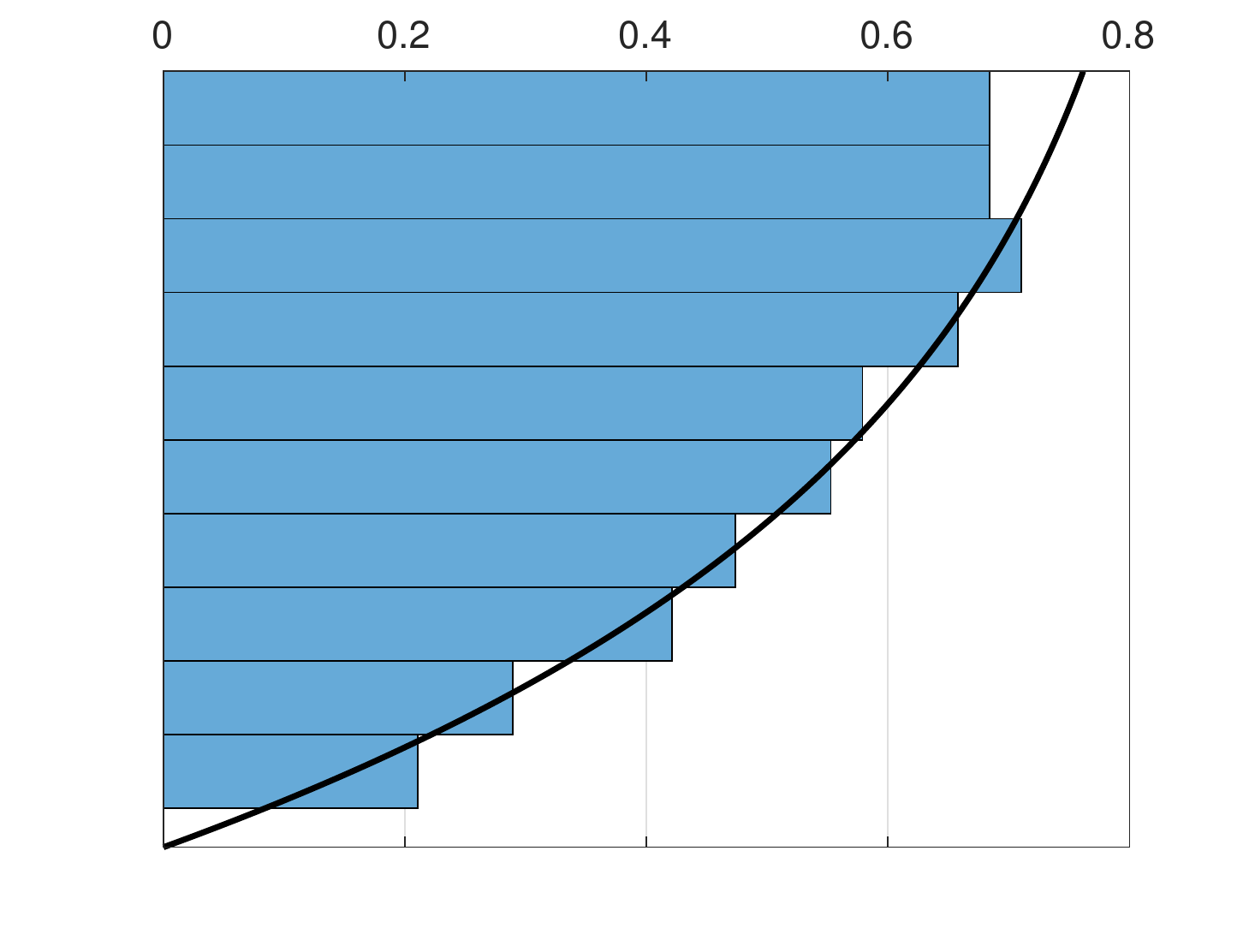}\\
\caption{Evolution of the value of $40$ weights chosen at random and histogram of their empirical distribution at time $t=5$.}
\label{fig:evolution}
\end{figure}

In the right column, we report the evolution of the population risk \eqref{eq:poprisk} normalized by  $\|f\|^2_{\Lp^2(\Omega)}$.
For comparison, we plot the evolution of the risk \eqref{eq:R-delta-0} as predicted by the limit PDE \eqref{eq:PME} with $\tau=0$. 
We solve the PDE \eqref{eq:PME} numerically using a finite difference scheme that enforces the conservation law $\int \rho(x,t)\de x=1$, see, e.g., 
\cite{thomas2013numerical}. In the finite difference scheme, we choose time step and spatial step  $\Delta t=10^{-5}$ and $\Delta x= 10^{-2}$, respectively. 
The curve obtained by this numerical solution appears to capture well the evolution of SGD towards optimality. The main difference is that, while 
the PDE \eqref{eq:PME} corresponds to $\delta=0$, and hence evolves towards a global optimum at zero risk, SGD converges to a non-zero risk value,
which can be interpreted as the approximation error, decreasing with $\delta$.

In Figure \ref{fig:PDE}, we illustrate the numerical solution of the PDE \eqref{eq:PME} by plotting \emph{(i)} the regression function $f$ together with the 
PDE solution $\rho_t$ (which coincides with the prediction $\hat{f}$ at $\delta=0$) at several times $t$, and \emph{(ii)} the PDE prediction for the 
risk $R(\rho_t)$ \eqref{eq:R-delta-0}  normalized with respect to $\|f\|^2_{\Lp^2(\Omega)}$ \comm{(this plot aggregates data from Figs.~\ref{fig:SGD}.(b), (d), (f))}. 
We also compare the risk \eqref{eq:R-delta-0} to the population risk 
$R_N(\bw^k)$ achieved by SGD for different values of $\delta$. Note that, as $\delta$ becomes smaller, the risk converges to 
the predicted curve. The risk of the limit PDE \eqref{eq:PME} converges to $0$ exponentially fast in $t$, as predicted by the 
strong displacement convexity of $R(\rho)$.

In Figure \ref{fig:evolution}, we consider the SGD algorithm with projection $\Proj$, see \eqref{eq:SGDup}. We pick $N=200$, $\tau=0$, $\eps=10^{-6}$ and $\delta=1/20$. On the left, we illustrate the evolution of the value of $40$ weights chosen at random; and on the right, we plot the histogram of their empirical distribution at $t=5$. Note that this histogram matches well the regression function $f$ plotted in black.

\subsection{A two-dimensional concave example} 

Next, we consider a two-dimensional example. We set $\Omega = [-1, 1]^2$ and 
\begin{equation}\label{eq:deffexamples}
f(\bx) = \frac{c_1 - \log(e^{\langle \bq_1, \bx\rangle} + e^{\langle \bq_2, \bx\rangle})}{c_2},
\end{equation}
with $\bq_1 = (2.5127,   -2.4490)$, $\bq_2 = (0.0596,    1.9908)$ and where  $c_1$ and $c_2$ are chosen so that $f$ is non-negative and $\int_\Omega f(\bx)\, \de\bx=1$. The kernel $K$ is given by $K(\bx) = C_d \kappa(|\bx|)$, where $\kappa$ is defined in \eqref{eq:Kex} and  $C_d$ is a normalization constant ensuring that $\int_{\Ball(\b0; 1)} K(\bx)\,\de \bx = 1$. Again, the initialization $\rho_{\sinit}$ is a 
truncated  Gaussian: $\rho_{\sinit}(\bx) = c\cdot\exp(-|\bx|^2/(2\sigma^2))\, \bfone_{[-1,1]^2}(\bx)$, with $\sigma=1/3$. We compare the normalized risk of SGD with no 
projection $\Proj$ ($N=2000$, $\tau=0$ and $\eps = 10^{-6}$) for $\delta \in \{1/3, 1/5, 1/10\}$ with that of the limit PDE \eqref{eq:PME}. Figure \ref{fig:2d} shows that, already at $\delta=1/10$, the risk of SGD converges to the predicted curve and the risk of the limit PDE \eqref{eq:PME} tends to $0$ exponentially fast in $t$.

\begin{figure}[t]
    \centering
    \includegraphics[width=.6\columnwidth]{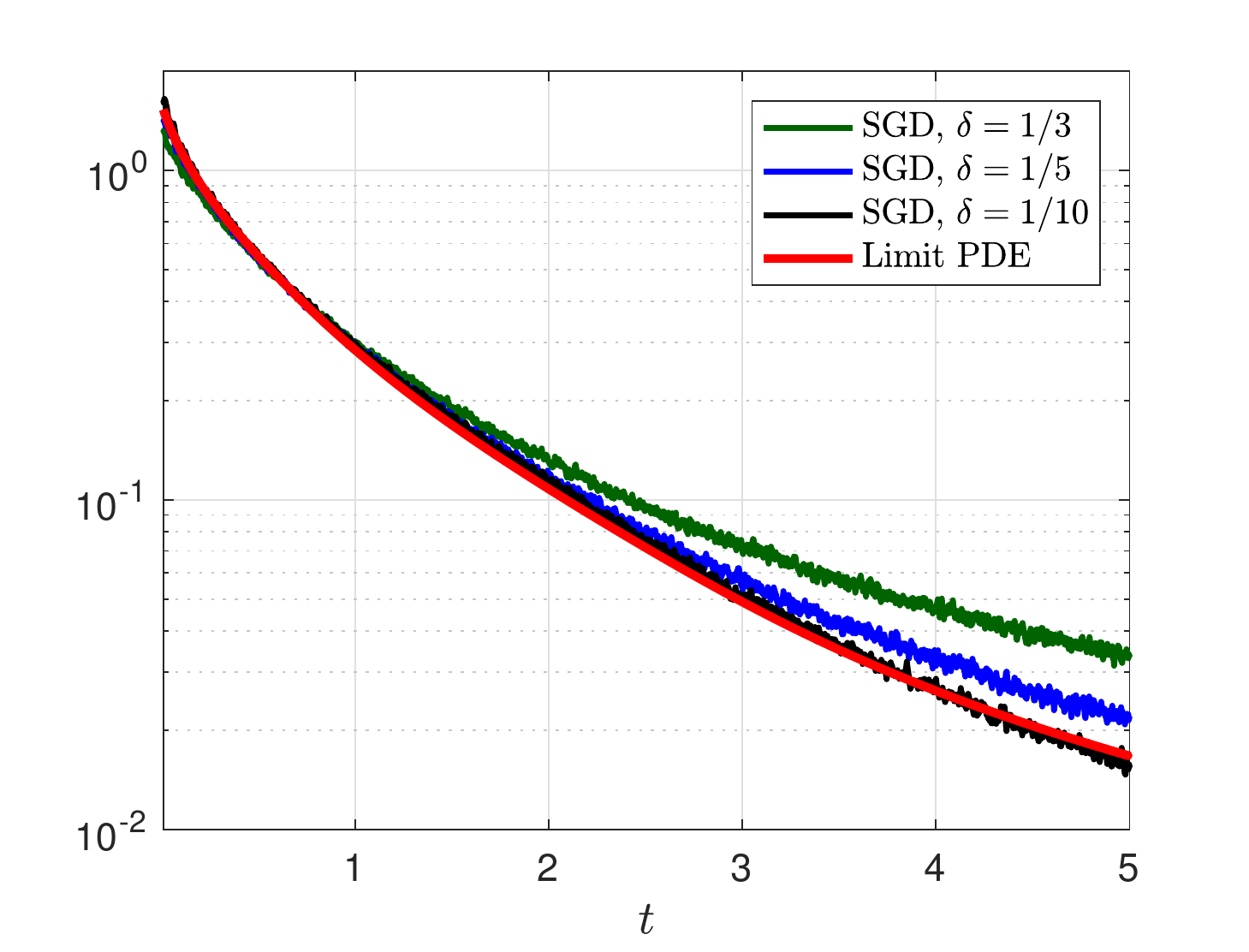}
\caption{Normalized risk of SGD for different values of $\delta$ compared with that of the limit PDE for a two-dimensional example.}
\label{fig:2d}
\end{figure}

\begin{figure}[p]
    \centering
    \subfloat[$N=200$, $n$ varies on the $x$-axis.]{\includegraphics[width=.6\columnwidth]{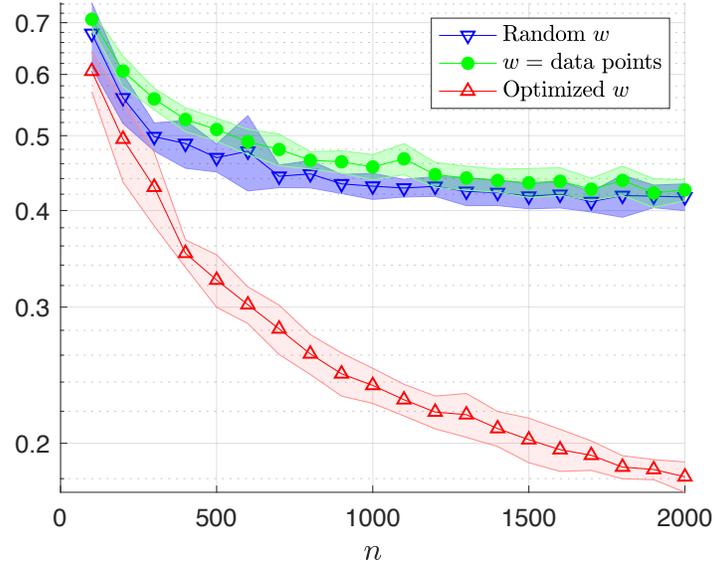}}\\
        \subfloat[$n=2000$, $N$ varies on the $x$-axis.]{\includegraphics[width=.6\columnwidth]{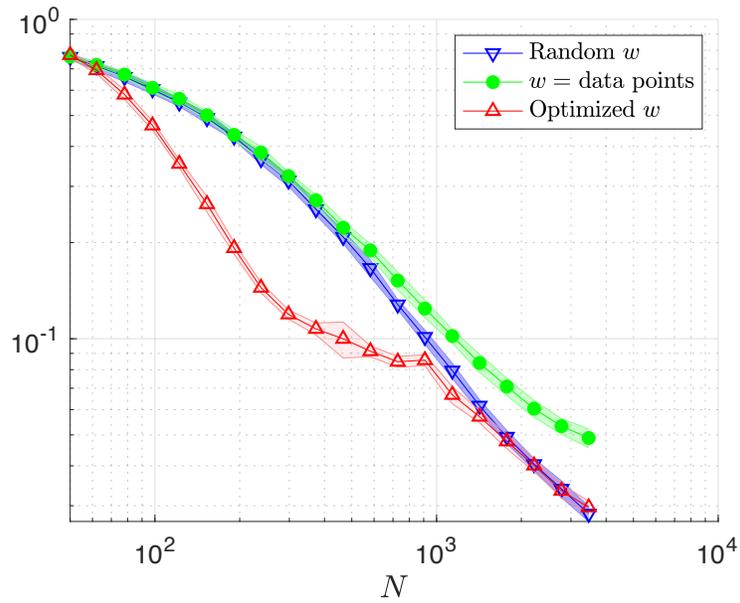}}
\caption{Generalization error achieved by fitting $\ba$ from the data for three different choices of the weights $\bw$: in red, the $\bw_i$'s are optimized before-hand via SGD, as suggested in this paper; in blue, the $\bw_i$'s are uniform in $\Omega$; and in green, the $\bw_i$'s are equal to random data points.}
\label{fig:SGDnew}
\end{figure}

\subsection{\comm{Comparing feature learning to random features}}

\comm{As discussed in the introduction, it is useful to consider the more general model
\begin{align}
\hf(\bx;\bw, \ba)= \sum_{i=1}^N a_i K^\delta(\bx-\bw_i)\, , \label{eq:GeneralFhat2}
\end{align}
with parameters  $\ba=(a_1, \ldots, a_N)$ as well as $\bw= (\bw_1,\dots,\bw_N)$. This setting allows to compare two different approaches:
\begin{itemize}
\item[$(i)$] \emph{Random feature regression}: the weights $\bw$ are chosen independently of the labels $y_i$ (we allow for dependence on the covariates $\bx_i$).
\item[$(ii)$] \emph{Feature learning}: the weights $\bw$ depend on the data $(y_i,\bx_i)$.
\end{itemize}
In order to compare these two approaches, we assume to be given i.i.d. data $\{(y_i,\bx_i)\}_{i\le n}$, with $\bx_i\sim\Unif(\Omega)$,  $y_i=f(\bx_i)$
and determine the parameters $\ba$ by the same method, ridge regression. More explicitly, define the matrix $\bZ\in\mathbb R^{n\times N}$ as $(\bZ)_{i, j} =  K^\delta(\bx_i-\bw_j)$. 
Then, we estimate $\ba$ via
\begin{equation}
\hat{\ba} = (\bZ^\sT\bZ + \lambda \boldsymbol I )^{-1} \bZ^\sT \by,
\end{equation}
where $\lambda$ is chosen via cross-validation on a hold-out set, comprising $10\%$ of the samples.}

\comm{In Figure \ref{fig:SGDnew}, we compare the performance of three different ways to construct the weights $\bw$: 
\emph{`random $\bw$,'}  we choose the weights $\bw_i$ independently and uniformly at random in $\Omega$ (blue triangles pointing down);
\emph{`$\bw=$ data points,'} we choose the weights $\bw_i$ uniformly at random among the data points (green circles);
\emph{`optimized $\bw$,'}  we use the output of the projected SGD algorithm of the previous sections (red triangles pointing up).
The first two can be regarded as `random features' approaches, while the latter is a `feature learning' method.}

\comm{For the optimized $\bw$, we use exactly the same algorithm in as in \eqref{eq:SGDup}  (without coefficients $\ba$ in the SGD update), 
with the only difference that each SGD step is carried out with respect to an independent sample from the empirical data, with replacement.
SGD is stopped after $k_{\max}$ iteration, and the coefficient  $\hat{\ba}$ are computed according to \eqref{eq:GeneralFhat2}.
Notice that this procedure is probably suboptimal, and it would be better to optimize $\ba$ and $\bw$ jointly:
we choose this simpler two-stage procedure to have a more direct application of the algorithm analyzed in the paper, and a comparison with the random feature methods. 
We set $\tau=0$ (noiseless SGD), and constant step size $\varepsilon=5\cdot 10^{-4}$. The number of iterations $k_{\max}\in\{5\cdot 10^3, 15 \cdot 10^3, 5\cdot 10^4, 15 \cdot 10^4, 5\cdot 10^5, 15 \cdot 10^5\}$ is chosen via cross-validation, by using the same hold-out set employed to optimize $\lambda$.}

  \comm{We set $\Omega = [-1, 1]^4$ and define $y_j = f(\bx_j)$, where $f(\bx)$ takes the form \eqref{eq:deffexamples} with $\bq_1 = (-0.3832, 
0.3074,  -0.3198, 0.4792)$ and $\bq_2 = (0.3502, -0.1471,$ $0.1685, 0.0546)$. Again, $c_1$ and $c_2$ are chosen so that $f$ is non-negative and $\int_\Omega f(\bx)\, \de\bx=1$; the kernel $K$ is given by $K(\bx) = C_d \kappa(|\bx|)$, where $\kappa$ is defined in Eq.~\eqref{eq:Kex} and  $C_d$ ensures that $\int_{\Ball(\b0; 1)} K(\bx)\,\de \bx = 1$.}

\comm{After estimating $\bw_i$ and $a_i$ by either methods, we generate a test set of $10,000$ samples and use it to estimate the generalization error. 
We perform $20$ independent trials of the experiment, and we plot the average risk normalized by $\|f\|^2_{\Lp^2(\Omega)}$ together with the error bar at 1 standard deviation.
 In Figure \ref{fig:SGDnew}-(a), we fix the number of neurons $N=200$ and we plot the normalized risk as a function of the number of data points $n$. In Figure \ref{fig:SGDnew}-(b), we fix the number of samples $n$ to $2000$ and we plot the normalized risk as a function of the number of neurons $N$. The data set used for cross-validation has size $\max(n/10, 40)$. Note that feature learning  leads to improved performance in both settings. The improvement becomes more pronounced with the sample size $n$,
presumably because a better set of weights $\bw_i$ can be learnt. On the other hand, when the number of neurons $N$ becomes very large, random $\bw_i$'s are already covering
$\Omega$ densely enough, and there is no significant advantage in feature learning. }

\begin{figure}[p]
    \centering
    \subfloat[Function $f$ and SGD estimates, $\delta=1/5$.]{\includegraphics[width=.5\columnwidth]{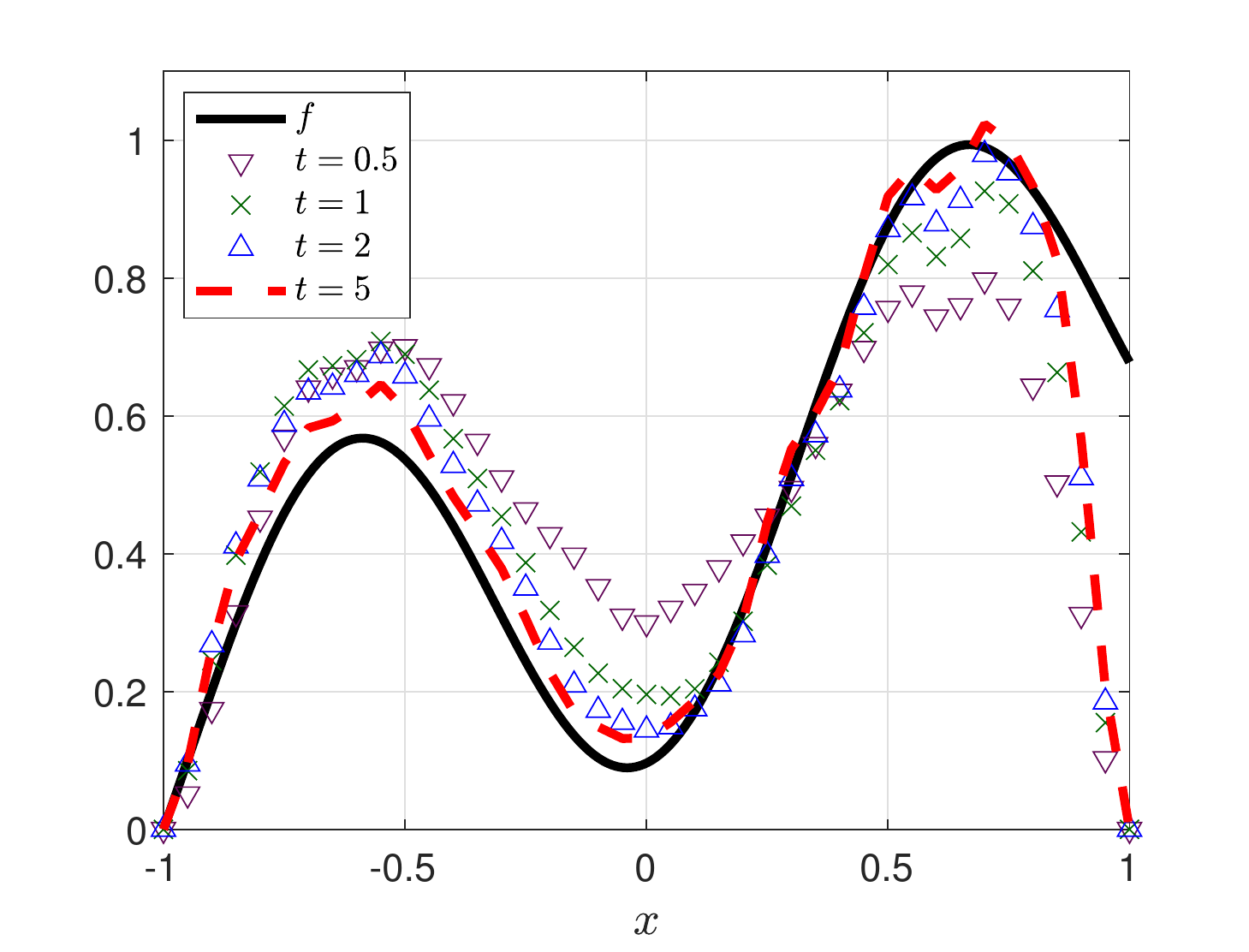}}
    \subfloat[Normalized risk, $\delta=1/5$.]{\includegraphics[width=.5\columnwidth]{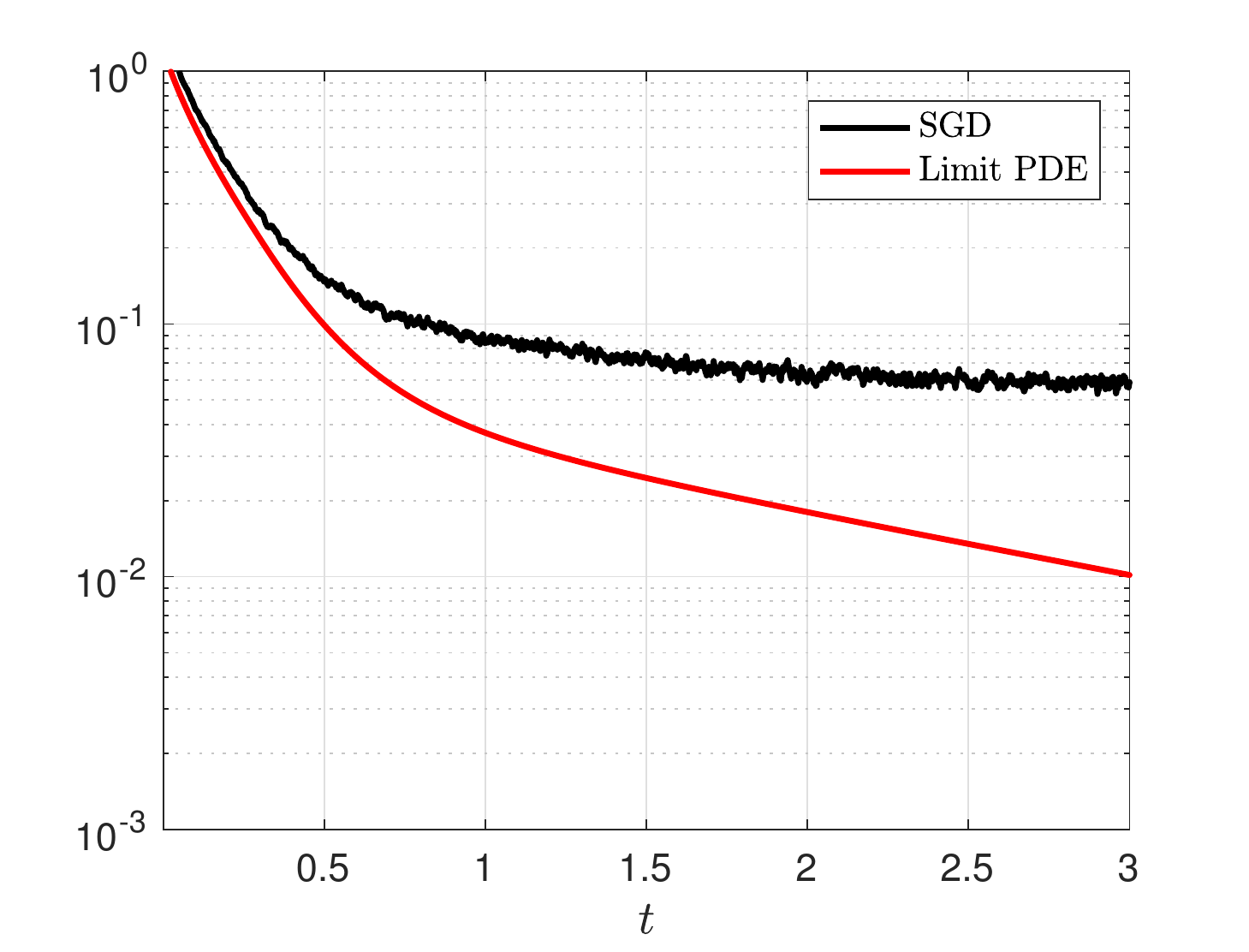}}\\
    \subfloat[Function $f$ and SGD estimates, $\delta=1/10$.]{\includegraphics[width=.5\columnwidth]{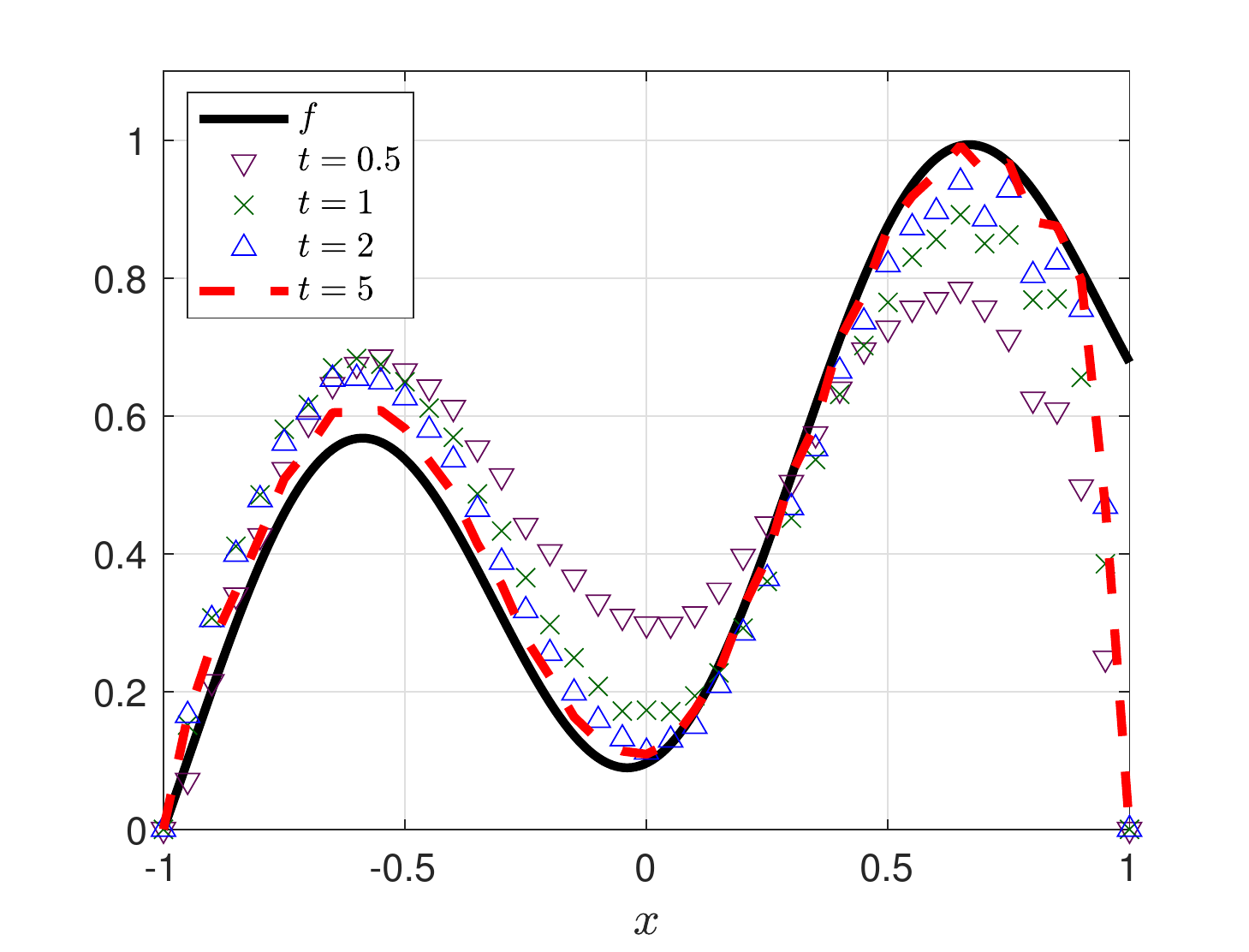}} 
    \subfloat[Normalized risk, $\delta=1/10$.]{\includegraphics[width=.5\columnwidth]{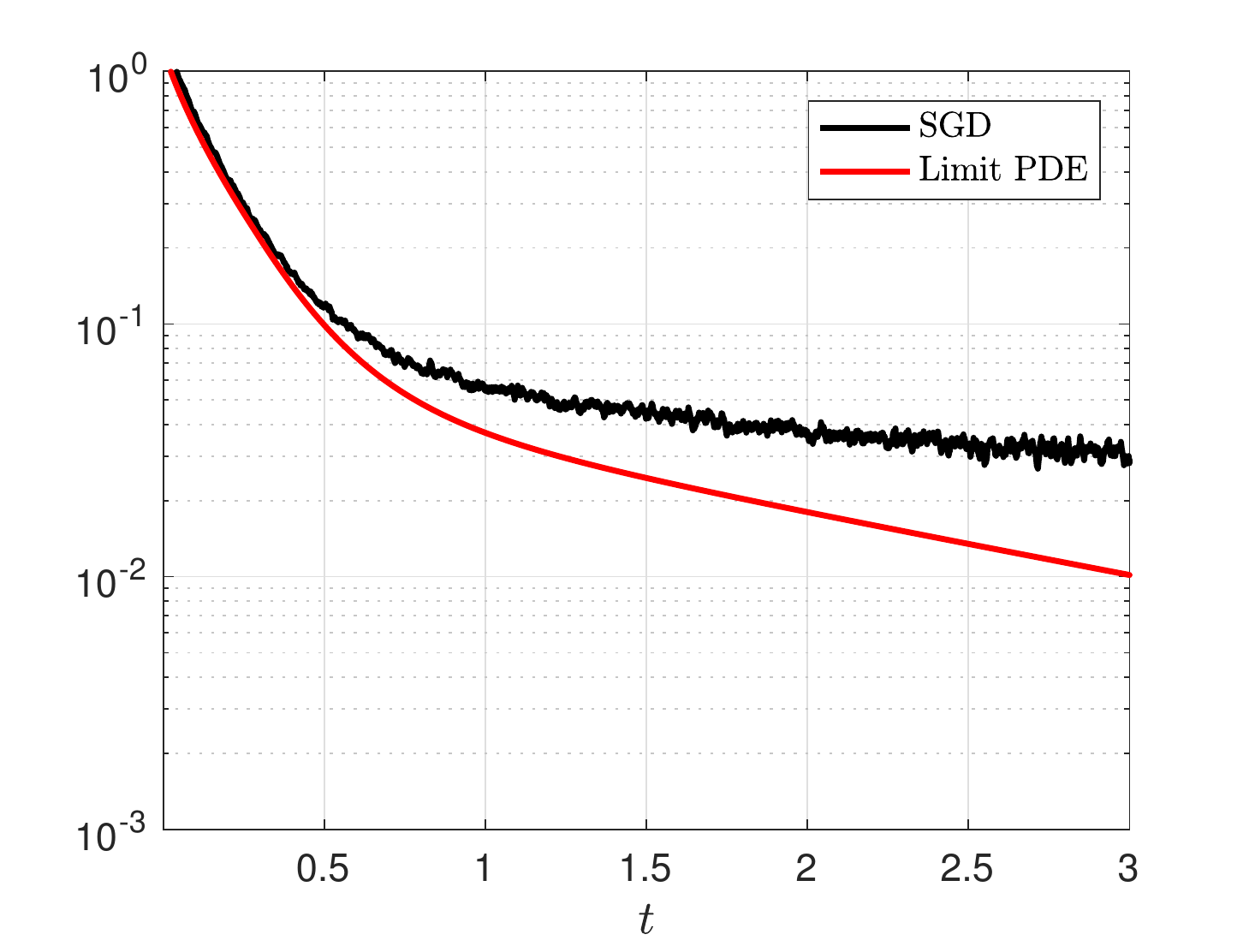}}\\
    \subfloat[Function $f$ and SGD estimates, $\delta=1/20$.]{\includegraphics[width=.5\columnwidth]{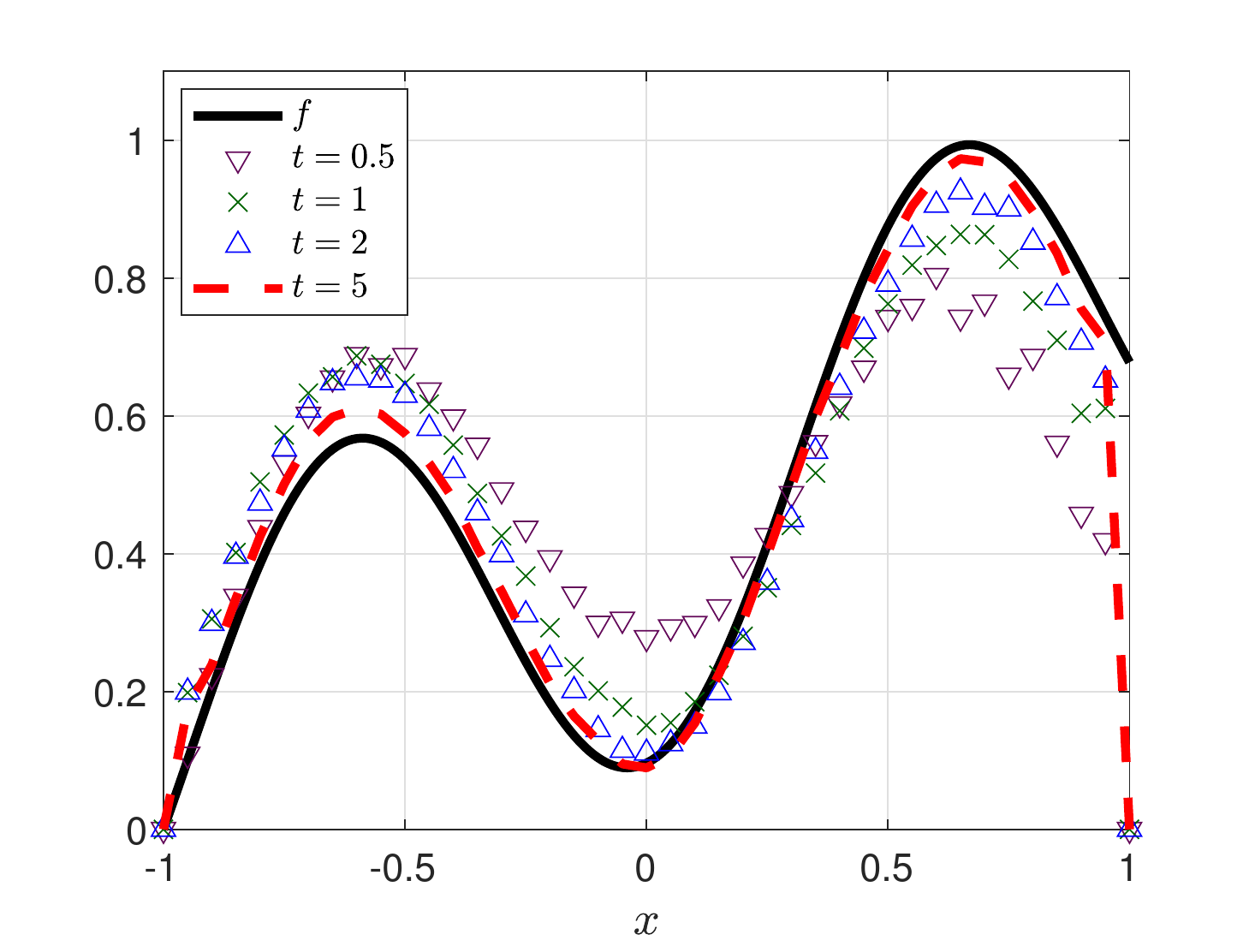}}
    \subfloat[Normalized risk, $\delta=1/20$.]{\includegraphics[width=.5\columnwidth]{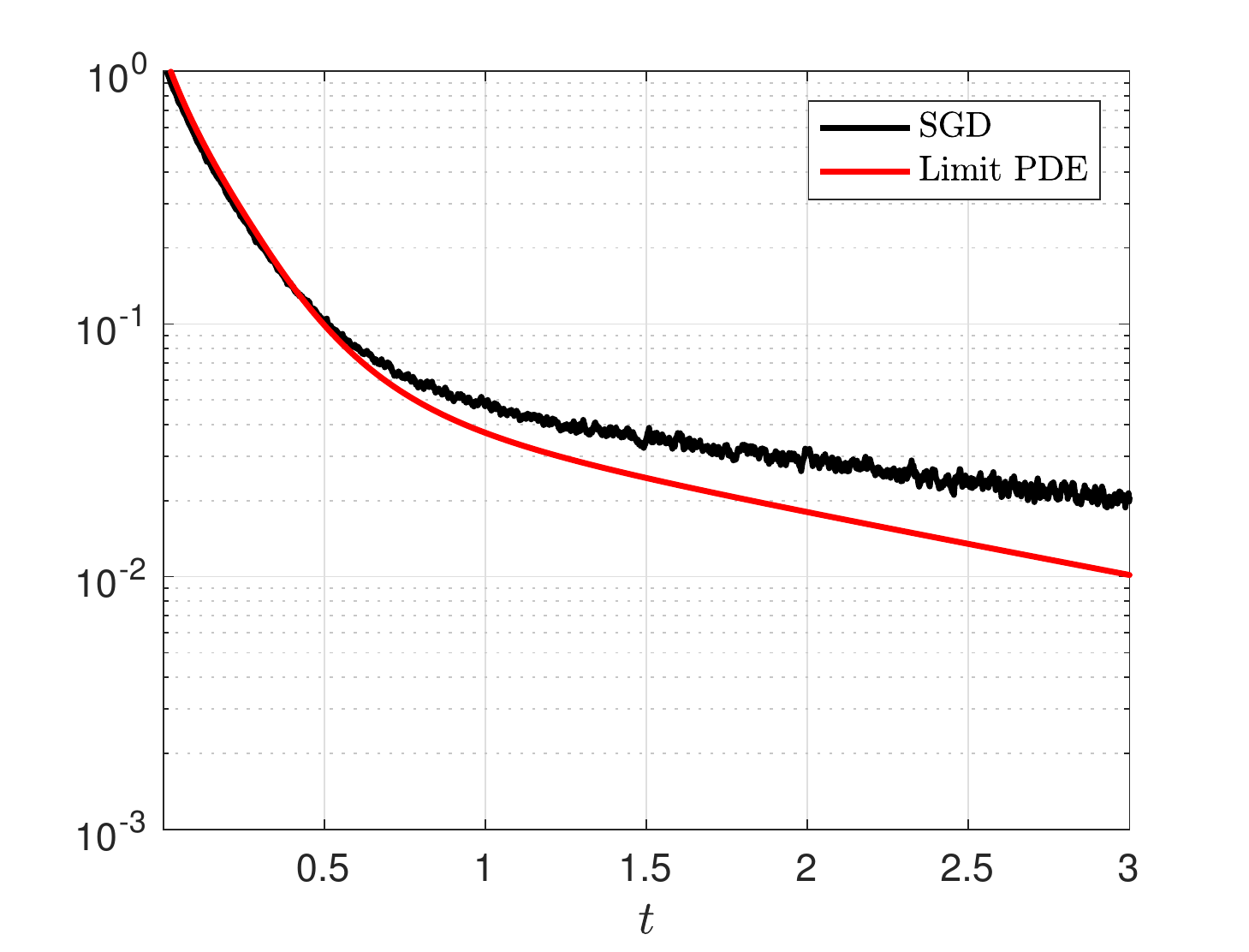}}\\
\caption{Dynamics of SGD update \eqref{eq:SGDup} at different times $t$ and for different values of $\delta$ for a non-concave target function $f$.}
\label{fig:SGDnc}
\end{figure}

\begin{figure}[t]
    \centering
    \subfloat[Function $f$ and PDE solution.]{\includegraphics[width=.5\columnwidth]{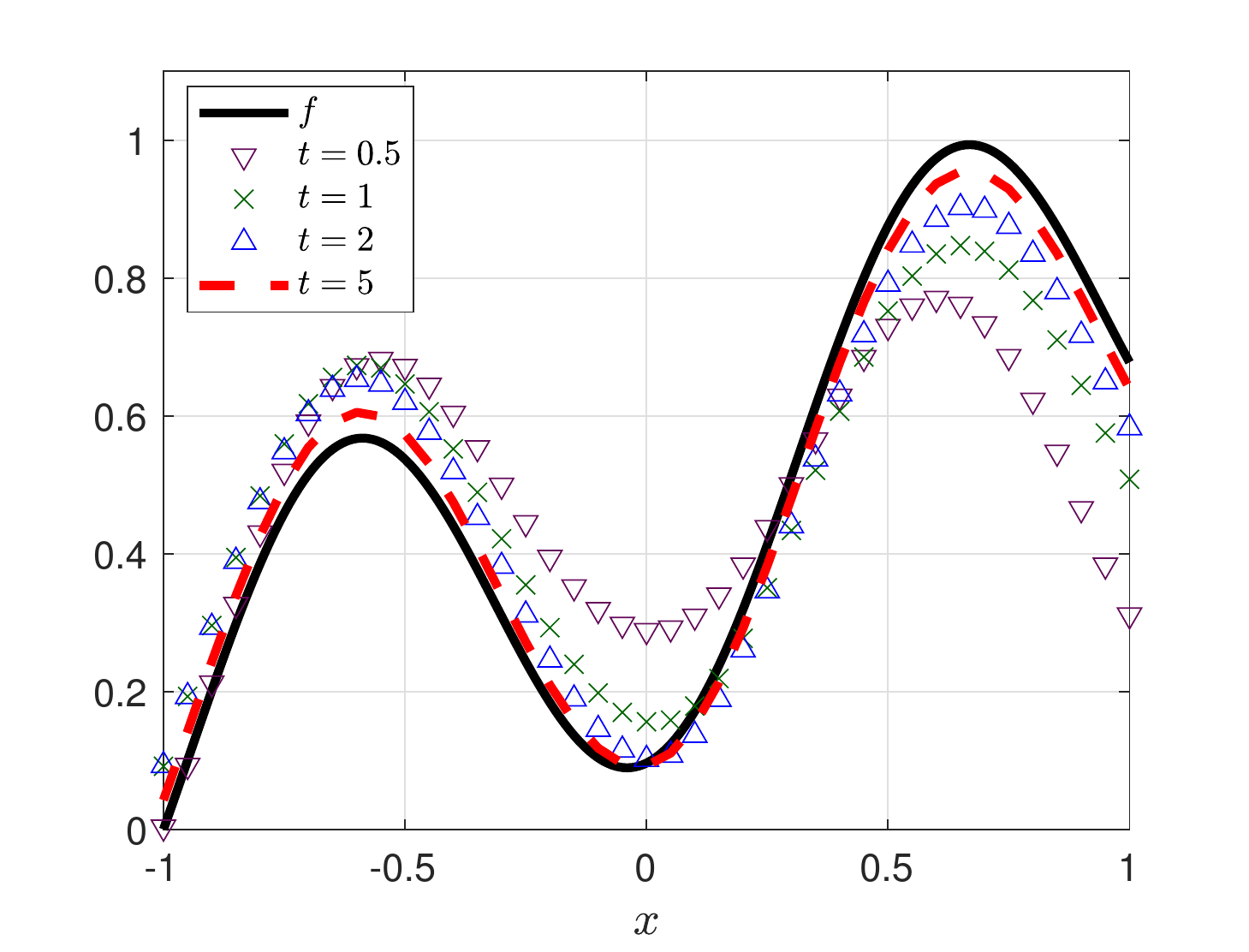}}
    \subfloat[Normalized risk.]{\includegraphics[width=.5\columnwidth]{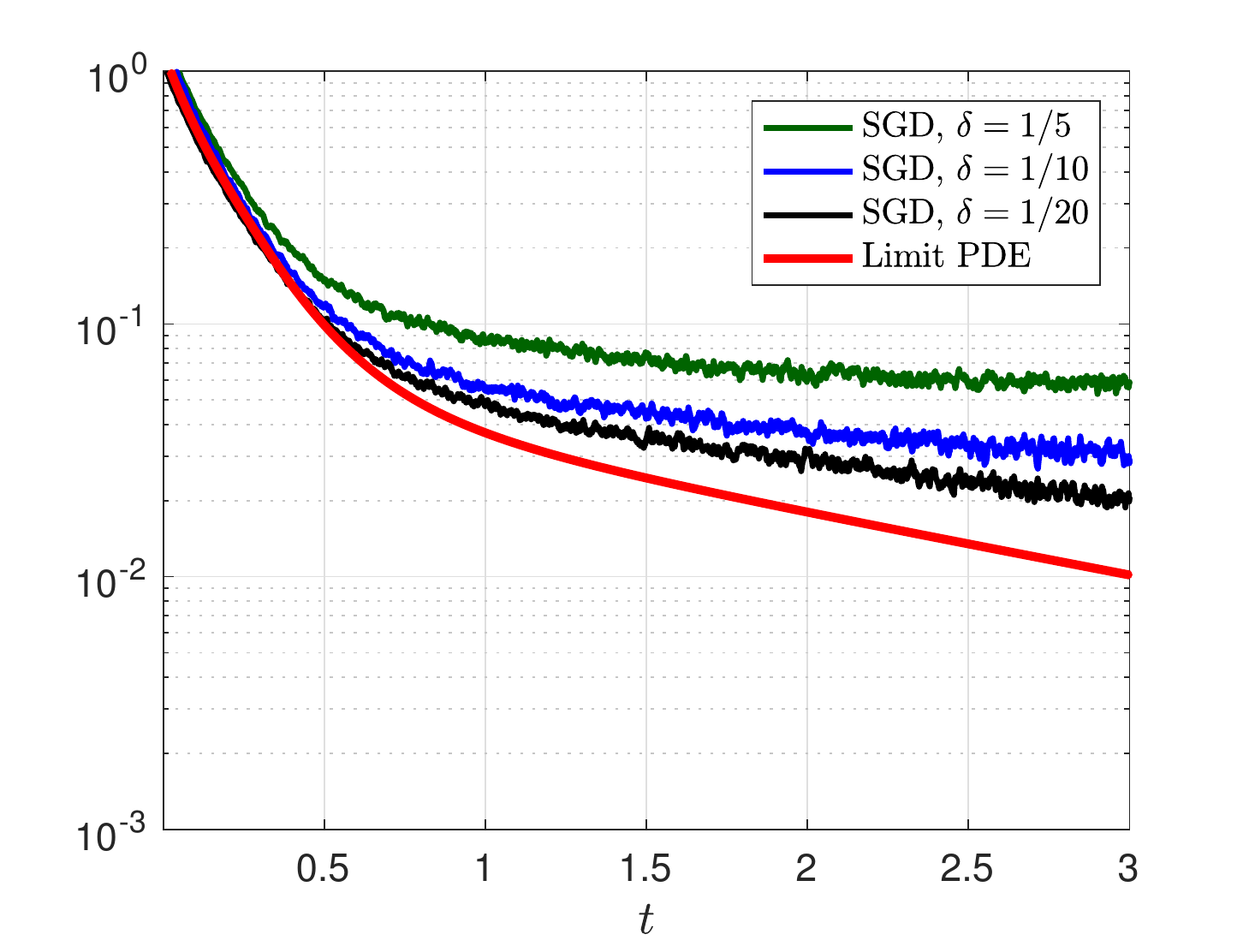}}\\
\caption{Dynamics of limit PDE \eqref{eq:PME} at different times $t$ for a non-concave target function $f$.}
\label{fig:PDEnc}
\end{figure}

\subsection{\comm{A non-concave one-dimensional example}} 
\comm{We set $\Omega = [-1, 1]$ and $f(x) = (x + \sin(5x-\pi/2) -c_1)/c_2$, where $c_1$ and $c_2$ are chosen so that $f$ is non-negative and $\int_\Omega f(x)\,\de x=1$. Note that the target function $f$ is bimodal, thus it is not concave. We perform the same numerical experiment described in Section \ref{sec:OnedConcave}. In Figure \ref{fig:SGDnc}, left column, we plot the true function $f(\,\cdot\,)$  together with the neural network estimate $\hf(\,\cdot\, ;\bw^k)$ at several points in time
 $t$, where different plots correspond to different values of $\delta\in\{1/5, 1/10, 1/20\}$. In the right column, we report the evolution of the population risk \eqref{eq:poprisk} normalized by  $\|f\|^2_{\Lp^2(\Omega)}$. In Figure \ref{fig:PDEnc}, we plot \emph{(i)} the regression function $f$ together with the 
PDE solution $\rho_t$ at several times $t$, and \emph{(ii)} the PDE prediction for the 
risk $R(\rho_t)$ \eqref{eq:R-delta-0} (normalized with respect to $\|f\|^2_{\Lp^2(\Omega)}$) compared with the population risk 
$R_N(\bw^k)$ achieved by SGD for different values of $\delta$. Even if the target function is not concave, the results are similar to those presented in 
the concave case: \emph{(i)} the network estimates $\hf(\,\cdot\, ;\bw^k)$ seem to converge to a limit curve which
is an approximation of the true function $f$, \emph{(ii)} the quality of the approximation improves as $\delta$ gets smaller, and \emph{(iii)} the risk of the limit PDE \eqref{eq:PME} converges to $0$ exponentially fast in $t$.} 
%

\subsection{\comm{Failure for small $N$}}

\comm{ We repeat the same experiment described in Section \ref{sec:OnedConcave} for a smaller number of neurons $N=20$. As can be seen in Figures \ref{fig:SGDN20} and \ref{fig:PDEN20}, the quality of the approximation becomes worse as $\delta$ gets smaller. This is expected because with small number of activations, reducing their bandwidth $\delta$ leads to a worse performance as they are all zero on a large part of the space. Put differently, the number of neurons is too small to guarantee convergence of SGD to the predictions of the Wasserstein gradient flow theory.
}

\begin{figure}[p]
    \centering
    \subfloat[Function $f$ and SGD estimates, $\delta=1/5$.]{\includegraphics[width=.5\columnwidth]{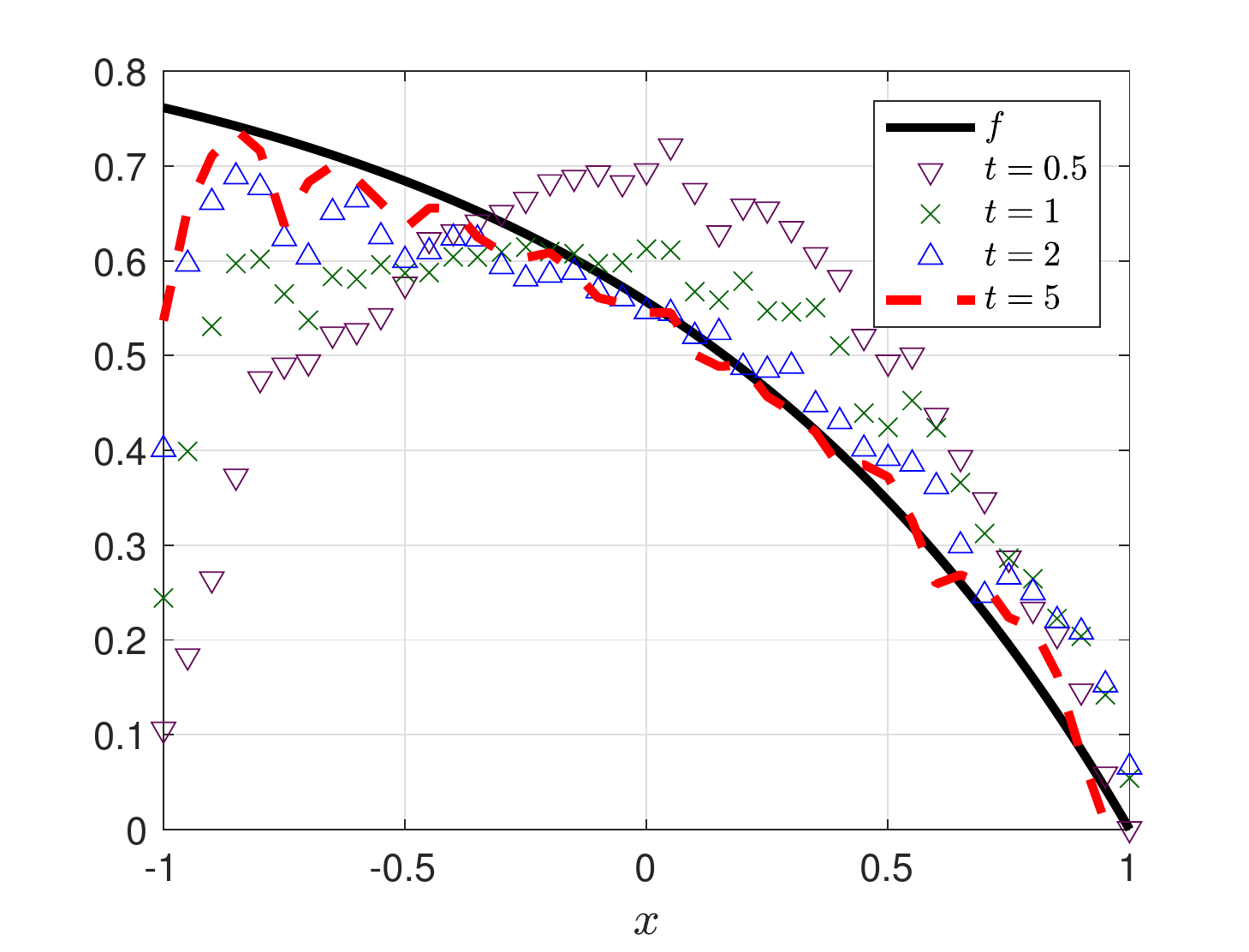}}
    \subfloat[Normalized risk, $\delta=1/5$.]{\includegraphics[width=.5\columnwidth]{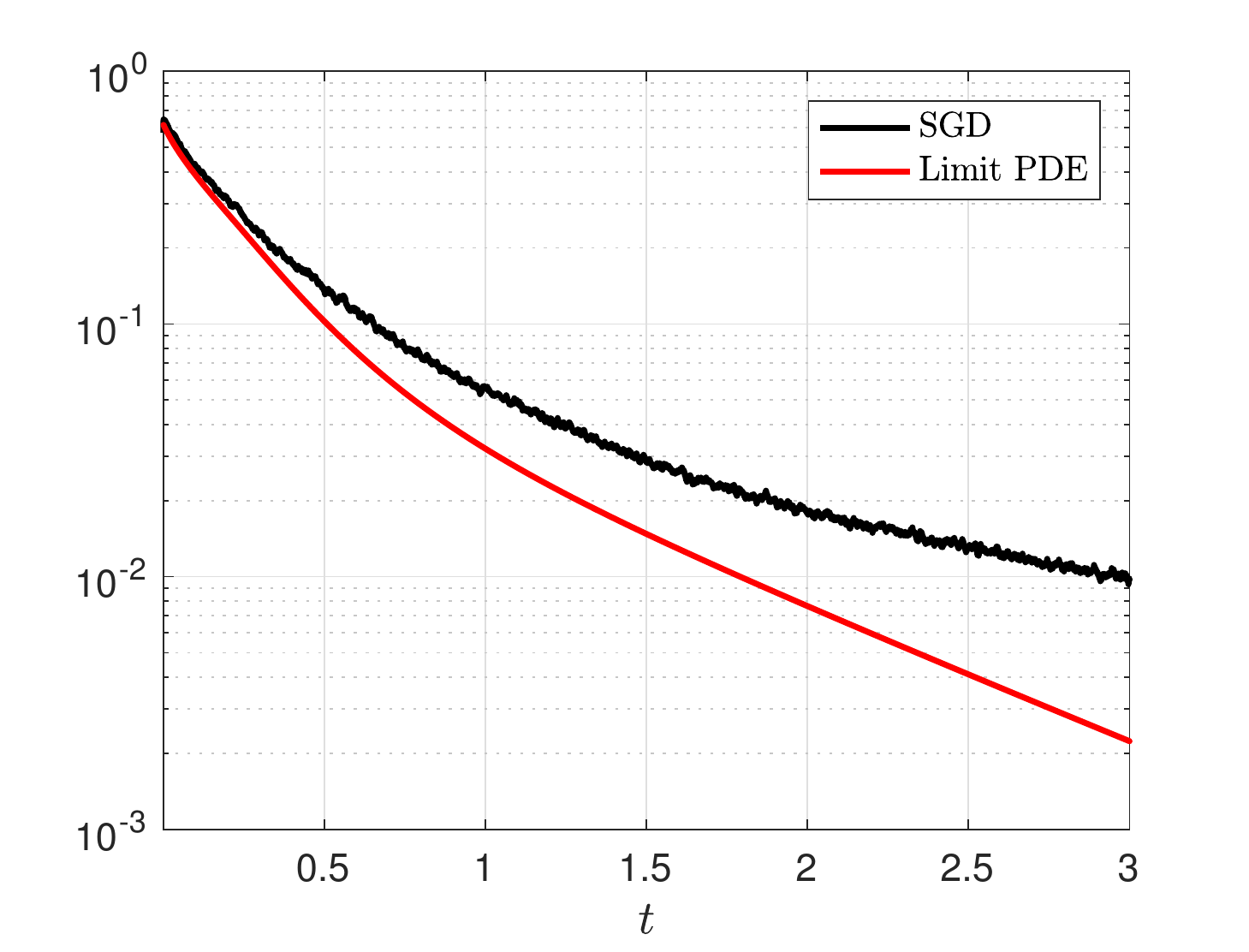}}\\
    \subfloat[Function $f$ and SGD estimates, $\delta=1/10$.]{\includegraphics[width=.5\columnwidth]{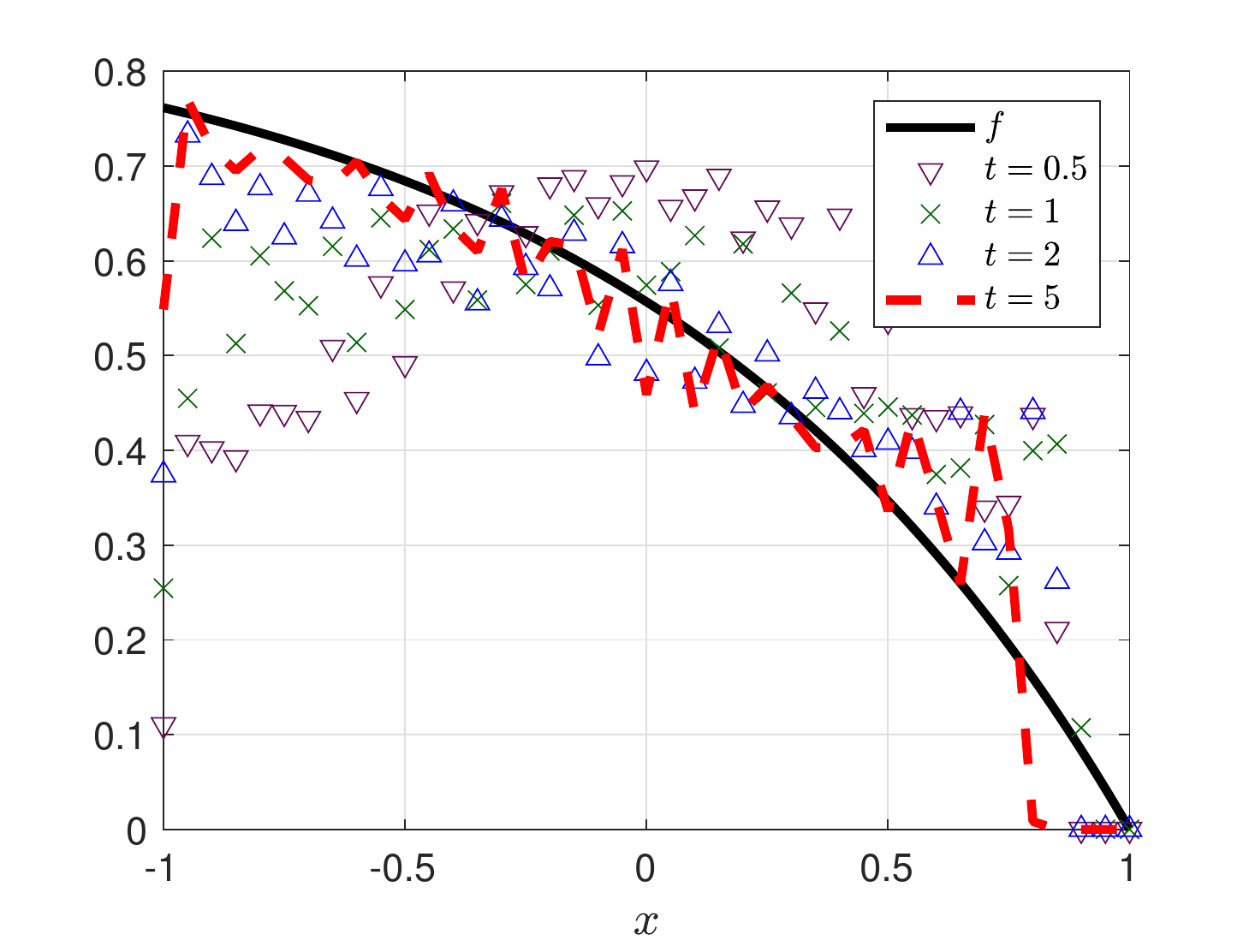}} 
    \subfloat[Normalized risk, $\delta=1/10$.]{\includegraphics[width=.5\columnwidth]{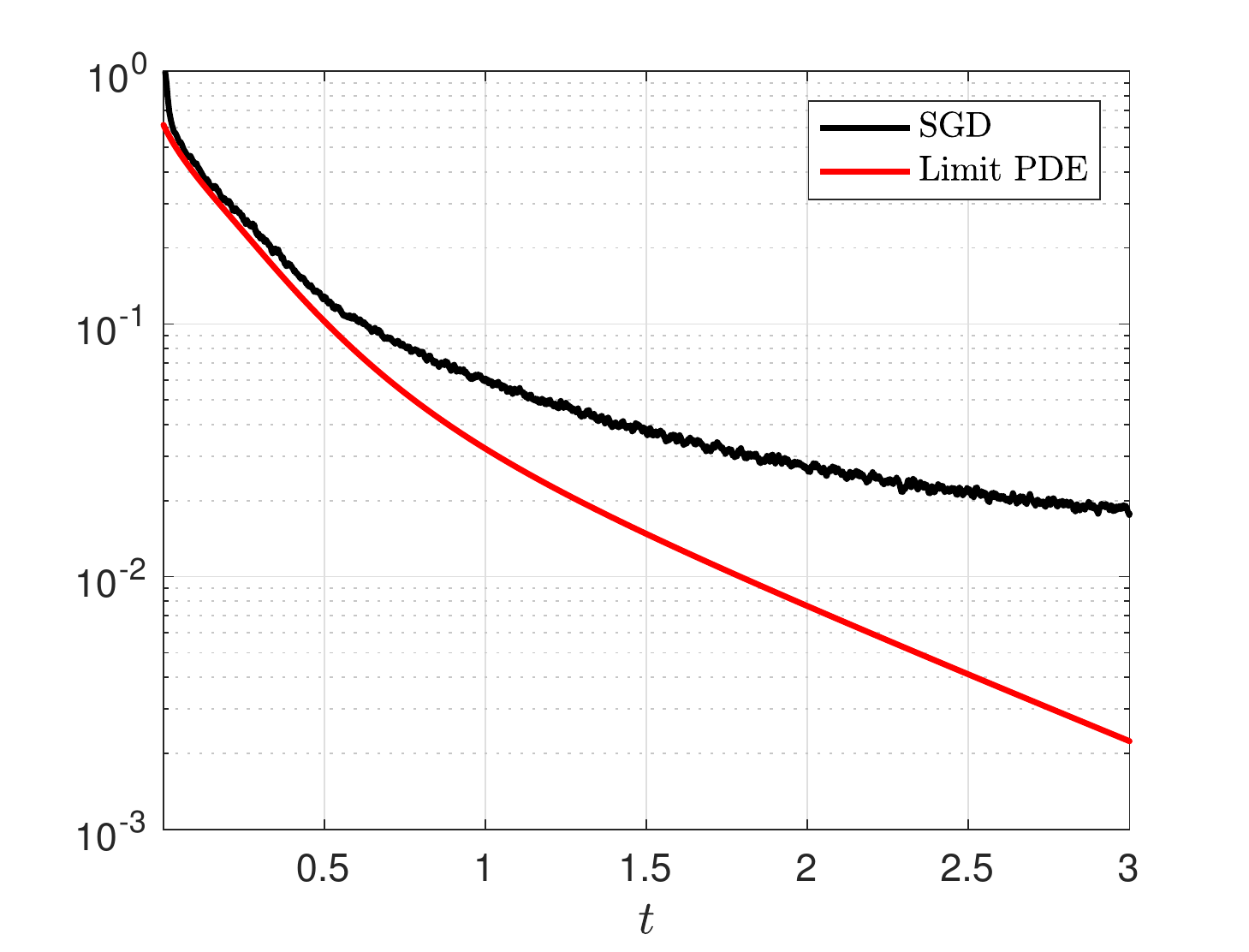}}\\
    \subfloat[Function $f$ and SGD estimates, $\delta=1/20$.]{\includegraphics[width=.5\columnwidth]{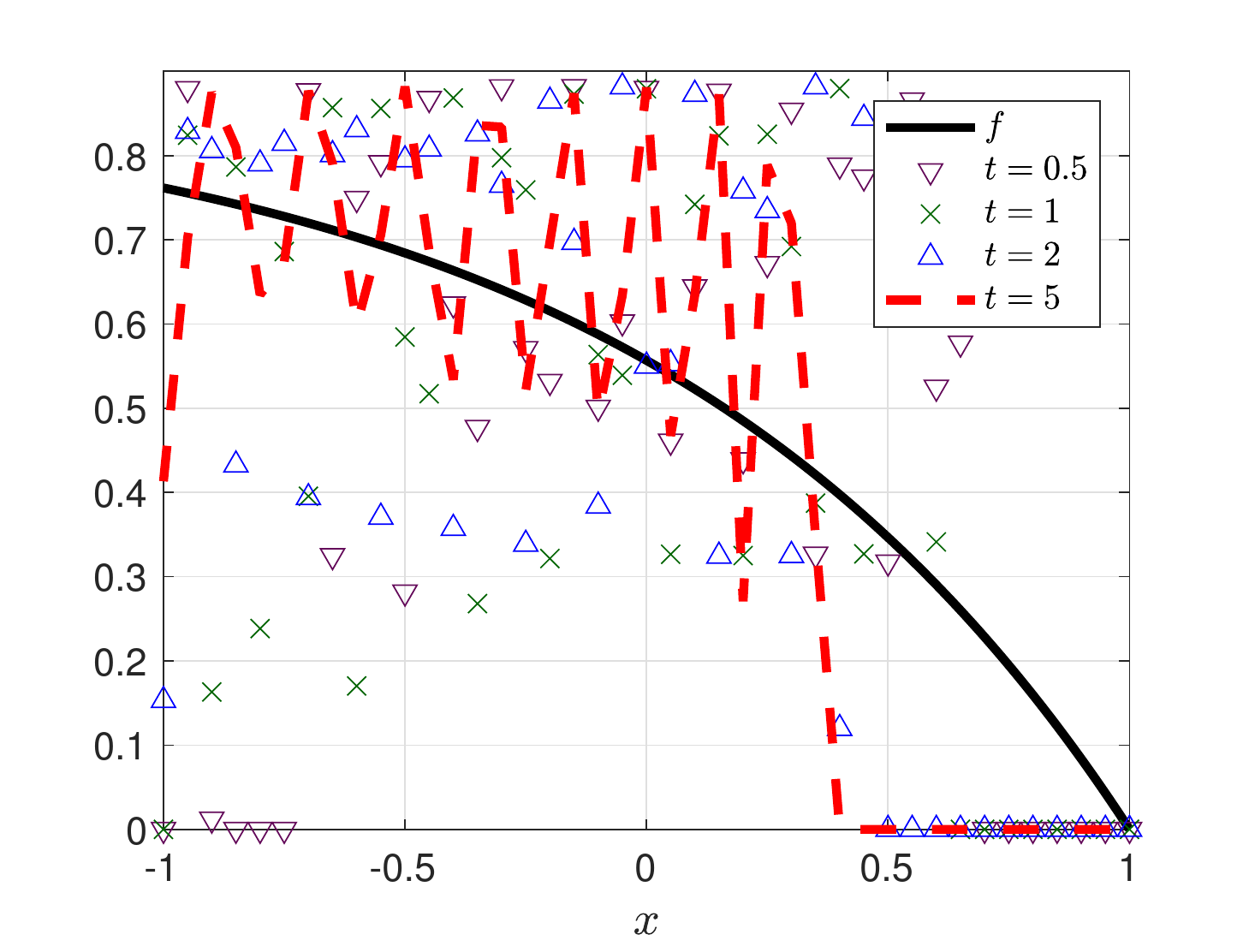}}
    \subfloat[Normalized risk, $\delta=1/20$.]{\includegraphics[width=.5\columnwidth]{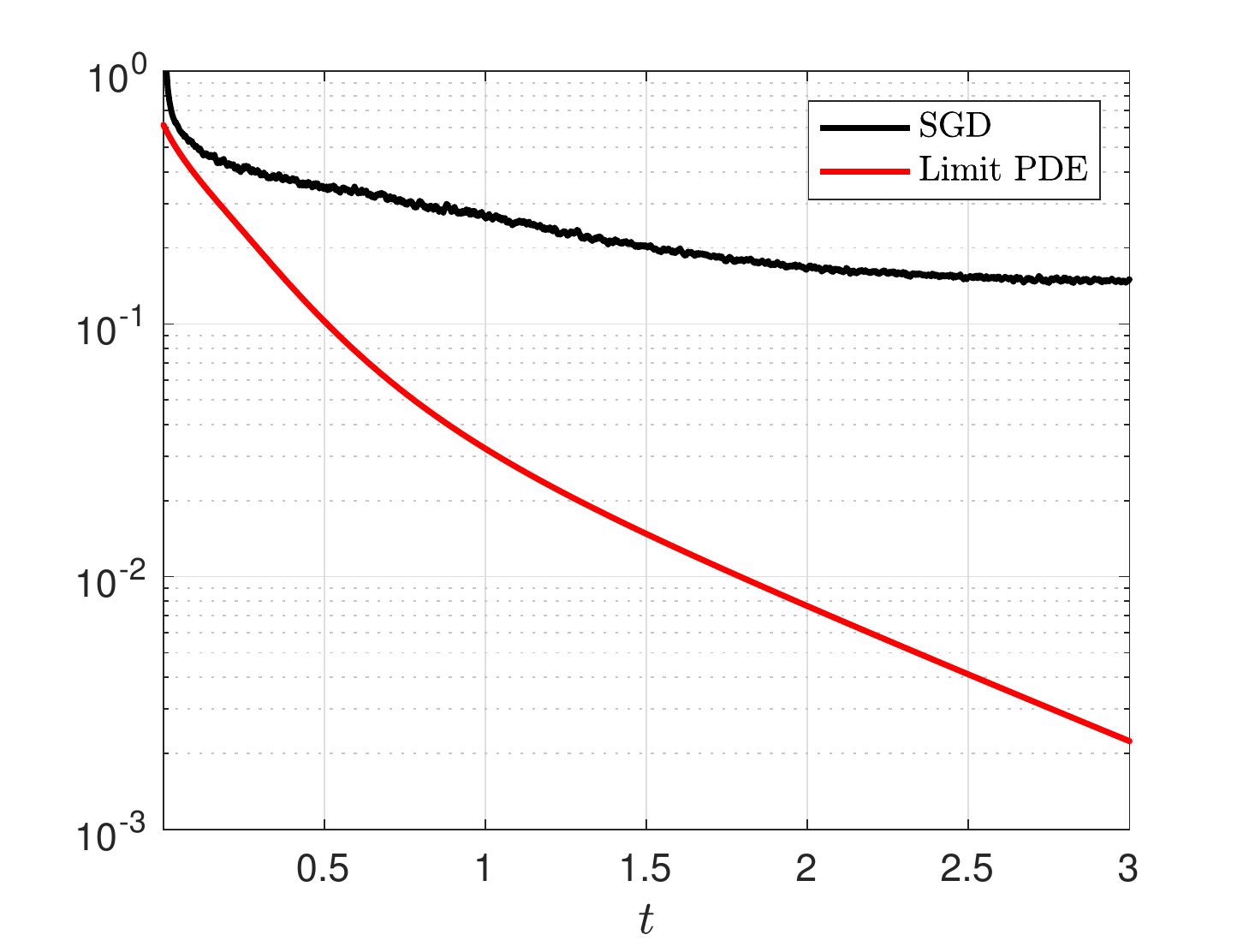}}\\
\caption{Dynamics of SGD update \eqref{eq:SGDup} at different times $t$ and for different values of $\delta$ when the number of neurons is too small ($N=20$).}
\label{fig:SGDN20}
\end{figure}

\begin{figure}[t]
    \centering
\includegraphics[width=.6\columnwidth]{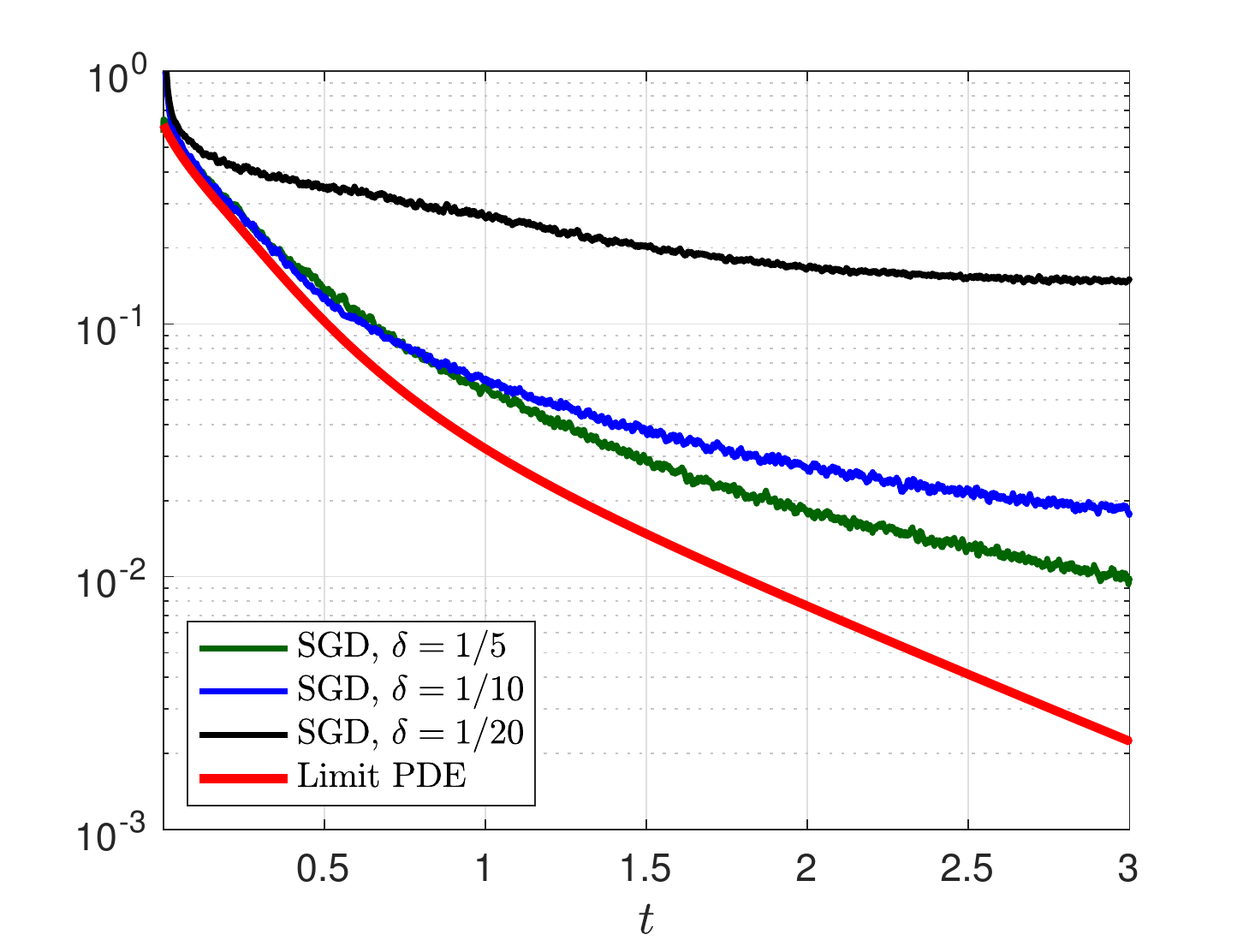}
\caption{Normalized risk of the limit PDE \eqref{eq:PME} and of the SGD update \eqref{eq:SGDup} when the number of neurons is too small ($N=20$).}
\label{fig:PDEN20}
\end{figure}

\section{Main results}
\label{sec:Main}

\subsection{Convergence of SGD to the PDE \eqref{EQ:PDEDELTA} at $\delta>0$ fixed}\label{subsec:convPDESGD}

We now state our result concerning the convergence of the SGD dynamics \eqref{eq:SGDup} to the PDE \eqref{EQ:PDEDELTA}. 
Note that this result does not require  concavity of $f$. Its proof is presented in Appendix \ref{app:conv}.
\begin{theorem}\label{TH:CONVPDE}
Assume that conditions {\sf (A1)}, {\sf (A3)}-{\sf (A5)} hold. Consider the SGD update \eqref{eq:SGDup} with initialization $(\bw_i^0)_{i\le N}\sim_{\rm i.i.d.}\rho_{\sinit}^\delta$ and constant step size $\varepsilon$. For $t\ge 0$, let $\rho_t$ be the unique solution of the PDE \eqref{EQ:PDEDELTA} with initial and boundary conditions 
\eqref{eq:PDEdelta-BD}, and assume
$\supp(\rho^{\delta}_{\sinit})\subseteq \Ball(\bfzero,r)$ 
Then, for any fixed $t\ge 0$, $\rho^{(N)}_{\lfloor t/\varepsilon \rfloor} \Rightarrow \rho_t$ almost surely along any sequence ($N, \varepsilon=\varepsilon_N$) such that $N\to \infty$, $\varepsilon_N\to 0$.

Furthermore, for any $\delta\le 1$, $T\ge 1$, $\eps\le 1$, $p\in\naturals$, and for 
any $g:\mathbb R^d\to \mathbb R$ with $\|g\|_{\rm Lip}\le 1$, the following happens with probability at least $1-z^{-2p}$,
 \begin{equation}\label{eq:ErrorBoundNInftyThm}
\begin{split}
&\sup_{k\in [0, T/\varepsilon]\cap \mathbb N} \bigg| \sum_{i=1}^N g(\bw_i^k)-\int g(\bw)\rho_{k\varepsilon}(\de \bw)\bigg|\le  z\,\err(N,d,\eps,\delta) \,\,e^{C_*p\delta^{-(d+2)}\, T},\\
&\sup_{k\in [0, T/\varepsilon]\cap \mathbb N} | R_N(\bw^k)-R^\delta(\rho_{k\epsilon})|\le z\,\err(N,d,\eps,\delta) \,\,e^{C_*p\delta^{-(d+2)}\, T},
\end{split}
\end{equation}
 where 
\begin{equation}\label{eq:errNdz}
\err(N,d,\eps,\delta) = \sqrt{\frac{d}{N}}\vee \big(\delta^{-2d-1}r(d^2\eps\log(1/\eps))^{1/4}\big)\, .
\end{equation}
\end{theorem}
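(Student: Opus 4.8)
The plan is to establish the result via a \emph{propagation-of-chaos} argument that couples the SGD trajectory $(\bw_i^k)_{i\le N}$ to $N$ i.i.d.\ copies of the so-called nonlinear dynamics, i.e.\ the McKean--Vlasov SDE whose one-time marginals solve the PDE \eqref{EQ:PDEDELTA}. Concretely, I would introduce auxiliary processes $\obw_i^t$ solving the reflected SDE
\begin{equation*}
\de\obw_i^t = -\nabla\Psi(\obw_i^t;\rho_t)\,\de t + \sqrt{2\tau}\,\de\bB_i^t + \de\boldsymbol{\ell}_i^t,
\end{equation*}
with $\boldsymbol{\ell}_i^t$ the boundary local-time term enforcing $\obw_i^t\in\Omega^\delta$ (Neumann/reflecting condition), driven by the same randomness used to generate $(\bw_i^k)$, and with $\obw_i^0=\bw_i^0$. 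By the existence/uniqueness theory sketched in Appendices \ref{app:genPDE}--\ref{app:nonlinear}, $\Law(\obw_i^t)=\rho_t$ for all $t$. The two key estimates are then: (i) a one-step consistency bound showing the SGD update \eqref{eq:SGDup} is, up to $O(\eps^{3/2})$ fluctuation and $O(\eps)$ discretization error, an Euler--Maruyama step for the above SDE with $\rho_t$ replaced by the empirical measure $\hrho^{(N)}_t$; and (ii) a Lipschitz/stability bound for the drift $\bw\mapsto\nabla\Psi(\bw;\rho)$ jointly in $\bw$ and in $\rho$ (in $W_1$), which follows from the smoothness assumptions on $K^\delta$ and $f$ — here the constants blow up like $\delta^{-(d+2)}$ because differentiating $K^\delta\ast K^\delta\ast\rho$ twice costs $\delta^{-d}\cdot\delta^{-2}$ in sup-norm, explaining the $e^{C_*p\delta^{-(d+2)}T}$ factor via a Grönwall iteration over $[0,T]$.

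More precisely, I would define $\Delta^k \equiv \max_i |\bw_i^k-\obw_i^{k\eps}|$ and $D^k\equiv\E[(\Delta^k)^{2p}]^{1/2p}$ (or a high-probability analogue), then prove a recursion of the form
\begin{equation*}
D^{k+1}\le (1+C\eps\,\delta^{-(d+2)})\,D^k + C\eps\cdot W_1\!\big(\hrho^{(N)}_{k\eps},\rho_{k\eps}\big) + \text{(martingale + discretization terms)}.
\end{equation*}
The martingale increments, arising because each SGD step replaces the population gradient $\nabla\Psi$ by a single-sample stochastic gradient $\nabla K^\delta(\bx_{k+1}-\bw_i^k)(y_{k+1}-\hf(\bx_{k+1};\bw^k))$, are mean-zero conditionally on the past and bounded by $C\delta^{-d-1}$ in magnitude; summing $T/\eps$ of them and applying a Burkholder--Davis--Gundy / Azuma-type inequality at exponent $2p$ yields a contribution of order $\delta^{-2d-1}r\,(d^2\eps\log(1/\eps))^{1/4}$ after also accounting for the Gaussian injected noise $\sqrt{2\eps\tau}\bg_i^{k+1}$ and the sub-Gaussian label noise. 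The term $W_1(\hrho^{(N)}_{k\eps},\rho_{k\eps})$ is bounded by $\Delta^k$ plus the empirical-measure fluctuation $W_1(\frac1N\sum_i\delta_{\obw_i^{k\eps}},\rho_{k\eps})$, and the latter is $O(\sqrt{d/N})$ uniformly in $k$ by a standard concentration bound for i.i.d.\ empirical measures on the bounded set $\Omega^\delta$ (this is the source of the $\sqrt{d/N}$ term in $\err$). Discretizing the Grönwall recursion over $k\le T/\eps$ produces the claimed bound \eqref{eq:ErrorBoundNInftyThm} with the stated $\err(N,d,\eps,\delta)$ and the exponential factor; the boundary reflection is handled because projection onto the convex set $\Omega^\delta$ is $1$-Lipschitz and hence only helps the contraction (standard Skorokhod-map / Tanaka estimates control the local-time terms). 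The bound for $|R_N(\bw^k)-R^\delta(\rho_{k\eps})|$ follows from the test-function bound applied to the (Lipschitz, after rescaling) integrand defining $R^\delta$ together with $R^\delta(\rho)=\nu_0\int[f-K^\delta\ast\rho]^2$ being a smooth functional of $\rho$ against bounded test functions. Finally, the almost-sure weak convergence $\rho^{(N)}_{\lfloor t/\eps\rfloor}\Rightarrow\rho_t$ along $\eps_N\to0$ is obtained by taking $p$ fixed and $z=z_N\to\infty$ slowly (e.g.\ $z_N=\log N$) in the high-probability bound and Borel--Cantelli, noting the exponential factor is a fixed constant once $\delta,T$ are fixed.

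The main obstacle I anticipate is \textbf{(ii)}: controlling the stability of the reflected nonlinear dynamics with explicit, dimension-tracked constants, in particular handling the interplay between the boundary local time and the $\delta^{-(d+2)}$-Lipschitz drift when passing from the continuum SDE to its Euler discretization — one must ensure the discretization error near $\partial\Omega^\delta$ does not pick up extra negative powers of $\delta$ beyond what appears in \eqref{eq:errNdz}. A secondary technical nuisance is that the injected Gaussian noise $\sqrt{2\eps\tau}\bg_i^{k+1}$ can push $\bw_i^k$ out of $\Omega^\delta$ before projection, so the one-step coupling to the reflected SDE must absorb this via the Lipschitz property of $\Proj$ and a careful accounting of the reflection term's contribution to $D^{k+1}$. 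Everything else — the martingale concentration at exponent $2p$, the empirical-measure bound, and the Grönwall iteration — is routine given the smoothness and compact-support assumptions {\sf (A1)}, {\sf (A3)}--{\sf (A5)} on $K$, $f$, $\Omega$, and $\rho_{\sinit}$.
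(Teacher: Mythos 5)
Your overall architecture --- coupling the SGD iterates to $N$ i.i.d.\ copies of the reflected nonlinear (McKean--Vlasov) dynamics, a Gr\"onwall recursion at the $2p$-th moment, and Markov plus Borel--Cantelli to conclude --- is the same as the paper's (Appendix \ref{app:conv}, via the more general Theorem \ref{thm:convPDE-GN}). However, two steps as you have written them would not deliver the stated error term.

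First, and most seriously, you close the recursion by bounding $W_1(\hrho^{(N)}_{k\eps},\rho_{k\eps})$ and asserting that the empirical measure of $N$ i.i.d.\ samples from $\rho_{k\eps}$ is within $O(\sqrt{d/N})$ of $\rho_{k\eps}$ in $W_1$. This is false for $d\ge 3$: for i.i.d.\ samples on a bounded domain one has $\E\,W_1(\tfrac1N\sum_i\delta_{\obw_i},\rho)\asymp N^{-1/d}$ (Dudley; Fournier--Guillin), so your recursion would only produce an $N^{-1/d}$ term where the theorem claims $\sqrt{d/N}$. The point the paper exploits is that one never needs a bound uniform over Lipschitz test functions: the drift depends on the empirical measure only through the single fixed kernel $\bw'\mapsto\nabla U(\bw,\bw')$, so what must be controlled is the fluctuation of $\tfrac1N\sum_j[\nabla U(\obw_i^{\ell},\hbw_j^{\ell})-\E\,\nabla U(\obw_i^{\ell},\hbw_j^{\ell})]$, a sum of i.i.d.\ bounded mean-zero vectors, which is sub-Gaussian with variance proxy $L^2/N$ and hence $O(L\sqrt{dp/N})$ at the $2p$-th moment (the term $A^{(2)}_{3,p}$ in the paper's proof). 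You must replace the $W_1$ step by this fixed-test-function concentration to recover the advertised rate.

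Second, you attribute the $(d^2\eps\log(1/\eps))^{1/4}$ contribution to the martingale concentration. In the paper this term is not a martingale effect (the martingale part contributes only $O(\sigma_\infty\gamma\sqrt{d\eps})$); it is the strong error of the Euler scheme for the \emph{reflected} SDE, which is genuinely worse than the $\sqrt{\eps}$ rate of the unconstrained case because of the boundary local time. The paper handles this by inserting a third, intermediate process --- the Euler discretization of the nonlinear dynamics --- so that the comparison splits into (SGD vs.\ Euler-discretized nonlinear dynamics) and (Euler scheme vs.\ continuous reflected SDE), the latter controlled by S{\l}omi\'nski's quantitative results (Theorem \ref{thm:Slominski} and the decomposition from \cite{slominski1994approximation}). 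You correctly flag the boundary discretization as the main obstacle, but your two-process coupling leaves this error unassigned; without the intermediate Euler-discretized copy (or an equivalent device importing the known $(\eps\log(1/\eps))^{1/4}$ strong rate for reflected Euler schemes) the recursion does not close with the stated dependence on $\eps$.
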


Our proof is based on the same approach developed in \cite{mei2018mean}. We prove that solutions of the PDE \eqref{EQ:PDEDELTA} are in correspondence with
distributions over trajectories $(\bX_t)_{t\ge 0}$ in $\Omega$ satisfying the following stochastic differential equation 
\begin{align}
\de\bX_t & =-\nabla \Psi(\bX_t,\rho_t) \, \de t+ \sqrt{2\tau}\,\de\bB_t + \de\bPhi_t\, ,  \label{eq:FirstSDE-Main}
\end{align}
where $(\bB_t)_{t\ge 0}$ is a standard Brownian motion and $\de\bPhi_t$ is the boundary reflection (in the sense of a Skorokhod problem).
The density $\rho_t$ is determined, self consistently, via $\rho_t = {\rm Law}(\bX_t)$. We prove existence and uniqueness of solutions to this problem, 
and refer to the corresponding stochastic process  $(\bX_t)_{t\ge 0}$ as \emph{nonlinear dynamics}.
This in turn implies   existence and uniqueness of the solutions of the PDE \eqref{EQ:PDEDELTA}.

We next construct a coupling between the network weights $(\bw^k_1,\dots,\bw^k_N)\in(\Omega^{\delta})^N$, and $N$ i.i.d. 
trajectories of the nonlinear dynamics $(\bX^t_1,\dots,\bX^t_N)\in(\Omega^{\delta})^N$. Controlling the expected distance in this coupling
yields Theorem~\ref{TH:CONVPDE}.
\begin{remark}
The error term in Eq.~\eqref{eq:ErrorBoundNInftyThm} is completely analogous to the error in a similar theorem proved  in \cite{mei2018mean}. The constant 
$\delta^{-d}$ appearing here is obtained by bounding  the Lipschitz constant of $\nabla\Psi(\bw;\rho)$. 
As already mentioned, 
the main technical difficulty with respect to \cite{mei2018mean} is posed by the Neumann (reflecting) boundary conditions. Indeed,
even if we are given a solution of the PDE  \eqref{EQ:PDEDELTA}, existence and uniqueness of solutions of the Skorokhod problem
\eqref{eq:FirstSDE-Main} is a highly non-trivial fact first established in \cite{tanaka1979stochastic,lions1984stochastic}.
As a consequence, while the main proof idea is similar to the one in \cite{mei2018mean}, its implementation is significantly different.
\end{remark}

\begin{remark}
As discussed in Appendix~\ref{app:conv}, our proof applies to a more general version of the PDE (\ref{EQ:PDEDELTA}) and correspondingly of the SGD dynamics (\ref{eq:SGDup}),
where $\Psi$ takes the form $\Psi(\bw,\rho) = V(\bw)+\int U(\bw,\bw') \, \rho(\de\bw')$, for $V:\Omega\to \reals$, $U:\Omega\times 
\Omega\to \reals$ two smooth functions. The SGD update  (\ref{eq:SGDup}) is generalized as in  \cite{mei2018mean}, and Theorem \ref{TH:CONVPDE}
holds with the terms containing $\delta$ (i.e., $\delta^{-2d-1}$ and $\delta^{-(d+2)}$) replaced by a constant that depends uniquely on $\|\nabla V\|_{\Lp^\infty(\Omega)}$, $\|\nabla U\|_{\Lp^\infty(\Omega\times \Omega)}$,
$\|\nabla^2 V\|_{\Lp^\infty(\Omega)}$, $\|\nabla^2 U\|_{\Lp^\infty(\Omega\times \Omega)}$.
\end{remark}

\subsection{\comm{Convergence to the solutions of porous medium equation}}\label{subs:convPME}

\comm{We next prove that the solution of the PDE \eqref{EQ:PDEDELTA} converges, as $\delta\to 0$, to the unique solution of the porous medium equation
\eqref{eq:PME}. As for Theorem \ref{TH:CONVPDE}, this result does not rely on the concavity assumption for $f$.}
\comm{\begin{theorem}\label{TH:CONVPME}
Assume that conditions {\sf (A1)} and {\sf (A3)}-{\sf (A5)} hold. Denote by $\rho^{\delta}$ the unique solution of the PDE \eqref{EQ:PDEDELTA}  with initial condition $\rho^{\delta}_{0}=\rho_{\sinit}$.
Then 
\begin{enumerate}
\item[$(a)$] The porous medium equation \eqref{eq:PME} admits a weak solution $\rho:(t,\bx)\mapsto \rho_t(\bx)$ with initial and boundary conditions 
\eqref{eq:PME-BC}. Further, this solution is unique  under the additional condition $\rho\in \Lp^4([0,T]\times\Omega)$. 
\item[$(b)$] For almost all $t\in [0, T]$, we have $\rho_t^{\delta}\to\rho_t$ in $\Lp^2(\Omega)$ as $\delta\to 0$.
\end{enumerate}
\end{theorem}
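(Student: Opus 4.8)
I would prove uniqueness in part $(a)$ directly, obtain existence in $(a)$ as a by-product of $(b)$, and prove $(b)$ by a compactness argument built on $\delta$-uniform a priori estimates. For uniqueness in $(a)$, I would use the $H^{-1}$ (duality) method: given two weak solutions $\rho^{(1)},\rho^{(2)}$ of \eqref{eq:PME}--\eqref{eq:PME-BC} with the same data and both in $\Lp^4([0,T]\times\Omega)$, let $w_t$ solve $-\Delta w_t=\rho^{(1)}_t-\rho^{(2)}_t$ on $\Omega$ with homogeneous Neumann condition (well defined, since both have unit mass). Differentiating $\tfrac12\|\nabla w_t\|_{\Lp^2(\Omega)}^2$ and using the weak formulation, the quadratic term of \eqref{eq:PME} produces $-\tfrac{\nu_0}{2}\int(\rho^{(1)}_t-\rho^{(2)}_t)^2(\rho^{(1)}_t+\rho^{(2)}_t)\le0$ — here the $\Lp^4$ hypothesis is exactly what makes $\rho_t^2\in\Lp^2$ so these manipulations are legitimate — the viscous term gives $-\tau\|\rho^{(1)}_t-\rho^{(2)}_t\|_{\Lp^2(\Omega)}^2\le0$, and the transport term $\nu_0\int(\rho^{(1)}_t-\rho^{(2)}_t)\nabla f\cdot\nabla w_t$ is bounded by $C(\|\nabla f\|_{\Lp^\infty})\|\nabla w_t\|_{\Lp^2(\Omega)}^2$ after absorbing a small multiple of $\|\rho^{(1)}_t-\rho^{(2)}_t\|_{\Lp^2(\Omega)}^2$ into the viscous term. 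Gr\"onwall then forces $w\equiv0$, i.e.\ $\rho^{(1)}=\rho^{(2)}$.

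\textbf{A priori estimates uniform in $\delta$.} After a harmless rescaling that maps $\Omega^\delta=\lambda_{\delta}\Omega$ onto the fixed domain $\Omega$ — the rescaled densities solve \eqref{EQ:PDEDELTA} with coefficients that converge to the $\delta=0$ ones since $\lambda_{\delta}\to1$ — I would collect bounds that do not degenerate as $\delta\to0$. From the monotonicity of the free energy, $F^\delta(\rho^\delta_t)\le F^\delta(\rho_{\sinit})$, one controls the entropy and $R^\delta(\rho^\delta_t)$. Testing \eqref{EQ:PDEDELTA} against $\rho^\delta_t$ itself, and writing $\nabla\Psi(\cdot;\rho^\delta_t)=-\nu_0\,K^\delta\ast\nabla f+\nu_0\,K^\delta\ast K^\delta\ast\nabla\rho^\delta_t$, the dangerous quadratic contribution $-\nu_0\int\rho^\delta_t\,(K^\delta\ast K^\delta\ast\nabla\rho^\delta_t)\cdot\nabla\rho^\delta_t$ is dissipative up to lower order precisely because $K^\delta\ast K^\delta$ is a probability density ($\|K^\delta\ast K^\delta\|_{\Lp^1}=1$, $K^\delta\ast K^\delta\ge0$); this yields
\[
\sup_{t\le T}\|\rho^\delta_t\|_{\Lp^2(\Omega)}^2+\tau\int_0^T\|\nabla\rho^\delta_t\|_{\Lp^2(\Omega)}^2\,\de t+\nu_0\int_0^T\!\!\int_\Omega\rho^\delta_t\,|\nabla\rho^\delta_t|^2\,\de\bw\,\de t\le C(T),
\]
uniformly in $\delta$ (together with the $\Lp^\infty$ bound on $\rho^\delta$ from Appendix~\ref{app:genPDE}, whose $\delta$-dependence one must check can be removed, which in turn gives $\rho^\delta\in\Lp^4([0,T]\times\Omega)$ uniformly). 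Feeding these bounds back into the PDE bounds $\partial_t\rho^\delta$ in $\Lp^2([0,T];H^{-1}(\Omega))$ uniformly in $\delta$. By the Aubin--Lions--Simon lemma, $\{\rho^\delta\}$ is relatively compact in $\Lp^2([0,T]\times\Omega)$: along any $\delta_n\to0$ there is a subsequence with $\rho^{\delta_n}\to\rho^\star$ strongly in $\Lp^2([0,T]\times\Omega)$, $\nabla\rho^{\delta_n}\rightharpoonup\nabla\rho^\star$ and $\partial_t\rho^{\delta_n}\rightharpoonup\partial_t\rho^\star$ weakly in $\Lp^2$.

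\textbf{Passage to the limit and conclusion.} I would pass to the limit in the weak formulation of \eqref{EQ:PDEDELTA} tested against $\varphi\in\Cont^\infty([0,T]\times\reals^d)$ (not vanishing on the boundary, so the Neumann condition is encoded). Linear terms converge by weak convergence; $\rho^\delta_t\,K^\delta\ast\nabla f\to\rho^\star_t\nabla f$ since $K^\delta\ast\nabla f\to\nabla f$ uniformly (mollification of smooth $\nabla f$) and $\rho^\delta\to\rho^\star$ in $\Lp^2$. For the convection term, $K^\delta\ast K^\delta\ast\rho^\delta\to\rho^\star$ strongly in $\Lp^2$ (approximate identity, contraction on $\Lp^2$), so its gradient $K^\delta\ast K^\delta\ast\nabla\rho^\delta$, bounded in $\Lp^2$, converges \emph{weakly} in $\Lp^2$ to $\nabla\rho^\star$; the product of the $\Lp^2$-strongly convergent $\rho^\delta$ with this weakly convergent sequence converges in $\Lp^1$ to $\rho^\star\nabla\rho^\star=\tfrac12\nabla\big((\rho^\star)^2\big)$, which is exactly the convection term of \eqref{eq:PME}. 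Hence $\rho^\star$ is a weak solution of \eqref{eq:PME}--\eqref{eq:PME-BC} (proving existence in $(a)$), and it lies in $\Lp^4([0,T]\times\Omega)$, so by the uniqueness above $\rho^\star=\rho$ is independent of the subsequence. Therefore the whole family converges, $\rho^\delta\to\rho$ in $\Lp^2([0,T]\times\Omega)$, which in particular gives $\rho^\delta_t\to\rho_t$ in $\Lp^2(\Omega)$ for almost every $t\in[0,T]$ as $\delta\to0$ (and, using the $\delta$-uniform time-equicontinuity of $t\mapsto\rho^\delta_t$, this can be made into convergence as the continuous parameter $\delta\to0$, not merely along sequences).

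\textbf{Main obstacle.} The crux is the $\delta$-uniformity of the a priori estimates. Two features make this delicate: the Neumann domain $\Omega^\delta$ shrinks with $\delta$ (handled by rescaling to $\Omega$, but one must verify the rescaling does not spoil the estimates), and the drift contains the nonlocal, $\delta$-singular operator $K^\delta\ast K^\delta\ast(\cdot)$. The latter must be controlled in two opposite ways: it must not destroy the energy estimate — this relies crucially on $K^\delta\ast K^\delta$ being a probability density, which keeps the leading quadratic term dissipative — and it must converge to the identity strongly enough to pass to the limit in the \emph{quadratic} convection term $\nabla\cdot(\rho^\delta\nabla(K^\delta\ast K^\delta\ast\rho^\delta))$, for which the combination of strong $\Lp^2$ compactness of $\rho^\delta$ with the approximate-identity property of $K^\delta\ast K^\delta$ is exactly what is needed. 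Everything else (the $H^{-1}$ uniqueness argument, Aubin--Lions compactness, and the limit in the linear terms) is routine.
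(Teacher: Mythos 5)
Your overall architecture ($\delta$-uniform a priori bounds $\Rightarrow$ compactness $\Rightarrow$ identification of the limit as a weak solution $\Rightarrow$ uniqueness in $\Lp^4$) matches the paper's, and your uniqueness argument is a legitimate alternative: you use the classical $H^{-1}$ duality method (testing against the Neumann potential $w_t$ with $-\Delta w_t=\rho^{(1)}_t-\rho^{(2)}_t$), whereas the paper (Lemma \ref{lemma:unique}) follows V\'azquez's backward-dual-problem technique, solving a regularized backward parabolic equation and using the $\Lp^4$ hypothesis to control the error from smoothing the coefficient $\eta_t=\tau+(\rho^{(1)}_t+\rho^{(2)}_t)/2$. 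Your route is arguably shorter, at the price of having to justify the use of the merely-$H^1$, time-dependent test function $w_t$ in the weak formulation (the paper's definition requires $\Cont^{2,1}$ test functions), which is a standard but nontrivial approximation step you do not carry out.

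The genuine gap is in the $\delta$-uniform a priori estimates, which you correctly identify as the crux but do not actually establish. First, your energy estimate tests the PDE against $\rho^\delta_t$ and asserts that the term $-\nu_0\int\rho^\delta_t\,(K^\delta\ast K^\delta\ast\nabla\rho^\delta_t)\cdot\nabla\rho^\delta_t$ is ``dissipative up to lower order precisely because $K^\delta\ast K^\delta$ is a probability density.'' This does not follow: writing $J=K^\delta\ast K^\delta$, the expression is the bilinear form $\int\!\!\int \rho(\bx)J(\bx-\by)\,\<\nabla\rho(\by),\nabla\rho(\bx)\>\,\de\by\,\de\bx$ whose kernel $\rho(\bx)J(\bx-\by)$ is neither symmetric nor obviously positive semidefinite; positivity of $\widehat{J}$ gives a sign only when $\rho$ is constant. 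The paper sidesteps exactly this issue by testing against $U\ast\rho^\delta$ rather than $\rho^\delta$ (Lemma \ref{lemma:SpeedBound}), which turns the viscous term into $-\tau\|\nabla(K^\delta\ast\rho^\delta)\|_2^2$ and the quadratic term into $-\int|\nabla(U\ast\rho^\delta)|^2\rho^\delta$, both with manifestly good signs — but this only yields a uniform bound on $\nabla(K^\delta\ast\rho^\delta)$, not on $\nabla\rho^\delta$. Second, you dismiss the $\delta$-uniform $\Lp^\infty$/$\Lp^4$ bound as something ``one must check can be removed''; in fact the paper's $\Lp^\infty$ bound (Lemma \ref{lemma:boundlinf}) scales like $e^{C L^2 t}$ with $L\sim\|\nabla U\|_\infty\sim\delta^{-2d-1}$ and is useless as $\delta\to0$. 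Obtaining the uniform $\Lp^4$ bound is the bulk of the paper's technical work (Lemma \ref{lemma:convL4}): it requires propagating high-order Sobolev norms via the specific Lyapunov combination $\|(-\Delta)^m\rho^\delta_t\|_2^2+C_*\|(-\Delta)^m(K^\delta\ast\rho^\delta_t-f)\|_2^2$ with $m=\lceil 1+d/2\rceil$, a nonlinear Gronwall inequality, and an iteration over short time intervals $[0,T_0]$. Without a valid replacement for these two estimates, your Aubin--Lions compactness step and the passage to the limit in the quadratic convection term have no foundation.
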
}

\comm{While this statement is very natural at a heuristic level, its proof is actually the bulk of our technical work.} 
Similar approximation results have been proved in the past by  Oelschl{\"a}ger, Philipowski, Figalli
\cite{oelschlager2002simulation,philipowski2007interacting,figalli2008convergence}, but they do not apply directly to the present case unless
$f=0$ (also, we have to deal with  different boundary conditions).

Our proof follows a classical compactness argument, generalizing the approach of \cite{figalli2008convergence}. Namely we consider the
sequence of trajectories $(\rho^{\delta}_t)_{t\in [0,T]}$ indexed by the width $\delta$. We prove that that this family is bounded and
equicontinuous in $\Cont([0,T],\cuP_2(\Omega))$, and hence admits converging subsequences $(\rho^{\delta_n}_t)_{t\in [0,T]}\to (\rho_t)_{t\in [0,T]}$.
We next prove that any such converging subsequence converges in $\Lp^2(\Omega\times [0,T])$ and that the limit is  a weak solution
of the porous medium equation \eqref{eq:PME}. Unfortunately, uniqueness of weak solutions of the PME  \eqref{eq:PME} is 
--to the best of our knowledge--  an open problem. 
However, we generalize methods from \cite{oelschlager2002simulation} to show that any subsequential limit is actually in $\Lp^4(\Omega\times [0,T])$,
and prove that the weak solution is unique under this condition. This allows us to conclude that $(\rho^{\delta}_t)_{t\in [0,T]}$ converges to this
unique weak solution $(\rho_t)_{t\in [0,T]}$.

\subsection{Global convergence of SGD}\label{subsec:convSGDmin}

Let us now state the main result of this paper: SGD converges to a model with nearly optimal risk.
\begin{theorem}\label{TH:ENDTOEND}
Assume that conditions {\sf (A1)}-{\sf (A5)} hold, and recall that $\alpha>0$ is the concavity parameter of the function $f$, i.e., 
$\<\by , \nabla^2 f(\bx)\by \> \le -\alpha |\by|^2$ for all $\bx\in\Omega$, $\by\in\reals^d$.

Consider the SGD update \eqref{eq:SGDup} with initialization 
$(\bw_i^0)_{i\le N}\sim_{\rm i.i.d.}\rho_{\sinit}$ and constant step size $\varepsilon$. Assume $\supp(\rho_{\sinit})\subseteq\Ball(\bfzero;r)$.  Then, 
for any $k\le T/\eps$, the following holds with probability at least $1-1/z$, 
\comm{\begin{align}
R_N(\bw^k)& \le R_N(\bw^0)e^{-2\alpha k \varepsilon} + 2\tau\, \Delta'(k, \eps, d) +\Delta(N,\eps,T,d,\delta,z)\, ,\label{eq:MainResult}
\end{align}
where
\begin{align}
&\Delta'(k, \eps, d)=\log|\Omega| -(1-e^{-2\alpha k\eps})S(f)-S(\rho_{\sinit})\,e^{-2\alpha k\eps}\,,\\
&\lim_{\delta\to 0}\lim_{N\to\infty,\eps\to 0}\Delta(N,\eps,T,d,\delta,z) = 0\, .
\end{align}}
\end{theorem}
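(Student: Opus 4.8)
The plan is to chain together the three approximation results already stated: Theorem~\ref{TH:CONVPDE} (SGD $\to$ PDE at fixed $\delta$), Theorem~\ref{TH:CONVPME} (PDE at $\delta>0$ $\to$ porous medium equation), and the displacement-convexity-based exponential decay for the viscous porous medium equation (which should appear below as Theorem~\ref{th:convminfree}). Concretely, I would first fix $\delta>0$ and bound
\begin{align*}
R_N(\bw^k) \le R^\delta(\rho^\delta_{k\eps}) + |R_N(\bw^k)-R^\delta(\rho^\delta_{k\eps})|\, ,
\end{align*}
where the second term is controlled, uniformly over $k\le T/\eps$, by the quantitative estimate \eqref{eq:ErrorBoundNInftyThm} of Theorem~\ref{TH:CONVPDE} (taking the probability level so that the failure probability is at most $1/z$, and absorbing the resulting factors into $\Delta(N,\eps,T,d,\delta,z)$). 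So it remains to bound $R^\delta(\rho^\delta_t)$ for $t=k\eps$.

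The second step is to pass $\delta\to 0$ in the PDE solution. By Theorem~\ref{TH:CONVPME}(b), $\rho^\delta_t\to\rho_t$ in $\Lp^2(\Omega)$ for a.e.\ $t$, where $\rho_t$ solves the viscous porous medium equation \eqref{eq:PME}; since $R^\delta(\rho)$ converges to $R(\rho)=\nu_0\|f-\rho\|_{\Lp^2}^2$ (using $K^\delta\ast\rho\to\rho$ and $K^\delta\ast f\to f$ in $\Lp^2$, controlled because $K$ is a first-order kernel with bounded derivatives, $f$ is smooth, and $\rho^\delta_t$ is uniformly bounded in $\Lp^2$ via the entropy/energy estimates from the PDE theory), we get $R^\delta(\rho^\delta_t)\to R(\rho_t)$ for a.e.\ $t$, with the discrepancy absorbed into $\Delta$. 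The third step is the heart of the matter: the free energy $F(\rho)=\tfrac12 R(\rho)-\tau S(\rho)$ is $\alpha$-strongly displacement convex under Assumption~{\sf (A2)}, so by the Bakry--Émery / optimal-transport argument (Theorem~\ref{th:convminfree}) the gradient flow satisfies $F(\rho_t)-F(\rho_*)\le (F(\rho_0)-F(\rho_*))e^{-2\alpha t}$, where $\rho_*$ is the unique minimizer. Since $R(\rho_*)=0$ (the $\delta=0$ risk can be driven to zero) and $\tau S(\rho_*)\le \tau\log|\Omega|$ by the maximum-entropy bound on $\Omega$, unwinding $F$ gives
\begin{align*}
\tfrac12 R(\rho_t) \le \tfrac12 R(\rho_0)e^{-2\alpha t} + \tau\big(S(\rho_*) - S(\rho_0)e^{-2\alpha t} - S(\rho_*)e^{-2\alpha t}\big) + (\text{terms from }S(\rho_t)\le\log|\Omega|)\, ,
\end{align*}
which, after matching $R(\rho_0)$ with $R_N(\bw^0)$ up to $\Delta$-type errors (the empirical distribution of the i.i.d.\ initialization concentrates around $\rho_{\sinit}$, again by Theorem~\ref{TH:CONVPDE} at $k=0$), reproduces the claimed bound with $\Delta'(k,\eps,d)=\log|\Omega|-(1-e^{-2\alpha k\eps})S(f)-S(\rho_{\sinit})e^{-2\alpha k\eps}$.

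The main obstacle is the \emph{order of limits} and making the error term $\Delta(N,\eps,T,d,\delta,z)$ genuinely go to zero in the stated order ($N\to\infty,\eps\to 0$ first, then $\delta\to0$). The $\delta\to 0$ step cannot be taken first because the bound \eqref{eq:ErrorBoundNInftyThm} blows up like $e^{C_*p\delta^{-(d+2)}T}$; so one must: (i) fix $\delta$, send $N\to\infty$, $\eps\to 0$ so that the Theorem~\ref{TH:CONVPDE} error vanishes, leaving $R_N(\bw^k)\le R^\delta(\rho^\delta_{k\eps})+o_{N,\eps}(1)$; (ii) then bound $R^\delta(\rho^\delta_{k\eps})$ by $R(\rho_{k\eps})+o_\delta(1)$ using Theorem~\ref{TH:CONVPME}, this being valid for a.e.\ $t$ — and handling the "a.e.\ $t$" versus "all $k\le T/\eps$" mismatch requires a continuity/monotonicity argument on $t\mapsto R(\rho_t)$ (the free energy is nonincreasing along the flow, and $R$ can be bootstrapped from $F$); (iii) finally invoke the exponential decay. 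A secondary technical point is that Theorem~\ref{TH:CONVPME} is stated with initial condition $\rho^\delta_0=\rho_{\sinit}$ (not the rescaled $\rho^\delta_{\sinit}$), which must be reconciled with the SGD initialization in Theorem~\ref{TH:ENDTOEND}; since $\lambda_\delta\to 1$, the rescaling contributes only an $o_\delta(1)$ error, again absorbed into $\Delta$. I would organize the proof as: (1) quantitative SGD-to-PDE bound; (2) PDE-to-PME convergence of the risk; (3) exponential decay of $R(\rho_t)$ via displacement convexity and entropy bounds; (4) assemble, carefully tracking which errors go into $\Delta$ versus the explicit $2\tau\Delta'$ term.
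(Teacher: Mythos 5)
Your proposal follows essentially the same route as the paper's proof: chain Theorem~\ref{TH:CONVPDE} (SGD to the $\delta>0$ PDE, with the failure probability set to $1/z$), Theorem~\ref{TH:CONVPME} (to replace $R^\delta(\rho^\delta_{k\eps})$ by the $\delta=0$ risk up to an $o_\delta(1)$ error), and the displacement-convexity decay of the free energy (Theorem~\ref{th:convminfree}), then unwind $F=\tfrac12 R-\tau S$ using $S(\rho)\le \log|\Omega|$ and match $R(\rho_0)$ with $R_N(\bw^0)$ via Theorem~\ref{TH:CONVPDE} at $k=0$, exactly as in Appendix~\ref{app:convglob}. One local correction: for $\tau>0$ the minimizer $\rho^*$ of $F$ is not $f$, so your claim $R(\rho^*)=0$ is false; what the argument actually needs (and what the paper uses) is only the one-sided bound $F(\rho^*)\le F(f)=-\tau S(f)$, which is precisely where the $S(f)$ term in $\Delta'$ comes from, and with that substitution your derivation reproduces the stated bound.
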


\comm{\begin{remark}
The error term $2\tau\, \Delta'(k, \eps, d)$ in Eq.~(\ref{eq:MainResult}) is always non-negative. In fact, $\Delta'(k, \eps, d)\ge 0$ as $S(\rho)\le \log|\Omega|$ for any $\rho\in \cP_2(\Omega)$. Furthermore, by applying Jensen's inequality, we have that, for any $\rho\in \cP_2(\Omega)$, 
$$S(\rho)=-\int \rho(\bx) \log\rho(\bx)\, \de\bx\ge -\log\mathbb \int \rho(\bx)^2\de\bx = -2\,\log\|\rho\|_{\Lp^2(\Omega)},$$ which gives the following upper bound 
\begin{equation*}
\Delta'(k, \eps, d)\le \log |\Omega| + 2\,\left|\log\|f\|_{\Lp^2(\Omega)}\right|+2\,\left|\log\|\rho_{\sinit}\|_{\Lp^2(\Omega)}\right|.
\end{equation*}
Recall that $\tau$ controls the variance of the noise, which is added at each step of the SGD algorithm for technical purposes. Thus, we can take $\tau$ sufficiently small so that the term $2\tau \Delta'(k, \eps, d)$ is arbitrarily small.  
\end{remark}}

\begin{remark}
The proof of Theorem \ref{TH:ENDTOEND} provides a somewhat more explicit expression for the error term $\Delta(N,\eps,T,d,\delta,z)$ in Eq.~(\ref{eq:MainResult}).
Namely, for an arbitrary but fixed $p\in \naturals$, 
\begin{align}
\Delta(N,\eps,T,d,\delta,z) &= \Delta_1(N,\eps,T,d,z)+\Delta_2(\delta,T,d)\, ,\label{eq:boundrisk}\\
\Delta_1(N,\eps,T,d,z)& =   \left( \sqrt{\frac{d}{N}}\vee \Big(r\delta^{-2d-1} (d^2\eps\log(1/\eps))^{1/4}\Big)\right)\label{eq:delta111}\\ 
&\phantom{AAAAAAA} \cdot \exp\Big\{\sqrt{2C_*\delta^{-(d+2)}\, T\log (z)}\Big\}\nonumber
\, \\
\lim_{\delta\to 0}& \;\Delta_2(\delta,T,d) = 0\, .
\end{align}
The term $\Delta_1$ bounds the error due to describing the SGD dynamics using the PDE \eqref{EQ:PDEDELTA}.
It vanishes when $N\to \infty$, $\eps\to 0$, under the stated conditions. The term $\Delta_2$ captures the error due to approximating the PDE  \eqref{EQ:PDEDELTA} with the porous medium equation 
\eqref{eq:PME}.
Finally,  the term $e^{-2\alpha k \varepsilon}$ describes the convergence to equilibrium of the solution of the porous medium equation.
\end{remark}
The proof of Theorem \ref{TH:ENDTOEND} is presented in Appendix \ref{app:convglob} and relies crucially on regularity results for the 
PDE \eqref{EQ:PDEDELTA} which are established in Appendix \ref{app:reg}.  

More specifically, the proof is based on three steps, which we spell out once more:
\begin{enumerate}
\item[$(i)$] We  approximate the dynamics of SGD by the PDE \eqref{EQ:PDEDELTA} at $\delta>0$ fixed. In doing so, we incur an error $\Delta_1$
which is controlled using Theorem \ref{TH:CONVPDE}. 
\item[$(ii)$] We approximate the solution $\rho^{\delta}_t$ of the PDE \eqref{EQ:PDEDELTA} at $\delta>0$ using the solution $\rho_{t}$ of the porous
medium equation \eqref{eq:PME}, as stated in Theorem \ref{TH:CONVPME}. 
\item[$(iii)$] We use results from  \cite{carrillo2001entropy,carrillo2003kinetic,carrillo2006contractions} to prove that the latter solution converges exponentially fast to the global optimum, with rate $O(e^{-2\alpha t})$.
\end{enumerate}
Given Theorems \ref{TH:CONVPDE},  \ref{TH:CONVPME}, and the results of \cite{carrillo2001entropy,carrillo2003kinetic,carrillo2006contractions},
this proof is relatively direct. \comm{We emphasize that, unlike  Theorems \ref{TH:CONVPDE},  \ref{TH:CONVPME}, the proof Theorem \ref{TH:ENDTOEND}
relies in a crucial way on our structural assumptions, namely the concavity of $f$, and the structure of the bump-like activation $K_{\delta}(\bx-\bw_i)$. }
\begin{remark}
If we settle for the less ambitious goal of proving global convergence without the explicit dimension-independent rate $e^{-2\alpha k\eps}$, and there are no boundary conditions ($\Omega = \reals^d$),
we can achieve this goal using \cite[Theorem 5]{mei2018mean}. This result guarantees  convergence in a number of SGD steps that potentially depends on $\tau$
(the noise injected in SGD) as well as the dimensions $d$, and the width $\delta$, but does not require to assume strong concavity of $f$. 
On the other hand, numerical experiments are consistent with the conclusion that rates are independent of these parameters, cf. e.g. Fig. \ref{fig:SGD} where dependence
on $\delta$ is explored. 
\end{remark}

\section{Discussion}

{It is instructive to compare the general strategy followed in this paper (and in related work, e.g. \cite{mei2018mean,mei2019mean}) and
the results we obtain, to a more classical approach in theoretical statistics. For the sake of clarity, we will abstract away most of the details of 
the present problem, and focus on the most important differences.}

\comm{Consider a general setting in which we want to minimize the population risk $R(\bw) = \E_{y,\bx} L(\bw;y,\bx)$, where $L$ is a non-convex loss function
and $\bw\in\reals^D$ are parameters (in our problem $\bw=(\bw_1,\dots,\bw_N)$ are the first-layer weights and $D=dN$). We are given $n$ i.i.d.
samples $\{(y_j,\bx_j)\}_{j\le n}$.}

\comm{A standard theoretical analysis of this problem uses empirical risk minimization. Namely, we define the empirical risk 
$\hR_n(\bw)=  \hE_{y,\bx} L(\bw;y,\bx)$ (with $\hE_n$ denoting the empirical average), and compute the minimizer
$\hbw_n \in \arg\min_{\bw}\hR_n(\bw)$, for instance by gradient descent. Theoretical analysis proceeds --conceptually--
in two steps. First, one proves that the empirical risk minimizer  is a near-minimizer of the population risk. Namely
\begin{align}
R(\hbw_{n})\le \min_{\bw}R(\bw) +\err(D,n)\, .
\end{align}
This is normally proved through a uniform convergence argument to establish a bound $\sup_{\bw}|\hR_n(\bw)-R(\bw)|\le \err(D,n)/2$.
Here $\err(D,n)$ is an error term that (hopefully) vanishes as $n\to\infty$ for $D$ fixed. Second, one proves that gradient descent 
(with respect to the cost function $\hR_n$) converges to a minimizer $\hbw_n$. 
This is achieved by showing that, with high probability, the landscape $\bw\mapsto \hR_n(\bw)$ satisfies some strong conditions that guarantee convergence of gradient 
descent (or other algorithms). For instance, one desirable (although not sufficient)  property is that $\hR_n$ does not have local minima
other than the global minima, provided that the sample size is large enough.
A substantial literature applies this general scheme (with significant refinements) to a variety of non-convex problems in high-dimensional statistics, including
phase retrieval, clustering, matrix completion, error-in-variables models, and so on. We refer to \cite{mei2018landscape} for examples and a more
detailed survey.}

\comm{Unfortunately this approach runs into substantial difficulties when treating complex models such as multi-layer neural networks.
We can name at least two sources of difficulties. First of all, the number of parameters $D$ in the model is often comparable with  the sample size
$n$, and therefore uniform convergence of the empirical risk to population risk does not hold. For instance, in the present model, we could use
a number of parameters  $Nd\gtrsim n$: indeed, such an example is considered in Figure \ref{fig:SGDnew}-(a), where $Nd = 800$ and $n\in \{100,\dots, 2000\}$.
Of course this problem can be addressed by constraining other measures of complexity than the number of parameters \cite{bartlett1998sample}, 
but the common practice is not to add such regularizers in the training.}

\comm{The second source of difficulties is that studying the risk landscape, and ruling out local minima is extremely difficult, even if we limit ourselves to the $n=\infty$
limit, i.e. the population risk $R(\bw)$. In two-layers neural networks, part of this difficulty is due to the fact that the risk \eqref{eq:poprisk} is invariant under permutations
of the $N$ neurons, and hence it has (generically) at least $N!$  global minima related by permutations, and a large number of saddle points connecting them.}

\comm{The approach pursued in this paper builds on two simple remarks, which are connected to the previous difficulties:}
\begin{enumerate}
\item[\comm{$(i)$}] \comm{Uniform convergence of the empirical risk $\hR_n(\bw)$ to the population risk $R(\bw)$ is not necessary, 
nor it is necessary to control the random deviations of the whole landscape  of the empirical risk. What is instead important is to control the landscape of the
 empirical risk along the trajectory of gradient descent from a given initialization.}

\comm{A convenient way to implement this idea is to consider SGD  in a one-pass setting in which each sample is used 
only once. In the limit of small step size, this converges to gradient flow with respect to $R(\bw)$.}
\item[\comm{$(ii)$}] \comm{Absence of local minima in the population landscape $R(\bw)$ is not necessary either. What is instead important is absence of
local minima along the gradient flow trajectory for $R(\bw)$ or, more precisely, the fact that the gradient flow trajectory  converges to a global minimum.}
\end{enumerate}

\comm{These remarks suggest the following proof strategy. Let $\bw(t)$ denote the gradient flow trajectory
from a given initialization $\bw(0) = \bw_0$ (namely $\dot{\bw}(t) = -\nabla R(\bw(t))$), and $\bw^{k}$ be the (random) parameters produced after 
$k$ SGD steps. We first prove that gradient flow converges to a global optimum, possibly with explicit convergence rate $\Delta(t)$:
\begin{align}
R(\bw(t)) \le \min_{\bw} R(\bw)+ \Delta(t)\, ,\label{eq:GeneralGflow}
\end{align}
where $\Delta(t)\to 0$ as $t\to\infty$.
We then show that the SGD trajectory, after $k$ steps, is well approximated by the gradient flow for $R(\bw)$ provided the step size $\eps$ is small.
For instance we might prove that there exists a numerical constant $c_0$ such that, for any $k\eps\le T$, with high probability
\begin{align}
\big| R(\bw^k)-R(\bw(k\eps))\big| \le \eps^{c_0}\,  \err(T)\, . \label{eq:GeneralApprox}
\end{align}
The reader might recognize that the last estimate is analogous to the one obtained in Theorem \ref{TH:CONVPDE}, while 
the estimate \ref{eq:GeneralGflow}  is what we obtain from displacement convexity (after taking the limit $\delta\to 0$ using Theorem
\ref{TH:CONVPME}). Putting the two estimates together, and recalling that we can run a total of $n$ SGD steps (in the one-pass setting), we get
\begin{align}
R(\hbw) \le \min_{\bw} R(\bw)+ \Delta(n\eps)+ \eps^{c_0}\,  \err(n\eps) \, ,
\end{align}
where we set $\hbw=\bw^k$. The error is reminiscent of a bias-variance tradeoff: the first term is a bias due to early stopping; the second is 
instead the stochastic approximation error. We can now optimize $n$ as to minimize this error. For instance, if $\Delta(t) = e^{-c_1 t}$, and $\err(T) = e^{c_2T}$,
we can choose $\eps\propto (\log n/n)$, yielding  $R(\hbw) \le \min_{\bw} R(\bw)+ C (\log n)^{c_0}/n^{c'}$ where $c' = c_0c_1/(c_1+c_2)$.}

\comm{In summary, within the present approach, the generalization error is bounded via a tradeoff between the convergence rate of
gradient flow in the population risk, and the error of approximating the gradient flow by SGD. A side benefit of this proof strategy
is that it guarantees the existence of an efficient algorithm to compute the weights $\hbw$.}

\comm{As mentioned, the above discussion omits several challenges that are posed by the model treated in this paper. 
Most notably: $(1)$~We are trying to optimize $N$ weight vectors $\bw_1,\dots,\bw_N\in\reals^d$,  but the loss only depends on the empirical distribution
of these vectors $\hrho^{(N)} = N^{-1}\sum_{i=1}^N\delta_{\bw_i}$. It is therefore natural to define a gradient flow in the space of probability distributions,
which is nothing but the PDE \eqref{EQ:PDEDELTA}.
This also help addressing the challenge posed by by the fact that, as $N$ increases, the dimension of the parameter space increases
and convergence to the population behavior might fail. We are embedding all the values of $N$ in the space $\cuP(\reals^d)$.
$(2)$ We  cannot prove a bound of the form \eqref{eq:GeneralGflow} for the original PDE \eqref{EQ:PDEDELTA} and have to approximate
this by the porous medium equation \eqref{eq:PME}.}

\comm{Because of these additional challenges, our bounds are not nearly as neat as in Eqs.~\eqref{eq:GeneralGflow}, \ref{eq:GeneralApprox} and
depend on the additional parameters $d,\delta$: in particular, the approximation by the porous medium equation in Theorem \ref{TH:CONVPME}
is non-quantitative.
 We therefore refrain from optimizing the tradeoff between convergence rate of gradient flow, and error in stochastic approximation, which would result in 
suboptimal statistical guarantees, and defer this objective to future work.}

\section*{Acknowledgements}
A.~Javanmard was partially supported by an Outlier Research in Business (iORB) grant from the USC Marshall School of Business, a Google Faculty Research award and the NSF CAREER award DMS-1844481.  
M.~Mondelli was supported by an Early Postdoc.Mobility fellowship from the Swiss National Science Foundation and by the Simons Institute for the Theory of Computing. 
A.~Montanari was partially supported by grants NSF DMS-1613091,  CCF-1714305,
IIS-1741162 and ONR N00014-18-1-2729.
This work was carried out  in part while the authors were visiting the Simons Institute for the Theory of Computing.
\newpage

\appendix

\section{Uniqueness of weak solutions of limit PDE ($\delta=0$)}\label{app:PDEdelta0}

In this appendix, we prove that the limit PDE obtained for $\delta\to 0$, namely the porous medium equation 
(\ref{eq:PME}) has at most one solution in $\Lp^4(\Omega\times [0,T])$. Existence of such solutions will follow from the results 
of Appendix \ref{app:convglob}, and in particular from Lemma \ref{lemma:weaksol}.

For the sake of clarity, we repeat the definitions of Section \ref{subsec:PDEdelta0}. Let $\Omega\subseteq\reals^d$ be a compact convex set with $\Cont^2$ boundary. 
 We denote by $\cuP_2(\Omega)$ the space of probability measures on $\Omega$
endowed with Wasserstein's $W_2$ distance. Since $\Omega$ is compact, the induced topology is equivalent to weak convergence. We consider the following PDE:
\begin{align}
\partial_t \rho_t(\bw) &= -\nu_0\nabla\cdot\big(\rho_t(\bw)  \nabla f(\bw)\big) +
\frac{\nu_0}{2}\Delta(\rho_t^2(\bw)) +\tau\Delta \rho_t(\bw)\, ,\label{eq:PMEbis}
\end{align}
with initial and boundary conditions
\begin{equation}\label{eq:PME-BCbis}
\begin{split}
\rho_0&=\rho_{\sinit},\\
\Big\<\bn(\bw), \nu_0\rho_t(\bw)\, \nabla (f(\bw)-\rho_t(\bw))& - \tau\nabla\rho_t(\bw)\Big\> = 0\, \;\;\;\;\; \forall \bw\in\partial\Omega\, .\\
\end{split}
\end{equation}
Throughout this appendix, we adopt the notation $\Phi(\rho) = \tau\rho +\nu_0\,\rho^2/2$. Let us formally define the concept of weak solutions for the PDE \eqref{eq:PMEbis}.

For the next statement, it is useful to recall that $\Cont^{2,1}(\Omega\times[0,T])$ denotes the class of functions $f:\Omega\times[0,T]\to\reals$
with continuous partial derivatives $D_tf(\bx,t)$, $D_{\bx}^{\balpha} f(\bx,t)$ for all $\|\balpha\|_2\le 2$.
\begin{definition}[Weak solution of limit PDE]\label{def:weaklimitPDE}
We say that $\rho\in \Cont([0,T],$ $\cuP_2(\Omega))$ is a \emph{weak solution} of the PDE \eqref{eq:PMEbis},
with initial and boundary conditions \eqref{eq:PME-BCbis} if 
\begin{enumerate}
\item $\rho_t$ has density $\rho(\,\cdot\,,t)$ with respect to Lebesgue  measure,
and $\rho \in \Lp^2(\Omega\times[0,T])$.
\item For any test function $h\in \Cont^{2,1}(\Omega\times [0,T])$,
satisfying $\<\bn(\bx),\nabla h(\bx,t)\>=0$ for all $\bx\in\partial\Omega, t\in[0,T]$, 
we have 
\begin{align}
\int_{\Omega}&h(\bx,T)\,\rho(\bx,T)\, \de\bx - \int_{\Omega}h(\bx,0)\,\rho_{\sinit}(\bx)\, \de\bx  \label{eq:WeakPME_Def}\\
&= 
\int_{0}^T\int_{\Omega}\Big[\partial_th(\bx,t)+\nu_0\<\nabla f(\bx),\nabla h(\bx,t)\>
\Big] \,\rho(\bx, t)\, \de\bx\,\de t \nonumber\\
&+ \int_{0}^T\int_{\Omega} \Delta h(\bx,t)  \Phi(\rho)(\bx,t)\, \de \bx\,\de t\, . \nonumber
\end{align}
\end{enumerate}
\end{definition}

We now prove a uniqueness result, under a mild integrability condition. 

\begin{lemma}[Uniqueness of limit PDE]\label{lemma:unique}
\comm{Let $\rho, \tilde{\rho} \in \Lp^4(\Omega\times[0,T])$ be two weak solutions of the PDE~\eqref{eq:PMEbis}
with initial and boundary conditions \eqref{eq:PME-BCbis}, in the sense of Definition \ref{def:weaklimitPDE}.} Then, $\rho=\tilde{\rho}$, almost everywhere.
\end{lemma}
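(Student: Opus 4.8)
The plan is to prove uniqueness by the classical duality argument (Holmgren--Oleĭnik) for degenerate parabolic equations, adapted to the no-flux boundary condition. Let $\rho,\tilde\rho\in\Lp^4(\Omega\times[0,T])$ be two weak solutions in the sense of Definition~\ref{def:weaklimitPDE}, and set $w:=\rho-\tilde\rho$. Subtracting the two instances of the identity~\eqref{eq:WeakPME_Def} (the $t=0$ terms cancel because both solutions have initial datum $\rho_{\sinit}$), and using that $\Phi(\rho)-\Phi(\tilde\rho)=b\,w$ with
\[
b(\bx,t):=\tau+\frac{\nu_0}{2}\bigl(\rho(\bx,t)+\tilde\rho(\bx,t)\bigr),
\]
one obtains, for every admissible test function $h$,
\[
\int_\Omega h(\bx,T)\,w(\bx,T)\,\de\bx=\int_0^T\!\!\int_\Omega\Bigl[\partial_t h+\nu_0\langle\nabla f,\nabla h\rangle+b\,\Delta h\Bigr]\,w\,\de\bx\,\de t.
\]
The key structural facts are that $b\ge\tau>0$ and, crucially, $b\in\Lp^4(\Omega\times[0,T])$ — this is exactly where the hypothesis $\rho,\tilde\rho\in\Lp^4$ enters, and it is what will make the error term in the duality argument vanish.

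Fix now an arbitrary $\psi\in\Cont^\infty_c(\Omega\times(0,T))$. Choose smooth coefficients $b_n$ with $\tau\le b_n\le C_n<\infty$ and $b_n\to b$ in $\Lp^4(\Omega\times[0,T])$ (mollification followed by an upper truncation). Let $h_n$ be the solution of the backward linear problem
\[
\partial_t h_n+\nu_0\langle\nabla f,\nabla h_n\rangle+b_n\,\Delta h_n=-\psi,\qquad h_n(\cdot,T)=0,\qquad \langle\bn,\nabla h_n\rangle=0\ \text{ on }\partial\Omega.
\]
Because $\nabla f$ is smooth and bounded by Assumption~{\sf (A3)} and $b_n$ is smooth and uniformly elliptic, standard linear parabolic theory on the convex $\Cont^2$ domain $\Omega$ yields $h_n\in\Cont^{2,1}(\Omega\times[0,T])$ satisfying the Neumann condition, hence an admissible test function. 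The crucial step is a \emph{uniform} a priori bound: multiplying the dual equation by $\Delta h_n$, integrating over $\Omega$, using the Neumann condition to write $\int_\Omega\partial_t h_n\,\Delta h_n=-\tfrac12\tfrac{\de}{\de t}\|\nabla h_n\|_{\Lp^2(\Omega)}^2$, applying Young's inequality together with $b_n\ge\tau$, and then Grönwall's lemma (with constants depending only on $\tau$, $\|\nabla f\|_{\Lp^\infty(\Omega)}$ and $T$), one gets
\[
\int_0^T\!\!\int_\Omega b_n\,(\Delta h_n)^2\,\de\bx\,\de t\le C\,\|\psi\|_{\Lp^2(\Omega\times[0,T])}^2,
\]
with $C$ independent of $n$.

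To conclude, take $h=h_n$ in the identity of the first paragraph; the left-hand side vanishes since $h_n(\cdot,T)=0$, and substituting $\partial_t h_n+\nu_0\langle\nabla f,\nabla h_n\rangle=-\psi-b_n\,\Delta h_n$ gives
\[
\int_0^T\!\!\int_\Omega\psi\,w\,\de\bx\,\de t=\int_0^T\!\!\int_\Omega (b-b_n)\,w\,\Delta h_n\,\de\bx\,\de t.
\]
Applying Hölder's inequality with exponents $(4,4,2)$ and bounding $(\Delta h_n)^2\le\tau^{-1}b_n(\Delta h_n)^2$ via the a priori estimate,
\[
\Bigl|\int_0^T\!\!\int_\Omega\psi\,w\Bigr|\le \tau^{-1/2}\,\|b-b_n\|_{\Lp^4(\Omega\times[0,T])}\,\|w\|_{\Lp^4(\Omega\times[0,T])}\,\Bigl(\int_0^T\!\!\int_\Omega b_n\,(\Delta h_n)^2\Bigr)^{1/2}\;\xrightarrow{n\to\infty}\;0,
\]
since $\|b-b_n\|_{\Lp^4}\to0$ and $\|w\|_{\Lp^4}<\infty$. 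Hence $\int_0^T\int_\Omega\psi\,w=0$ for every $\psi\in\Cont^\infty_c(\Omega\times(0,T))$, so $w=0$ almost everywhere, i.e. $\rho=\tilde\rho$.

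I expect the main obstacle to be the construction and \emph{uniform} control of the dual solutions $h_n$: the energy estimate above must be carried out with constants independent of $n$ (the boundary terms, where the Neumann condition for $h_n$ is essential, need care), and one has to ensure that the required linear parabolic regularity genuinely holds on a general convex domain with merely $\Cont^2$ boundary — working with strong $W^{2,1}_p$ solutions and Sobolev embedding, or slightly mollifying $\partial\Omega$, if necessary. Once these are in place, the remaining steps — subtracting the weak formulations, the Hölder bookkeeping, and the limit — are routine, the $\Lp^4$ hypothesis being precisely the integrability needed to drive $\|b-b_n\|_{\Lp^4}\|w\|_{\Lp^4}\to0$.
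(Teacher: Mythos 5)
Your proposal is correct and follows essentially the same route as the paper's proof: a duality (Holmgren-type) argument in which the dual backward problem is solved with a smoothed, truncated version of the coefficient $\tau+\tfrac{\nu_0}{2}(\rho+\tilde\rho)$, a uniform weighted energy estimate on $\int b_n(\Delta h_n)^2$ is obtained via the Neumann boundary condition and a Gr\"onwall/exponential-weight argument, and the $\Lp^4$ hypothesis is used exactly where you use it, to make the commutator term $\int(b-b_n)\,w\,\Delta h_n$ vanish. The only differences are cosmetic (one-step $\Lp^4$ approximation of the coefficient versus the paper's truncate-then-mollify, and testing against arbitrary $\psi$ rather than nonnegative $\theta$ plus a symmetry step).
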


\begin{proof}
Note that setting $\nu_0=1$ corresponds to scaling time by a factor $\nu_0$ and to substituting $\tau$ with $\tau\, \nu_0$. Since the proof holds for any $\tau>0$, without loss of generality we can set $\nu_0=1$. 

The proof follows ideas from \cite[Theorem 6.5]{vazquez2007porous}. We write the identity~\eqref{eq:WeakPME_Def} for $\rho$ and $\tilde{\rho}$ and subtract them to get
\begin{align}
\int_{\Omega}&h_T(\bx)\,(\rho_T(\bx) -\tilde{\rho}_T(\bx))\, \de\bx  \label{eq:WeakPME_Def-2}\\
&= 
\int_{0}^T\int_{\Omega}\Big[\partial_th_t(\bx)+\<\nabla f(\bx),\nabla h_t(\bx)\>
\Big] \,(\rho_t(\bx)-\tilde{\rho}_t(\bx))\, \de\bx\,\de t \nonumber\\
& + \int_{0}^T\int_{\Omega} \Delta h_t(\bx)  (\Phi(\rho_t(\bx))-\Phi(\tilde{\rho}_t(\bx)))\, \de \bx\,\de t\, , \nonumber
\end{align}   
where we use the shorthand $\rho_t(\bx)\equiv \rho(\bx,t)$ and $h_t(\bx)\equiv h(\bx,t)$. Define $u_t = \rho_t-\tilde{\rho}_t$ and $\eta_t = \tau+(\rho_t+\tilde{\rho}_t)/2$. Then, 
\begin{align}
\int_{\Omega}h_T(\bx)\,u_T(\bx)\, \de\bx  &= 
\int_{0}^T\int_{\Omega}\Big[\partial_th_t(\bx)+\<\nabla f(\bx),\nabla h_t(\bx)\>
\Big] \,u_t(\bx)\, \de\bx\,\de t \label{eq:WeakPME_Def-3}\\
& + \int_{0}^T\int_{\Omega} \Delta h_t(\bx) \eta_t(\bx) u_t(\bx)\, \de \bx\,\de t\, . \nonumber
\end{align}
Note that $\eta_t(\bx)\ge \tau$ and define the truncated function $\eta_t^M = \min(M, \eta_t)$. We next choose a smooth test function 
$\theta:\Omega\times [0,T]\to \reals_{\ge 0}$, $(\bx,t)\mapsto \theta_t(\bx)$ and consider the following backward problem:
\begin{equation}\label{eq:parabolic}
\left\{\hspace{-.5em}\begin{array}{cc}
\partial_th_t(\bx) + \hat{\eta}_t(\bx)  \Delta h_t(\bx) +\<\nabla f(\bx), \nabla h_t(\bx)\> +\theta_t(\bx) =0\,,  &\hspace{-.5em}\forall \bx\in \Omega, t\in [0,T],\\
\<\bn(\bx),\nabla h_t(\bx)\> = 0\,, & \hspace{-1.5em}\forall \bx\in \partial\Omega, t\in [0,T],\\
h_T(\bx) = 0\,, &\hspace{-1em}\forall \bx\in \Omega\,.
\end{array}\right.
\end{equation}
Here, $\hat{\eta}_t$ is a smooth approximation of $\eta_t^M$, such that $\tau\le \hat{\eta}_t(\bx)\le M$. 
(We will make precise below in what sense $\hat{\eta}_t$ has to approximate $\eta_t^M$. For the moment, it can be a general smooth function
satisfying the bounds $\tau\le \hat{\eta}_t(\bx)\le M$.)
Note that~\eqref{eq:parabolic} is a backward parabolic problem with smooth coefficients and with Neumann boundary conditions. Hence, by classical results on quasilinear parabolic PDEs \cite{ladyzhenskaia1988linear}, it admits a solution $h_t\in \Cont^{2,1}(\Omega\times[0,T] )$.  Rewriting~\eqref{eq:WeakPME_Def-3} for such a test function $h_t$, we get
\[
\int_0^T\int_\Omega \Delta h_t(\bx) (\eta_t(\bx) - \hat{\eta}_t(\bx)) u_t(\bx)\,\de \bx\, \de t = \int_0^T \int_\Omega \theta_t(\bx) u_t(\bx)\,\de \bx\, \de t \,.
\] 
This immediately implies that
\begin{align}\label{eq:unique-0}
\int_0^T \int_\Omega \theta_t(\bx) u_t(\bx)\de \bx \de t \le \int_0^T\int_\Omega |\Delta h_t(\bx)|\, |\eta_t(\bx) - \hat{\eta}_t(\bx)| |u_t(\bx)| \,\de\bx \,\de t \,.
\end{align}
By applying Cauchy-Schwarz inequality, we have that
\begin{align}\label{eq:unique-2}
\int_0^T\int_\Omega & |\Delta h_t(\bx)|\, |\eta_t(\bx) - \hat{\eta}_t(\bx)| \, |u_t(\bx)|\, \de\bx \,\de t\nonumber\\
\le& \left(\int_{\Omega\times [0,T]} \hat{\eta}_t(\bx)\, (\Delta h_t(\bx))^2 \,\de \bx \,\de t\right)^{1/2}\nonumber\\
& \left(\int_{\Omega\times [0,T]} \frac{|\eta_t(\bx) - \hat{\eta}_t(\bx)|^2}{\hat{\eta}_t(\bx)} u_t^2(\bx)\, \de \bx\, \de t\right)^{1/2}\,.
\end{align}

To bound the first term on the right-hand side of~\eqref{eq:unique-2}, we consider a smooth positive bounded function $\mu(t)$, defined on $[0,T]$, whose properties will be discussed later. 
Define the shorthand $\tilde{\theta}_t(\bx)\equiv \theta_t(\bx) + \<\nabla f(\bx), \nabla h_t(\bx)\>$.
We multiply the parabolic PDE~\eqref{eq:parabolic} by $\mu(t)\Delta h_t(\bx)$ and integrate to obtain
\begin{align}\label{eq:unique-3}
\int_{\Omega\times[0,T]} \mu(t) \partial_t h_t(\bx) \Delta h_t(\bx) \de\bx\de t &+ \int_{\Omega\times[0,T]}  \mu(t)\hat{\eta}_t(\bx) (\Delta h_t(\bx))^2\de\bx\de t\nonumber \\
&+ \int_{\Omega\times[0,T]}  \mu(t) \tilde{\theta}_t(\bx) \Delta h_t(\bx) \de\bx\de t = 0.
\end{align}
We next write 
\begin{align}
&\int_{\Omega\times[0,T]} \mu(t) \partial_t h_t(\bx) \Delta h_t(\bx) \de\bx\de t \nonumber\\
 &\stackrel{(a)}{=}  - \int_{\Omega\times[0,T]} \mu(t) \<\nabla h_t(\bx), \nabla (\partial_t h_t(\bx)\> \de\bx\de t\nonumber\\
& =  - \int_{\Omega\times[0,T]} \mu(t) \frac{1}{2}\frac{\de}{\de t} |\nabla h_t(\bx)|^2 \de\bx\de t\nonumber\\
& \stackrel{(b)}= \frac{1}{2}\int_\Omega \mu(0)|\nabla h_0(\bx)|^2\de \bx- \frac{1}{2}\int_\Omega \mu(T)|\nabla h_T(\bx)|^2\de\bx\nonumber\\
&\hspace{11em}+  \frac{1}{2}\int_{\Omega\times[0,T]} \mu'(t)  |\nabla h_t(\bx)|^2 \de\bx\de t\nonumber\\
&\stackrel{(c)}\ge \frac{1}{2}\int_{\Omega\times[0,T]} \mu'(t)  |\nabla h_t(\bx)|^2 \de\bx\de t
\end{align}
 Here $(a)$ follows from integration by parts in the integral over $\Omega$ and using the fact that $\<\bn(\bx),\nabla h_t(\bx)\> = 0$ for $\bx\in \partial \Omega$ and $t\in [0,T]$. Also, $(b)$ follows from integration by parts in the integral over $t$.
 Finally $(c)$ holds because $h_T(\bx) = 0$ for $\bx\in \Omega$ and  $\mu(0) \ge 0$. 
 
 Getting back to~\eqref{eq:unique-3} and using the properties of function $\mu(t)$, we have
 \begin{align}
 &\frac{1}{2}\int_{\Omega\times[0,T]} \mu'(t)  |\nabla h_t(\bx)|^2 \de\bx\de t + \int_{\Omega\times[0,T]} \mu(t)  \hat{\eta}(\bx) (\Delta h_t(\bx))^2\de\bx\de t\nonumber\\
 & \le -\int_{\Omega\times[0,T]}  \mu(t) \tilde{\theta}_t(\bx) \Delta h_t(\bx) \de\bx\de t  \nonumber\\
 & = -\int_{\Omega\times[0,T]}  \mu(t) {\theta}_t(\bx) \Delta h_t(\bx) \de\bx\de t  \nonumber\\
 &\hspace{2em}-  \int_{\Omega\times[0,T]}  \mu(t) \<\nabla f(\bx), \nabla h_t(\bx)\> \Delta h_t(\bx) \de\bx\de t   \nonumber\\
 & = \int_{\Omega\times[0,T]}  \mu(t) \<\nabla\theta_t(\bx), \nabla h_t(\bx)\> \de\bx\de t\nonumber \\
 &\hspace{2em}-  \int_{\Omega\times[0,T]}  \mu(t) \<\nabla f(\bx), \nabla h_t(\bx)\> \Delta h_t(\bx) \de\bx\de t \nonumber\\
 &\le\int_{\Omega\times[0,T]}\hspace{-1em}  \mu(t) \<\nabla\theta_t(\bx), \nabla h_t(\bx)\> \de\bx\de t + \int_{\Omega\times[0,T]}  \frac{\mu(t)}{2\tau} |\nabla f(\bx)|^2 |\nabla h_t(\bx)|^2\de\bx \de t\nonumber\\
 & + \int_{\Omega\times[0,T]} \mu(t) \frac{\tau}{2} (\Delta h_t(\bx))^2 \de\bx\de t 
 \,,\label{eq:unique-4}
 \end{align}
The penultimate step follows from integration by parts and the constraint $\<\nabla h_t(\bx), \bn(\bx)\> = 0$, for $\bx\in \partial \Omega$ and $t\in [0,T]$, and the last step follows by applying 
Cauchy-Schwartz inequality. We continue by applying Cauchy-Schwartz inequality  again to get
 \begin{align}
  &\int_{\Omega\times[0,T]}  \mu(t) \<\nabla\theta_t(\bx), \nabla h_t(\bx)\> \de\bx\de t\nonumber\\
  &\le \int_{\Omega\times[0,T]}\bigg[\frac{\tau\mu(t)}{2C^2}  |\nabla \theta_t(\bx)|^2+ \frac{C^2 \mu(t)}{2\tau} |\nabla h_t(\bx)|^2 \bigg] \de\bx \,\de t\,,\label{eq:unique-5}
   \end{align}
   where $C = \sup_{\bx\in \Omega} |\nabla f(\bx)|$.
   Combining Equations~\eqref{eq:unique-4} and \eqref{eq:unique-5}, we get
   \begin{align}\label{eq:unique-6}
   &\frac{1}{2}\int_{\Omega\times[0,T]} \left(\mu'(t) - \frac{\mu(t)}{\tau} \left(|\nabla f(\bx)|^2 +C^2\right) \right)  |\nabla h_t(\bx)|^2 \de\bx\de t\nonumber\\
   & +\int_{\Omega\times[0,T]} \mu(t) \left(\hat{\eta}(\bx)- \frac{\tau}{2}\right) (\Delta h_t(\bx))^2\de\bx\de t \le \frac{\tau\mu_{\max}}{2C^2} \int_{\Omega\times[0,T]}  |\nabla \theta_t(\bx)|^2\de\bx\de t\,,
   \end{align} 
 where $\mu_{\max} = \sup_{t\in[0,T]} \mu(t)$.
We find a smooth function $\mu(t)$ such that
 \begin{enumerate}
 \item $\mu(t) \ge \mu_{\min}>0$, for $t\in [0,T]$,
 \item $\mu'(t) - \frac{2C^2}{\tau} \mu(t) \ge 0$.
 \end{enumerate}
 A particular choice is 
 \[
 \mu(t) = \mu_{\min}e^{\frac{2C^2}{\tau}t}.
 \]
 
 We then obtain from~\eqref{eq:unique-6} that
 \begin{align}\label{eq:unique-7}
 \int_{\Omega\times[0,T]}  \hat{\eta}(\bx) (\Delta h_t(\bx))^2\de\bx\de t\le \frac{\tau \mu_{\max}}{\mu_{\min}C^2} \int_{\Omega\times[0,T]}  |\nabla \theta_t(\bx)|^2\de\bx\de t\, .
 \end{align}
   
   Now by employing~\eqref{eq:unique-7} in bound \eqref{eq:unique-2} combined with \eqref{eq:unique-0} we get
   \begin{align}
\int_0^T \int_\Omega \theta_t(\bx) u_t(\bx)\de \bx \de t\le & \frac{1}{C}\sqrt{\frac{\tau\mu_{\max}}{\mu_{\min}}}  \|\nabla \theta\|_{\Lp^2(\Omega\times[0,T])} \nonumber\\
&\left(\int_{\Omega\times [0,T]} \frac{|\eta_t(\bx) - \hat{\eta}_t(\bx)|^2}{\hat{\eta}_t(\bx)} u_t^2(\bx)\, \de \bx\, \de t\right)^{1/2}\,.\label{eq:unique-8}
\end{align}
Next we note that
\begin{equation*}
\begin{split}
&\int_{\Omega\times[0,T]} |\eta_t(\bx) - \hat{\eta}_t(\bx)|^2 u_t^2(\bx) \de \bx \de t \\
&\le 2 \int_{\Omega\times[0,T]} |\eta^M_t(\bx) - \hat{\eta}_t(\bx)|^2 u_t^2(\bx) \de \bx \de t \\
&+ 2 \int_{\Omega\times[0,T]} (({\eta}_t(\bx)- M)_+)^2 u_t^2(\bx) \de \bx \de t.
\end{split}
\end{equation*}
Call the first integral $I_1$ and denote the second one by $I_2$.
The integrand in $I_2$ is pointwise bounded by 
\[
2\eta_t^2(\bx) u_t^2(\bx)  \,\mathbb{I}(\eta_t(\bx)> M ) \le
2(\Phi(\rho_t(\bx)) - \Phi(\tilde{\rho}_t(\bx)))^2 \,\mathbb{I}(\eta_t(\bx)> M ) \,.
\]
Since $\rho_t, \tilde{\rho}_t \in \Lp^4$, we have that $(\Phi(\rho_t) - \Phi(\tilde{\rho}_t))^2$ has bounded integral. Hence, we can choose $M$
large enough such that  $I_2$ is arbitrarily small. Moreover we can choose the smooth approximation $\hat{\eta}_t$ such that $I_1$ is also arbitrarily small. Putting everything together, we obtain that
\[\int_{\Omega\times[0,T]} |\eta_t(\bx) - \hat{\eta}_t(\bx)|^2 u_t^2(\bx) \de \bx \de t\le \eps\,, \]   
where $\eps$ is an arbitrary small fixed constant. 

In addition, since $\hat{\eta}_t(\bx) \ge \tau$, invoking~\eqref{eq:unique-8} we have
\[
\int_0^T \int_\Omega \theta_t(\bx) u_t(\bx)\de \bx \de t\le  \frac{1}{C}\sqrt{\frac{\mu_{\max}}{\mu_{\min}}}  \|\nabla \theta\|_{\Lp^2(\Omega\times[0, T])}  \sqrt{\eps}\,.
\] 
Since $\frac{\mu_{\max}}{\mu_{\min}} = e^{\frac{2C^2}{\tau}T}<\infty$ and $\theta$ are independent of $\eps$, by choosing $\eps$ arbitrarily small, we conclude that
\[
\int_0^T \int_\Omega \theta_t(\bx) u_t(\bx)\de \bx \de t\le 0 \,.
\] 
Since $\theta_t(\bx) \ge 0$ was an arbitrary smooth function supported on $\Omega\times [0,T]$, this implies that $u\le 0$, almost everywhere. By repeating a similar argument, we get
$u\ge 0$, almost everywhere. The result follows.
\end{proof}

\section{General results on the PDE \eqref{EQ:PDEDELTA} ($\delta>0$)}\label{app:genPDE}

This appendix contains some basic results on the PDE (\ref{EQ:PDEDELTA}). Although these facts are standard, we collect them here for the reader's convenience. 

In fact, we will consider a more general PDE, which also includes as a special case the one studied in \cite{mei2018mean}.
We consider a compact convex domain $D$, with a non-empty interior.
 The general PDE is parametrized by two functions
$V\in \Cont^{2}(D)$ and $U\in \Cont^{2}(D\times D)$, with $U(\bx_1,\bx_2)=U(\bx_2,\bx_1)$.
(Unlike in \cite{mei2018mean},  we consider the case of a compact domain with Neumann boundary conditions.)
Given $\rho\in\cuP_2(D)$, 
we define
\begin{align}
\Psi(\bx,\rho)\equiv V(\bx)+\int U(\bx,\tbx)\, \rho(\de\tbx) \, ,
\end{align}
and consider the PDE
\begin{align}
\partial_t\rho(\bx,t) = \nabla \cdot(\rho(\bx,t)\nabla\Psi(\bx,\rho_t))+ \tau\, \Delta\rho(\bx,t)\, ,
\label{eq:PDEbis}
\end{align}
with initial and boundary conditions
\begin{equation}\label{eq:BCbis}
\begin{split}
\rho_{0} & = \rho_{\sinit}\, ,\\
\<\bn(\bx),\rho_t(\bx)\nabla\Psi(\bx,\rho_t)&+\, \tau\, \nabla\rho_t(\bx)\> = 0\, ,\;\;\;\;\; \forall \bx\in\partial D\,.\\
\end{split}
\end{equation}
We will typically write  $\rho_t(\,\cdot\,)$ for a solution of this
equation, in order to emphasize that it is a function of $t$ that takes values in $\cuP_2(D)$, and $\rho(\bx,t)$
for the corresponding density, viewed as a function on $D\times [0,T]$. Let us formally define the concept of weak solutions for the PDE \eqref{eq:PDEbis}.

Note that the PDE (\ref{EQ:PDEDELTA}) is a special case of this setting with $D=\Omega^{\delta}$, and $V(\bw)$ and $U(\bw_1,\bw_2) = U(\bw_1-\bw_2)$ defined as follows:
\begin{equation}\label{eq:KtoUV}
\begin{split}
V(\bw) &  \equiv -\nu_0\,K^{\delta}*f(\bw) =  -\nu_0\int K^{\delta}(\bw-\bx)\, f(\bx)\, \de\bx\, ,\\
U(\bw) &  \equiv \nu_0\,K^{\delta}*K^{\delta}(\bw)= \nu_0 \int K^{\delta}(\bw-\bx)\, K^{\delta}(\bx)\, \de\bx\,.
\end{split}
\end{equation}

\begin{remark} For the special choice of $V$ and $U$ given by \eqref{eq:KtoUV} the following properties hold:
\begin{enumerate}\label{rem1}
\item $V:\Omega^\delta\to \reals$ is convex for any $\delta>0$. 

\item $\lim_{\delta\to 0}\sup_{\bw\in\Omega^\delta}|V(\bw)+\nu_0\,f(\bw)|=0$.
\item $U(\bw) = \nu_0\, \delta^{-2d}K^{(2)}(\bw/\delta)$, where $K^{(2)} = K*K$.
\end{enumerate}
\end{remark}
\begin{proof}
We have $V(\bw) = -\nu_0\int K^{\delta}(\bx) f(\bw-\bx) \de\bx$. Hence,
\begin{align*}
V(\lambda \bw + (1-\lambda)\bw') &= -\nu_0\int K^{\delta}(\bx) f(\lambda \bw + (1-\lambda)\bw' -\bx) \de\bx\\
&= -\nu_0\int K^{\delta}(\bx) f(\lambda (\bw-\bx) + (1-\lambda)(\bw' - \bx)) \de\bx\\
&\le -\nu_0\int K^{\delta}(\bx) \left(\lambda f(\bw-\bx) + (1-\lambda) f(\bw'-\bx)\right)\de\bx \\
&= \lambda V(\bw) + (1-\lambda) V(\bw')\,.
\end{align*}
This proves that $V(\bw)$ is convex. The next two properties are straightforward.
\end{proof}

\begin{definition}[Weak solution of PDE]\label{def:weaksolPDE}
We say that $\rho:[0,T]\to\cuP_2(D)$ is a \emph{weak solution} of \eqref{eq:PDEbis} with initial and boundary
conditions \eqref{eq:BCbis} if $\rho\in \Cont([0,T],\cuP_2(D))$ and,
 for any test function $h\in \Cont^{2,1}(D\times [0,T])$,
satisfying $\<\bn(\bx),\nabla h(\bx,t)\>=0$ for all $\bx\in\partial D, t\in[0,T]$,
we have 
\begin{align}
&\int_{D}h(\bx,T)\,\rho_T(\de\bx) - \int_{D}h(\bx,0)\,\rho_{\sinit}(\de\bx)  \label{eq:WeakDef}\\
&= 
\int_{0}^T\int_{D}\left[\partial_th(\bx,t) +\tau\Delta h(\bx,t) -\<\nabla\Psi(\bx,\rho_t),\nabla h(\bx,t)\>
\right] \,\rho_t(\de\bx)\, \de t\, . \nonumber
\end{align}
\end{definition}

We now state and prove Duhamel's principle for the PDE \eqref{eq:PDEbis}. Duhamel's principle follows from the fact that the right-hand side of \eqref{eq:PDEbis} contains the linear diffusion term $\tau \Delta \rho$, and it will be crucial for the proofs that will follow. 
\begin{lemma}[Duhamel's principle]
Assume $\tau>0$. Let $G^{D}(\bx,\by;t)$ denote the heat kernel with Neumann boundary conditions, defined in \eqref{eq:defheatbd1}-\eqref{eq:defheatbd3}. Let $\rho$ be a weak solution of the PDE \eqref{eq:PDEbis} with initial and boundary
conditions \eqref{eq:BCbis}. Then, for any $t>0$, $\rho_t(\de\bx)$
has a density, denoted by $\rho(\;\cdot\;, t)$, which satisfies, for any $t>0$,
\begin{align}
\rho(\bx,t) &= \int_{D} G^{D}(\bx,\by;\tau t)\, \rho_{\sinit}(\de\by) \nonumber\\
&-
\int_{0}^t \int_{D} \<\nabla_{\by} G^{D}(\bx,\by;\tau(t-s)),\nabla_{\by} \Psi(\by;\rho_s)\>\, \rho(\by,s)\, \de\by\,\de s\, . 
\label{eq:Duhamel}
\end{align}
\end{lemma}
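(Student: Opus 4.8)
The plan is to test the weak formulation \eqref{eq:WeakDef} against a regularized \emph{backward} Neumann heat kernel and exploit the exact cancellation of the two diffusion terms $\partial_t h$ and $\tau\Delta h$. Fix $t>0$ and a point $\bx_0\in D$, and for $\eps>0$ define, on $[0,t]$,
\[
h_\eps(\by,s)\;=\;G^{D}\!\big(\bx_0,\by;\tau(t-s)+\eps\big)\, .
\]
Since the argument $\tau(t-s)+\eps$ stays in $[\eps,\tau t+\eps]\subset(0,\infty)$, the function $h_\eps$ is smooth, hence lies in $\Cont^{2,1}(D\times[0,t])$; moreover, by symmetry of $G^{D}$ in its first two arguments and the Neumann property of the heat kernel \eqref{eq:defheatbd1}--\eqref{eq:defheatbd3}, one has $\<\bn(\by),\nabla_\by h_\eps(\by,s)\>=0$ for $\by\in\partial D$, so $h_\eps$ is an admissible test function. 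Finally, since $u\mapsto G^{D}(\bx_0,\by;u)$ solves the heat equation, the chain rule gives $\partial_s h_\eps(\by,s)+\tau\,\Delta_\by h_\eps(\by,s)=0$ on $D\times[0,t]$.

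Substituting $h_\eps$ into \eqref{eq:WeakDef} with $T$ replaced by $t$, the bracket $\partial_s h+\tau\Delta h$ vanishes, and using $h_\eps(\by,t)=G^{D}(\bx_0,\by;\eps)$ and $h_\eps(\by,0)=G^{D}(\bx_0,\by;\tau t+\eps)$ we obtain
\begin{align*}
\int_{D}G^{D}(\bx_0,\by;\eps)\,\rho_t(\de\by)
&=\int_{D}G^{D}(\bx_0,\by;\tau t+\eps)\,\rho_{\sinit}(\de\by)\\
&\quad-\int_{0}^{t}\!\!\int_{D}\big\<\nabla_\by G^{D}(\bx_0,\by;\tau(t-s)+\eps),\,\nabla_\by\Psi(\by;\rho_s)\big\>\,\rho_s(\de\by)\,\de s\, .
\end{align*}
It then remains to let $\eps\downarrow0$. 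The first term on the right converges to $\int_{D}G^{D}(\bx_0,\by;\tau t)\rho_{\sinit}(\de\by)$ by continuity of the heat kernel at $u=\tau t>0$. For the second term one uses the Gaussian gradient bound $\int_{D}|\nabla_\by G^{D}(\bx_0,\by;u)|\,\de\by\le C\,u^{-1/2}$ together with the uniform bounds $\|\nabla_\by\Psi(\,\cdot\,;\rho)\|_{\Lp^\infty(D)}\le\|\nabla V\|_{\Lp^\infty(D)}+\|\nabla U\|_{\Lp^\infty(D\times D)}$ (a consequence of $V,U\in\Cont^2$ on the compact set $D$) and $\sup_{s\le t}\|\rho(\,\cdot\,,s)\|_{\Lp^\infty(D)}\le M<\infty$; these yield the $s$-integrable majorant $s\mapsto C M\,(\tau(t-s))^{-1/2}$, so dominated convergence applies and the term converges to $\int_0^t\!\int_D\<\nabla_\by G^{D}(\bx_0,\by;\tau(t-s)),\nabla_\by\Psi(\by;\rho_s)\>\rho(\by,s)\,\de\by\,\de s$. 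On the left, $\int_{D}G^{D}(\bx_0,\by;\eps)\rho_t(\de\by)$ is the Neumann heat semigroup at time $\eps$ applied to $\rho_t$ and evaluated at $\bx_0$, which converges weakly (as measures in $\bx_0$) to $\rho_t$; since the right-hand side is, for every $\bx_0$, a fixed bounded (hence integrable on $D$) function of $\bx_0$, it follows that $\rho_t$ has that function as its density and that this density equals the right-hand side of \eqref{eq:Duhamel}. Applying this for every positive time also licenses writing $\rho_s(\de\by)=\rho(\by,s)\,\de\by$ inside the time integral.

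The main obstacle is precisely the $\eps\downarrow0$ passage in the time-convolution term: as $s\uparrow t$ the kernel $\nabla_\by G^{D}$ becomes singular, and making $\int_{D}|\nabla_\by G^{D}(\bx_0,\by;\tau(t-s))|\,\rho_s(\de\by)$ integrable in $s$ requires both a sharp Gaussian gradient estimate for the Neumann heat kernel on the $\Cont^2$ convex domain $D$ and an a priori $\Lp^\infty$ bound on the density $\rho(\,\cdot\,,s)$, uniform for $s$ in a compact time interval. The latter --- and indeed the very existence of a density for each $s>0$ --- is supplied by the probabilistic representation $\rho_s=\Law(\bX_s)$ for the reflected nonlinear diffusion \eqref{eq:FirstSDE-Main} (nondegenerate since $\tau>0$), developed in Appendix~\ref{app:nonlinear}, together with the regularity estimates of Appendix~\ref{app:reg}; granted these inputs, every other step is routine. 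One should also record beforehand the standard facts about $G^{D}$ used here: existence, $\Cont^\infty$-smoothness for positive time, symmetry in the first two arguments, the Neumann boundary condition, and two-sided Gaussian bounds on $G^{D}$ and $\nabla G^{D}$.
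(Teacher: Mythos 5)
Your overall strategy (testing the weak formulation against a backward Neumann heat kernel so that the $\partial_s h+\tau\Delta h$ terms cancel) is the same as the paper's, but the specific regularization you chose creates a gap that you correctly identify as ``the main obstacle'' and then resolve in a way that is circular. Because you evaluate at a point $\bx_0$ and regularize only by shifting time by $\eps$, the $\eps\downarrow 0$ limit of the time-convolution term forces you to control $\int_D|\nabla_{\by}G^D(\bx_0,\by;\tau(t-s))|\,\rho_s(\de\by)$ near $s=t$; the $\Lp^1$ bound $\int_D|\nabla_{\by}G^D(\bx_0,\by;u)|\,\de\by\le C u^{-1/2}$ only helps if $\rho_s$ has a density bounded in $\Lp^\infty$ uniformly in $s$ (for a general probability measure the worst case is $u^{-(d+1)/2}$, not integrable). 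But in the paper that $\Lp^\infty$ bound (Lemma \ref{lemma:boundlinf}) is \emph{derived from} Duhamel's principle via a fixed-point argument on the right-hand side of \eqref{eq:Duhamel}, and the probabilistic representation $\rho_s=\Law(\bX_s)$ for an arbitrary weak solution is itself established through Corollary \ref{lemma:UniqLinear}, whose proof also uses \eqref{eq:Duhamel}. Even the existence of a density at intermediate times $s\in(0,t)$ --- which you need to write $\rho_s(\de\by)=\rho(\by,s)\de\by$ --- is part of what the lemma is proving. So as written the argument assumes its own conclusion.

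The fix is the paper's choice of mollification: instead of regularizing in time, smooth in space by testing against $h(\by,s)=G_\varphi(\by;\tau(t-s))=\int_D G^D(\bz,\by;\tau(t-s))\varphi(\bz)\,\de\bz$ for an arbitrary $\varphi\in\Cont^2(D)$. This $h$ is an admissible test function with no singularity at $s=t$ (it converges to $\varphi$ there, and $\nabla h$ stays bounded), so the weak formulation can be applied directly with no $\eps$ and no a priori information on $\rho_s$ beyond its being a probability measure. The singular kernel $\nabla G^D(\cdot,\cdot;t-s)$ only appears after exchanging the order of integration by Fubini, where it is integrated against Lebesgue measure in the $\varphi$-variable and the $C(t-s)^{-1/2}$ bound of Theorem \ref{thm:Kernel} gives absolute integrability. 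Arbitrariness of $\varphi$ then yields both the existence of the density and the identity \eqref{eq:Duhamel}.
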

\begin{proof}
By rescaling time, without loss of generality, we set $\tau=1$.
Let $\varphi\in \Cont^2(D)$, and define 
\begin{align}
G_{\varphi}(\bx;t) = \int_{D} G^{D}(\bx,\by;t)\, \varphi(\by)\, \de\by\, .
\end{align}
By the properties of the heat kernel, we have:
\begin{align}
&\Big(\partial_t-\Delta\Big) G_{\varphi}(\bx;t) = 0\;\;\;\;\; \forall t>0\, ,\label{heat1}\\
&\<\bn(\bx), \nabla G_{\varphi}(\bx;t) \> =0 \;\;\; \forall \bx\in\partial D\, ,\label{heat2}\\
&\lim_{t\to 0} G_{\varphi}(\bx;t) = G_{\varphi}(\bx;0) = \varphi(\bx)\, .
\end{align}

Let $\rho_t$ be a weak solution. We choose the test function $h(\bx,s) = G_{\varphi}(\bx;t-s)$ in \eqref{eq:WeakDef} with $T=t$. Note that by~\eqref{heat1}, this test function satisfies the 
Neumann boundary condition. In addition, by~\eqref{heat2} we obtain 
\begin{align}
\int_{D}\varphi(\by)\,\rho_{t}(\de\by)& = \int_{D}G_{\varphi}(\bx;t)\,\rho_{\sinit}(\de\bx) \nonumber\\
&-
\int_{0}^{t}\int_{D}\<\nabla\Psi(\bx,\rho_s),\nabla G_{\varphi}(\bx;t-s)\>
 \,\rho_s(\de\bx)\, \de s\, . 
\end{align}
By an application of Fubini's theorem, this implies
\begin{align}
\int_{D}\varphi(\by)\,\rho_{t}(\de\by)& = \int_{D}\int_{D}G^{D}(\bx,\by;t)\,\rho_{\sinit}(\de\bx) \,\varphi(\by)\, \de\by\nonumber\\
&- \int_{0}^{t}\int_{D}\int_{D}\<\nabla\Psi(\bx,\rho_s),\nabla G^{D}(\bx,\by;t-s)\>
 \,\varphi(\by)\,\rho_s(\de\bx) \de s  \, \de\by\, . 
\end{align}
Since $\varphi\in \Cont^{2}(D)$ is arbitrary, we obtain that $\rho_{t}$ admits a density and \eqref{eq:Duhamel} follows.
\end{proof}

As an intermediate step towards proving existence and uniqueness, we consider a linearized problem
\begin{align}
\partial_t\rho(\bx,t) = \nabla \cdot(\rho(\bx,t)\nabla\Psi_*(\bx,t))+ \tau\, \Delta\rho(\bx,t)\, ,
\label{eq:LinPDE}
\end{align}
with initial and boundary conditions
\begin{equation}\label{eq:BCbis2}
\begin{split}
\rho_{0} & = \rho_{\sinit}\, ,\\
\<\bn(\bx),\rho_t(\bx)\nabla\Psi_*(\bx,t)&+\, \tau\, \nabla\rho_t(\bx)\> = 0\, ,\;\;\;\;\; \forall \bx\in\partial D\,.\\
\end{split}
\end{equation}
Here, $\Psi_*:D\times\reals\to\reals$ is independent of $\rho$, and weak solutions are defined as for 
the original problem (with Neumann boundary conditions).
\begin{corollary}[Uniqueness of linearized problem]\label{lemma:UniqLinear}
Assume that $\tau>0$ and also that
$$\|\nabla\Psi_*\|_{\Lp^\infty(D\times [0, T])}\equiv \sup_{t\in [0, T]}\sup_{\bx\in D}|\nabla\Psi_*(\bx,t)| < \infty.$$
Then, the PDE \eqref{eq:LinPDE} with initial and boundary
conditions \eqref{eq:BCbis2} has at most one weak solution.
\end{corollary}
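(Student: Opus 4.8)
The plan is to combine Duhamel's representation with a Gr\"onwall argument whose kernel is weakly singular. First I would note that the Duhamel principle established above for the PDE \eqref{eq:PDEbis} applies to the linearized equation \eqref{eq:LinPDE} with $\Psi_*$ in place of $\Psi(\,\cdot\,,\rho_t)$: its proof used only that the right-hand side of the equation contains the linear diffusion term $\tau\Delta\rho$ and that $\Psi$ enters the drift linearly, and $\Psi_*$ does not depend on $\rho$. Running the same computation with the test function $h(\bx,s)=G_{\varphi}(\bx;\tau(t-s))$ shows that any weak solution $\rho$ of \eqref{eq:LinPDE}--\eqref{eq:BCbis2} has, for every $t>0$, a density $\rho(\,\cdot\,,t)$ with
\[
\rho(\bx,t) = \int_{D} G^{D}(\bx,\by;\tau t)\,\rho_{\sinit}(\de\by) - \int_{0}^{t}\!\!\int_{D}\big\langle \nabla_{\by} G^{D}(\bx,\by;\tau(t-s)),\nabla_{\by}\Psi_*(\by,s)\big\rangle\,\rho(\by,s)\,\de\by\,\de s .
\]

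Next, let $\rho^{(1)},\rho^{(2)}$ be two weak solutions with the common initialization $\rho_{\sinit}$, and set $u_t(\bx)=\rho^{(1)}(\bx,t)-\rho^{(2)}(\bx,t)$. Subtracting the two Duhamel identities, the heat-semigroup term cancels and we are left with the homogeneous linear identity
\[
u_t(\bx) = -\int_{0}^{t}\!\!\int_{D}\big\langle \nabla_{\by} G^{D}(\bx,\by;\tau(t-s)),\nabla_{\by}\Psi_*(\by,s)\big\rangle\,u_s(\by)\,\de\by\,\de s .
\]
Taking absolute values, integrating over $\bx\in D$, using Tonelli's theorem to perform the $\bx$-integration first, and invoking both $\|\nabla\Psi_*\|_{\Lp^\infty(D\times[0,T])}=:L<\infty$ and the classical gradient estimate for the Neumann heat kernel on the compact convex $\Cont^2$ domain $D$, namely $\int_{D}|\nabla_{\by}G^{D}(\bx,\by;\sigma)|\,\de\bx\le C(D)\,\sigma^{-1/2}$, I obtain for $\phi(t):=\|u_t\|_{\Lp^1(D)}$ the inequality
\[
\phi(t) \le \frac{C(D)\,L}{\sqrt{\tau}}\int_{0}^{t}\frac{\phi(s)}{\sqrt{t-s}}\,\de s .
\]
Since $\rho^{(1)}_t$ and $\rho^{(2)}_t$ are probability measures, $\phi(t)\le 2$ for all $t\in[0,T]$, so $\phi$ is a bounded measurable function; note also $\phi(0)=0$.

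Finally, this is a Gr\"onwall-type inequality with the integrable singular kernel $(t-s)^{-1/2}$. Substituting the inequality into itself and using $\int_{s}^{t}(t-r)^{-1/2}(r-s)^{-1/2}\,\de r = \pi$ turns it into $\phi(t)\le (C(D)L/\sqrt{\tau})^{2}\,\pi\int_{0}^{t}\phi(s)\,\de s$, which by the ordinary Gr\"onwall lemma (with vanishing constant term) forces $\phi\equiv 0$ on $[0,T]$; hence $\rho^{(1)}_t=\rho^{(2)}_t$ for every $t$, which is the claimed uniqueness. Since everything else is routine, the main (mild) obstacles are the Neumann heat-kernel gradient estimate $\int_{D}|\nabla_{\by}G^{D}(\bx,\by;\sigma)|\,\de\bx\le C(D)\,\sigma^{-1/2}$ — standard, but the place where the $\Cont^2$ regularity of $\partial D$ from {\sf (A1)} is used — together with the justification of Tonelli and of the finiteness of the space--time integral near $s=t$ in the Duhamel step, which is itself controlled by that same $\sigma^{-1/2}$ bound.
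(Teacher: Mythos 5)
Your proposal is correct and rests on the same two pillars as the paper's proof: the Duhamel representation (which, as you note, applies verbatim to the linearized equation — the paper's own argument invokes it in exactly this way) and the $O(\sigma^{-1/2})$ bound on the $\Lp^1$ norm of the Neumann heat-kernel gradient. Where you diverge is in the choice of norm and in how the weakly singular Volterra kernel is handled. The paper measures the difference of the two solutions in $\Lp^\infty(D)$, uses the estimate $\int_D|\nabla_\by G^D(\bx,\by;\sigma)|\,\de\by\le C(D)\sigma^{-1/2}$ (gradient and integration in the same variable), and closes the argument by an absorption inequality valid for $t$ small, iterated over short time intervals. You instead work with $\phi(t)=\|u_t\|_{\Lp^1(D)}$, substitute the singular-kernel inequality into itself once so that the Beta-function identity $\int_s^t(t-r)^{-1/2}(r-s)^{-1/2}\,\de r=\pi$ produces a regular kernel, and then apply the ordinary Gr\"onwall lemma. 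Both routes are standard and correct; your $\Lp^1$ variant has the small advantage that the a priori bound $\phi(t)\le 2$ is automatic because both solutions are probability densities, whereas the $\Lp^\infty$ route implicitly requires finiteness of $\sup_{s\le t}\|\rho^{(1)}(\cdot,s)-\rho^{(2)}(\cdot,s)\|_{\Lp^\infty(D)}$.

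One point you should make explicit: the kernel estimate you invoke is $\int_D|\nabla_\by G^D(\bx,\by;\sigma)|\,\de\bx\le C(D)\sigma^{-1/2}$, with integration in the \emph{first} argument while the gradient acts on the second. This is not literally the estimate recorded in Theorem \ref{thm:Kernel}(3), where the integration runs over the same variable as the gradient. The crossed version does hold — for instance via the decomposition $G^D=G+G^D_R$ of Theorem \ref{thm:Kernel}(1), since $|\nabla_\by G(\bx,\by;\sigma)|$ for the free-space Gaussian is symmetric under $\bx\leftrightarrow\by$ and $G^D_R$ is smooth with bounded gradient on the compact domain — but it deserves a sentence of justification rather than a citation of the same-variable bound as if the two were identical.
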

\begin{proof}
Without loss of generality, we will set $\tau=1$.
Assume by contradiction that $\rho^{(1)}$, $\rho^{(2)}$ are two solutions. Fix arbitrary $0\le t'\le t$. Then, by an application of \eqref{eq:Duhamel} to $\Psi_*(\bx,t)$, we have
\begin{align*}
&\big|\rho^{(1)}(\bx,t')- \rho^{(2)}(\bx,t')\big|\le \big\|\rho^{(1)}(\,\cdot\,,0)- \rho^{(2)}(\,\cdot\, ,0)\big\|_{\Lp^\infty(D)}\\
&+\|\nabla\Psi_*\|_{\Lp^\infty(D\times[0, T])}\int_{0}^{t'} \hspace{-.5em}\int_{D} |\nabla_{\by} G^{D}(\bx,\by;t'-s)|\, 
\big|\rho^{(1)}(\by,s)- \rho^{(2)}(\by,s)\big|\, \de\by\de s\\
&\le  \big\|\rho^{(1)}(\,\cdot\,,0)- \rho^{(2)}(\,\cdot\, ,0)\big\|_{\Lp^\infty(D)}\\
&\quad+C(D) \|\nabla\Psi_*\|_{\Lp^\infty(D\times [0, T])}\int_{0}^{t'} \frac{1}{\sqrt{t'-s}} \big\|\rho^{(1)}(\,\cdot\, ,s)- \rho^{(2)}(\, \cdot\, ,s)\big\|_{\Lp^\infty(D)}\de s\\
&\le  \big\|\rho^{(1)}(\,\cdot\,,0)- \rho^{(2)}(\,\cdot\, ,0)\big\|_{\Lp^\infty(D)}\\
&\quad+C(D) \|\nabla\Psi_*\|_{\Lp^\infty(D\times [0, T])} \sqrt{t}\,
\,\sup_{s\le t}\big\|\rho^{(1)}(\,\cdot\, ,s)- \rho^{(2)}(\, \cdot\, ,s)\big\|_{\Lp^\infty(D)}\,, 
\end{align*}
where we used the estimates of Theorem \ref{thm:Kernel}. By taking supremum over $0\le t'\le t$ form both sides, we obtain that for $t<1/(C(D)^2 \|\nabla\Psi_*\|_{\Lp^\infty(D\times[0, T])}^2)$,
\begin{align}
\sup_{s\le t}\big\|\rho^{(1)}(\,\cdot\, ,s)- \rho^{(2)}(\, \cdot\, ,s)\big\|_{\Lp^\infty(D)}\le 
\frac{\big\|\rho^{(1)}(\,\cdot\,,0)- \rho^{(2)}(\,\cdot\, ,0)\big\|_{\Lp^\infty(D)}}{1-C(D)\|\nabla\Psi_*\|_{\Lp^\infty(D\times [0, T])}\sqrt{t}}\, .
\end{align}
Therefore, the two solutions coincide if we fix the initial condition $\rho^{(1)}(\,\cdot\,,0)=\rho^{(2)}(\,\cdot\,,0) = \rho_{\sinit}$.
For larger $t$, the claim follows by iterating the above argument.
\end{proof}

\section{Nonlinear dynamics}
\label{app:nonlinear}

The `nonlinear dynamics' plays an important role in our proof of Theorem \ref{TH:CONVPDE}. 
In this section  we adopt the same general setting as in Appendix \ref{app:genPDE}, remembering that for our application we set $D=\Omega^{\delta}$
and $U,V$ as per Eq.~(\ref{eq:KtoUV}).

Given $\rho:[0,T]\to \cuP_2(D)$,
consider the following stochastic differential equation for a process $(\bX_t)_{t\in [0,T]}$,  with a reflecting boundary condition
(known as `Skorokhod problem')
\begin{align}
\bX_0 & \sim \rho_{\sinit}\, \label{eq:SDE1}\\
\de\bX_t & =-\nabla \Psi(\bX_t,\rho_t) \, \de t+ \sqrt{2\tau}\,\de\bB_t + \de\bPhi_t\, ,   \label{eq:SDE2}
\end{align}
where $(\bB_t)_{t\ge 0}$ is a standard $d$-dimensional Brownian motion and $(\bPhi_t)_{t\ge 0}$ enforces the reflecting boundary
by satisfying the following constraints (recall that $\bn(\bx)$ is the normal to $\partial D$ at $\bx\in\partial D$, directed inside):
\begin{itemize}
\item[$(i)$] $(\bPhi_t)_{t\ge 0}$ is adapted (and hence so is $(\bX_t)_{t\ge 0}$).
\item[$(ii)$] $t\mapsto \bPhi_t$ has (almost surely) bounded variation. Denoting by $\|\bPhi\|_{\sTV}(t)$ the total variation 
of $\bPhi$ on the interval $[0,t]$, we define the measure $\mu_{\Phi}$ on $[0,T]$ by $\mu_{\Phi}([0,t]) = \|\bPhi\|_{\sTV}(t)$.
\item[$(iii)$] $\mu_{\Phi}(\{t:\, \bX_t\in D^\circ\}) =0$, where $D^\circ$ denotes the interior of $D$.
\item[$(iv)$] We have that, for $t\in [0,T]$,
\begin{align}
\bPhi_t = \int_0^t \bN_s \, \mu_{\Phi}(\de s)\, ,
\end{align}
where $\bN_s=\bn(\bX_s)$, for $\mu_{\Phi}$-almost every $s$.
\end{itemize}
Then, $(\bX_t,\bPhi_t)_{t\in [0,T]}$ is said to solve the Skorokhod problem.

\begin{lemma}[Existence, uniqueness and continuity of Skorokhod problem]\label{lemma:UniqueSko}
Fix $\rho_{\sinit}\in \cuP_2(D)$ and let $\rho:[0,T]\to \cuP_2(D)$ with $\rho_0=\rho_{\sinit}$. 
Then, the Skorokhod problem  (\ref{eq:SDE1}),  (\ref{eq:SDE2}) admits a unique solution $(\bX_t)_{t\ge 0}$ with continuous paths.
Define $\cuF(\rho)_t\in \cuP_2(D)$, for $t\in[0,T]$, by letting  $\cuF(\rho)_t=\Law(\bX_t)$. Then, 
$\cuF(\rho)\in \Cont([0,T],\cuP_2(D))$.
\end{lemma}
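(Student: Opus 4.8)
The plan is to reduce the Skorokhod SDE to the \emph{deterministic} Skorokhod map on the convex set $D$, and then absorb the drift by a pathwise Picard (contraction) argument. Write $\nabla\Psi(\bx,\rho)=\nabla V(\bx)+\int\nabla_{\bx}U(\bx,\tbx)\,\rho(\de\tbx)$ and set $b(t,\bx):=-\nabla\Psi(\bx,\rho_t)$. Since $V\in\Cont^{2}(D)$, $U\in\Cont^{2}(D\times D)$ and $D$ is compact, $b$ is bounded, $\sup_{t,\bx}|b(t,\bx)|\le L_0:=\|\nabla V\|_{\Lp^\infty(D)}+\|\nabla U\|_{\Lp^\infty(D\times D)}<\infty$, and Lipschitz in $\bx$ uniformly in $t$, $\sup_{t}\|b(t,\cdot)\|_{\rm Lip}\le L_1:=\|\nabla^2 V\|_{\Lp^\infty(D)}+\|\nabla^2 U\|_{\Lp^\infty(D\times D)}<\infty$; moreover $t\mapsto b(t,\bx)$ is Borel (indeed continuous whenever $t\mapsto\rho_t$ is, which is the case in all our applications). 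Because $D$ is convex, the deterministic Skorokhod problem in $D$ is well posed: by \cite{tanaka1979stochastic,lions1984stochastic}, for every continuous $y:[0,T]\to\reals^d$ with $y(0)\in\overline D$ there is a unique pair $(x,\phi)=(\Gamma(y),\phi)$ with $x\in\Cont([0,T];\overline D)$, $\phi$ of bounded variation, $x=y+\phi$, and $\phi$ satisfying the pathwise analogues of $(ii)$–$(iv)$; furthermore $\Gamma$ is \emph{causal} ($\Gamma(y)(t)$ depends on $y$ only through $y|_{[0,t]}$) and Lipschitz for the sup norm, $\sup_{s\le t}|\Gamma(y)(s)-\Gamma(y')(s)|\le C_D\sup_{s\le t}|y(s)-y'(s)|$, with $C_D$ depending only on $D$ (this is where convexity of $D$ enters).

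Next I would set up the fixed point. On a probability space carrying $\bX_0\sim\rho_{\sinit}$ and an independent standard $d$-dimensional Brownian motion $(\bB_t)$, with the natural augmented filtration, define, for an adapted continuous $\overline D$-valued process $\bY$,
\[
\mathcal{T}(\bY):=\Gamma\Big(\bX_0+\int_0^{\cdot}b(s,\bY_s)\,\de s+\sqrt{2\tau}\,\bB\Big),
\]
noting that the argument of $\Gamma$ is a continuous path starting at $\bX_0\in\overline D$. A pair $(\bX,\bPhi)$ solves \eqref{eq:SDE1}–\eqref{eq:SDE2} if and only if $\bX=\mathcal{T}(\bX)$, with $\bPhi$ the bounded-variation part produced by $\Gamma$. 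For two adapted continuous processes $\bY,\bY'$ the initial and Brownian terms cancel, so pathwise
\[
\sup_{s\le t}\big|\mathcal{T}(\bY)_s-\mathcal{T}(\bY')_s\big|\le C_D\sup_{s\le t}\Big|\int_0^{s}\big(b(u,\bY_u)-b(u,\bY'_u)\big)\,\de u\Big|\le C_D L_1\int_0^{t}\sup_{u\le s}|\bY_u-\bY'_u|\,\de s .
\]
Hence on any interval $[0,T_1]$ with $T_1<1/(C_DL_1)$, $\mathcal{T}$ is a sup-norm contraction \emph{pathwise}; Banach's theorem gives a unique fixed point there, which is adapted since it is the sup-norm limit of the adapted Picard iterates $\bY^{(0)}\equiv\bX_0$, $\bY^{(n+1)}=\mathcal{T}(\bY^{(n)})$ (using causality of $\Gamma$). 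Since $T_1$ depends only on $C_D,L_1$, iterating over $[T_1,2T_1],\dots$ builds a unique continuous adapted solution on $[0,T]$; the properties $(i)$–$(iv)$ of $\bPhi_t=\bX_t-\bX_0-\int_0^t b(s,\bX_s)\de s-\sqrt{2\tau}\,\bB_t$ hold because they hold for the deterministic Skorokhod decomposition of the path $\mathcal{T}(\bX)$ applied $\omega$ by $\omega$. This yields existence, uniqueness and continuity of paths.

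Finally, to see that $\cuF(\rho)_t=\Law(\bX_t)$ defines an element of $\Cont([0,T];\cuP_2(D))$: coupling through the single process $\bX$ gives $W_2(\cuF(\rho)_t,\cuF(\rho)_s)^2\le\E|\bX_t-\bX_s|^2$ for $s,t\in[0,T]$, and since $\bX$ has continuous sample paths valued in the compact set $\overline D$, dominated convergence yields $\E|\bX_t-\bX_s|^2\to0$ as $s\to t$, so $t\mapsto\cuF(\rho)_t$ is $W_2$-continuous. (The same coupling between the solutions driven by two inputs $\rho,\tilde\rho$, together with $|b(t,\bx)-\tilde b(t,\bx)|\le\|\nabla^2 U\|_{\Lp^\infty(D\times D)}\,W_2(\rho_t,\tilde\rho_t)$ and Grönwall, gives $\sup_tW_2(\cuF(\rho)_t,\cuF(\tilde\rho)_t)\le C\sup_tW_2(\rho_t,\tilde\rho_t)$, i.e.\ $\cuF$ is Lipschitz as a map on $\Cont([0,T];\cuP_2(D))$; this stronger statement is what the later fixed-point construction of the nonlinear dynamics requires.) The only genuinely nontrivial ingredient is the deterministic Skorokhod-map theory for the reflecting (Neumann) boundary — well-posedness, causality, and the sup-norm Lipschitz bound for $\Gamma$ on the convex domain $D$ — quoted from \cite{tanaka1979stochastic,lions1984stochastic}; this is precisely the point where the boundary makes the present setting harder than the whole-space analysis of \cite{mei2018mean}. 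Everything else is a routine pathwise Picard/Grönwall argument, the only mild care being the preservation of adaptedness (via causality of $\Gamma$) and the transfer of the measure-theoretic constraints $(ii)$–$(iv)$ from the deterministic decomposition to the SDE solution.
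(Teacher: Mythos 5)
Your overall architecture (reduce to the deterministic Skorokhod map $\Gamma$ on the convex set $D$, then absorb the Lipschitz drift by Picard iteration) is reasonable, but it hinges on a property of $\Gamma$ that the cited references do not provide and that is not true in the generality you claim: namely, that $\Gamma$ is Lipschitz for the sup norm with a constant $C_D$ depending only on $D$. What Tanaka's Lemma 2.2 actually gives for a convex domain is
\[
|\Gamma(y)(t)-\Gamma(y')(t)|^2 \le |y(t)-y'(t)|^2 + 2\int_0^t \big\< (y(t)-y(s))-(y'(t)-y'(s)),\, \de(\phi-\phi')(s)\big\>\, ,
\]
whose right-hand side is controlled by $\|y-y'\|_{\infty,t}^2 + 4\|y-y'\|_{\infty,t}\,(\|\phi\|_{\sTV}(t)+\|\phi'\|_{\sTV}(t))$ --- a H\"older-$1/2$ modulus with constants depending on the total variation of the reflection terms, not a Lipschitz bound with a domain-only constant. (Sup-norm Lipschitz continuity of the normal-reflection Skorokhod map is known for half-spaces and for certain polyhedra, but it is not established in \cite{tanaka1979stochastic} or \cite{lions1984stochastic} for a general compact convex $D$ with $\Cont^2$ boundary.) Since your Banach contraction for $\mathcal{T}$ is run pathwise in sup norm and uses $C_D$ explicitly, this step does not go through as written.

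The repair is standard and is essentially what Tanaka's Theorem 4.1 (the result the paper invokes directly for existence and uniqueness) does: the Picard iterates $\bY^{(n)}$, $\bY^{(n+1)}$ feed $\Gamma$ inputs that differ only by an absolutely continuous (drift) path, so Remark 2.2 of \cite{tanaka1979stochastic} gives the inner-product estimate $|\bY^{(n+1)}_t-\bY^{(n)}_t|^2 \le 2\int_0^t \<\bY^{(n+1)}_s-\bY^{(n)}_s,\, b(s,\bY^{(n)}_s)-b(s,\bY^{(n-1)}_s)\>\,\de s$, and the Lipschitz property of $b$ plus Gr\"onwall yields convergence and uniqueness; the same estimate replaces your claimed Lipschitz bound in the stability statement for $\cuF$ (this is exactly how the paper later proves that $\cuF$ is a contraction for the nonlinear dynamics). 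Your proof of the $W_2$-continuity of $t\mapsto\cuF(\rho)_t$ via path continuity, compactness of $D$ and dominated convergence is correct and suffices for the statement, though note the paper instead derives a quantitative modulus of continuity from Tanaka's Lemma 2.2, cf.\ Eq.~\eqref{eq:continuityF}.
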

\begin{proof}
Let  $\bb(\bx,t) \equiv -\nabla \Psi(\bx,\rho_t)$ and notice that, by the smoothness of $U,V$, and compactness of $D$,
this is a Lipschitz continuous function of $\bx$. Hence the problem (\ref{eq:SDE1}),  (\ref{eq:SDE2}) admits a unique solution 
by  \cite[Theorem 4.1]{tanaka1979stochastic}.

We are left with the task of proving that $t \mapsto \cuF(\rho)_t$ is continuous in $W_2$ metric. Notice that
\begin{align}
\bX_t & =\bX_0 +\int_0^t\bb(\bX_s,s)\, \de s+ \sqrt{2\tau}\,\bB_t + \bPhi_t\, .
\end{align}
By \cite[Lemma 2.2]{tanaka1979stochastic}, we have, for any $s\le t$,
\begin{align}
|\bX_t-\bX_s|^2 & \le \left| \int_s^t\bb(\bX_r,r)\, \de r+ \sqrt{2\tau}\,(\bB_t-\bB_s)\right|^2+\nonumber\\
&2 \int_s^t\<  \int_r^t\bb(\bX_u,u)\, \de u+ \sqrt{2\tau}\,(\bB_t-\bB_r) ,\bN_r\>\mu_{\Phi}(\de r)\, .\nonumber
\end{align}
Taking expectation, we get 
\begin{equation}\label{eq:continuityF}
\begin{split}
\E\{|\bX_t-\bX_s|^2\} &\le \sup_{\bx,t}|\bb(\bx,t)|^2(t-s)^2 + 2\tau (t-s)\\
&\quad\quad+2 \sup_{\bx,t}|\bb(\bx,t)|
 \int_s^t (t-r)\mu_{\Phi}(\de r)\\
&\le  \sup_{\bx,t}|\bb(\bx,t)|^2(t-s)^2 + 2\tau (t-s)\\
&\quad \quad +2 \sup_{\bx,t}|\bb(\bx,t)|
(t-s)\mu_{\Phi}([0,t])\, ,
\end{split}
\end{equation}
whence the continuity follows.
\end{proof}

\begin{definition}[Solution of nonlinear dynamics]
We say that $\rho\in \Cont([0,T];\cuP_2(D))$ is a solution of the nonlinear dynamics if $\cuF(\rho) =\rho$,
namely
\begin{align}
\rho_t = {\rm Law}(\bX_t)\, \;\;\; \forall t\in[0,T]\, .\label{eq:NonlinearDynFP}
\end{align}
\end{definition}

\begin{lemma}
Assume $\tau>0$. If $\rho:[0,T]\to \cuP_2(D)$ is a weak solution of the PDE \eqref{eq:PDEbis} with initial and boundary
conditions \eqref{eq:BCbis}, then it is a solution of the nonlinear dynamics. 
Vice versa, if $\rho:[0,T]\to \cuP_2(D)$ is a solution of the nonlinear dynamics, then it is a weak solution of
 PDE \eqref{eq:PDEbis} with initial and boundary
conditions \eqref{eq:BCbis}.
\end{lemma}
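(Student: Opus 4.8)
The plan is to prove the two implications separately, in each case passing through the \emph{linear} Fokker--Planck PDE \eqref{eq:LinPDE}--\eqref{eq:BCbis2} obtained by freezing the measure argument of $\Psi$, and exploiting It\^o's formula for the reflected diffusion together with the uniqueness statement of Corollary~\ref{lemma:UniqLinear}.

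\emph{From nonlinear dynamics to weak PDE solution.} Suppose $\rho\in\Cont([0,T];\cuP_2(D))$ satisfies $\cuF(\rho)=\rho$, and let $(\bX_t,\bPhi_t)_{t\in[0,T]}$ be the solution of the Skorokhod problem \eqref{eq:SDE1}--\eqref{eq:SDE2} furnished by Lemma~\ref{lemma:UniqueSko}, so that $\rho_t=\Law(\bX_t)$. Fix a test function $h\in\Cont^{2,1}(D\times[0,T])$ with $\<\bn(\bx),\nabla h(\bx,t)\>=0$ on $\partial D\times[0,T]$. I would apply It\^o's formula to $s\mapsto h(\bX_s,s)$, which (using that $\bPhi$ has bounded variation so only the Brownian part contributes to the quadratic variation) gives
\begin{align*}
h(\bX_T,T)-h(\bX_0,0) &= \int_0^T\big[\partial_s h(\bX_s,s)-\<\nabla\Psi(\bX_s,\rho_s),\nabla h(\bX_s,s)\>+\tau\Delta h(\bX_s,s)\big]\,\de s \\
&\quad + \sqrt{2\tau}\int_0^T\<\nabla h(\bX_s,s),\de\bB_s\> + \int_0^T\<\nabla h(\bX_s,s),\de\bPhi_s\>\, .
\end{align*}
The last term is zero: by properties $(iii)$--$(iv)$ of the Skorokhod problem, $\mu_{\Phi}$ is supported on $\{s:\bX_s\in\partial D\}$, where $\de\bPhi_s$ is directed along $\bN_s=\bn(\bX_s)$, while $\<\bn(\bX_s),\nabla h(\bX_s,s)\>=0$ by the Neumann condition on $h$. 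Taking expectations (the stochastic integral has zero mean because $\nabla h$ is bounded on the compact set $D\times[0,T]$) and inserting $\rho_s=\Law(\bX_s)$ turns this identity into exactly the weak formulation \eqref{eq:WeakDef}; since $\rho\in\Cont([0,T];\cuP_2(D))$ by hypothesis, $\rho$ is a weak solution of \eqref{eq:PDEbis}--\eqref{eq:BCbis}.

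\emph{From weak PDE solution to nonlinear dynamics.} Conversely, let $\rho$ be a weak solution of \eqref{eq:PDEbis}--\eqref{eq:BCbis}. Set $\Psi_*(\bx,t)\equiv\Psi(\bx,\rho_t)$; by smoothness of $U,V$ and compactness of $D$ we have $\|\nabla\Psi_*\|_{\Lp^\infty(D\times[0,T])}<\infty$, and $t\mapsto\nabla\Psi_*(\,\cdot\,,t)$ is continuous since $t\mapsto\rho_t$ is $W_2$-continuous. Two observations then combine. First, $\rho$ is itself a weak solution of the \emph{linear} PDE \eqref{eq:LinPDE}--\eqref{eq:BCbis2} with this $\Psi_*$, because once $\Psi(\bx,\rho_t)$ is read as a prescribed function of $(\bx,t)$ the weak formulation \eqref{eq:WeakDef} is precisely that of the linear problem. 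Second, Lemma~\ref{lemma:UniqueSko} (applied with the frozen drift $-\nabla\Psi_*$) produces a unique reflected diffusion $(\bX_t)$ with $\Law(\bX_0)=\rho_{\sinit}$, and running the same It\^o computation as above shows $t\mapsto\Law(\bX_t)$ is a weak solution of \eqref{eq:LinPDE}--\eqref{eq:BCbis2}, lying in $\Cont([0,T];\cuP_2(D))$ by the continuity assertion of Lemma~\ref{lemma:UniqueSko}. By Corollary~\ref{lemma:UniqLinear} the linear problem has at most one weak solution, so $\Law(\bX_t)=\rho_t$ for all $t$, i.e.\ $\cuF(\rho)=\rho$, which is \eqref{eq:NonlinearDynFP}.

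\emph{Main obstacle.} The It\^o computations themselves are routine; the one point deserving care is the justification that $\Law(\bX_t)$ solves the linear PDE \emph{weakly} with the stated Neumann boundary condition, which requires It\^o's formula for the reflected diffusion and the verification that the reflection contributes nothing against test functions $h$ with $\<\bn,\nabla h\>=0$, as above. A secondary point is to confirm that $\Law(\bX_t)$ is admissible as a weak solution in the sense of Definition~\ref{def:weaksolPDE}; since $\tau>0$, this (and in particular the existence of a density for $t>0$) follows from Duhamel's formula \eqref{eq:Duhamel}. Everything else is bookkeeping with the uniform bounds on $\nabla h$ and $\nabla\Psi$ over the compact domain.
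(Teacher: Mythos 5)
Your proposal is correct and follows essentially the same route as the paper's proof: both directions rest on Itô's formula for the reflected diffusion (with the reflection term killed by the Neumann condition on the test function), and the passage from weak PDE solution to nonlinear dynamics is obtained by recognizing $\rho$ and $\Law(\bX_t)$ as weak solutions of the same linearized problem with frozen drift $\Psi_*(\bx,t)=\Psi(\bx,\rho_t)$ and invoking the uniqueness of Corollary~\ref{lemma:UniqLinear}. The only cosmetic difference is that you work directly with time-dependent test functions $h\in\Cont^{2,1}(D\times[0,T])$ where the paper uses time-independent $g\in\Cont^2(D)$, which if anything matches Definition~\ref{def:weaksolPDE} more closely.
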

\begin{proof}
Let $\rho$ be a weak solution of the PDE \eqref{eq:PDEbis}, and assume $\tau>0$. 
 Let $(\bX_t)_{t\ge 0}$ be the unique solution of the
Skorokhod problem (\ref{eq:SDE1}), (\ref{eq:SDE2}), cf. Lemma  \ref{lemma:UniqueSko}.
Let $\trho_t\equiv \Law(\bX_t)$, $t\ge 0$, i.e. $\trho\equiv \cuF(\rho)$. For $g\in \Cont^{2}(D)$,  satisfying 
$\<\bn(\bx),\nabla g(\bx)\>=0$ for all $\bx\in\partial D$, compute
\begin{align}
\int g(\bx) &\, \trho_t(\de\bx) =  \E\{g(\bX_t)\} \\
&\stackrel{(a)}{=} \E\{g(\bX_0)\}+\int_{0}^t \E\left\{-\<\nabla \Psi(\bX_s,\rho_s), \nabla g(\bX_s)\>
+\tau\Delta g(\bX_s)\right\}\de s \nonumber\\
&\phantom{AAAA}+ \E\int _0^t \<\nabla g(\bX_s),\bN_s\>\, \mu_{\Phi}(\de s)\nonumber\\
&\stackrel{(b)}{=}\E\{g(\bX_0)\} +\int_{0}^t \E\left\{-\<\nabla \Psi(\bX_s,\rho_s), \nabla g(\bX_s)\>
+\tau\Delta g(\bX_s)\right\}\de s \nonumber\\
&\stackrel{(c)}{=}\int_{D} g(\bx)\, \rho_{\sinit}(\de\bx)\nonumber\\
&\phantom{AAAA} +\int_{0}^t \int_{D}\left\{-\<\nabla \Psi(\bx,\rho_s), \nabla g(\bx)\>
+\tau\Delta g(\bx)\right\} \, \trho_s(\de\bx)\, \de s\, .\nonumber
\end{align}
Here $(a)$ follows from Ito's formula for continuous semimartingales \cite{rogers1994diffusions}, 
$(b)$ since $\bX_s\in \partial D$ and  $\bN_s=\bn(\bX_s)$ for $\mu_{\Phi}$-almost every $s$, and $(c)$ by the definition of $\trho$. 
We conclude that $\trho$ is a weak solution of the linearized PDE \eqref{eq:LinPDE}, with  $\Psi_*(\bx,t) = \Psi(\bx,\rho_t)$.
Since $\rho$ also solves the same linearized PDE, we conclude by Lemma \ref{lemma:UniqLinear}
that $\trho_t = \rho_t$ for all $t\in [0,T]$,
and therefore $\rho$ is a solution of the nonlinear dynamics.

Next,  assume that $\rho:[0,T]\to \cuP_2(D)$ is a solution of the nonlinear dynamics. Then by the same application of
Ito's formula to the process $\bX_t$, we have
\begin{align}
\int g(\bx) \, \rho_t(\de\bx) &=\E\{g(\bX_0)\} \\
&+\int_{0}^t \E\left\{-\<\nabla \Psi(\bX_s,\rho_s), \nabla g(\bX_s)\>
+\tau\Delta g(\bX_s)\right\}\de s \, ,\nonumber
\end{align}
which coincides with the claim that $\rho$ is a weak solution of the PDE \eqref{eq:PDEbis}.
\end{proof}

\begin{theorem}[Existence and uniqueness of nonlinear dynamics]
For any initial condition $\rho_{\sinit}\in\cuP_2(D)$, and any $T>0$, the nonlinear dynamics (\ref{eq:NonlinearDynFP}) admits
a unique solution $\rho:[0,T]\to \cuP_2(D)$ with $\rho_0=\rho_{\sinit}$. As a consequence, the 
PDE \eqref{eq:PDEbis} with initial and boundary
conditions \eqref{eq:BCbis} has a unique solution.
\end{theorem}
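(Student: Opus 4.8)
The plan is to realize a solution of the nonlinear dynamics as a fixed point of the map $\cuF$ constructed in Lemma~\ref{lemma:UniqueSko}, and to show that $\cuF$ is a strict contraction on $\Cont([0,T_0],\cuP_2(D))$ for a horizon $T_0>0$ that depends only on the Lipschitz constants of $\nabla\Psi$ (and in particular not on $\rho_{\sinit}$ nor on $T$). Recall that $\nabla\Psi(\bx,\rho)=\nabla V(\bx)+\int\nabla_{\bx}U(\bx,\tbx)\,\rho(\de\tbx)$, so by the $\Cont^2$ smoothness of $U,V$ and compactness of $D$ there is $L=L(U,V)<\infty$ with $|\nabla\Psi(\bx,\rho)-\nabla\Psi(\by,\rho)|\le L|\bx-\by|$ for all $\bx,\by\in D$, and $|\nabla\Psi(\bx,\rho)-\nabla\Psi(\bx,\tilde\rho)|\le L\,W_1(\rho,\tilde\rho)\le L\,W_2(\rho,\tilde\rho)$, the last bound because $\nabla_{\bx}U$ is Lipschitz in its second argument. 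Note also that $(\cuP_2(D),W_2)$ is complete (indeed compact, since $D$ is), hence so is $\Cont([0,T_0],\cuP_2(D))$ with its sup metric.

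The contraction estimate is obtained by a synchronous coupling. Given $\rho,\tilde\rho\in\Cont([0,T],\cuP_2(D))$ with common initial value $\rho_{\sinit}$, let $(\bX_t,\bPhi_t)$ solve the Skorokhod problem \eqref{eq:SDE1}--\eqref{eq:SDE2} driven by $\rho$ and $(\tilde\bX_t,\tilde\bPhi_t)$ solve it driven by $\tilde\rho$, using the \emph{same} Brownian motion $(\bB_t)$ and the \emph{same} initial draw $\bX_0=\tilde\bX_0\sim\rho_{\sinit}$. Then $(\bX_t,\tilde\bX_t)$ is a coupling of $\cuF(\rho)_t$ and $\cuF(\tilde\rho)_t$, so $W_2(\cuF(\rho)_t,\cuF(\tilde\rho)_t)^2\le\E|\bX_t-\tilde\bX_t|^2$. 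The process $\bZ_t=\bX_t-\tilde\bX_t$ has no martingale part (the Brownian increments cancel), so by the Tanaka/It\^o formula for the Skorokhod problem (as in Lemma~\ref{lemma:UniqueSko}; see \cite[Lemma 2.2]{tanaka1979stochastic}),
\begin{equation*}
|\bZ_t|^2 = 2\int_0^t\big\<\bZ_s,\,\nabla\Psi(\tilde\bX_s,\tilde\rho_s)-\nabla\Psi(\bX_s,\rho_s)\big\>\,\de s + 2\int_0^t\<\bZ_s,\de\bPhi_s\> - 2\int_0^t\<\bZ_s,\de\tilde\bPhi_s\>.
\end{equation*}
The crucial point is that, \emph{since $D$ is convex}, both boundary terms are $\le 0$: when $\mu_{\Phi}(\de s)>0$ we have $\bX_s\in\partial D$, $\tilde\bX_s\in D$, and $\<\bX_s-\tilde\bX_s,\bn(\bX_s)\>\le 0$ because the inward normal to a convex set makes a nonnegative angle with any vector pointing to a point of the set; symmetrically for $\tilde\bPhi$. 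Dropping these terms, splitting $\nabla\Psi(\tilde\bX_s,\tilde\rho_s)-\nabla\Psi(\bX_s,\rho_s)$ into a piece bounded by $L|\bZ_s|$ and a piece bounded by $L\,W_2(\rho_s,\tilde\rho_s)$, and using Cauchy--Schwarz and Young's inequality after taking expectations gives
\begin{equation*}
\E|\bZ_t|^2\le C(L)\int_0^t\E|\bZ_s|^2\,\de s + C(L)\int_0^t W_2(\rho_s,\tilde\rho_s)^2\,\de s .
\end{equation*}
Gr\"onwall's inequality then yields $\sup_{s\le t}W_2(\cuF(\rho)_s,\cuF(\tilde\rho)_s)^2\le C(L)\,t\,e^{C(L)t}\,\sup_{s\le t}W_2(\rho_s,\tilde\rho_s)^2$, so picking $T_0$ with $C(L)\,T_0\,e^{C(L)T_0}<1$ makes $\cuF$ a strict contraction on $\Cont([0,T_0],\cuP_2(D))$. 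By the Banach fixed point theorem there is a unique fixed point on $[0,T_0]$, i.e. a unique solution of the nonlinear dynamics on $[0,T_0]$.

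Finally, since $T_0$ depends only on $L$, one iterates: solve on $[0,T_0]$, then use $\rho_{T_0}\in\cuP_2(D)$ as the new initial condition and solve on $[T_0,2T_0]$, and so on, concatenating over finitely many steps to cover $[0,T]$; uniqueness on each subinterval gives uniqueness on $[0,T]$. The last sentence of the theorem is then immediate: by the lemma immediately preceding it, $\rho\in\Cont([0,T],\cuP_2(D))$ is a weak solution of \eqref{eq:PDEbis}--\eqref{eq:BCbis} if and only if it solves the nonlinear dynamics, so existence and uniqueness transfer verbatim to the PDE. I expect the main obstacle to be precisely the handling of the reflection terms in the coupling: one must exploit the convexity of $D$ to conclude that the Skorokhod reflection is non-expansive, i.e. $\<\bX_s-\tilde\bX_s,\de\bPhi_s-\de\tilde\bPhi_s\>\le 0$, so that the Gr\"onwall argument closes without any uncontrolled boundary contribution; all other ingredients (Lipschitz bounds on $\nabla\Psi$, completeness of the path space, the iteration over time) are routine.
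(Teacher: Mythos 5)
Your proposal is correct and follows essentially the same route as the paper: a Banach fixed-point argument for $\cuF$ on $\Cont([0,T_0],\cuP_2(D))$ via a synchronous coupling (same initial draw and same Brownian motion), with the reflection terms discarded thanks to the convexity of $D$ (the paper outsources this non-expansiveness to Tanaka's Remark~2.2, whereas you derive it directly from $\<\by-\bx,\bn(\bx)\>\ge 0$ for $\bx\in\partial D$, $\by\in D$), followed by a Gr\"onwall-type closing of the estimate and iteration over subintervals of length $T_0$ depending only on the Lipschitz constants. The only differences are cosmetic, e.g. your use of Young's inequality to linearize the integral inequality versus the paper's direct manipulation ending in $\oDelta(t)\le 2Lt^2\oDelta(t)+2Ct^2 d(\rho,\trho)\oDelta(t)^{1/2}$.
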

\begin{proof}
Note that it is sufficient to prove the claim for $T\le T_0$, where $T_0>0$ is a small enough constant,
since this implies the claim for arbitrary $T$ by breaking $[0,T]$ into intervals of size smaller than $T_0$.

We claim that $\cuF$ is a contraction on $\Cont([0,T],\cP_2(D))$ endowed with the metric 
$d(\rho,\trho)\equiv \sup_{t\in [0,T]} W_2(\rho,\trho)$. To show that this is the case,
define $\bb(\bx,t) \equiv -\nabla \Psi(\bx,\rho_t)$, $\tbb(\bx,t) \equiv -\nabla \Psi(\bx,\trho_t)$.
By the smoothness of $U$, $V$ and by the compactness of $D$, we have that $\bb$ and $\tbb$ are Lipschitz continuous in $\bx$, with Lipschitz constant $L$
independent of $t,\rho,\trho$. Further,
\begin{equation}\label{eq:lipconb}
\begin{split}
|\bb(\bx,t) -\tbb(\bx,t)| &= \left|\int \nabla U(\bx,\tbx)\, \rho_t(\de\tbx)-\int \nabla U(\bx,\tbx)\, \trho_t(\de\tbx)\right|\\
& \le C\, W_1(\rho_t,\trho_t)\le C\, d(\rho,\trho)\, .
\end{split}
\end{equation}
\comm{Let $(\bX_t,\bPhi_t)$ and $(\tbX,\tbPhi_t)$ are be solution of  the Skorokhod problem~\eqref{eq:SDE2}, with  
drift coefficients $\bb(\bx,t)$, $\tbb(\bx,t)$. We couple the processes $\bX_t$ and $\tbX_t$ by using the same initial condition 
$\bX_0$ and same Brownian motion $\bB_t$:} 
\begin{align}
\bX_t & =\bX_0 +\int_0^t\bb(\bX_s,s)\, \de s+ \sqrt{2\tau}\,\bB_t + \bPhi_t\, ,\\
\tbX_t & =\bX_0 +\int_0^t\tbb(\tbX_s,s)\, \de s+ \sqrt{2\tau}\,\bB_t + \tbPhi_t\, .
\end{align}
Define 
\begin{align}
\bD_t \equiv \int_0^t\bb(\bX_s,s)\, \de s -\int_0^t\tbb(\tbX_s,s)\, \de s\, ,
\end{align}
and notice that, by the above remarks, 
\begin{align}
|\bD_t| \le  L\int_0^t|\bX_s-\tbX_s|\, \de s + C\, t\, d(\rho,\trho)\, .
\end{align}
Further, by  \cite[Remark 2.2]{tanaka1979stochastic}, we have
\begin{align}
|\bX_t-\tbX_t|^2&\le 2\int_0^t \<\bX_s-\tbX_s,\bD_s\>\, \de s\\
& \le 2\int_0^t |\bX_s-\tbX_s| \left(L\left(\int_0^s|\bX_r-\tbX_r|\, \de r\right)+ C\, s\, d(\rho,\trho)\right)\, \de s\nonumber\\
& \le 2L\left(\int_0^t |\bX_s-\tbX_s|  \de s\right)^2 +2C\, t\, d(\rho,\trho) \int_0^t |\bX_s-\tbX_s|  \de s\nonumber\\
& \le 2 L t \int_0^t |\bX_s-\tbX_s|^2  \de s +2C\, t^{3/2}\, d(\rho,\trho)\left(\int_0^t |\bX_s-\tbX_s|^2  \de s\right)^{1/2}\, \hspace{-1em}.\nonumber
\end{align}
\comm{Define $\Delta(t) \equiv \E\{|\bX_t-\tbX_t |^2\}$ and $\oDelta(t) \equiv \sup_{s\le t}\Delta(s)$. By taking the 
expectation of the last inequality and using Jensen's inequality, we get}
\begin{equation}
\Delta(t) \le 2  L t \int_0^t \Delta(s) \de s +2C\, t^{3/2}\, d(\rho,\trho)\left(\int_0^t \Delta(s)  \de s\right)^{1/2}\, ,
\end{equation}
which immediately implies
\begin{equation}
\oDelta(t)  \le 2 L t^2\, \oDelta(t) + 2C\, t^2\,  d(\rho,\trho)\, \oDelta(t)^{1/2}\, .
\end{equation}
Hence, for $T_0 < (2L)^{-1/2}$, 
\begin{align}
\oDelta(t) &\le\left(\frac{2CT_0^2}{1-2LT_0^2}\right)^{2} d(\rho,\trho)^{2}\, .
\end{align}
Selecting $T_0$ small enough, so that $(2CT_0^2)/(1-2LT_0^2)\le 1/2$, we obtain
\begin{align}
d(\cuF(\rho),\cuF(\trho)) \le \sqrt{\oDelta(T_0)} &\le \frac{1}{2}\, d(\rho,\trho)\, .
\end{align}
This proves that $\cuF$ is a contraction as claimed. By Lemma \ref{lemma:UniqueSko}, $\cuF$ maps $\Cont([0,T],\cP_2(D))$ into itself. Furthermore, $\Cont([0,T],\cP_2(D))$ is complete with respect to the metric $d$. As a result, there exists a unique fixed point.
\end{proof}

We conclude this section by stating a result about the discretization of the nonlinear dynamics.
Fix a solution $(\rho_t)_{t\ge 0}$ of the PDE \eqref{eq:PDEbis} with initial condition $\rho_0=\rho_{\sinit}$, a step size $\eps>0$ and define recursively 
the random variables $(\bX^{\eps})_{k\in \naturals}$ by
\begin{align}
\bX^{\eps}_0 & \sim \rho_{\sinit}\, \label{eq:Euler1}\\
\bX^{\eps}_{k+1} & =\Proj\Big(\bX_k^{\eps}-\eps\nabla \Psi(\bX^\eps_k,\rho_{k\eps}) + \sqrt{2\tau\eps}\,\bg_k \Big)\, .   \label{eq:Euler2}
\end{align}
This can be viewed as an Euler discretization of the  stochastic differential equation \eqref{eq:SDE1}, \eqref{eq:SDE2},
and the next theorem establishes that this is indeed a close
approximation of the original process. It is just an immediate consequence  of a result of Slomi\'nski 
\cite{slominski1994approximation,slominski2001euler}.
\begin{theorem}[Theorem 3.2 in \cite{slominski2001euler}]\label{thm:Slominski}
Consider the nonlinear dynamics defined by Eqs.~\eqref{eq:SDE1}, \eqref{eq:SDE2}. Assume $\Ball(\bfzero;r)\subseteq D$, 
and $\|\nabla V\|_{\Lp^\infty(D)}$, $\|\nabla U\|_{\Lp^\infty(D\times D)}$, $\|\nabla V\|_{{\rm Lip}}$, $\|\nabla U\|_{{\rm Lip}}
\le L$. Also assume that $\supp(\rho_{\sinit})\subseteq \Ball(\bfzero,r)$.
\comm{Construct the Euler scheme  
(\ref{eq:Euler1}), (\ref{eq:Euler2}) on the same probability  space by letting $\bX^{\eps}_0= \bX_0$ and $\bg_k = (\bB((k+1)\eps)-\bB(k\eps))/\sqrt{\eps}$. }
Then, for any $p\in\naturals$, $T\in\reals_{\ge 0}$, 
\begin{align}
\E\Big\{\max_{k\in [0,T/\eps]\cap\naturals} \big|\bX^{\eps}_k-\bX_{k\eps}\big|^{2p}\Big\}^{1/(2p)} \le C_*L r\, e^{C_*pLT} (d^2\eps \log(1/\eps))^{1/4}\, .
\end{align}
\end{theorem}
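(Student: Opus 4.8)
The plan is to derive the statement directly from Slomi\'nski's convergence theorem for Euler approximations of reflected diffusions in convex domains \cite{slominski2001euler}, after recasting the nonlinear dynamics in the form required there. The first step is to observe that, because $(\rho_t)_{t\in[0,T]}$ is a \emph{fixed} solution of the PDE \eqref{eq:PDEbis}, the drift
\[
\bb(\bx,t) \;\equiv\; -\nabla\Psi(\bx,\rho_t) \;=\; -\nabla V(\bx) - \int \nabla_{\bx} U(\bx,\tbx)\, \rho_t(\de\tbx)
\]
is itself a fixed (time-dependent) vector field. The hypotheses $\|\nabla V\|_{\Lp^\infty(D)},\,\|\nabla U\|_{\Lp^\infty(D\times D)}\le L$ give $\sup_{\bx\in D,\,t\le T}|\bb(\bx,t)|\le 2L$; the hypotheses $\|\nabla V\|_{{\rm Lip}},\,\|\nabla U\|_{{\rm Lip}}\le L$ give that $\bb(\cdot,t)$ is $2L$-Lipschitz in $\bx$, uniformly in $t$; and the $W_2$-continuity of $t\mapsto\rho_t$ together with the a priori bound \eqref{eq:continuityF} yields $\sup_{\bx\in D}|\bb(\bx,t)-\bb(\bx,s)|\le C\,W_2(\rho_t,\rho_s)\le C\,|t-s|^{1/2}$, i.e. $\tfrac12$-H\"older continuity in time. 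Hence \eqref{eq:SDE1}, \eqref{eq:SDE2} is an ordinary reflected SDE in the compact \emph{convex} domain $D$ --- for which the Skorokhod map is well posed, cf. Lemma \ref{lemma:UniqueSko} --- with constant non-degenerate diffusion $\sqrt{2\tau}\,\id_d$ and a bounded, space-Lipschitz, time-regular drift, and \eqref{eq:Euler1}, \eqref{eq:Euler2} is precisely its uniform-step Euler scheme, coupled synchronously to the exact process via $\bX_0^\eps=\bX_0$ and $\bg_k=(\bB((k+1)\eps)-\bB(k\eps))/\sqrt\eps$. This is exactly the scope of \cite[Theorem 3.2]{slominski2001euler}; if that reference is stated only for time-homogeneous coefficients, one reduces to it by appending the coordinate $X^0_t=t$, which preserves convexity and creates no new reflection.

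The second step is to invoke that theorem, obtaining, for all $p\in\naturals$ and $T>0$,
\[
\E\Big\{\max_{k\in[0,T/\eps]\cap\naturals}\big|\bX_k^\eps-\bX_{k\eps}\big|^{2p}\Big\}^{1/(2p)} \;\le\; C\,\big(\eps\log(1/\eps)\big)^{1/4},
\]
where $C$ depends only on $p$, $T$, the dimension, the geometric constant of the convex domain $D$, and the bounds $\sup|\bb|$, $\|\bb(\cdot,t)\|_{{\rm Lip}}$. Two features of this estimate are worth flagging. The exponent $1/4$ (rather than the $1/2$ of the unconstrained Euler scheme) is intrinsic to the reflected case: the boundary term $\bPhi_t$ has finite variation but does not depend Lipschitz-continuously on the driving data at the mesh scale, which halves the order of convergence. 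The logarithmic correction $\log(1/\eps)$ enters through the modulus of continuity of Brownian motion, which bounds how far the exact process can move between consecutive mesh points.

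What remains --- and this is the only genuinely laborious part --- is to make $C$ explicit in the claimed form $C_*\,L\,r\,e^{C_* p L T}\,d^{1/2}$, so that $C\,(\eps\log(1/\eps))^{1/4}=C_* L r\,e^{C_* pLT}\,(d^2\eps\log(1/\eps))^{1/4}$. I would extract this by tracking the constants through Slomi\'nski's argument: the factor $e^{C_* p L T}$ is produced by the Gronwall-type step that closes the recursion for $\E\{\sup_{s\le t}|\bX^{\eps}_s-\bX_s|^{2p}\}$, and inherits its constant from the Lipschitz bound $2L$ on $\bb$ and the horizon $T$; the factor $d^{1/2}$ comes from the $d$-dimensional Brownian quadratic variation, $\E|\bB_t-\bB_s|^{2p}$ being of order $(d\,(t-s))^{p}$; and the factor $r$, together with the dependence on the domain geometry, is most cleanly obtained by the rescaling $\bx\mapsto\bx/r$, which by $\Ball(\bfzero;r)\subseteq D$ and $\supp(\rho_{\sinit})\subseteq\Ball(\bfzero;r)$ turns $D$ into a convex body containing the unit ball and the initial law into one supported in it, so that all geometric constants become absolute and $r$ resurfaces only as the overall length scale of the error. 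The main obstacle is therefore bookkeeping rather than conceptual: verifying that the constants of \cite{slominski2001euler} recombine exactly as above, and that the fixed noise level $\tau>0$ --- which one removes by the time rescaling $t\mapsto 2\tau t$, under which the diffusion coefficient becomes $\id_d$ and $(L,T)$ is replaced by $(L/2\tau,\,2\tau T)$ --- enters only through the constants.
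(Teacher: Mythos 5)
Your proposal is correct and follows essentially the same route as the paper: the paper's proof is precisely to invoke Slomi\'nski's Theorem 3.2 (part (ii)) for Euler schemes of reflected SDEs in convex domains and chase the constants, the only point it flags explicitly being that the optimal Burkholder--Davis--Gundy constant ($C(p)\le (C_*p)^{2p}$ in Slomi\'nski's Eq.~(2.7)) is what yields the linear-in-$p$ dependence inside the exponential $e^{C_*pLT}$. Your additional verifications (boundedness and Lipschitz continuity of the frozen drift $\bb(\bx,t)=-\nabla\Psi(\bx,\rho_t)$, the rescalings in $r$ and $\tau$) are consistent with that bookkeeping.
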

\begin{proof}
The proof is obtained simply by chasing the constants in the proof of Theorem 3.2 (part (ii)) of \cite{slominski2001euler}, and using the optimal constant 
in the Burkholder-Davis-Gundy inequality (which yields $C(p)\le (C_* p)^{2p}$ in  \cite[Eq.~(2.7)]{slominski2001euler}). 
\end{proof}

\section{Convergence of SGD to the PDE: Proof of Theorem \ref{TH:CONVPDE}}\label{app:conv}

The proof is a `propagation of chaos' argument \cite{sznitman1991topics}. While the basic idea is  similar to the one 
used in \cite{mei2018mean}, implementing it requires different estimates because of the reflecting boundary conditions. 
In particular, we rely on tools developed in the study of discretizations of reflecting stochastic differential equations.

We will prove a more general theorem that implies  Theorem~\ref{TH:CONVPDE} as a special case, and also applies to the setting of
\cite{mei2018mean}. Namely, we consider data $\{\bz_i=(y_i,\bx_i)\}_{i\ge 1}$ i.i.d. with common distribution $\prob$ on $\reals\times\reals^{d_0}$,
and parameters $\bw_i\in D\subseteq \reals^d$. These parameters are initially sampled independently from distribution $\rho_0\in \cuP_2(D)$, and then evolve according to 
\begin{align}
\bw_i^{k+1} &= \Proj\left\{\bw_i^k+\bF_i(\bz_{k+1};\bw^k)\right\}\, ,\label{eq:GeneralUpdate}\\
\bF_i(\bz_{k+1};\bw^k) & =  -\eps\nabla\sigma(\bx_{k+1};\bw_i^k) \Big(y_{k+1}-\frac{1}{N}\sum_{i=1}^N\sigma(\bx_{k+1};\bw_i^k)\Big)+\sqrt{2\tau\eps}\
\bg^{k+1}_i \, .
\end{align}
Here $\Proj$ is the projection on the closed convex domain $D\subseteq\reals^d$ with non-empty interior.
The setting of Theorem \ref{TH:CONVPDE} is recovered by taking $\sigma(\bx;\bw) = K_{\delta}(\bx-\bw)$, $D=\Omega^{\delta}$, $\bx_k\sim\Unif(\Omega)$,
$\E\{y_k|\bx_k\} = f(\bx_k)$. 

We make the following assumptions:
\begin{itemize}
\item[{\sf (G1)}] $\|y\|_{\infty}$, $\|\sigma\|_{\infty} =\ess\sup_{\bw\in D,\bx}|\sigma(\bx;\bw)|\le \sigma_{\infty}$, and $\nabla_{\bw}\sigma(\bx;\bw)$ is $\gamma$-subgaussian.
\item[{\sf (G2)}] Letting $V(\bw) = -\E\{y\sigma(\bx;\bw)\}$, $U(\bw_1,\bw_2) \equiv \E\{\sigma(\bx;\bw_1)\sigma(\bx;\bw_2)\}$, both $V$ and $U$ 
are differentiable with Lipschitz continuous derivative, namely
$\|\nabla V\|_{{\rm Lip}}, \|\nabla U\|_{{\rm Lip}}\le L$. 
Further, we assume $\|\nabla U\|_{\Lp^\infty(D\times D)}<\infty$. 
\end{itemize}

\begin{theorem}\label{thm:convPDE-GN}
Consider the general update (\ref{eq:GeneralUpdate}) with initialization $(\bw_i^0)_{i\le N}\sim_{iid}\rho_0=\rho_{\sinit}$, under the conditions
${\sf (G1)}$, ${\sf (G2)}$ above.
For $t\ge 0$, let $\rho_t$ be the unique solution of the PDE \eqref{eq:PDEbis} with initial and boundary conditions \eqref{eq:BCbis}. 
Assume $\supp(\rho_{\sinit})\subseteq \Ball(\bfzero,r)$.

Then, for $T\ge 0$ $TL\ge 1$, any $g:\mathbb R^d\to \mathbb R$ with $\|g\|_{\rm Lip}\le 1$ and for $\varepsilon \le 1$, $p\in\naturals$,
the following holds with probability at least $1-z^{-2p}$:
\begin{equation}\label{eq:ErrorBoundGeneral}
\begin{split}
&\sup_{k\in [0, T/\varepsilon]\cap \mathbb N} \bigg| \sum_{i=1}^N g(\bw_i^k)-\int g(\bw)\rho_{k\varepsilon}(\de \bw)\bigg|\le z\, \err(N,d,\eps) \,\,e^{C_*pL\, T},\\
&\sup_{k\in [0, T/\varepsilon]\cap \mathbb N} | R_N(\bw^k)-R^\delta(\rho_{k\epsilon})|\le z\, \err(N,d,\eps) \,\,\,e^{C_*pL\, T} ,
\end{split}
\end{equation}
where
\begin{equation}\label{eq:errNGeneral}
\err(N,d,\eps) =\sqrt{\frac{d}{N}}\vee
\big(\sigma_{\infty}\gamma \sqrt{d\eps}\big)\vee \bigg(Lr(d^2\eps\log(1/\eps))^{1/4}\bigg)\, .
\end{equation}
\end{theorem}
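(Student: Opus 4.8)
The plan is to compare the SGD iterates $(\bw_i^k)$ with the solution $\rho_t$ of the PDE \eqref{eq:PDEbis} by interposing two auxiliary processes, following the propagation-of-chaos strategy of \cite{sznitman1991topics,mei2018mean} but now in the presence of a reflecting boundary. Let $(\bX_t^i)_{t\in[0,T]}$, $i\le N$, be $N$ independent copies of the nonlinear dynamics \eqref{eq:SDE1}--\eqref{eq:SDE2}, driven by independent Brownian motions $(\bB_t^i)$ and initialized i.i.d.\ from $\rho_{\sinit}$; by the results of Appendix \ref{app:nonlinear} each has marginal law $\rho_t$. Let $(\bX_i^{\eps,k})_{k\in\naturals}$ be the Euler--Skorokhod discretization \eqref{eq:Euler1}--\eqref{eq:Euler2} of $\bX^i$, coupled to it through $\bg_i^{k+1}=(\bB^i((k+1)\eps)-\bB^i(k\eps))/\sqrt{\eps}$, and run \eqref{eq:GeneralUpdate} on the same probability space with $\bw_i^0=\bX_0^i$ and the same Gaussian variables $\bg_i^{k+1}$ in the injected-noise term. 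For a fixed $1$-Lipschitz $g$, the triangle inequality bounds $|N^{-1}\sum_i g(\bw_i^k)-\int g\,\de\rho_{k\eps}|$ by $N^{-1}\sum_i|\bw_i^k-\bX_i^{\eps,k}|+N^{-1}\sum_i|\bX_i^{\eps,k}-\bX_{k\eps}^i|+|N^{-1}\sum_i g(\bX_{k\eps}^i)-\int g\,\de\rho_{k\eps}|$. The second term is controlled, uniformly over $k\le T/\eps$, by Theorem \ref{thm:Slominski}, and the third, since the $\bX_{k\eps}^i$ are i.i.d.\ with law exactly $\rho_{k\eps}$, is $O(\sqrt{1/N})$ in every $\Lp^{2p}$ norm by a bounded-differences estimate together with a crude maximum over the $\lceil T/\eps\rceil+1$ values of $k$. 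The whole argument is carried out at the level of $\Lp^{2p}$ norms, the high-probability statement following at the end from Markov's inequality.

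The heart of the matter is the first term, the coupling bound. Conditioning on the $\sigma$-field $\cF_k$ generated by data and noise up to step $k$, assumption {\sf (G2)} gives
\[
\E\{\bF_i(\bz_{k+1};\bw^k)\mid\cF_k\}=-\eps\,\nabla\Psi\!\big(\bw_i^k,\hrho^{(N)}_k\big)+O(\eps/N),\qquad \hrho^{(N)}_k=\tfrac1N\textstyle\sum_{j}\delta_{\bw_j^k},
\]
where the $O(\eps/N)$ is a self-interaction correction. Writing $\bF_i=-\eps\nabla\Psi(\bw_i^k,\hrho^{(N)}_k)+\sqrt{2\tau\eps}\,\bg_i^{k+1}+\widetilde\bxi_i^{k+1}$ isolates a martingale-difference sequence $\widetilde\bxi_i^{k+1}$ which, by {\sf (G1)}, has conditional mean zero and size $O(\eps)$ with per-coordinate variance proxy $O(\eps^2\sigma_\infty^2\gamma^2)$. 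Since the Euler update for $\bX_i^{\eps,k}$ uses the same $\bg_i^{k+1}$ and the drift $-\eps\nabla\Psi(\bX_i^{\eps,k},\rho_{k\eps})$, the injected-noise terms cancel in the difference and the non-expansiveness of $\Proj$ on the convex domain $D$ yields
\[
\big|\bw_i^{k+1}-\bX_i^{\eps,k+1}\big|\le\Big|\big(\bw_i^k-\bX_i^{\eps,k}\big)-\eps\,\bdelta_i^k+\widetilde\bxi_i^{k+1}\Big|,
\]
with $\bdelta_i^k=\nabla\Psi(\bw_i^k,\hrho^{(N)}_k)-\nabla\Psi(\bX_i^{\eps,k},\rho_{k\eps})$. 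Splitting $\bdelta_i^k=[\nabla\Psi(\bw_i^k,\hrho^{(N)}_k)-\nabla\Psi(\bX_i^{\eps,k},\widehat\mu^{(N)}_k)]+[\nabla\Psi(\bX_i^{\eps,k},\widehat\mu^{(N)}_k)-\nabla\Psi(\bX_i^{\eps,k},\rho_{k\eps})]$ with $\widehat\mu^{(N)}_k=N^{-1}\sum_j\delta_{\bX_j^{\eps,k}}$, the first bracket has norm $\le 2L\max_i|\bw_i^k-\bX_i^{\eps,k}|$ (Lipschitz continuity of $\nabla\Psi$ in the state and, in the measure argument, in $W_1$, plus $W_1(\hrho^{(N)}_k,\widehat\mu^{(N)}_k)\le\max_i|\bw_i^k-\bX_i^{\eps,k}|$), while the second equals $N^{-1}\sum_j\nabla_1U(\bX_i^{\eps,k},\bX_j^{\eps,k})-\int\nabla_1U(\bX_i^{\eps,k},\bw')\,\rho_{k\eps}(\de\bw')$, which conditionally on the $i$-th trajectory is the deviation from its mean of an average of $N-1$ i.i.d.\ vectors bounded by $\|\nabla U\|_{\Lp^\infty(D\times D)}$, hence $O(\sqrt{d/N})$ in every $\Lp^{2p}$ norm uniformly over $i\le N$ and $k\le T/\eps$ (by a moment bound for sums of bounded independent random vectors together with a crude maximum over $i$ and $k$, the resulting polynomial factors being absorbed below).

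Expanding $|\bw_i^{k+1}-\bX_i^{\eps,k+1}|^2$ via $|\Proj(a)-\Proj(b)|^2\le\langle\Proj(a)-\Proj(b),a-b\rangle$ produces, besides the contraction terms above, a genuine martingale $\sum_{l<k}\langle\bw_i^l-\bX_i^{\eps,l},\widetilde\bxi_i^{l+1}\rangle$ — to which the Burkholder--Davis--Gundy inequality with the optimal constant $(C_*p)^{2p}$ (as in Theorem \ref{thm:Slominski}) applies — together with quadratic terms of size $O(|\widetilde\bxi|^2)$ that sum over $T/\eps$ steps to $O(\eps T\sigma_\infty^2\gamma^2 d)$. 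A discrete Gronwall inequality then gives, with $\Delta_k=\E\{\max_i|\bw_i^k-\bX_i^{\eps,k}|^{2p}\}^{1/(2p)}$, the bound $\Delta_k\le C_*\,e^{C_*pLT}\big(\sqrt{d/N}\vee\sigma_\infty\gamma\sqrt{d\eps}\big)$ for all $k\le T/\eps$; combining the three comparisons and invoking Markov's inequality yields the first line of \eqref{eq:ErrorBoundGeneral}, and the $R_N$ bound follows by writing $R_N(\bw^k)-R^\delta(\rho_{k\eps})$ as a term linear in $\hrho^{(N)}_k$ (handled through $V$) plus a term quadratic in $\hrho^{(N)}_k$ (expanded as a double sum and handled through $U$ by the same concentration), each Lipschitz in the relevant metrics under {\sf (G1)}--{\sf (G2)}. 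The main obstacle — and the sole essential departure from \cite{mei2018mean} — is the reflecting boundary: one cannot telescope a martingale through the nonlinear map $\Proj$, so the coupling estimate must be organized using only the non-expansiveness and monotonicity of $\Proj$ and dovetailed with the path-regularity of the Skorokhod map underlying Słomiński's discretization bound (Theorem \ref{thm:Slominski}); keeping track of the dimension $d$, of the parameter $L$ (a fixed power of $1/\delta$ in the application to $\sigma(\bx;\bw)=K^\delta(\bx-\bw)$), and of the exact $e^{C_*pLT}$ dependence requires care in propagating the constants through BDG and through the reflected-SDE estimates of \cite{slominski2001euler}.
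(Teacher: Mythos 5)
Your proposal follows essentially the same route as the paper: the same three-process coupling (SGD iterates, the Euler--Skorokhod discretization of the nonlinear dynamics with drift evaluated at $\rho_{k\eps}$, and i.i.d.\ copies of the continuous nonlinear dynamics, all driven by shared Gaussian increments), Theorem \ref{thm:Slominski} for the discretization error, sub-Gaussian concentration of the empirical interaction term giving the $L\sqrt{dp/N}$ contribution, a Gronwall iteration at the level of $\Lp^{2p}$ norms of the running maximum, and Markov's inequality at the end. The one point where you genuinely depart from the paper is the SGD-to-Euler coupling step: the paper writes both discrete processes as Skorokhod problems (initial point plus martingale part plus bounded-variation part plus reflection) and invokes Słomiński's comparison estimate \cite[Theorem 1]{slominski1994approximation}, which bounds $\E\max_{j\le k}|\bw_i^j-\obw_i^j|^{2p}$ directly by moments of the quadratic variation of the martingale difference and the total variation of the drift difference, with the reflection terms absorbed inside that theorem; you instead handle the reflection by hand via firm non-expansiveness of $\Proj$, a squared one-step expansion, and BDG. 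Both work and give the same rate; the packaged route is cleaner on the maximal-inequality bookkeeping, and in your route you should be explicit that the cross term $\langle\bw_i^{k+1}-\bX_i^{\eps,k+1},\widetilde\bxi_i^{k+1}\rangle$ must first be recentred to $\langle\bw_i^{k}-\bX_i^{\eps,k},\widetilde\bxi_i^{k+1}\rangle$ (at the cost of an $O(|\widetilde\bxi|^2+\eps^2|\bdelta|^2)$ remainder) before it is a martingale difference, and that the ``crude maximum over $k$'' for the law-of-large-numbers term needs exponential tails plus a union bound (as in \cite{mei2018mean}, Lemmas 3.3--3.4) rather than a bare $2p$-th moment bound, since the latter would leave a $(T/\eps)^{1/(2p)}$ factor.
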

%
Theorem \ref{TH:CONVPDE} follows as a special case of Theorem~\ref{thm:convPDE-GN} by considering $\sigma(\bx;\bw) = K_{\delta}(\bx-\bw)$ and letting $\sigma_\infty \le C_*\delta^{-d}$, $\gamma = C_*\delta^{-d-1}$ and $L = C_*\delta^{-2d-1}$.
\begin{proof}
Let $\cF_k$ denote the sigma algebra generated by $(\bz_j)_{j\le k}$ and denote the empirical distribution of $(\bw_i^k)_{i\le N}$ by $\rho^{(N)}_k \equiv\sum_{i=1}^n\delta_{\bw_i^k}$. 
Note that
\begin{align}
\E\big\{ \bF_i(\bz_{k+1};\bw^k) \big|\cF_k\big\} & = \eps \bG(\bw^k_i;\rho^{(N)}_k)\, ,\nonumber\\
\bG(\bw;\rho)& \equiv -\nabla\Psi(\bw;\rho) = -\nabla V(\bw) - \int \nabla U(\bw,\bw') \, \rho(\de\bw')\, .
\end{align}
We introduce two auxiliary processes  $(\obw_i^k)_{i\le N}$  $(\hbw_i^k)_{i\le N}$, with initial conditions $\obw_i^{0} =\hbw_i^0=\bw_i^0$, as follows:
\begin{itemize}
\item The trajectories $(\hbw_i^k)_{k\ge 0}$ are i.i.d. copies of the nonlinear dynamics introduced in Appendix \ref{app:nonlinear}, sampled at times $t=k\eps$. Namely,
for any $k\in \reals$
\begin{align}
\hbw_i^k = \bw_i^0 + \int_0^{k\eps} \bG(\bw_i^{s/\eps};\rho_s)\, \de s +\sqrt{2\tau}\, \bB_i(k\eps) + \bPhi_i(k\eps)\, .
\end{align}
In particular, for any $k$, $(\hbw_i^k)_{i\le N}\sim_{iid} \rho_{k\eps}$. 
\item The trajectories $(\obw_i^{k})_{k\ge 0}$ are obtained by the Euler discretization of the non-linear dynamics:
\begin{align}
\obw_i^{k+1} & = \Proj\Big(\obw_i^k+\eps \bG(\obw^k_i;\rho_{k\eps})+\sqrt{2\tau\eps}\bg_i^{k+1}\Big)\, .
\end{align}
As above, $(\rho_s)_{s\ge 0}$ is the solution of the PDE \eqref{eq:PDEbis}. Note that, again, the $(\obw_i^k)_{i\le N}$ are i.i.d. although their distribution does not coincide with 
$\rho_{k\eps}$.  
\end{itemize} 
We construct these three processes on the same space by letting $\bB_i((k+1)\eps) = \bB_i(k\eps)+\sqrt{\eps}\bg_i^{k+1}$,
and define the distances (for $q\ge 1$)
\begin{align}
\cD_q(k) & \equiv \E\Big\{\max_{j\le k}\big|\bw_i^j-\obw_i^j\big|^q\Big\}^{1/q} = \E\left\{\frac{1}{N}\sum_{i=1}^N \max_{j\le k}\big|\bw_i^j-\obw_i^j\big|^q\right\}^{1/q}\label{Dq}\\
& \ge \E\left\{\max_{j\le k}\frac{1}{N}\sum_{i=1}^N \big|\bw_i^j-\obw_i^j\big|^q\right\}^{1/q} \, ,\nonumber\\
\hcD_q(k) & \equiv \E\Big\{\max_{j\le k}\big|\hbw_i^j-\obw_i^j\big|^q\Big\}^{1/q}\, .\label{hDq}
\end{align}
Theorem \ref{thm:Slominski} yields, for $p\in\naturals$,
\begin{align}
\hcD_{2p}(k) \le  C_*Lr\, e^{C_*pL(k\eps)}(d^2\eps \log(1/\eps))^{1/4}\,. \label{eq:hcd}
\end{align}

Note that $\bw_i^k$, $\obw_i^k$ take the form 
\begin{align}
\bw_i^k & = \bw_i^0 +\bM_i^k+\bV_i^k+\bphi_i^k\, ,\\
\obw_i^k& = \bw_i^0+\obM_i^k+\obV_i^k+\obphi_i^k\, , 
\end{align}
where $\bM_i^k, \obM_i^k$ are  martingales with respect to the filtration $\cF_k$: $\E\{\bM_i^k|\cF_{k-1}\} = \bM_i^{k-1}$, $\E\{\obM_i^k|\cF_{k-1}\} = \obM_i^{k-1}$,
and  $\bV_i^k$, $\obV_i^k$ are $\cF_{k-1}$-measurable. Explicitly
\begin{align}
\bM_i^k & = \sum_{\ell=0}^{k-1} \big(\bF_i(\bz_{\ell+1};\bw^{\ell}) - \E\{\bF_i(\bz_{\ell+1};\bw^{\ell}) |\cF_{\ell}\}\big)\, ,\\
\obM_i^k & = \sum_{\ell=0}^{k-1} \sqrt{2\tau\eps}\, \bg_i^{\ell+1}\, ,\\
\bV_i^k & = \sum_{\ell=0}^{k-1} \E\{\bF_i(\bz_{\ell+1};\bw^{\ell}) |\cF_{\ell}\} =  \sum_{\ell=0}^{k-1} \eps \, \bG(\bw^{\ell}_i;\rho^{(N)}_\ell)\, ,\\
\obV_i^k & =  \sum_{\ell=0}^{k-1} \eps \, \bG(\obw^{\ell}_i;\rho_{\ell\eps})\, .
\end{align}
Finally, $\bphi^k_i$, $\obphi^k_i$ are corrections to satisfy the constraint $\bw_i^k,\obw_i^k\in D$. Indeed the above can
be viewed as Skorokhod problems with unknowns $(\bw_i,\bphi_i)$ and  $(\obw_i,\obphi_i)$.  

Using \cite[Theorem 1]{slominski1994approximation} (where we can set $C_p =(C_*p)^{2p}$ which is the tight constant in the Burkholder-Davis-Gundy inequality),
we get 
\begin{align}
\cD_{2p}(k) \le C_* p \left\{\E\big([\bM_i-\obM_i]_k^p\big)^{1/(2p)} + \E\big(|\bV_i-\obV_i|_k^{2p}\big)^{1/(2p)}\right\} \, ,\label{eq:BasicBoundSlominski}
\end{align}
where $[\bM]_k$ denotes the quadratic variation of the martingale $\bM$, and $|\bV|_k$ is the total variation of the process $\bV$. We then have
\begin{align}
A_{1,p}(k) &\equiv \E\big([\bM_i-\obM_i]_k^p\big)^{1/(2p)} 
 = \E\left\{\left[\sum_{\ell=0}^{k-1} |\bZ_i^\ell|^2\right]^p\right\}^{1/(2p)}\nonumber\\
\bZ_i^{\ell} & = \eps \nabla\sigma(\bx_{\ell+1};\bw_i^k) \Big(y_{\ell+1}-\frac{1}{N}\sum_{i=1}^N\sigma(\bx_{\ell+1};\bw_i^\ell)\Big) +
\eps\bG(\bw_i^{\ell};\rho_\ell^{(N)})\, ,
\end{align}
Note that under the stated assumption the martingale increments $\bZ^{\ell}_i$ are sub-Gaussian with variance proxy upper bounded by 
$v^2 = C_*\eps^2\sigma_{\infty}^2\gamma^2$.
Therefore, by using the moment generating function of $\chi^2_d$ distribution, we have
\begin{align}
\E \exp\bigg\{\frac{\alpha^2}{2dv^2} |\bZ^{\ell}_i|^2 \bigg\} \le {\Big(1-\frac{\alpha^2}{d}\Big)^{-d/2}}\,.
\end{align} 
Hence,
\begin{align}
\E \exp\bigg\{\frac{\alpha^2}{2dv^2} \sum_{\ell=0}^{k-1}|\bZ^{\ell}_i|^2 \bigg\} \le \Big(1-\frac{\alpha^2}{d}\Big)^{-dk/2}\,.
\end{align}
By using the inequality $x^p\le e^x p!$, this implies, for $\alpha\le \sqrt{d/2}$,
\begin{align}
\E \bigg\{\bigg[\frac{\alpha^2}{2dv^2} \sum_{\ell=0}^{k-1}|\bZ^{\ell}_i|^2\bigg]^p \bigg\} \le p! \Big(1-\frac{\alpha^2}{d}\Big)^{-dk/2}\le p^p e^{\alpha^2k}\,.
\end{align}
Equivalently, 
\begin{align}
\E \bigg\{\Big[\sum_{\ell=0}^{k-1}|\bZ^{\ell}_i|^2\Big]^p \bigg\}^{1/(2p)} \le \Big(\frac{\sqrt{2d}v}{\alpha}\Big) \sqrt{p} e^{\alpha^2k/(2p)}\,.
\end{align}
By taking $\alpha = \sqrt{p/k}$ (which is allowed provided $p\le \sqrt{kd/2}$), we obtain that
\begin{align}
A_{1,p}(k) \le 10\sqrt{k d} v\le C_* \sqrt{k d}\,\eps \sigma_\infty \gamma \, . \label{eq:A1}
\end{align}
%
We next consider the total variation of the process $\bV_i$ in Eq.~(\ref{eq:BasicBoundSlominski}). We have
\begin{align*}
\E\big(|\bV_i-\obV_i|_k^{2p}\big)^{1/(2p)} &=\E\left\{ \left(\sum_{\ell=0}^{k-1}\eps\Big|\bG(\bw_i^{\ell};\rho_\ell^{(N)}) - \bG(\obw_i^{\ell};\rho_{\ell\eps}) \Big|\right)^{2p}\right\}^{1/(2p)}\\
&\le \E\left\{ \left(\sum_{\ell=0}^{k-1}\eps\Big|\bG(\bw_i^{\ell};\rho_\ell^{(N)}) - \bG(\obw_i^{\ell};\rho_{\ell}^{(N)}) \Big|\right)^{2p}\right\}^{1/(2p)}\\
&\phantom{AA}+
\E\left\{ \left(\sum_{\ell=0}^{k-1}\eps\Big|\bG(\obw_i^{\ell};\rho_\ell^{(N)}) - \bG(\obw_i^{\ell};\rho_{\ell\eps}) \Big|\right)^{2p}\right\}^{1/(2p)}\\
& \equiv A_{2,p}(k)+A_{3,p}(k)\, .
\end{align*}
Using the Lipschitz property of $\nabla V$, $\nabla U$, we get
\begin{align}
A_{2,p}(k) &\le L \eps \E\left\{ \left(\sum_{\ell=0}^{k-1}\big|\bw_i^{\ell}- \obw_i^{\ell} \big|\right)^{2p}\right\}^{1/(2p)}\nonumber\\
& \le L\eps \sum_{\ell=0}^{k-1} \cD_{2p}(\ell)\, .\label{eq:A2}
\end{align}
For the second term, we get, by triangular inequality,
\begin{align}
A_{3,p}(k) \le \eps \sum_{\ell=0}^{k-1} \E\left\{ \Big|\bG(\obw_i^{\ell};\rho_\ell^{(N)}) - \bG(\obw_i^{\ell};\rho_{\ell\eps}) \Big|^{2p}\right\}^{1/(2p)}\, .
\end{align}
We next use the expression $\bG(\bw;\rho)  = \nabla V(\bw) +\int \nabla U(\bw,\bw') \, \rho(\de\bw')$, and the fact that $\hbw_i^{\ell}\sim \rho_{\ell\eps}$, to get 
\begin{align*}
A_{3,p}(k) &\le \eps \sum_{\ell=0}^{k-1} \E\left\{ \left|\frac{1}{N}\sum_{j=1}^N\left[\nabla U(\obw_i^{\ell},\bw_j^{\ell})
-\E_{\hbw_j^{\ell}} \nabla U(\obw_i^{\ell},\hbw_j^{\ell})\right] \right|^{2p}\right\}^{1/(2p)}\\
& \le \eps \sum_{\ell=0}^{k-1} \E\left\{ \left|\frac{1}{N}\sum_{j=1}^N\left[\nabla U(\obw_i^{\ell},\bw_j^{\ell})
-\nabla U(\obw_i^{\ell},\hbw_j^{\ell})\right]
  \right|^{2p}\right\}^{1/(2p)}\\
&\phantom{AA}+\eps \sum_{\ell=0}^{k-1} \E\left\{ \left|\frac{1}{N}\sum_{j=1}^N\left[\nabla U(\obw_i^{\ell},\hbw_j^{\ell})
-\E_{\hbw_j^{\ell}}\nabla U(\obw_i^{\ell},\hbw_j^{\ell})\right]
  \right|^{2p}\right\}^{1/(2p)}\\
&\equiv A^{(1)}_{3,p}(k)+A^{(2)}_{3,p}(k)\, .
\end{align*}
Using once more the Lipschitz property of $\nabla U$, and the symmetry of the distributions of $(\bw^{\ell})_{i\le N}$, $(\hbw^{\ell})_{i\le N}$
under permutations, we obtain
\begin{align}
A_{3,p}^{(1)}(k) &\le  L\eps \sum_{\ell=0}^{k-1} \E\left\{ \left|\bw_j^{\ell}-\hbw_j^{\ell}\right|^{2p}\right\}^{1/(2p)}\\
&\le L\eps \sum_{\ell=0}^{k-1} \cD_{2p}(\ell) + L\eps \sum_{\ell=0}^{k-1} \hcD_{2p}(\ell)  \, .\label{eq:A3_1}
\end{align}
Finally, $|\nabla U(\obw_i^{\ell},\hbw_j^{\ell})|\le L$ and therefore the vector $$\bW = \frac{1}{N}\sum_{j=1}^N\left[\nabla U(\obw_i^{\ell},\hbw_j^{\ell})
-\E_{\hbw_j^{\ell}}\nabla U(\obw_i^{\ell},\hbw_j^{\ell})\right]$$ is sub-Gaussian, with variance proxy upper bounded by $v^2=L^2/N$. 
This implies that $\E\{|\bW|^{2p}\}^{1/(2p)}\le C_*\sqrt{dp}\, v$, and therefore 
\begin{align}
A^{(2)}_{3,p}(k) \le C_*(k\eps) L\sqrt{\frac{dp}{N}} \,.\label{eq:A3_2}
\end{align}
Substituting \eqref{eq:A1}, \eqref{eq:A2}, \eqref{eq:A3_1}, \eqref{eq:A3_2} in Eq.~\eqref{eq:BasicBoundSlominski}, we obtain
\begin{align}
\cD_{2p}(k) &\le C_*Lp\eps \sum_{\ell=0}^{k-1} \cD_{2p}(\ell) +C_*Lp(k\eps) \hcD_{2p}(k) \nonumber\\
&+ C_*p\sqrt{kd}\, \eps \sigma_{\infty}\gamma +C_*Lp(k\eps)\sqrt{\frac{dp}{N}} \, .
\end{align}
Using Eq.~(\ref{eq:hcd}) and  Gronwall inequality, along with the fact that $k\eps\le T$, this yields 
\begin{align*}
\cD_{2p}(T/\eps)& \le C_* \, e^{C_*pLT}\left[\sqrt{\frac{d}{N}}\vee
\big(\sigma_{\infty}\gamma \sqrt{d\eps}\Big)\vee \Big(Lr(d^2\eps\log(1/\eps))^{1/4}\Big)\right]\,.
\end{align*}
By using Eq.~(\ref{eq:hcd}) again, we get
\begin{align}
&\cD_{2p}(T/\eps) + \hcD_{2p}(T/\eps) \\
&\quad\le  C_* \, e^{C_*pLT}\left[\sqrt{\frac{d}{N}}\vee
\big(\sigma_{\infty}\gamma \sqrt{d\eps}\Big)\vee \Big(Lr(d^2\eps\log(1/\eps))^{1/4}\Big)\right] \nonumber\\
&\quad\equiv e^{C_*pLT} \err(N,d,\eps)\, .\nonumber
\end{align}
By Markov inequality along with the Jensen inequality applied to the convex function $x^{2p}$, we have
\begin{align*}
\prob\left\{\frac{1}{N}\sum_{i=1}^N|\bw_i^k-\hbw_i^k|\ge  \Delta \right\}& \le 
\frac{1}{\Delta^{2p}}\E\left\{\left[\frac{1}{N}\sum_{i=1}^N|\bw_i^k-\hbw_i^k|\right]^{2p} \right\} \\
&\le \frac{1}{\Delta^{2p}}\E\left\{\frac{1}{N}\sum_{i=1}^N|\bw_i^k-\hbw_i^k|^{2p} \right\} \\
&\le \frac{1}{\Delta^{2p}}\left[\cD_{2p}(T/\eps)+ \hcD_{2p}(T/\eps)\right]^{2p} \\
&\le  \frac{1}{\Delta^{2p}} e^{2C_*p^2LT} \err(N,d,\eps)^{2p}\,,
\end{align*}
where in the third step we used~\eqref{Dq} and~\eqref{hDq}.
Set $\Delta = z\, e^{C_*pLT}\err(N,d,\eps)$. Thus, we obtain 
\begin{align}
\frac{1}{N}\sum_{i=1}^N|\bw_i^k-\hbw_i^k|\le z\, e^{C_*pLt}\err(N,d,\eps)\, ,\label{eq:KeyCouplingBound}
\end{align}
with probability at least $1-z^{-2p}$. 

The bounds in Eq.~\eqref{eq:ErrorBoundGeneral} follow straightforwardly from Eq.~\eqref{eq:KeyCouplingBound} as in the
proofs of Lemma 3.3 and 3.4 in the supplementary material of \cite{mei2018mean}.
\end{proof}

\section{Regularity of the solutions of the PDE (\ref{EQ:PDEDELTA}) ($\delta>0$)}\label{app:reg}

In this section we prove some standard regularity properties of the solutions of the  PDE (\ref{EQ:PDEDELTA}), for $\delta>0$, 
and indeed for the more general  PDE (\ref{eq:PDEbis}). 
First of all, we show that the weak solution of the PDE \eqref{eq:PDEbis} is in fact strong, i.e., $\rho\in \Cont^{2,1}(\Omega^{\delta},[0,T])$ and the equation \eqref{eq:PDEbis} 
holds pointwise.  We will then prove upper bounds on $\nabla K^{\delta}\ast\rho$ and $\nabla U^{\delta}\ast\rho$ that are uniform in $\delta$. These will be crucial in order to
take the $\delta\to 0$ limit in the next section.

We start by proving a  bound on the $\Lp^\infty$ norm of $\rho$.
 In the proofs of the two lemmas that follow, we assume without loss of generality that $\tau=1$. 
\begin{lemma}[Bound on $\Lp^\infty$ norm]\label{lemma:boundlinf}
Let $\rho_t$ be a weak solution of the PDE \eqref{eq:PDEbis} with initial and boundary
conditions \eqref{eq:BCbis}.
Recall that  $\rho_t$ has a density with respect to Lebesgue measure, denoted  by $\rho(\,\cdot\,,t)$.
Then, there exists a constant $C(\Omega)$ such that, by letting $L= (\|\nabla V\|_{\Lp^\infty(\Omega)} \vee \|\nabla U\|_{\Lp^\infty(\Omega\times \Omega)})$, we have
%
\begin{align}
\|\rho(\,\cdot\,,t)\|_{\Lp^\infty(\Omega)}\le \|\rho_{\sinit}\|_{\Lp^\infty(\Omega)}\, e^{C(\Omega) L^2\, t}\, .
\end{align}
\end{lemma}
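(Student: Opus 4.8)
The plan is to represent the solution via Duhamel's principle \eqref{eq:Duhamel} (with $\tau$ normalized to $1$) and to control the resulting integral term using the mass–conservation and gradient bounds for the Neumann heat kernel $G^{D}$, thereby reducing the statement to a Gronwall inequality with a weakly singular kernel.

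Concretely, I would first bound the two terms in \eqref{eq:Duhamel}. The free term $\int_{D} G^{D}(\bx,\by;t)\,\rho_{\sinit}(\by)\,\de\by$ is at most $\|\rho_{\sinit}\|_{\Lp^\infty(\Omega)}$, since the Neumann heat kernel conserves mass, $\int_{D} G^{D}(\bx,\by;t)\,\de\by = 1$ (here I use that $\rho_{\sinit}$ has a bounded density, which holds under {\sf (A5)}). For the Duhamel term, since $\rho_s$ is a probability measure, $|\nabla_{\by}\Psi(\by;\rho_s)| = |\nabla V(\by)+\int \nabla_{\by} U(\by,\by')\,\rho_s(\de\by')| \le \|\nabla V\|_{\Lp^\infty(\Omega)}+\|\nabla U\|_{\Lp^\infty(\Omega\times\Omega)} \le 2L$, while the heat–kernel estimates of Theorem \ref{thm:Kernel} give $\int_{D} |\nabla_{\by} G^{D}(\bx,\by;u)|\,\de\by \le C(\Omega)\,u^{-1/2}$. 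Writing $\phi(t) := \|\rho(\,\cdot\,,t)\|_{\Lp^\infty(\Omega)}$ and $b := 2C(\Omega)L$, \eqref{eq:Duhamel} then yields $\phi(t) \le \|\rho_{\sinit}\|_{\Lp^\infty(\Omega)} + b\int_0^t (t-s)^{-1/2}\,\phi(s)\,\de s$.

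It remains to resolve this weakly singular integral inequality. I would first use it in the form $\Phi(t) \le \|\rho_{\sinit}\|_{\Lp^\infty(\Omega)} + 2b\sqrt{t}\,\Phi(t)$, where $\Phi(t) := \sup_{s\le t}\phi(s)$ (finite for small $t$ by the regularity of the weak solution), giving $\Phi(t) \le 2\|\rho_{\sinit}\|_{\Lp^\infty(\Omega)}$ for $t \le t_0 := 1/(16b^2)$; chaining this over consecutive intervals of length $t_0$ already yields $\phi(t) \le \|\rho_{\sinit}\|_{\Lp^\infty(\Omega)}\,e^{C(\Omega)L^2 t}$. Alternatively — and this is the cleaner bookkeeping — I would iterate the integral inequality once, using $\int_0^t(t-s)^{-1/2}\,\de s = 2\sqrt{t}$ and the Beta integral $\int_r^t (t-s)^{-1/2}(s-r)^{-1/2}\,\de s = \pi$, to obtain $\phi(t)\le \|\rho_{\sinit}\|_{\Lp^\infty(\Omega)}(1+2b\sqrt{t}) + \pi b^2\int_0^t \phi(s)\,\de s$, a Gronwall inequality with a bounded kernel, whence $\phi(t) \le \|\rho_{\sinit}\|_{\Lp^\infty(\Omega)}(1+2b\sqrt{t})\,e^{\pi b^2 t}$, the polynomial prefactor being absorbed as above (equivalently, one may quote the standard weakly singular Gronwall lemma in Mittag–Leffler form).

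The heat–kernel estimates are standard, so I expect the only real subtleties to be (i) first ensuring the a priori finiteness of $\sup_{s\le T}\phi(s)$ before the Gronwall argument can be run — handled by the short-time bootstrap above, using the $\Cont([0,T],\cuP_2(D))$-regularity of the weak solution together with the local smoothing from Duhamel — and (ii) keeping the dependence on the initial data as the clean multiplicative factor $\|\rho_{\sinit}\|_{\Lp^\infty(\Omega)}$, i.e. making sure that the weak singularity $(t-s)^{-1/2}$ contributes only an $e^{C(\Omega)L^2 t}$ factor and no spurious power of $t$ or of $L$.
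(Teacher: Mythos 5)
Your proposal is correct and follows essentially the same route as the paper: Duhamel's representation, the Neumann heat-kernel gradient bound $\|\nabla G^{D}(\bx,\cdot\,;u)\|_{\Lp^1}\le C(\Omega)u^{-1/2}$, a doubling bound on a short interval of length $\sim 1/L^2$, and iteration to produce the exponential factor. The only cosmetic difference is that the paper packages the short-time step as a contraction-mapping argument for the Duhamel operator on $\Lp^\infty(\Omega\times[0,T_0])$, which simultaneously supplies the a priori finiteness of $\sup_{s\le t}\|\rho(\cdot,s)\|_{\Lp^\infty}$ that you correctly flag as the one subtlety.
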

\begin{proof}
Any solution the PDE \eqref{eq:PDEbis}  satisfies Eq.~\eqref{eq:Duhamel}. Given a measurable (Borel)  function 
$\rho\in m\cB(\Omega \times [0,T])$, denote by $\cuD(\rho)\in m\cB(\Omega \times [0,T])$ the function given by the right-hand side of \eqref{eq:PDEbis}.
Let $C(\Omega)$ be the constant in the statement of Theorem \ref{thm:Kernel} (part 3) and let $C_{U,V}\equiv C(\Omega) (\|\nabla V\|_{\Lp^\infty(\Omega)}+\|\nabla U\|_{\Lp^\infty(\Omega\times \Omega)})$. We then have
\begin{align}
\|\cuD(\rho)(\,\cdot,t)&\|_{\Lp^\infty(\Omega)}\le \|\rho_{\sinit}\|_{\Lp^\infty(\Omega)} + \big(\|\nabla V\|_{\Lp^\infty(\Omega)}+\|\nabla U\|_{\Lp^\infty(\Omega\times \Omega)}\big)\nonumber\\
&\hspace{4em}\int_{0}^t \sup_{\bx\in\Omega}\big\|\nabla G^\Omega(\bx,\, \cdot\, ;t-s)\big\|_{\Lp^1(\Omega)}
\|\rho(\,\cdot\,,s)\|_{\Lp^\infty(\Omega)}\, \de s \nonumber\\
&\hspace{3em} \le \|\rho_{\sinit}\|_{\Lp^\infty(\Omega)} + C_{U,V} \int_{0}^t(t-s)^{-1/2}  \|\rho(\,\cdot\,,s)\|_{\Lp^\infty(\Omega)}\, \de s. 
\end{align}
Hence
\begin{align}
\|\cuD(\rho)\|_{\Lp^\infty(\Omega\times [0,T])}\le \|\rho_{\sinit}\|_{\Lp^\infty(\Omega)}  + C_{U,V} \sqrt{T}\,   \|\rho\|_{\Lp^\infty(\Omega\times [0,T])}\, .  \label{eq:Linfty}
\end{align}
Proceeding analogously for two different densities $\rho, \trho$, we get
\begin{align}
\|\cuD(\rho)-\cuD(\trho)\|_{\Lp^\infty(\Omega\times [0,T])}\le  C_{U,V} \sqrt{T}\,   \|\rho-\trho\|_{\Lp^\infty(\Omega\times [0,T])}\, . 
\end{align}
Hence $\cuD$ maps $\Lp^\infty(\Omega\times [0,T])$ into itself, and is a contraction for $C_{U,V} \sqrt{T}<1$. Therefore, it must have a unique 
fixed point in $\Lp^{\infty}$ that coincides with the unique solution of PDE \eqref{eq:PDEbis}. Let $T_0= 1/(4C_{UV}^2)$. Then for that fixed point $\rho\in \Lp^\infty(\Omega\times [0,T])$ we have from Eq.~(\ref{eq:Linfty})
\begin{align}
\|\rho\|_{\Lp^\infty(\Omega\times [0,T_0])}\le 2\, \|\rho_{\sinit}\|_{\Lp^\infty(\Omega)}  \, .
\end{align}
The desired claim follow by iterating this inequality $\lceil t/T_0\rceil$ times.
\end{proof}

\begin{lemma}[Strong solutions of PDE]
Let $\rho_t$ be a weak solution of the PDE \eqref{eq:PDEbis} with initial and boundary
conditions \eqref{eq:BCbis},
and recall that, for any $t\le T<\infty$,  this has a density $\rho(\,\cdot\,,t)$, with $\rho\in \Lp^{\infty}(\Omega\times [0,T])$. Fix $q\in\mathbb N$. If $\rho_{\sinit}\in \Cont^q(\Omega)$, then $\rho\in \Cont^{q,1}(\Omega,[0,T])$.
\end{lemma}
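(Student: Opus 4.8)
The plan is to bootstrap the regularity of the weak solution $\rho$ of \eqref{eq:PDEbis} starting from the Duhamel representation \eqref{eq:Duhamel} and the a priori bound $\rho\in\Lp^\infty(\Omega\times[0,T])$ already established in Lemma \ref{lemma:boundlinf}. The first step is to rewrite the solution as $\rho=\cuD(\rho)$, where
\begin{align}
\cuD(\rho)(\bx,t) = \int_{D} G^{D}(\bx,\by;t)\, \rho_{\sinit}(\de\by) - \int_0^t\int_D \<\nabla_\by G^D(\bx,\by;t-s),\nabla_\by\Psi(\by;\rho_s)\>\,\rho(\by,s)\,\de\by\,\de s\,. \nonumber
\end{align}
The idea is to view $\rho$ as the solution of a \emph{linear} parabolic equation $\partial_t\rho=\Delta\rho+\nabla\cdot(\rho\,\bb)$ with coefficient field $\bb(\bx,t)=\nabla\Psi(\bx,\rho_t)$, which is bounded (Lemma \ref{lemma:boundlinf} gives boundedness of $\rho$, and smoothness of $U,V$ gives boundedness of $\nabla\Psi$ in terms of $\nabla U,\nabla V$). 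Standard parabolic $\Lp^p$ theory and Schauder theory for linear equations with bounded measurable (resp.\ H\"older) coefficients \cite{ladyzhenskaia1988linear} then upgrade $\rho$ from $\Lp^\infty$ first to a H\"older class $\Cont^{0,\gamma}$ in $(\bx,t)$, and in particular $\rho(\cdot,t)\in\Cont^{0,\gamma}(\Omega)$ for positive times.

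Once $\rho$ is H\"older continuous in space, the drift $\bb(\bx,t)=\nabla V(\bx)+\int\nabla U(\bx,\bw')\rho(\bw',t)\,\de\bw'$ inherits H\"older regularity in $\bx$ (convolution against a fixed smooth kernel $\nabla U$), with modulus controlled by that of $\rho$. Feeding this back into the parabolic Schauder estimates for the linear equation with Neumann boundary conditions on the $\Cont^2$ domain $D=\Omega^\delta$ gives $\rho\in\Cont^{2,1}$ on $\Omega\times[0,T]$ — this already proves the case $q\le 2$. For general $q$, I would iterate: having obtained $\rho\in\Cont^{q-1,\,\lceil (q-1)/2\rceil}$, differentiate the equation, observe that $\nabla\cdot(\rho\,\bb)$ involves only $\rho$ and its derivatives up to order $q-1$ together with the smooth functions $\nabla U,\nabla V$, so the right-hand side lies in a lower H\"older class, and apply Schauder again to gain two more derivatives; the compatibility of the Neumann boundary condition at $t=0$ is where the hypothesis $\rho_{\sinit}\in\Cont^q(\Omega)$ enters, ensuring no loss of regularity up to $t=0$. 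A short induction on $q$ then closes the argument.

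The main obstacle is the \textbf{boundary regularity}: interior parabolic regularity is routine, but we need the estimates to hold up to $\partial\Omega^\delta$ under the conormal (Neumann-type) boundary condition in \eqref{eq:BCbis}, and up to $t=0$. The $\Cont^2$ regularity of $\partial\Omega$ assumed in {\sf (A1)} (hence of $\partial\Omega^\delta$, since $\Omega^\delta$ is a rescaling of $\Omega$) is exactly what is needed to invoke the boundary Schauder theory of \cite{ladyzhenskaia1988linear} for the linearized problem; the compatibility conditions at the parabolic corner $\partial\Omega\times\{0\}$ must be checked, and this is where one uses both $\rho_{\sinit}\in\Cont^q(\Omega)$ and the fact that $\rho_{\sinit}$ satisfies the boundary condition (implicitly, via Assumption {\sf (A5)} on the initialization, which makes $\rho_{\sinit}$ smooth). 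Everything else — the bootstrapping of the drift's regularity from that of $\rho$, and the induction on $q$ — is bookkeeping with classical estimates.
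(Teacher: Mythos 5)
Your strategy is sound and reaches the right conclusion, but it is implemented differently from the paper. The paper works entirely from the Duhamel (mild-solution) representation \eqref{eq:Duhamel}: it decomposes the Neumann heat kernel as $G^{\Omega}=G+G_R^{\Omega}$ with $G_R^{\Omega}$ smooth (Theorem \ref{thm:Kernel}), uses the parabolic $\Lp^p$ smoothing inequality $\<\<G*_2u\>\>^{(2m+2)}_{\Lp^p}\le C\<\<u\>\>^{(2m)}_{\Lp^p}$ from \cite{ladyzhenskaia1988linear}, and then repeats verbatim the bootstrap of \cite{mei2018mean} (supplementary Lemma 6.7) with that inequality replaced by its Neumann analogue. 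You instead freeze the coefficients, regard $\rho$ as a solution of a linear divergence-form parabolic problem with bounded drift $\bb=\nabla\Psi$, and bootstrap through De~Giorgi--Nash--Moser and boundary Schauder theory for the conormal problem on a $\Cont^2$ domain. Both are classical bootstraps and both ultimately rest on \cite{ladyzhenskaia1988linear}; the paper's route buys a shorter argument by outsourcing the boundary issue to the kernel decomposition and reusing an existing proof wholesale, while yours is more self-contained but must verify the boundary Schauder estimates and corner compatibility by hand. Two small remarks. First, the spatial regularity of the drift does not need to be ``inherited'' from that of $\rho$: since $\bb(\bx,t)=\nabla V(\bx)+\int\nabla_{\bx}U(\bx,\bw')\,\rho_t(\de\bw')$ and $\rho_t$ is a probability measure, $\bb$ is as regular in $\bx$ as $\nabla V$ and $\nabla_{\bx}U$ are, uniformly in $t$; only its time regularity depends on $\rho$ (through $W_1$-continuity of $t\mapsto\rho_t$). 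Second, you correctly flag the compatibility condition at $\partial\Omega^{\delta}\times\{0\}$ needed for regularity up to $t=0$; the paper's proof is silent on this point, so your treatment is if anything more careful there. Neither argument is wrong, and no essential idea is missing from yours.
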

\begin{proof}
We prove the claim for $q=2$. For larger values of $q$, the proof is similar and it only requires to iterate the argument. 

The proof uses the same bootstrap technique of  \cite{mei2018mean}[Supplementary material, Lemma 6.7]. The only difference is that the Duhamel 
formula of Eq.~(\ref{eq:Duhamel}) involves the Neumann heat kernel in $\Omega$ instead of the heat kernel in $\reals^d$.

Let $S = \Omega\times [0,T]$ and, for $u:\Omega\times[0,T]\to \reals$. For $r\in \naturals$, $\balpha = (\alpha_1,\dots,\alpha_d)\in\naturals^d$,
let $D^r_tD^{\balpha}_{\bx}u$ be the generalized derivative of $u$, and define the parabolic seminorm
\begin{align}
\<\<u\>\>_{\Lp^p(S)}^{(j)} \equiv \sum_{|\balpha|+2r = j} \|D^r_tD^{\balpha}_{\bx}u\|_{\Lp^p(S)}.
\end{align}
The proof  of \cite{mei2018mean}[Supplementary material, Lemma 6.7] uses the following inequality from \cite{ladyzhenskaia1988linear}[Chapter IV, Section 3, Eq. (3.1)]
\begin{align}
&\<\<\<G *_2 u\>\>^{(2m+2)}_{\Lp^p(S)}\le C\, \<\<u\>\>^{(2m)}_{\Lp^p(S)}\, ,\label{eq:Lady}\\
&G *_2 u (\bx,t) \equiv \int_{\reals^d}\int_0^t G(\bx,\by,\;t-s) \, u(\by,s)\, \de\by\,\de s\, .
\end{align}
Furthermore, \eqref{eq:HeatDecomposition} of Theorem \ref{thm:Kernel} yields
\begin{align}
\<\<G^{\Omega}*_2 u\>\>^{(2m+2)}_{\Lp^p(S)} \le \<\<G *_2 u\>\>^{(2m+2)}_{\Lp^p(S)}+ \<\<G_R^{\Omega}*_2 u\>\>^{(2m+2)}_{\Lp^p(S)} \, .
\end{align}
Since $G_R^{\Omega}\in \Cont^{\infty}(\Omega\times\Omega\times[0,T])$, we have that
$$\<\<G_R^{\Omega}*_2 u\>\>^{(2m+2)}_{\Lp^p(S)} \le C\|u\|_{\Lp^p(S)},$$
which immediately implies that
\begin{align}
\<\<G^{\Omega}*_2 u\>\>^{(2m+2)}_{\Lp^p(S)} \le C\<\<u\>\>^{(2m)}_{\Lp^p(S)}+ C\| u\|_{\Lp^p(S)} \, .\label{eq:SmoothingModified}
\end{align}
The  proof  of \cite{mei2018mean}[Supplementary material, Lemma 6.7]  can be repeated verbatimly with \eqref{eq:Lady} replaced by \eqref{eq:SmoothingModified}.
\end{proof}

As a consequence of the last lemma, the PDE \eqref{eq:PDEbis} admits unique strong solutions 
$\rho\in \Cont^{2,1}(\Omega,[0,T])$ with initial condition $\rho_{\sinit}$ and Neumann boundary condition. We will use $\rho(t)$ as shortcut for  $\rho(\,\cdot\,,t)$. The rest of this appendix is devoted to prove further regularity results for $\rho(t)$, which will be crucial in the proofs provided in Appendix \ref{app:convglob}.
To emphasize the dependence of $\rho$ on $\delta$, we will denote this solution by $\rho^{\delta}$.

In what follows, we will set  the initial condition $\rho^\delta(0)\equiv \rho_{\sinit}^\delta$ at $\delta>0$ to be defined  via
 $\rho_{\sinit}^\delta(\bw)=\lambda_{\delta}^{-d}\rho_{\sinit}(\bw/\lambda_{\delta})$, with $\lambda_{\delta}$ given by Eq.~(\ref{eq:lambda}) 

It is useful to recall the definition of free energy, which is given by
\begin{align}
F^\delta(\rho) &= \frac{1}{2}\, R^\delta(\rho) -\tau\, S(\rho)\nonumber\\
& = \frac{\nu_0}{2}\|f-K^\delta\cv \rho\|_{\Lp^2(\Omega)}^2+\tau\int \rho(\bx) \log\rho(\bx)\, \de\bx\, .
\end{align}

The following lemma provides an expression for the derivative of the free energy with respect to time. Such an expression immediately yields an upper bound on the $\Lp^2(\Omega)$ norm of $K^\delta *\rho^\delta(t)$ which is independent of $\delta$.  
\begin{lemma}\label{lemma:FreeEnergyDecrease}
Let $\rho^\delta\in \Cont^{2,1}(\Omega^\delta,[0,T])$ be the solution of the PDE \eqref{eq:PDEbis} with initial and boundary
conditions \eqref{eq:BCbis}. Then,
\begin{align}
\frac{\de\phantom{t}}{\de t}F^\delta(\rho^\delta(t)) = -\int \big|\nabla\big( \Psi(\bx;\rho^\delta(t))+\tau\log\rho^\delta(\bx,t)\big)\big|^2\, \rho^\delta(\bx,t)\, \de\bx\, .
\end{align}
\end{lemma}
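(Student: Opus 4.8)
The plan is to compute $\frac{\de}{\de t}F^\delta(\rho^\delta(t))$ directly by differentiating under the integral sign, using that $\rho^\delta$ is a strong solution in $\Cont^{2,1}(\Omega^\delta,[0,T])$ (established in the preceding lemma) so all manipulations are justified. Write $F^\delta(\rho) = \frac12 R^\delta(\rho) - \tau S(\rho)$ and recall from Section~\ref{subsec:PDEdeltag0} that $F^\delta$ is precisely the functional whose first variation is $\Psi(\bx;\rho) + \tau\log\rho(\bx)$; more concretely, $\frac{\delta}{\delta\rho}\big(\frac12 R^\delta\big) = \Psi(\bx;\rho) = -\nu_0 K^\delta\ast f + \nu_0 K^\delta\ast K^\delta\ast\rho$ (using symmetry of $K^\delta$ so that the quadratic term $\frac{\nu_0}{2}\int (K^\delta\ast\rho)^2$ has variation $\nu_0 K^\delta\ast K^\delta\ast\rho$), and $\frac{\delta}{\delta\rho}(-\tau S) = \tau(\log\rho + 1)$. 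Hence
\begin{align}
\frac{\de}{\de t}F^\delta(\rho^\delta(t)) = \int \big(\Psi(\bx;\rho^\delta(t)) + \tau\log\rho^\delta(\bx,t) + \tau\big)\,\partial_t\rho^\delta(\bx,t)\,\de\bx.
\end{align}
The constant $\tau$ drops out because $\int \partial_t\rho^\delta\,\de\bx = 0$ (mass conservation, itself a consequence of the Neumann boundary condition).

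Next I would substitute the PDE \eqref{eq:PDEbis}, namely $\partial_t\rho^\delta = \nabla\cdot(\rho^\delta\nabla\Psi(\,\cdot\,;\rho^\delta)) + \tau\Delta\rho^\delta = \nabla\cdot\big(\rho^\delta\nabla(\Psi(\,\cdot\,;\rho^\delta) + \tau\log\rho^\delta)\big)$, where the last equality uses $\tau\Delta\rho^\delta = \tau\nabla\cdot(\rho^\delta\nabla\log\rho^\delta)$. Writing $\xi(\bx) \equiv \Psi(\bx;\rho^\delta(t)) + \tau\log\rho^\delta(\bx,t)$ for brevity, this gives
\begin{align}
\frac{\de}{\de t}F^\delta(\rho^\delta(t)) = \int_{\Omega^\delta} \xi(\bx)\,\nabla\cdot\big(\rho^\delta(\bx,t)\nabla\xi(\bx)\big)\,\de\bx.
\end{align}
Integrating by parts over $\Omega^\delta$, the boundary term is $\int_{\partial\Omega^\delta}\xi\,\rho^\delta\langle\bn,\nabla\xi\rangle\,\de\sigma$, which vanishes identically: indeed $\rho^\delta\nabla\xi = \rho^\delta\nabla\Psi(\,\cdot\,;\rho^\delta) + \tau\nabla\rho^\delta$, so the Neumann boundary condition \eqref{eq:BCbis} says exactly $\langle\bn,\rho^\delta\nabla\xi\rangle = 0$ on $\partial\Omega^\delta$. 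Therefore
\begin{align}
\frac{\de}{\de t}F^\delta(\rho^\delta(t)) = -\int_{\Omega^\delta}\langle\nabla\xi(\bx),\rho^\delta(\bx,t)\nabla\xi(\bx)\rangle\,\de\bx = -\int_{\Omega^\delta}\big|\nabla\xi(\bx)\big|^2\rho^\delta(\bx,t)\,\de\bx,
\end{align}
which is the claimed identity once we unfold $\xi$.

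The only genuinely delicate points are regularity/integrability bookkeeping rather than conceptual obstacles. First, differentiation under the integral sign for the entropy term $S(\rho^\delta) = -\int\rho^\delta\log\rho^\delta$ requires a lower bound on $\rho^\delta$ (or at least control near $\{\rho^\delta = 0\}$) so that $\log\rho^\delta$ and $\nabla\log\rho^\delta$ are well behaved; this is where the diffusion term $\tau\Delta\rho^\delta$ with $\tau>0$ is essential, since parabolic smoothing / strong maximum principle arguments (available because $\rho^\delta\in\Cont^{2,1}$ and $\Omega^\delta$ is compact with $\Cont^2$ boundary) guarantee $\rho^\delta(\bx,t) > 0$ on $\Omega^\delta$ for $t>0$, hence $\xi\in\Cont^{2,1}$ locally and the integrals are finite. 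If one wants to avoid even this, one can first prove the inequality $\frac{\de}{\de t}F^\delta(\rho^\delta(t))\le 0$ via a truncated entropy $-\int\rho^\delta\log(\rho^\delta+\eta)$ and pass $\eta\to 0$ by monotone/dominated convergence. Second, one must make sure the boundary integral manipulations are legitimate, which again follows from $\rho^\delta\in\Cont^{2,1}(\overline{\Omega^\delta}\times[0,T])$ and the $\Cont^2$ regularity of $\partial\Omega^\delta$. I expect the entropy positivity/integrability step to be the main thing to nail down carefully; everything else is a direct integration by parts.
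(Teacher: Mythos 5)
Your proposal is correct and follows essentially the same route as the paper: differentiate $F^\delta$ under the integral sign to identify the first variation $\Psi(\cdot;\rho^\delta)+\tau\log\rho^\delta+\tau$, substitute the PDE in the form $\partial_t\rho^\delta=\nabla\cdot\big(\rho^\delta\nabla(\Psi+\tau\log\rho^\delta)\big)$, and integrate by parts with the boundary term killed by the Neumann condition. Your additional remarks on positivity of $\rho^\delta$ and the legitimacy of the entropy differentiation are reasonable bookkeeping that the paper's proof leaves implicit.
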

\begin{proof}
By definition
\begin{align*}
F^\delta(\rho^\delta(t)) = \frac{\nu_0}{2}&\|f\|_{\Lp^2(\Omega)}^2 + \int V(\bw) \rho^\delta(\bw,t)\de\bw \nonumber\\
& + \frac{1}{2} \int U(\bw_1-\bw_2) \rho^\delta(\bw_1,t) \de \bw_1  \rho^\delta(\bw_2,t) \de \bw_2 \\
&+ \tau \int \rho^\delta(\bw,t) \log \rho^\delta(\bw,t) \de \bw\,.
\end{align*}
By differentiating $F^\delta(\rho^\delta(t))$ along the solution of \eqref{eq:PDEbis}, we obtain
\begin{align}
\frac{\de}{\de t} F^\delta(\rho^\delta(t)) &= \int V(\bw) \partial_t \rho^\delta(\bw,t) \de \bw\nonumber\\
&\phantom{AAAA}+\int U(\bw_1-\bw_2) \rho^\delta(\bw_1,t) \partial_t\rho^\delta(\bw_2,t)\de\bw_1\de\bw_2 \nonumber\\
&\phantom{AAAA}+ \tau\int (1+\log \rho^\delta(\bw,t)) \partial_t \rho^\delta(\bw,t)\de\bw\nonumber\\
& = \int (\Psi(\bw,\rho^\delta(t)) + \tau \log \rho^\delta(\bw,t) +\tau) \partial_t\rho^\delta(\bw,t) \de\bw\nonumber\\
& = \int (\Psi(\bw,\rho^\delta(t)) + \tau \log \rho^\delta(\bw,t) +\tau)\nonumber\\
& \phantom{AAAA} \nabla\cdot \Big(\rho^\delta(\bw,t) \nabla \left(\Psi(\bw,\rho^\delta(t)) + \tau\log \rho^\delta(\bw,t)\right)\Big)\de\bw\nonumber\\
&= - \int \<\nabla\left(\Psi(\bw,\rho^\delta(t) + \tau \log \rho^\delta(\bw,t) \right), \nonumber\\
&\phantom{AAAA} \nabla (\Psi(\bw,\rho^\delta(t)) +\tau \log \rho^\delta(\bw,t))\> \rho^\delta(\bw,t) \de\bw\nonumber\\
& = - \int \big|\nabla\big( \Psi(\bw;\rho^\delta(t))+\tau\log\rho^\delta(\bw,t)\big)\big|^2\, \rho^\delta(\bw,t)\, \de\bw
\end{align}
\end{proof}

\begin{corollary}\label{coro:ell2Bound}
Let $\rho^\delta\in \Cont^{2,1}(\Omega^\delta,[0,T])$ be the solution of the PDE \eqref{eq:PDEbis} with initial and boundary
conditions \eqref{eq:BCbis}. Then,
\begin{align}\label{eq:ell2bound}
\nu_0\|K^{\delta} \cv \rho^\delta(t)-f\|^2_{\Lp^2(\Omega)}\le 2F^\delta(\rho^\delta(0)) +2\tau\log|\Omega^\delta|\, ,
\end{align}
where $|\Omega^\delta|$ denotes the volume of the set $\Omega^\delta$.
\end{corollary}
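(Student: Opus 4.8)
The plan is to combine the monotonicity of the free energy along the flow, which is immediate from Lemma~\ref{lemma:FreeEnergyDecrease}, with the elementary entropy bound $S(\rho)\le\log|\Omega^\delta|$ valid for any probability density on $\Omega^\delta$.

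First, note that the integrand in the expression of Lemma~\ref{lemma:FreeEnergyDecrease} is non-negative, being $\big|\nabla(\Psi(\bx;\rho^\delta(t))+\tau\log\rho^\delta(\bx,t))\big|^2$ multiplied by the density $\rho^\delta(\bx,t)\ge 0$. Hence $\frac{\de}{\de t}F^\delta(\rho^\delta(t))\le 0$, so $t\mapsto F^\delta(\rho^\delta(t))$ is non-increasing and in particular $F^\delta(\rho^\delta(t))\le F^\delta(\rho^\delta(0))$ for every $t\in[0,T]$. Second, recalling the definition of the free energy,
\[
F^\delta(\rho)=\frac{\nu_0}{2}\|f-K^\delta\cv\rho\|_{\Lp^2(\Omega)}^2+\tau\int\rho(\bx)\log\rho(\bx)\,\de\bx=\frac{\nu_0}{2}\|f-K^\delta\cv\rho\|_{\Lp^2(\Omega)}^2-\tau\, S(\rho)\, ,
\]
we may rearrange to obtain
\[
\frac{\nu_0}{2}\|K^\delta\cv\rho^\delta(t)-f\|_{\Lp^2(\Omega)}^2=F^\delta(\rho^\delta(t))+\tau\, S(\rho^\delta(t))\le F^\delta(\rho^\delta(0))+\tau\, S(\rho^\delta(t))\, .
\]
Finally, by Jensen's inequality applied to the concave logarithm (equivalently, the uniform measure maximizes differential entropy on a bounded set), $S(\rho^\delta(t))=\int_{\Omega^\delta}\rho^\delta(\bx,t)\log\big(1/\rho^\delta(\bx,t)\big)\,\de\bx\le\log\!\int_{\Omega^\delta}\de\bx=\log|\Omega^\delta|$. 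Substituting this bound and multiplying through by $2$ yields~\eqref{eq:ell2bound}.

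There is no genuine obstacle here; the only points worth a line of care are bookkeeping ones. All manipulations in Lemma~\ref{lemma:FreeEnergyDecrease} and in the entropy bound are legitimate because, by the regularity results established earlier in this appendix, $\rho^\delta\in\Cont^{2,1}(\Omega^\delta,[0,T])$ and $\rho^\delta$ is bounded in $\Lp^\infty(\Omega^\delta\times[0,T])$, so that $\rho^\delta\log\rho^\delta$ is integrable, the differentiation of $F^\delta$ along the flow is justified, and the convolution term $K^\delta\cv\rho^\delta(t)$ lies in $\Lp^2(\Omega)$. One should also note that $\log|\Omega^\delta|\le\log|\Omega|$ if one prefers a $\delta$-free right-hand side, but the stated form is what is needed downstream.
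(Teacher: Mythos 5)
Your proof is correct and follows exactly the paper's argument: monotonicity of $F^\delta$ along the flow from Lemma~\ref{lemma:FreeEnergyDecrease}, the definition of the free energy, and the bound $S(\rho)\le\log|\Omega^\delta|$. The extra remarks on regularity and integrability are fine but not needed beyond what the hypotheses already provide.
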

\begin{proof}
By Lemma \ref{lemma:FreeEnergyDecrease} we have $F^\delta(\rho^\delta(t)) \le F^\delta(\rho^\delta(0))$. The claim follows by substituting the definition of $F^\delta(\rho^\delta)$
and using $S(\rho^\delta) \le \log|\Omega^\delta|$.
\end{proof}
\begin{remark}\label{rem:delta-ind1}
By Corollary~\ref{coro:ell2Bound}, we are able to provide a $\delta$-free upper bound on $\nu_0\|K^{\delta} \cv \rho^\delta(t)-f\|^2_{\Lp^2(\Omega)}$. Specifically, $\Omega^\delta\subseteq \Omega$ and hence $|\Omega^\delta| \le |\Omega|$.  We also have
\[
F^\delta (\rho^\delta(0)) \le \nu_0 \|f\|^2_{\Lp^2(\Omega)} + \nu_0 \|K^\delta \cv \rho^\delta(0)\|_{\Lp^2(\Omega)} -\tau S(\rho^\delta(0))\,.
\]

Note that
\begin{equation*}
S(\rho^\delta(0)) = S(\rho_{\sinit}) + d\log \lambda_\delta.
\end{equation*}
Since $\lambda_\delta \to 1$ as $\delta\to 0$, there exists a $C_* >0$ such that for $\delta<C_*$, $\lambda_\delta \ge 1/2$. Thus, the term $S(\rho^\delta(0))$ has a $\delta$-free upper bound. 

By Young's inequality it only remains to give a $\delta$-free upper bound on the quantity $\|\rho^\delta(0)\|_{\Lp^2(\Omega)}$. Let us write
\begin{align*}
\|\rho^\delta(0)\|^2_{\Lp^2(\Omega)} &\le \int_{\Omega^\delta} \lambda_\delta^{-2d} \rho^2_{\sinit} (\bw/\lambda_\delta) \de\bw \\
& =  \int_{\Omega} \lambda_\delta^{-2d} \rho^2_{\sinit} (\bx) \lambda_\delta^d \de\bx = \lambda_\delta^{-d} \|\rho^2_{\sinit}\| ^2_{\Lp^2(\Omega)}. 
\end{align*}
Again, for $\delta<C_*$, $\lambda_\delta \ge 1/2$. Also, by Assumption {\sf (A5)} and the fact that $\Omega$ is compact, we have $\|\rho^2_{\sinit}\| ^2_{\Lp^2(\Omega)}<\infty$, which concludes the claim. 
\end{remark}
%

We next prove $\delta$-free upper bound on the gradient of $\nabla K^\delta\cv \rho^\delta$.

\begin{lemma}\label{lemma:SpeedBound}
Let $\rho^\delta\in \Cont^{2,1}(\Omega^\delta,[0,T])$ be the solution of the PDE \eqref{eq:PDEbis} with initial and boundary
conditions \eqref{eq:BCbis}. Then, the following bound holds:
\begin{equation}\label{eq:intfin}
\begin{split}
\int_0^T \int \big|\nabla U\cv&\rho^\delta(\bx,t)|^2 \rho^\delta(\bx,t)\,\de\bx\, \de t+2\tau \int_0^T \int |\nabla(K^{\delta}\cv\rho^\delta)(\bx,t)|^2 \de\bx\, \de t\\
&\le T\|\nabla V\|_{\Lp^\infty(\Omega)}^2+2\nu_0\|f\|^2_{\Lp^2(\Omega)}+4F^\delta(\rho^\delta(0))+
4\tau\log|\Omega^\delta|\, .
\end{split}
\end{equation}
\end{lemma}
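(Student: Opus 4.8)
The proof rests on the free-energy dissipation identity of Lemma~\ref{lemma:FreeEnergyDecrease}. Integrating it over $[0,T]$ gives
\[
\int_0^T\!\!\int\big|\nabla\big(\Psi(\bx;\rho^\delta(t))+\tau\log\rho^\delta(\bx,t)\big)\big|^2\,\rho^\delta(\bx,t)\,\de\bx\,\de t = F^\delta(\rho^\delta(0))-F^\delta(\rho^\delta(T))\,.
\]
Since $R^\delta\ge 0$ and $S(\rho)\le\log|\Omega^\delta|$ for every probability density on $\Omega^\delta$, we have $F^\delta(\rho^\delta(T))\ge -\tau\log|\Omega^\delta|$, so the dissipation integral is at most $F^\delta(\rho^\delta(0))+\tau\log|\Omega^\delta|$. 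It therefore suffices to bound the left-hand side of~\eqref{eq:intfin} by a fixed multiple of this dissipation integral plus the terms $T\|\nabla V\|_{\Lp^\infty(\Omega)}^2$ and $2\nu_0\|f\|_{\Lp^2(\Omega)}^2$.

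The next step is to expand the integrand. Using $\Psi(\bx;\rho)=V(\bx)+U\cv\rho(\bx)$ with $U=\nu_0\,K^\delta\cv K^\delta$ (the remark following~\eqref{eq:KtoUV}) and $\nabla\log\rho=\nabla\rho/\rho$,
\[
\big|\nabla(\Psi+\tau\log\rho)\big|^2\rho = \big|\nabla(U\cv\rho)\big|^2\rho + 2\tau\langle\nabla(U\cv\rho),\nabla\rho\rangle + \tau^2\frac{|\nabla\rho|^2}{\rho} + 2\langle\nabla V,\nabla(U\cv\rho)+\tau\nabla\log\rho\rangle\rho + |\nabla V|^2\rho\,.
\]
The first term is exactly the integrand $|\nabla U\cv\rho^\delta|^2\rho^\delta$ appearing in~\eqref{eq:intfin}, and the third term is nonnegative and dropped. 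For the second term I would use the symmetry of $K^\delta$ and integration by parts: writing $\nabla(U\cv\rho)=\nu_0\,K^\delta\cv\nabla(K^\delta\cv\rho)$ and moving the derivative onto the kernel yields $\int\langle\nabla(U\cv\rho),\nabla\rho\rangle\,\de\bx=\nu_0\int|\nabla(K^\delta\cv\rho)|^2\,\de\bx$ up to a boundary integral on $\partial\Omega^\delta$, which is handled via the no-flux condition $\langle\bn,\rho\nabla\Psi+\tau\nabla\rho\rangle=0$; this produces the $2\tau\int|\nabla(K^\delta\cv\rho)|^2$ term. The residual cross terms $2\langle\nabla V,\nabla(U\cv\rho)\rangle\rho$ and $2\tau\langle\nabla V,\nabla\rho\rangle$, together with $|\nabla V|^2\rho$, are controlled by Cauchy--Schwarz and Young's inequality; doing so absorbs a fraction of $|\nabla(U\cv\rho)|^2\rho$ and of $|\nabla(K^\delta\cv\rho)|^2$ back into the left-hand side (whence the factor $4$ in front of $F^\delta(\rho^\delta(0))$ and $\log|\Omega^\delta|$), leaves a term bounded after $t$-integration by $T\|\nabla V\|_{\Lp^\infty(\Omega)}^2$, and — bounding the $\nabla V=-\nu_0\nabla(K^\delta\cv f)$ contribution against the $\Lp^2$ norms of $f$ and $K^\delta\cv\rho$ via the convolution identity — produces the remaining $2\nu_0\|f\|_{\Lp^2(\Omega)}^2$.

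The main obstacle is the boundary analysis in the second step. The convolution identity $\int\langle\nabla(U\cv\rho),\nabla\rho\rangle=\nu_0\|\nabla(K^\delta\cv\rho)\|_{\Lp^2}^2$ is exact on $\reals^d$, but here $\rho^\delta(\cdot,t)$ is supported on $\Omega^\delta$ and its zero-extension is only Lipschitz across $\partial\Omega^\delta$, so integration by parts leaves a surface term $\int_{\partial\Omega^\delta}\rho\,\langle\bn,\nabla(U\cv\rho)\rangle\,\de\sigma$; this must be rewritten using the Neumann boundary condition and the regularity of $\rho^\delta$ established in the preceding lemmas, and shown not to reintroduce $\delta$-dependence (the $\delta$-independence of the final estimate being the whole point, cf.\ Remark~\ref{rem:delta-ind1}). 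A secondary, purely bookkeeping difficulty is arranging the numerical constants so that the bound takes exactly the stated form; since the Young-type inequalities above leave slack, the precise constants are not essential.
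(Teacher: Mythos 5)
Your route is genuinely different from the paper's, and it works, modulo one bookkeeping misstep noted below. The paper does not start from the dissipation identity of Lemma~\ref{lemma:FreeEnergyDecrease}; instead it differentiates the interaction energy $\tfrac{1}{2}\<U\ast\rho^\delta(t),\rho^\delta(t)\>$ along the flow, performs a single integration by parts against the no-flux boundary condition to obtain
$-\int|\nabla(U\ast\rho^\delta)|^2\rho^\delta-\tau\int|\nabla(K^\delta\ast\rho^\delta)|^2-\int\<\nabla(U\ast\rho^\delta),\nabla V\>\rho^\delta$,
applies one Young inequality to the last term, integrates in $t$, and then bounds the resulting initial-data term $\<\rho^\delta(0),U\ast\rho^\delta(0)\>=\nu_0\|K^\delta\ast\rho^\delta(0)\|_{\Lp^2}^2$ via Corollary~\ref{coro:ell2Bound} --- this is where the $2\nu_0\|f\|^2_{\Lp^2(\Omega)}$ and the factors $4$ come from, not from Young-type absorption as you guessed. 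Your approach (integrate the free-energy dissipation, lower-bound $F^\delta(\rho^\delta(T))$ by $-\tau\log|\Omega^\delta|$, expand the square) reaches the same two dissipation terms but must handle three cross terms instead of one, and it yields the bound with different numerical constants and without the $\|f\|^2$ term; since the lemma is only ever used for a $\delta$-uniform bound, that is harmless, as you note. Both routes hinge on the same convolution identity converting $\int\<\nabla(U\ast\rho^\delta),\nabla\rho^\delta\>$ (equivalently $\<U\ast\rho^\delta,\Delta\rho^\delta\>$) into $\nu_0\int|\nabla(K^\delta\ast\rho^\delta)|^2$, so the boundary subtlety you flag as ``the main obstacle'' is present, and passed over silently, in the paper's own proof as well; you are not introducing a new difficulty there.

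The one genuine misstep: the cross term $2\tau\int\<\nabla V,\nabla\rho^\delta\>$ cannot be absorbed into $|\nabla(U\ast\rho^\delta)|^2\rho^\delta$ or $|\nabla(K^\delta\ast\rho^\delta)|^2$, since neither controls $\nabla\rho^\delta$ itself. You must instead retain the term $\tau^2|\nabla\rho^\delta|^2/\rho^\delta$ that you proposed to drop and use it as the absorbing quantity, via
\begin{equation*}
2\tau\<\nabla V,\nabla\rho^\delta\>\;\ge\;-\tau^2\frac{|\nabla\rho^\delta|^2}{\rho^\delta}-|\nabla V|^2\rho^\delta\, .
\end{equation*}
With that correction the expansion closes and gives a bound of the stated form (with larger, but still $\delta$-independent, constants).
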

\begin{proof}
Denote by $\<f,g\>=\int f(\bx)g(\bx) \de\bx$ the standard scalar product in $\Lp^2$. Then,
\begin{equation}\label{eq:exprlong}
\begin{split}
\frac{1}{2}\frac{\de\phantom{t}}{\de t}&\<U\cv\rho^\delta(t),\rho^\delta(t)\>  = \<U\cv \rho^\delta(t),\partial_t \rho^\delta(t)\> \\
&= \<U\cv \rho^\delta(t),\nabla\cdot (\rho^\delta(t) \nabla (V+ U\cv \rho^\delta(t))) + \tau \Delta \rho^\delta(t) \>  \\
& =\< U\cv\rho^\delta(t),\nabla\cdot(\rho^\delta(t)\nabla V)\> +
\< U\cv\rho^\delta(t),\nabla\cdot(\rho^\delta(t)\nabla(U\cv\rho^\delta(t))\> \\
&\phantom{AAAA}+\tau \<U\cv\rho^\delta(t),\Delta\rho^\delta(t)\>\\
& = -\int\<\nabla(U\cv\rho^\delta)(\bx,t),\nabla V(\bx) \> \, \rho^\delta(\bx,t)\,\de\bx\\
&\phantom{AAAA}- \int |\nabla(U\cv\rho^\delta)(\bx,t)|^2 \, \rho^\delta(\bx,t)\,\de\bx-
\tau \int |\nabla(K^{\delta}\cv\rho^\delta)(\bx,t)|^2 \de\bx\\
& \le \frac{1}{2} \int|\nabla V(\bx) |^2 \, \rho^\delta(\bx,t)\,\de\bx- \frac{1}{2}\int |\nabla(U\cv\rho^\delta)(\bx,t)|^2 \, \rho^\delta(\bx,t)\,\de\bx\\
&\phantom{AAA}-
\tau \int |\nabla(K^{\delta}\cv\rho^\delta)(\bx,t)|^2 \de\bx\, .
\end{split}
\end{equation}
By integrating \eqref{eq:exprlong} between $0$ and $T$, we obtain
\begin{equation}
\begin{split}
\int_0^T \int\big|\nabla U\cv\rho^\delta(\bx,t)|^2& \rho^\delta(\bx,t)\,\de\bx\, \de t+2\tau \int_0^T \int |\nabla(K^{\delta}\cv\rho^\delta)(\bx,t)|^2 \de\bx\, \de t\\
&\le T\|\nabla V\|_{\infty}^2-\<\rho^\delta(T),U\cv\rho^\delta(T)\>+\<\rho^\delta(0),U\cv\rho^\delta(0)\>\\
&\le  T\|\nabla V\|_{\Lp^\infty(\Omega)}^2+\nu_0\|K^{\delta}\cv\rho^\delta(0)\|_{\Lp^2(\Omega)}^2\, .
\end{split}
\end{equation}
Hence, \eqref{eq:intfin} follows from Corollary \ref{coro:ell2Bound}.
\end{proof}

\begin{remark}
Note that by virtue of Lemma~\ref{lemma:SpeedBound}, we are able to get a $\delta$-free upper bound on the left-hand side of \eqref{eq:intfin}.  
Indeed, by definition of $\nabla V$ as per~\eqref{eq:KtoUV} and using Assumption ({{\sf A3}}), we have the $\delta$-free bound:
\begin{align}
\|\nabla V\|_{\Lp^\infty(\Omega)} \le \nu_0 \|K^\delta\|_{\Lp^1(\Omega)} \|\nabla f\|_{\Lp^\infty(\Omega)}  =  \|\nabla f\|_{\Lp^\infty(\Omega)} < C_*\,.
\end{align}
In addition, by Remark~\ref{rem:delta-ind1}, $\|K^{\delta}\cv\rho^\delta(0)\|_{\Lp^2(\Omega)}^2$ has $\delta$-free bound.
\end{remark}

\section{Global convergence: Proof of Theorems \ref{TH:CONVPME} and \ref{TH:ENDTOEND}}\label{app:convglob}

We start by showing that $\rho^\delta$ admits a limit in a suitable functional space as $\delta \to 0$.

\begin{lemma}[Existence of converging subsequence]\label{lemma:rcseq}
Let $\rho^\delta\in \Cont^{2,1}(\Omega^\delta,$ $[0,T])$ be the unique solution of the PDE \eqref{eq:PDEbis} with initial and boundary
conditions \eqref{eq:BCbis}.
Then, the family $(\rho^\delta)_{\delta>0}$ is relatively compact in
the space $\Cont([0,T],\cuP_2(\Omega))$. In particular any sequence
$(\rho^{\delta_n})_{n\ge 1}$, admits
a converging subsequence.
\end{lemma}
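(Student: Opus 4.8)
The plan is to invoke the Arzelà–Ascoli theorem in the space $\Cont([0,T],\cuP_2(\Omega))$. Since $\Omega$ is compact, $(\cuP_2(\Omega),W_2)$ is a compact metric space, so the pointwise-relative-compactness hypothesis of Arzelà–Ascoli is automatic: for every fixed $t$, the set $\{\rho^\delta_t\}_{\delta>0}$ lies in the compact space $\cuP_2(\Omega)$ (one first has to check that $\rho^\delta_t$, a priori a measure on $\Omega^\delta\subseteq\Omega$, is naturally viewed as an element of $\cuP_2(\Omega)$, which is immediate). Hence the only thing to establish is \emph{equicontinuity} of the family $\{t\mapsto \rho^\delta_t\}_{\delta>0}$ in the $W_2$ metric, uniformly in $\delta$.

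For the equicontinuity estimate I would use the stochastic-process representation from Appendix~\ref{app:nonlinear}: $\rho^\delta_t=\Law(\bX^\delta_t)$ where $\bX^\delta$ solves the Skorokhod problem \eqref{eq:SDE1}--\eqref{eq:SDE2} with $D=\Omega^\delta$ and $\Psi(\bx,\rho)=V(\bx)+\int U(\bx,\tbx)\rho(\de\tbx)$, $V,U$ given by \eqref{eq:KtoUV}. By the coupling $W_2(\rho^\delta_t,\rho^\delta_s)^2\le \E\{|\bX^\delta_t-\bX^\delta_s|^2\}$, so it suffices to bound the right-hand side. Reusing the computation \eqref{eq:continuityF} in the proof of Lemma~\ref{lemma:UniqueSko}, one gets, for $s\le t$,
\begin{equation*}
\E\{|\bX^\delta_t-\bX^\delta_s|^2\}\le \sup_{\bx,r}|\bb^\delta(\bx,r)|^2(t-s)^2+2\tau(t-s)+2\sup_{\bx,r}|\bb^\delta(\bx,r)|\,(t-s)\,\mu_{\Phi^\delta}([0,t]),
\end{equation*}
where $\bb^\delta(\bx,r)=-\nabla\Psi(\bx,\rho^\delta_r)$. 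So I need two $\delta$-uniform bounds: first, $\sup_{\bx,r}|\bb^\delta(\bx,r)|\le C_*$ uniformly in $\delta$ — this follows from the remark after Lemma~\ref{lemma:SpeedBound}, since $\|\nabla V\|_{\Lp^\infty}\le \|\nabla f\|_{\Lp^\infty}\le C_*$ by Assumption {\sf (A3)}, and $\|\nabla U\|_{\Lp^\infty(\Omega\times\Omega)}=\nu_0\|\nabla K^\delta\ast K^\delta\|_{\Lp^\infty}$ which, using $K^\delta\ast K^\delta=\delta^{-d}(K\ast K)(\cdot/\delta)$ and the $\Lp^\infty$ bound on $K\ast\rho^\delta$ together with compact support, can be controlled independently of $\delta$ — alternatively one bounds $|\nabla U\ast\rho^\delta| = \nu_0|\nabla K^\delta\ast (K^\delta\ast\rho^\delta)|\le \nu_0\|\nabla K^\delta\|_{\Lp^1}\|K^\delta\ast\rho^\delta\|_{\Lp^\infty}$ and uses the $\delta$-free bounds from Remark~\ref{rem:delta-ind1} plus the $\Lp^\infty$ bound of Lemma~\ref{lemma:boundlinf} (note $\|\nabla K^\delta\|_{\Lp^1}=\delta^{-1}\|\nabla K\|_{\Lp^1}$ is \emph{not} $\delta$-free, so the better route is the first one, convolving the derivative onto $K\ast\rho^\delta$). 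Second, $\mu_{\Phi^\delta}([0,T])$, the total variation of the reflection term, must be bounded uniformly in $\delta$; this follows from standard estimates on the Skorokhod map on the domains $\Omega^\delta$, whose boundaries have uniformly bounded curvature (being rescalings $\lambda_\delta\Omega$ of the fixed $\Cont^2$-convex set $\Omega$ with $\lambda_\delta\to1$), yielding $\E\{\mu_{\Phi^\delta}([0,T])\}\le C(\Omega,T)(1+\sup_{\bx,r}|\bb^\delta(\bx,r)|)$ uniformly.

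Granting these two $\delta$-uniform bounds, the displayed estimate gives $W_2(\rho^\delta_t,\rho^\delta_s)\le C(T)\,|t-s|^{1/2}$ for all $\delta$ small, which is uniform equicontinuity (indeed uniform Hölder-$\tfrac12$ continuity). Arzelà–Ascoli then yields relative compactness of $\{\rho^\delta\}_{\delta>0}$ in $\Cont([0,T],\cuP_2(\Omega))$, and in particular every sequence $(\rho^{\delta_n})$ has a subsequence converging in this space. The main obstacle is the $\delta$-uniform control of the reflection measure $\mu_{\Phi^\delta}$ on the varying domains $\Omega^\delta$; this is where one must lean on the uniform $\Cont^2$-regularity of $\partial\Omega^\delta$ (inherited from $\partial\Omega$ via the scaling $\lambda_\delta\to1$) together with the quantitative Skorokhod-problem estimates of Tanaka \cite{tanaka1979stochastic} and Lions–Sznitman \cite{lions1984stochastic}, to obtain a bound that does not blow up as $\delta\to0$.
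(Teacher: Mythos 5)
Your overall architecture (Arzel\`a--Ascoli in $\Cont([0,T],\cuP_2(\Omega))$, compactness of $\cuP_2(\Omega)$ handling pointwise relative compactness, and equicontinuity via the nonlinear-dynamics representation $W_2(\rho^\delta_t,\rho^\delta_s)^2\le \E\{|\bX^\delta_t-\bX^\delta_s|^2\}$) matches the paper. The gap is in how you estimate $\E\{|\bX^\delta_t-\bX^\delta_s|^2\}$. You recycle the bound \eqref{eq:continuityF} from Lemma~\ref{lemma:UniqueSko}, which forces you to control, uniformly in $\delta$, both $\sup_{\bx,r}|\bb^\delta(\bx,r)|$ and the reflection measure $\mu_{\Phi^\delta}([0,T])$. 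Neither is available. For the drift, $\nabla V$ is fine, but $\nabla U\ast\rho^\delta=\nu_0\,\nabla K^\delta\ast(K^\delta\ast\rho^\delta)$ has no $\delta$-free $\Lp^\infty$ bound: $\|\nabla K^\delta\|_{\Lp^1}=\delta^{-1}\|\nabla K\|_{\Lp^1}$, $\|\nabla(K^\delta\ast K^\delta)\|_{\Lp^\infty}\sim\delta^{-d-1}$, and the $\Lp^\infty$ bound on $\rho^\delta$ from Lemma~\ref{lemma:boundlinf} itself degenerates as $\delta\to0$ (its constant involves $\|\nabla U\|_{\Lp^\infty}$). The ``first route'' you gesture at --- moving the derivative onto $K^\delta\ast\rho^\delta$ --- only trades the problem for a uniform $\Lp^\infty$ bound on $\nabla(K^\delta\ast\rho^\delta)$, which the paper also does not have; Lemma~\ref{lemma:SpeedBound} controls this quantity only in $\Lp^2(\Omega\times[0,T])$. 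And your asserted bound $\E\{\mu_{\Phi^\delta}([0,T])\}\le C(\Omega,T)(1+\sup|\bb^\delta|)$ is both unproved and, even if granted, useless without the uniform drift bound it is multiplied by.

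The paper's proof sidesteps both obstructions: instead of \eqref{eq:continuityF}, it invokes \cite[Theorem 2.2]{slominski2001euler}, which gives $\E\{|\bX_t-\bX_s|^2\}\le C_*\tau\,\E\{[\bB]_s^t\}+C_*\,\E\{(|\bV|_s^t)^2\}$ with \emph{no} reflection term at all, and then Cauchy--Schwarz turns $\E\{(|\bV|_s^t)^2\}$ into $(t-s)\int_s^t\E\{|\bb(\bX_r,r)|^2\}\,\de r$. Since $\bX_r\sim\rho^\delta_r$, this last integral is exactly $2(t-s)\|\nabla V\|_{\Lp^\infty}^2+2\int_s^t\int|\nabla U\ast\rho^\delta|^2\rho^\delta\,\de\bx\,\de r$, i.e.\ precisely the $\rho$-weighted, time-integrated $\Lp^2$ quantity that Lemma~\ref{lemma:SpeedBound} bounds uniformly in $\delta$. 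To repair your argument, replace the pathwise estimate \eqref{eq:continuityF} with this moment estimate; the rest of your proof then goes through and yields the uniform H\"older-$\tfrac12$ modulus you claim.
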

\begin{proof}
This follows from the Ascoli-Arzel\'a's theorem. Notice that $\cuP_2(\Omega)$ is compact due to the compactness of $\Omega$. Therefore, it is sufficient to prove that
the family is equicontinuous. Using the representation in terms of nonlinear dynamics (cf. Appendix \ref{app:nonlinear}), we have 
\begin{align}
W_2(\rho_t,\rho_s)^2 \le\E\big\{\big|\bX_t-\bX_s\big|^2\big\}\, .
\end{align}
Note that we omit for simplicity the dependence on $\delta$. Recall that the nonlinear dynamic satisfies (for $\bb(\bx,t) \equiv -\nabla\Psi(\bx,\rho_t)$)
\begin{align}
\bX_t &= \int_0^t\bb(\bX_r,r)\, \de r+ \sqrt{2\tau}\,\bB_t+ \bPhi_t\\
& \equiv \bV_{t}+\sqrt{2\tau}\,\bB_t+ \bPhi_t\, .
\end{align}
By \cite[Theorem 2.2]{slominski2001euler}, we have
\begin{align}
\E\big\{|\bX_t-\bX_s|^2\big\} & \le C_*\tau \E\{[\bB]_{s}^t\} +C_* \E\{(|\bV|_{s}^t)^2\}\, .
\end{align}
where $[\bB]_{s}^t$ denotes the quadratic variation of $\bB$, and $|\bV|_{s}^t$ the total variation of $\bV$ between times
$s$ and $t$. We thus have
\begin{align}
W_2(\rho_t,\rho_s)^2&\le \E\big\{|\bX_t-\bX_s|^2\big\} \le C_*\tau (t-s)+C_* \E\left\{\left(\int_s^t |\bb(\bX_r,r)| \,\de r\right)^2\right\}\nonumber\\
&\le  C_*\tau (t-s)+C_*(t-s) \int_{s}^t \E\left\{ |\bb(\bX_r,r)|^2 \right\} \,\de r\, .
\end{align}
Hence, in order to prove uniform continuity, it is sufficient to show that, for $s,t\le T$, $\int_s^t\E\big\{|\bb(\bX_r,r)|^2\big\}\, \de r \le C$
where $C$ is bounded uniformly in $\delta$.
In order to show that this is the case, notice that
\begin{align}
\int_s^t&\E\big\{|\bb(\bX_r,r)|^2\big\}\, \de r = \int_s^t\E\big\{|\nabla V(\bX_r) +\nabla U\cv \rho(\bX_r,r)|^2\big\}\, \de r\\
& \le 2(t-s) \|\nabla V\|_{\Lp^\infty(\Omega^\delta)}^2 + 2 \int_s^t\int|\nabla U\cv \rho(\bx,r)|^2\rho(\bx,r) \, \de\bx\, \de r\, ,
\end{align}
and the claim follows from Lemma \ref{lemma:SpeedBound}.
\end{proof}

We have now proved that the sequence $(\rho^{\delta_n})_{n\ge 1}$ admits a converging subsequence, where $\delta_n\to 0$ as $n\to \infty$. Fix such a convergent subsequence and, with an abuse of notation, also denote it by $(\rho^{\delta_n})_{n\ge 1}$. Let $\rho^\infty\in \Cont([0,T],\cuP_2(\Omega))$ be its limit. 

Recall that  $\rho^{\delta_n}$ is supported in $\Omega^{\delta_n}$. Hence, $K^{\delta_n} \ast \rho^{\delta_n}$ is supported in $\Omega$ and $K^{\delta_n} \ast \rho^{\delta_n}\in \cuP_2(\Omega)$. We will now show that $(K^{\delta_n} \ast \rho^{\delta_n})_{n \ge 1}$ has the same limit as $(\rho^{\delta_n})_{n \ge 1}$ in $\Cont([0,T],\cuP_2(\Omega))$.

\begin{lemma}\label{lemma:rcseq2}
The sequence $(K^{\delta_n} \ast \rho^{\delta_n})_{n \ge 1}$ also converges in  $\Cont([0,T],$ $\cuP_2(\Omega))$ to $\rho^\infty$. 
\end{lemma}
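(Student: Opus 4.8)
The plan is to use that $K^{\delta}$ is an approximate identity whose support shrinks at rate $\delta$, so that convolution with $K^{\delta_n}$ displaces any measure supported in $\Omega^{\delta_n}$ by at most $c_0\delta_n$ in $W_2$, uniformly in time; the conclusion is then immediate from a triangle inequality together with the assumed convergence $\rho^{\delta_n}\to\rho^\infty$.

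Concretely, I would first fix $t\in[0,T]$ and build an explicit coupling. Let $\bX_t$ have law $\rho^{\delta_n}_t$, let $\bZ$ have law $K$ (a probability density by {\sf (A4)}) and be independent of $\bX_t$; then $\bX_t+\delta_n\bZ$ has law $K^{\delta_n}\ast\rho^{\delta_n}_t$. Since $\rho^{\delta_n}_t$ is supported in $\Omega^{\delta_n}=\lambda_{\delta_n}\Omega$ and, by the definition \eqref{eq:lambda} of $\lambda_{\delta_n}$, $\Omega^{\delta_n}\oplus\Ball(\b0,c_0\delta_n)\subseteq\Omega$, the vector $\bX_t+\delta_n\bZ$ lies in $\Omega$ almost surely, so $(\bX_t,\bX_t+\delta_n\bZ)$ is an admissible coupling of two elements of $\cuP_2(\Omega)$ (that $K^{\delta_n}\ast\rho^{\delta_n}_t\in\cuP_2(\Omega)$ was already observed). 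Using $\supp(K)\subseteq\Ball(\bzero,c_0)$, this coupling gives
\[
W_2\big(\rho^{\delta_n}_t,\,K^{\delta_n}\ast\rho^{\delta_n}_t\big)^2 \le \E\big\{|\delta_n\bZ|^2\big\}\le c_0^2\,\delta_n^2 ,
\]
a bound uniform in $t\in[0,T]$, hence
\[
d_{\Cont([0,T],\cuP_2(\Omega))}\big(\rho^{\delta_n},\,K^{\delta_n}\ast\rho^{\delta_n}\big)
=\sup_{t\in[0,T]}W_2\big(\rho^{\delta_n}_t,K^{\delta_n}\ast\rho^{\delta_n}_t\big)\le c_0\,\delta_n .
\]
(One also checks that $t\mapsto K^{\delta_n}\ast\rho^{\delta_n}_t$ is continuous in $\cuP_2(\Omega)$, so that $K^{\delta_n}\ast\rho^{\delta_n}$ genuinely lies in $\Cont([0,T],\cuP_2(\Omega))$: applying the same device to an optimal coupling of $\rho^{\delta_n}_s$ and $\rho^{\delta_n}_t$ shows that convolution with $K^{\delta_n}$ is a $W_2$-contraction, so $W_2(K^{\delta_n}\ast\rho^{\delta_n}_s,K^{\delta_n}\ast\rho^{\delta_n}_t)\le W_2(\rho^{\delta_n}_s,\rho^{\delta_n}_t)$.)

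Finally I would combine this estimate with the assumed convergence $\rho^{\delta_n}\to\rho^\infty$ in $\Cont([0,T],\cuP_2(\Omega))$ via the triangle inequality:
\[
d_{\Cont([0,T],\cuP_2(\Omega))}\big(K^{\delta_n}\ast\rho^{\delta_n},\rho^\infty\big)
\le c_0\,\delta_n + d_{\Cont([0,T],\cuP_2(\Omega))}\big(\rho^{\delta_n},\rho^\infty\big)\longrightarrow 0
\]
as $n\to\infty$, since $\delta_n\to 0$, which is the claim.

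There is no real obstacle here: the only points requiring care are (i) verifying that $\bX_t+\delta_n\bZ$ remains in $\Omega$, which is precisely what the Minkowski-sum definition of $\Omega^{\delta}$ in \eqref{eq:lambda} guarantees, and (ii) keeping in mind that we metrize $\cuP_2(\Omega)$ by $W_2$, which is equivalent to weak convergence since $\Omega$ is compact, so the mode of convergence is the same one used for $\rho^{\delta_n}\to\rho^\infty$.
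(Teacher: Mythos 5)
Your proof is correct and uses essentially the same device as the paper: couple $\rho^{\delta_n}_t$ with $K^{\delta_n}\ast\rho^{\delta_n}_t$ by adding an independent displacement with law $K^{\delta_n}$, bound the resulting transport cost uniformly in $t$, and conclude by the triangle inequality with Lemma \ref{lemma:rcseq}. The only (harmless) difference is that you bound $W_2$ directly via the compact support of $K$, getting $O(\delta_n)$, whereas the paper first passes to $W_1$ via the diameter of $\Omega$ and uses the bounded first moment of $K$; your route is marginally cleaner and quantitatively sharper.
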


\begin{proof}
By Lemma \ref{lemma:rcseq}, the result is implied by the following claim:
\begin{equation}\label{eq:claimsup}
\lim_{n\to \infty}\sup_{0\le t\le T} W_2(K^{\delta_n} \ast \rho_t^{\delta_n}, \rho_t^{\delta_n})=0.
\end{equation}
Note that, for bounded $\Omega$,
\[
\left(\int |\bx-\by|^2 \gamma(\de\bx,\de\by)\right)^{1/2} \le {{\rm diam}}(\Omega)^{1/2} \left(\int |\bx-\by| \gamma(\de\bx,\de\by)\right)^{1/2}\,,
\] 
for any coupling $\gamma$ of the probability distributions of $\bx$ and $\by$. Hence,
\begin{align}
W_2(\rho_1,\rho_2) \le {{\rm diam}}(\Omega)^{1/2} W_1(\rho_1,\rho_2)\,.
\end{align}
 As an application,
\begin{equation}
W_2^2(K^{\delta_n} \ast \rho_t^{\delta_n}, \rho_t^{\delta_n})\le{{\rm diam}}(\Omega)^{1/2} W_1(K^{\delta_n} \ast \rho_t^{\delta_n}, \rho_t^{\delta_n}).
\end{equation}
Thus, it suffices to show that $\sup_{0\le t\le T} W_1(K^{\delta_n} \ast \rho_t^{\delta_n}, \rho_t^{\delta_n})\to 0$ as $n\to \infty$.

\comm{Note that
\begin{equation*}
W_1(K^{\delta_n} \ast \rho_t^{\delta_n}, \rho_t^{\delta_n})\le \mathbb E \{|(\bK_{\delta_n}+\bX_{\delta_n})-\bX_{\delta_n}|\} = \mathbb E\{|\bK_{\delta_n}|\},
\end{equation*}
where the random variables $\bK_{\delta_n}$ and $\bX_{\delta_n}$ have distributions $K^{\delta_n}$ and $\rho_t^{\delta_n}$, respectively. The quantity $\mathbb E\{|\bK_{\delta_n}|\}$ is $O(\delta)$, since $K$ has bounded absolute first  moment, which completes the proof.}
\end{proof}

We will now prove a stronger convergence result. 

\begin{lemma}[Convergence in $\Lp^2$]\label{lemma:convL2}
The measure $\rho^\infty$ has a density, which is the limit in $\Lp^2(\Omega\times [0, T])$ of the sequence $(K^{\delta_n} \ast \rho^{\delta_n})_{n \ge 1}$. 
\end{lemma}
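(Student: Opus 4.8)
\textbf{Proof proposal for Lemma \ref{lemma:convL2}.}

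The plan is to upgrade the weak convergence $K^{\delta_n}\ast\rho^{\delta_n}\to\rho^\infty$ in $\Cont([0,T],\cuP_2(\Omega))$ (Lemma \ref{lemma:rcseq2}) to strong $\Lp^2(\Omega\times[0,T])$ convergence, using a compactness argument in the spirit of the Aubin--Lions lemma: combine (i) a uniform-in-$\delta$ bound on $K^{\delta_n}\ast\rho^{\delta_n}$ in $\Lp^2(0,T;H^1(\Omega))$ and (ii) a uniform bound on time-increments, which together force precompactness in $\Lp^2(\Omega\times[0,T])$; the subsequential limit must then coincide with $\rho^\infty$, which therefore has an $\Lp^2$ density and is the full limit (not just subsequential) because the limit is the same for every converging subsequence.

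First I would record the spatial regularity. By Lemma \ref{lemma:SpeedBound} and the subsequent Remark, $\int_0^T\|\nabla(K^{\delta_n}\ast\rho^{\delta_n})(\cdot,t)\|_{\Lp^2(\Omega)}^2\,\de t$ is bounded uniformly in $n$; combined with Corollary \ref{coro:ell2Bound} (which bounds $\|K^{\delta_n}\ast\rho^{\delta_n}-f\|_{\Lp^2(\Omega)}$ uniformly, hence $\|K^{\delta_n}\ast\rho^{\delta_n}\|_{\Lp^2(\Omega)}$ via $\|f\|_{\Lp^\infty}\le C_*$), this gives a uniform bound on $\|K^{\delta_n}\ast\rho^{\delta_n}\|_{\Lp^2(0,T;H^1(\Omega))}$. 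Second, I would establish equicontinuity in time in a weak norm: writing $\partial_t(K^{\delta_n}\ast\rho^{\delta_n}) = K^{\delta_n}\ast\big(\nabla\cdot(\rho^{\delta_n}\nabla\Psi(\cdot,\rho^{\delta_n})) + \tau\Delta\rho^{\delta_n}\big)$ and testing against $h\in H^1(\Omega)$ (or a smoother class adapted to the Neumann condition), one integrates by parts to move one derivative onto $h$; the resulting expression is controlled by $\|\nabla\Psi(\cdot,\rho^{\delta_n})\|_{\Lp^2(\rho^{\delta_n})}$ and $\|\nabla(K^{\delta_n}\ast\rho^{\delta_n})\|_{\Lp^2}$, both in $\Lp^2(0,T)$ uniformly by Lemma \ref{lemma:SpeedBound}, so $\partial_t(K^{\delta_n}\ast\rho^{\delta_n})$ is bounded in $\Lp^2(0,T;(H^1(\Omega))^*)$ (or $\Lp^1$ in time into a negative Sobolev space, which also suffices). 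By the Aubin--Lions--Simon lemma, the embedding $\{u : u\in\Lp^2(0,T;H^1), \partial_t u\in\Lp^2(0,T;(H^1)^*)\}\hookrightarrow\Lp^2(0,T;\Lp^2(\Omega))$ is compact, so $(K^{\delta_n}\ast\rho^{\delta_n})_{n\ge1}$ has a subsequence converging strongly in $\Lp^2(\Omega\times[0,T])$; by uniqueness of limits in the weaker topology of $\Cont([0,T],\cuP_2(\Omega))$ already identified in Lemma \ref{lemma:rcseq2}, the strong limit is (a density of) $\rho^\infty$, and the whole sequence converges since any subsequence has a further subsequence with this same limit.

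The main obstacle will be handling the Neumann boundary correctly: $K^{\delta_n}\ast\rho^{\delta_n}$ is supported in $\Omega$ (its support is $\Omega^{\delta_n}\oplus\Ball(\b0,c_0\delta_n)\subseteq\Omega$), but the convolution does not exactly satisfy the Neumann condition for $\rho^{\delta_n}$ on $\partial\Omega^{\delta_n}$, so the integration-by-parts estimate for $\partial_t(K^{\delta_n}\ast\rho^{\delta_n})$ must be done carefully — either by exploiting that $K^{\delta_n}\ast\rho^{\delta_n}$ vanishes near $\partial\Omega$ for each fixed $n$ (no boundary terms, at the cost of non-uniform constants) and then checking the final bounds are still uniform, or by working with the Duhamel representation \eqref{eq:Duhamel} and the Neumann heat-kernel estimates of Theorem \ref{thm:Kernel} to bound time-increments directly. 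A clean alternative that sidesteps abstract Aubin--Lions is to prove the time-increment bound $\|K^{\delta_n}\ast\rho^{\delta_n}(\cdot,t) - K^{\delta_n}\ast\rho^{\delta_n}(\cdot,s)\|_{(H^1)^*}\le C|t-s|^{1/2}$ uniformly, combine it with the uniform $\Lp^2(0,T;H^1)$ bound, and invoke the Lions--Aubin interpolation inequality $\|u\|_{\Lp^2}\le \eta\|u\|_{H^1} + C_\eta\|u\|_{(H^1)^*}$ together with Ascoli-type tightness to extract the strong limit by hand; either route requires only the a priori bounds already in Lemma \ref{lemma:SpeedBound}, Corollary \ref{coro:ell2Bound}, and Lemma \ref{lemma:boundlinf}.
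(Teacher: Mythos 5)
Your proposal is essentially correct in strategy but takes a genuinely different route from the paper. Both arguments rest on the same two a priori inputs: the convergence $K^{\delta_n}\ast\rho^{\delta_n}_t\to\rho^\infty_t$ in $\cuP_2(\Omega)$ uniformly in $t$ (Lemma \ref{lemma:rcseq2}) and the uniform-in-$\delta$ bound on $\int_0^T\|\nabla(K^{\delta_n}\ast\rho^{\delta_n}_t)\|^2_{\Lp^2(\Omega)}\de t$ (Lemma \ref{lemma:SpeedBound}). The paper converts these directly into a Cauchy property in $\Lp^2(\Omega\times[0,T])$ via Parseval: it splits the Fourier integral at a cutoff $\Lambda$, controls the low frequencies pointwise using the $W_1$ convergence (since $\|e^{i\<\blambda,\cdot\>}\|_{\rm Lip}\le|\blambda|$) plus dominated convergence, and controls the high frequencies by $\Lambda^{-2}$ times the uniform gradient bound. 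This cleverly avoids any time-compactness estimate beyond the equicontinuity already in Lemma \ref{lemma:rcseq}. Your Aubin--Lions route instead pays for compactness with a new bound on $\partial_t(K^{\delta_n}\ast\rho^{\delta_n})$ in a negative Sobolev space, which is exactly the obstacle you flag; it is more standard and more portable, but it is also where the one real technical slip in your sketch lies. To bound the drift contribution $\int\<\nabla(K^{\delta_n}\ast h),\nabla\Psi\>\rho^{\delta_n}_t\,\de\bx$ by $\bigl(\int|\nabla\Psi|^2\rho^{\delta_n}_t\bigr)^{1/2}$ you need $\|\nabla(K^{\delta_n}\ast h)\|_{\Lp^2(\rho^{\delta_n}_t)}$ controlled; with $h\in H^1$ only, this requires $\|\rho^{\delta_n}_t\|_{\Lp^\infty}^{1/2}$, and the $\Lp^\infty$ bound of Lemma \ref{lemma:boundlinf} is \emph{not} uniform in $\delta$ (its constant involves $\|\nabla U\|_{\Lp^\infty}\sim\delta^{-2d-1}$). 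The fix is to test against $h\in W^{1,\infty}$ (or $\Cont^1(\overline\Omega)$), use that $\rho^{\delta_n}_t$ is a probability density so $\|\nabla(K^{\delta_n}\ast h)\|_{\Lp^2(\rho^{\delta_n}_t)}\le\|\nabla h\|_{\Lp^\infty}$, and place $\partial_t(K^{\delta_n}\ast\rho^{\delta_n})$ in $\Lp^2(0,T;(W^{1,\infty}(\Omega))^*)$; Simon's form of Aubin--Lions still applies with this weaker third space. With that correction, and the boundary care you already describe for the diffusion term, your argument goes through and yields the same conclusion, including the final identification of the limit and the subsequence-of-subsequence upgrade to full convergence.
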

\begin{proof}
By Corollary \ref{coro:ell2Bound}, we have that, for any $n\ge 1$, $K^{\delta_n} \ast \rho^{\delta_n}\in \Lp^2(\Omega \times [0,T])$. Let us show that $(K^{\delta_n} \ast \rho^{\delta_n})_{n \ge 1}$ is a Cauchy sequence in $\Lp^2(\Omega \times [0,T])$.

As $K^{\delta_n} \ast \rho_t^{\delta_n}\in \Lp^2(\Omega)$ for every $t\in[0,T]$, its Fourier transform exists and we denote it by $\reallywidehat{K^{\delta_n} \ast \rho^{\delta_n}}$. Hence, by applying Parseval's theorem, we have 
\begin{equation}\label{eq:parsedec}
\begin{split}
\limsup_{n, n'\to\infty} & \| K^{\delta_n} \ast \rho^{\delta_n}-K^{\delta_{n'}} \ast \rho^{\delta_{n'}}\|^2_{\Lp^2(\Omega\times [0, T])}\\
&=\limsup_{n, n'\to\infty}\int_0^T\| K^{\delta_n} \ast \rho_t^{\delta_n}-K^{\delta_{n'}} \ast \rho_t^{\delta_{n'}}\|^2_{\Lp^2(\Omega)}\,\de t\\
&=\limsup_{n, n'\to\infty}\int_0^T\int_{\mathbb R^d}| \reallywidehat{K^{\delta_n} \ast \rho_t^{\delta_n}}(\blambda)-\reallywidehat{K^{\delta_{n'}} \ast \rho_t^{\delta_{n'}}}(\blambda)|^2\,\de\blambda\,\de t.\\
\end{split}
\end{equation}
Fix $\Lambda>1$ and decompose the integral in the right-hand side of \eqref{eq:parsedec} as 
\begin{equation}\label{eq:parsedec2}
\begin{split}
\limsup_{n, n'\to\infty}&\int_0^T\int_{|\blambda|<\Lambda}| \reallywidehat{K^{\delta_n} \ast \rho_t^{\delta_n}}(\blambda)-\reallywidehat{K^{\delta_{n'}} \ast \rho_t^{\delta_{n'}}}(\blambda)|^2\,\de\blambda\,\de t\\
&+\limsup_{n, n'\to\infty}\int_0^T\int_{|\blambda|\ge \Lambda}| \reallywidehat{K^{\delta_n} \ast \rho_t^{\delta_n}}(\blambda)-\reallywidehat{K^{\delta_{n'}} \ast \rho_t^{\delta_{n'}}}(\blambda)|^2\,\de\blambda\,\de t.
\end{split}
\end{equation}
Consider the first term of \eqref{eq:parsedec2}. By Lemma \ref{lemma:rcseq2}, and since by Jensen's inequality $W_1(\rho_1,\rho_2) \le W_2(\rho_1,\rho_2)$ for any two distributions $\rho_1,\rho_2$, we have $W_1(K^{\delta_n}*\rho_t^{\delta_n} - K^{\delta_{n'}}*\rho_t^{\delta_{n'}}) \to 0$, as $n,n' \to \infty$. Since for the complex exponential functions $\|e^{i\<\blambda,\bx\>}\|_{{\rm Lip}}\le |\blambda|$, by definition of 1-Wasserstein distance, the integrand in the first term converges pointwise to $0$. Furthermore, the integrand is upper bounded by an integrable function, since $|\reallywidehat{K^{\delta_n} \ast \rho_t^{\delta_n}}(\blambda)|\le \|K^{\delta_n} \ast \rho_t^{\delta_n} \|_{\Lp^2(\Omega)}\le C$ 
for all $n$ and every $t\in[0,T]$. Hence, by dominated convergence, the first integral in  \eqref{eq:parsedec2} converges to $0$.

As for the second term of \eqref{eq:parsedec2}, the following chain of inequalities holds:
\begin{equation}\label{eq:parsedec3}
\begin{split}
\limsup_{n, n'\to\infty}&\int_0^T\int_{|\blambda|\ge \Lambda}| \reallywidehat{K^{\delta_n} \ast \rho_t^{\delta_n}}(\blambda)-\reallywidehat{K^{\delta_{n'}} \ast \rho_t^{\delta_{n'}}}(\blambda)|^2\,\de\blambda\,\de t\\
&\le 4\sup_{n\ge 1}\int_0^T\int_{|\blambda|\ge \Lambda}| \reallywidehat{K^{\delta_n} \ast \rho_t^{\delta_n}}(\blambda)|^2\,\de\blambda\,\de t\\
&\le \frac{4}{\Lambda^2}\sup_{n\ge 1}\int_0^T\int_{|\blambda|\ge \Lambda}|\blambda|^2\,| \reallywidehat{K^{\delta_n} \ast \rho_t^{\delta_n}}(\blambda)|^2\,\de\blambda\,\de t\\
&\le \frac{4}{\Lambda^2}\sup_{n\ge 1}\int_0^T\int_{\mathbb R^d}|\blambda|^2\,| \reallywidehat{K^{\delta_n} \ast \rho_t^{\delta_n}}(\blambda)|^2\,\de\blambda\,\de t\\
&= \frac{4}{\Lambda^2}\sup_{n\ge 1}\int_0^T\int_{\mathbb R^d}| \reallywidehat{\nabla K^{\delta_n} \ast \rho_t^{\delta_n}}(\blambda)|^2\,\de\blambda\,\de t\\
&= \frac{4}{\Lambda^2}\sup_{n\ge 1}\int_0^T\int_{\Omega}| \nabla K^{\delta_n} \ast \rho_t^{\delta_n}(\bx)|^2\,\de\bx\,\de t,\\
\end{split}
\end{equation}
where in the last equality we have applied again Parseval's theorem. 
By Lemma \ref{lemma:SpeedBound}, the integral in the right-hand side of \eqref{eq:parsedec3} is upper bounded by a constant independent of $n$. Therefore, as $\Lambda\to \infty$, the second term of \eqref{eq:parsedec2} converges to $0$. 

As a result, $(K^{\delta_n} \ast \rho^{\delta_n})_{n \ge 1}$ is a Cauchy sequence in $\Lp^2(\Omega \times [0,T])$. Let $\tilde{\rho}^{\infty} \in \Lp^2(\Omega \times [0,T])$ be its limit. Furthermore, by Lemma \ref{lemma:rcseq2}, $(K^{\delta_n} \ast \rho^{\delta_n})_{n \ge 1}$ has limit $\rho^{\infty}$ in $\Cont([0,T],\cuP_2(\Omega))$. Therefore, the measures $\rho^{\infty}_t(\de \bx)\de t$ and $\tilde{\rho}^{\infty}_t(\bx)\de\bx\,\de t$ coincide. This implies that  the measure $\rho^{\infty}_t$ has for almost every $t\in[0, T]$ the density  $\tilde{\rho}^{\infty}_t \in \Lp^2(\Omega \times [0,T])$, and the proof is complete.
\end{proof}

From now on, with an abuse of notation, we will use $\rho^\infty$ to denote also the density which is the limit in $\Lp^2(\Omega\times [0, T])$ of the sequence $(K^{\delta_n} \ast \rho^{\delta_n})_{n \ge 1}$. 

\begin{lemma}[Convergence to a weak solution of the limit PDE]\label{lemma:weaksol}
Let $\rho^\infty$ be the limit in $\Cont([0,T],\cuP_2(\Omega))$ of the converging sequence $(\rho^{\delta_n})_{n \ge 1}$. Then, $\rho^\infty$ is a weak solution of the PDE \eqref{eq:PMEbis} with initial and boundary conditions \eqref{eq:PME-BCbis}. 
\end{lemma}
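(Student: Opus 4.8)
The plan is to pass to the limit $n\to\infty$ in the weak formulation \eqref{eq:WeakDef} of the PDE \eqref{eq:PDEbis} satisfied by each $\rho^{\delta_n}$, and to verify that the limiting identity is exactly the weak formulation \eqref{eq:WeakPME_Def} of the limit PDE \eqref{eq:PMEbis}. Condition~1 of Definition~\ref{def:weaklimitPDE} is already available: $\rho^\infty\in\Cont([0,T],\cuP_2(\Omega))$ by construction (Lemma~\ref{lemma:rcseq}), and it has a density in $\Lp^2(\Omega\times[0,T])$ by Lemma~\ref{lemma:convL2}. We may assume $\nu_0=1$. Fix a test function $h\in\Cont^{2,1}(\Omega\times[0,T])$ with $\langle\bn(\bx),\nabla h(\bx,t)\rangle=0$ on $\partial\Omega$. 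The weak formulation of \eqref{eq:PDEbis} on $\Omega^{\delta_n}=\lambda_{\delta_n}\Omega$ requires a test function that is Neumann on $\partial\Omega^{\delta_n}=\lambda_{\delta_n}\partial\Omega$, so we instead test against the dilated function $h^{(n)}(\bw,t):=h(\lambda_{\delta_n}^{-1}\bw,t)$: a positive dilation preserves normal directions, so $h^{(n)}$ is Neumann on $\partial\Omega^{\delta_n}$, and since $\lambda_{\delta_n}\to1$ the function $h^{(n)}$ together with $\nabla h^{(n)}$, $\nabla^2 h^{(n)}$, $\partial_t h^{(n)}$ converges uniformly (on $\Omega^{\delta_n}$) to $h$, $\nabla h$, $\nabla^2 h$, $\partial_t h$. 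Recall from \eqref{eq:KtoUV} that $\nabla\Psi(\bx,\rho^{\delta_n}_t)=\nabla V_{\delta_n}(\bx)+\nabla\big(U_{\delta_n}\ast\rho^{\delta_n}_t\big)(\bx)$, with $V_{\delta_n}=-K^{\delta_n}\ast f$ and $U_{\delta_n}=K^{\delta_n}\ast K^{\delta_n}$.

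The ``linear'' terms of \eqref{eq:WeakDef} are routine. Because $\rho^{\delta_n}_T\Rightarrow\rho^\infty_T$ and $\rho^{\delta_n}_0\Rightarrow\rho_{\sinit}$ weakly on the compact set $\Omega$ and $h^{(n)}\to h$ uniformly, the boundary terms converge to $\int_\Omega h(\bx,T)\rho^\infty_T(\de\bx)$ and $\int_\Omega h(\bx,0)\rho_{\sinit}(\bx)\,\de\bx$. For each fixed $t$, weak convergence $\rho^{\delta_n}_t\Rightarrow\rho^\infty_t$ together with the uniform convergence of the continuous bounded integrands $\partial_t h^{(n)}(\cdot,t)$, $\Delta h^{(n)}(\cdot,t)$ gives $\int_\Omega\partial_t h^{(n)}(\bx,t)\rho^{\delta_n}_t(\de\bx)\to\int_\Omega\partial_t h(\bx,t)\rho^\infty_t(\de\bx)$ and likewise for $\tau\Delta h^{(n)}$; since these are uniformly bounded in $(n,t)$, dominated convergence in $t$ transfers this to the time integral. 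For the drift term, split $\langle\nabla V_{\delta_n},\nabla h^{(n)}\rangle\rho^{\delta_n}=\langle\nabla V_{\delta_n}+\nabla f,\nabla h^{(n)}\rangle\rho^{\delta_n}-\langle\nabla f,\nabla h^{(n)}\rangle\rho^{\delta_n}$; since $\nabla V_{\delta_n}=-K^{\delta_n}\ast\nabla f$ and $\nabla f$ is Lipschitz (Assumption~{\sf (A3)}), $\|\nabla V_{\delta_n}+\nabla f\|_{\Lp^\infty(\Omega^{\delta_n})}\le C_*\delta_n$, so the first piece is $O(\delta_n)$ and the second converges to $-\langle\nabla f,\nabla h\rangle\rho^\infty$. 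Hence, in the limit, every term of \eqref{eq:WeakDef} except the quadratic one reproduces the corresponding term of \eqref{eq:WeakPME_Def}, the $\tau\Delta h$ contribution supplying the $\tau\rho^\infty$ part of $\Delta h\,\Phi(\rho^\infty)$.

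The quadratic term is the crux. Put $g_n(\by,t):=\big(K^{\delta_n}\ast\rho^{\delta_n}_t\big)(\by)$, so that $U_{\delta_n}\ast\rho^{\delta_n}_t=K^{\delta_n}\ast g_n(t)$ and $\nabla\big(U_{\delta_n}\ast\rho^{\delta_n}_t\big)=K^{\delta_n}\ast\nabla g_n(t)$. Fubini's theorem and $K^{\delta_n}(-\bz)=K^{\delta_n}(\bz)$ give
\[
\int\big\langle\nabla\big(U_{\delta_n}\ast\rho^{\delta_n}_t\big)(\bx),\nabla h^{(n)}(\bx,t)\big\rangle\rho^{\delta_n}(\bx,t)\,\de\bx=\int\big\langle\nabla g_n(\by,t),\big(K^{\delta_n}\ast\big[\nabla h^{(n)}(\cdot,t)\,\rho^{\delta_n}(\cdot,t)\big]\big)(\by)\big\rangle\,\de\by.
\]
Since $\supp(K^{\delta_n})\subseteq\Ball(\bzero,c_0\delta_n)$, the inner mollification differs pointwise from $\nabla h^{(n)}(\by,t)\,g_n(\by,t)$ by at most $c_0\delta_n\|\nabla^2 h^{(n)}\|_{\Lp^\infty}\,g_n(\by,t)$, so the right-hand side equals $\int\langle\nabla g_n(\by,t),\nabla h^{(n)}(\by,t)\rangle g_n(\by,t)\,\de\by+E_n(t)$ with $|E_n(t)|\le C\delta_n\|\nabla g_n(t)\|_{\Lp^2(\Omega)}\|g_n(t)\|_{\Lp^2(\Omega)}$ by Cauchy--Schwarz. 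Integrating in $t$ and applying Cauchy--Schwarz once more, $\int_0^T|E_n(t)|\,\de t\le C\delta_n\big(\int_0^T\|\nabla g_n(t)\|_{\Lp^2}^2\,\de t\big)^{1/2}\big(\int_0^T\|g_n(t)\|_{\Lp^2}^2\,\de t\big)^{1/2}$, and both factors are bounded uniformly in $n$ — the first by Lemma~\ref{lemma:SpeedBound} (whose right-hand side is $\delta$-free by the following remark, and whose proof uses $\tau>0$), the second by Corollary~\ref{coro:ell2Bound} and Remark~\ref{rem:delta-ind1} — so $\int_0^T|E_n(t)|\,\de t\to0$. Finally, integrating by parts in $\bx$,
\[
\int\langle\nabla g_n(\cdot,t),\nabla h^{(n)}(\cdot,t)\rangle g_n(\cdot,t)\,\de\bx=\tfrac12\int\big\langle\nabla\big(g_n(\cdot,t)^2\big),\nabla h^{(n)}(\cdot,t)\big\rangle\,\de\bx=-\tfrac12\int g_n(\cdot,t)^2\,\Delta h^{(n)}(\cdot,t)\,\de\bx,
\]
the boundary term vanishing because $g_n(\cdot,t)$ is supported in $\Omega^{\delta_n}\oplus\Ball(\bzero,c_0\delta_n)\subseteq\Omega$ (and also because $h$ is Neumann on $\partial\Omega$). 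Since $g_n\to\rho^\infty$ in $\Lp^2(\Omega\times[0,T])$ (Lemma~\ref{lemma:convL2}), $g_n^2\to(\rho^\infty)^2$ in $\Lp^1(\Omega\times[0,T])$, and $\Delta h^{(n)}\to\Delta h$ uniformly, so $\int_0^T\!\int g_n^2\,\Delta h^{(n)}\to\int_0^T\!\int(\rho^\infty)^2\,\Delta h$. Collecting the signs, $-\int_0^T\!\int\langle\nabla(U_{\delta_n}\ast\rho^{\delta_n}_t),\nabla h^{(n)}\rangle\rho^{\delta_n}_t\to\tfrac12\int_0^T\!\int\Delta h\,(\rho^\infty)^2$, which together with the $\tau\rho^\infty$ term produces exactly $\int_0^T\!\int\Delta h\,\Phi(\rho^\infty)$. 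Thus \eqref{eq:WeakPME_Def} holds for $\rho^\infty$, so $\rho^\infty$ is a weak solution of \eqref{eq:PMEbis} with initial and boundary conditions \eqref{eq:PME-BCbis}.

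The genuinely delicate step is the quadratic term: plain weak convergence of $\rho^{\delta_n}$ is too weak to pass to the limit in the product $\langle\nabla(U_{\delta_n}\ast\rho^{\delta_n}),\nabla h\rangle\rho^{\delta_n}$, and the mollification identity above is exactly the device that trades one copy of $\rho^{\delta_n}$ for the $\Lp^2$-convergent mollification $g_n=K^{\delta_n}\ast\rho^{\delta_n}$ at the cost of an $O(\delta_n)$ error; making that error vanish relies essentially on the $\delta$-independent Sobolev-type bound on $\|\nabla(K^{\delta_n}\ast\rho^{\delta_n})\|_{\Lp^2(\Omega\times[0,T])}$ from Lemma~\ref{lemma:SpeedBound} (hence on $\tau>0$) and on the $\Lp^2$ bounds of Corollary~\ref{coro:ell2Bound}. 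A secondary, purely technical point — the mismatch between the Neumann conditions on $\partial\Omega^{\delta_n}$ and on $\partial\Omega$ — is disposed of by testing against the dilated function $h^{(n)}$.
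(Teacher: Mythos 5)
Your proof is correct and follows essentially the same route as the paper's: dilate the test function to $h^{(n)}(\bw,t)=h(\bw/\lambda_{\delta_n},t)$ to match the Neumann condition on $\partial\Omega^{\delta_n}$, pass the linear and drift terms to the limit by weak convergence and uniform convergence of $K^{\delta_n}\ast\nabla f$, and handle the quadratic term by the same mollification swap (trading one factor $K^{\delta_n}$ from $U_{\delta_n}\ast\rho^{\delta_n}$ onto the other copy of $\rho^{\delta_n}$, with an $O(\delta_n)$ error controlled via Cauchy--Schwarz by Lemma \ref{lemma:SpeedBound} and Corollary \ref{coro:ell2Bound}), followed by integration by parts and the $\Lp^2$ convergence of $K^{\delta_n}\ast\rho^{\delta_n}$ from Lemma \ref{lemma:convL2}. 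The only cosmetic difference is the order of operations in the quadratic term: the paper first replaces $\nabla h^{\delta_n}$ by $\nabla h$ (using the uniform bound \eqref{eq:InftyOne}) before the mollification swap, which sidesteps the fact that $h^{(n)}$ is a priori only defined on $\Omega^{\delta_n}$ while $g_n$ is supported on the slightly larger set $\Omega^{\delta_n}\oplus\Ball(\bzero,c_0\delta_n)$; your version should perform this replacement first (or extend $h$ smoothly beyond $\Omega^{\delta_n}$) for the expression $\nabla h^{(n)}(\by)\,g_n(\by)$ to be well defined.
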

\begin{proof}
By Lemma \ref{lemma:convL2}, we have that $\rho^\infty\in \Lp^2(\Omega \times [0,T])$. Choose a test function $h\in \Cont^{2,1}(\Omega\times [0,T])$,
satisfying $\<\bn(\bx),\nabla h(\bx,t)\>=0$ for all $\bx\in\partial\Omega, t\in[0,T]$. In order to prove the claim, we need to show that \eqref{eq:WeakPME_Def} holds. 
Throughout the proof, we will let $\lambda_n \equiv \lambda_{\delta_n}$.

Recall that, for any $n\ge 1$, $\rho^{\delta_n}$ is a weak solution of the PDE \eqref{eq:PDEbis} with initial and boundary
conditions \eqref{eq:BCbis}. Hence, by Definition \ref{def:weaksolPDE}, we have that
\begin{align}
\int_{\Omega^{\delta_n}}h^{\delta_n}(\bx,T)&\,\rho^{\delta_n}_T(\de\bx) - \int_{\Omega^{\delta_n}}h^{\delta_n}(\bx,0)\,\rho_0^{\delta_n}(\de\bx)  \label{eq:Weakappl}\\
&= 
\int_{0}^T\int_{\Omega^{\delta_n}}\Big[\partial_th^{\delta_n}(\bx,t) +\tau\Delta h^{\delta_n}(\bx,t)\nonumber \\
&\phantom{AAAA}-\<\nabla\Psi(\bx,\rho^{\delta_n}_t),\nabla h^{\delta_n}(\bx,t)\>
\Big] \,\rho^{\delta_n}_t(\de\bx)\, \de t\, , \nonumber
\end{align}
for any $h^{\delta_n}\in \Cont^{2,1}(\Omega^{\delta_n}\times [0,T])$
satisfying $\<\bn(\bx),\nabla h^{\delta_n}(\bx,t)\>=0$ for all $\bx\in\partial\Omega^{\delta_n}, t\in[0,T]$. Now, we set
\begin{equation}
h^{\delta_n}(\bx, t) = h(\bx/\lambda_n, t).
\end{equation}
By definition of $\Omega^\delta_n$, we have that $h^{\delta_n}\in \Cont^{2,1}(\Omega^{\delta_n}\times [0,T])$ since $h\in \Cont^{2,1}(\Omega\times [0,T])$. Furthermore, $\<\bn(\bx),\nabla h(\bx,t)\>=0$  for all $\bx\in\partial\Omega$, $t\in[0,T]$
 immediately implies that $\<\bn(\bx),\nabla h^{\delta_n}(\bx,t)\>=0$ for all $\bx\in\partial\Omega^{\delta_n}$, $t\in[0,T]$. 

Recall that
\begin{equation}
\Psi(\bx,\rho^{\delta_n}_t) =  V^{\delta_n}(\bx)+U^{\delta_n}\ast\rho^{\delta_n}_t(\bx) = -\nu_0 \,K^{\delta_n}\ast f(\bx) + \nu_0\,K^{\delta_n}\ast K^{\delta_n}\ast\rho^{\delta_n}_t(\bx).
\end{equation}
Thus, \eqref{eq:Weakappl} can be rewritten as 
\begin{align}
&\int_{\Omega^{\delta_n}}h^{\delta_n}(\bx,T)\,\rho^{\delta_n}_T(\de\bx) - \int_{\Omega^{\delta_n}}h^{\delta_n}(\bx,0)\,\rho_0^{\delta_n}(\de\bx)  \label{eq:Weakappl2}\\
&= 
\int_{0}^T\int_{\Omega^{\delta_n}}\left[\partial_th^{\delta_n}(\bx,t) +\nu_0\,\<\nabla K^{\delta_n}\ast f(\bx),\nabla h^{\delta_n}(\bx,t)\>
\right] \,\rho^{\delta_n}_t(\de\bx)\, \de t\,  \nonumber\\
&+
\int_{0}^T\hspace{-.5em}\int_{\Omega^{\delta_n}}\hspace{-.25em}\left[\tau\Delta h^{\delta_n}(\bx,t) -\nu_0\<\nabla K^{\delta_n}\ast K^{\delta_n}\ast  \rho^{\delta_n}_t(\bx),\nabla h^{\delta_n}(\bx,t)\>
\right] \,\rho^{\delta_n}_t(\de\bx)\, \de t\, . \nonumber
\end{align}

Since $(\rho^{\delta_n})_{n \ge 1}$ converges in  $\Cont([0,T],\cuP_2(\Omega))$ to $\rho^\infty$ by Lemma \ref{lemma:rcseq}, we have that
\begin{equation}\label{eq:fin1}
\begin{split}
&\lim_{n\to \infty}\int_{\Omega^{\delta_n}}h^{\delta_n}(\bx,T)\,\rho^{\delta_n}_T(\de\bx) = \int_{\Omega}h(\bx,T)\,\rho^{\infty}_T(\de\bx),\\
&\lim_{n\to \infty}\int_{0}^T\int_{\Omega^{\delta_n}}\left[\partial_th^{\delta_n}(\bx,t) +\tau\Delta h^{\delta_n}(\bx,t) 
\right] \,\rho^{\delta_n}_t(\de\bx)\, \de t \\
&\phantom{AAAA}= \int_{0}^T\int_{\Omega}\left[\partial_th(\bx,t) +\tau\Delta h(\bx,t) 
\right] \,\rho^{\infty}_t(\de\bx)\, \de t.\\
\end{split}
\end{equation}
Furthermore, since $\rho_0^{\delta_n}(\bx)= \lambda_{n}^{-d}\rho_{\sinit}(\bx/\lambda_n)$, we have that
\begin{equation}
\int_{\Omega^{\delta_n}}h^{\delta_n}(\bx,0)\,\rho_0^{\delta_n}(\de\bx)= \int_{\Omega}h(\bx,0)\,\rho_{\sinit}(\bx)\, \de\bx.
\end{equation}
 
Let us use the notation $h_t(\bx)= h(\bx, t)$ and $h_t^{\delta_n}(\bx)= h^{\delta_n}(\bx, t)$. Again, we set $\rho_t^{\delta_n}(\bx)=0$ for $\bx\not \in \Omega^{\delta_n}$. 
We further define $\trho^{\delta_n}_t(\bx) = \lambda_{n}^d\rho^{\delta_n}_t(\lambda_{n}\bx)$, which is a probability density on $\Omega$. 
Since $\rho^{\delta_n}_t(\,\cdot)\to \rho^{\infty}_t(\,\cdot)$ in $\cuP_2(\Omega)$ and $\lambda_{n}\to 1$, we have 
$\trho^{\delta_n}_t(\,\cdot)\to \rho^{\infty}_t(\,\cdot)$ in $\cuP_2(\Omega)$ as well.
Hence
\begin{align}
&\lim_{n\to \infty}\int_{0}^T\int_{\Omega^{\delta_n}}\<\nabla K^{\delta_n}\ast f(\bx),\nabla h^{\delta_n}_t(\bx)\> \,\rho^{\delta_n}_t(\bx)\, \de\bx\de t\\
&=
\lim_{n\to \infty}\lambda_{n}^{-1}\int_{0}^T\int_{\Omega}\<\nabla K^{\delta_n}\ast f(\lambda_n\bx),\nabla h_t(\bx)\> \,\trho^{\delta_n}_t(\bx)\, \de\bx\de t
\nonumber\\
&=\int_{0}^T\int_{\Omega}\< \nabla f(\bx),\nabla h^{\delta_n}_t(\bx)\> \,\rho^{\infty}_t(\bx)\, \de\bx\de t\, ,\label{eq:fin2}
\end{align}
where the last equality follows since $\lambda_n\to 1$, and $\nabla K^{\delta_n}\ast f(\lambda_n\bx)\to \nabla f(\bx)$
uniformly in $\Omega$.

Furthermore, we have that 
\begin{equation}\label{eq:intconvf2}
\begin{split}
&\bigg| \int_{0}^T\int_{\Omega^{\delta_n}}\<\nabla K^{\delta_n}\ast K^{\delta_n}\ast  \rho^{\delta_n}_t(\bx),\nabla h^{\delta_n}_t(\bx)\> \rho^{\delta_n}_t(\bx)\,\de\bx\, \de t\\
&\phantom{AAAA}+\frac{1}{2}\int_{0}^T\int_{\Omega} \Delta h_t(\bx)  (\rho_t^\infty(\bx))^2\,\de\bx\,\de t\bigg|\\
&\le \bigg| \int_{0}^T\int_{\Omega^{\delta_n}}\<\nabla K^{\delta_n}\ast K^{\delta_n}\ast  \rho^{\delta_n}_t(\bx),\nabla h_t^{\delta_n}(\bx)\> \rho^{\delta_n}_t(\bx)\, \de\bx\,\de t\\
&\phantom{AAAA}- \int_{0}^T\int_{\Omega}\<\nabla K^{\delta_n}\ast  \rho^{\delta_n}_t(\bx),\nabla h_t(\bx)\> K^{\delta_n}\ast \rho^{\delta_n}_t(\bx)\, \de\bx\,\de t\bigg|\\
&+ \bigg| \int_{0}^T\int_{\Omega}\<\nabla K^{\delta_n}\ast  \rho^{\delta_n}_t(\bx),\nabla h_t(\bx)\> K^{\delta_n}\ast \rho^{\delta_n}_t(\bx)\, \de\bx\,\de t\\
&\phantom{AAAA}+\frac{1}{2}\int_{0}^T\int_{\Omega} \Delta h_t(\bx)  ( K^{\delta_n}\ast\rho_t^{\delta_n}(\bx))^2\,\de\bx\,\de t\bigg|\\
&+ \bigg| \frac{1}{2}\int_{0}^T\int_{\Omega} \Delta h_t(\bx)  ( K^{\delta_n}\ast\rho_t^{\delta_n}(\bx))^2\,\de\bx\,\de t\\
&\phantom{AAAA}- \frac{1}{2}\int_{0}^T\int_{\Omega} \Delta h_t(\bx)  (\rho_t^{\infty}(\bx))^2\,\de\bx\,\de t\bigg|.\\
\end{split}
\end{equation} 
The second term in the right-hand side of \eqref{eq:intconvf2} is equal to $0$ by integration by parts. 
The third integral in the right-hand side of \eqref{eq:intconvf2} is upper bounded as follows:
\begin{equation}
\begin{split}
&\left| \frac{1}{2}\int_{0}^T\int_{\Omega} \Delta h_t(\bx)  ( K^{\delta_n}\ast\rho_t^{\delta_n}(\bx))^2\,\de\bx\,\de t- \frac{1}{2}\int_{0}^T\int_{\Omega} \Delta h_t(\bx)  (\rho_t^{\infty}(\bx))^2\,\de\bx\,\de t\right|\\
&\le C \int_{0}^T\int_{\Omega}  \left|( K^{\delta_n}\ast\rho_t^{\delta_n}(\bx))^2-(\rho_t^{\infty}(\bx))^2\right|\,\de\bx\,\de t\\
&\le C \int_{0}^T\int_{\Omega}  \left| K^{\delta_n}\ast\rho_t^{\delta_n}(\bx)-\rho_t^{\infty}(\bx)\right|\,\left| K^{\delta_n}\ast\rho_t^{\delta_n}(\bx)+\rho_t^{\infty}(\bx)\right|\,\de\bx\,\de t\\
&\le C \left(\int_{0}^T\int_{\Omega}  \left| K^{\delta_n}\ast\rho_t^{\delta_n}(\bx)-\rho_t^{\infty}(\bx)\right|^2\,\de\bx\,\de t\right)^{1/2}\\
&\phantom{AAAA}\left(\int_{0}^T\int_{\Omega}  \left| K^{\delta_n}\ast\rho_t^{\delta_n}(\bx)+\rho_t^{\infty}(\bx)\right|^2\,\de\bx\,\de t\right)^{1/2},\\
\end{split}
\end{equation}
which converges to $0$, as $(K^{\delta_n} \ast \rho^{\delta_n})_{n \ge 1}$ converges in $\Lp^2(\Omega\times [0, T])$ to $\rho^\infty$. The first term in the 
right-hand side of \eqref{eq:intconvf2} is upper bounded as follows:
\begin{equation}\label{eq:intconvf3_more}
\begin{split}
&\bigg| \int_{0}^T\int_{\Omega^{\delta_n}}\<\nabla K^{\delta_n}\ast K^{\delta_n}\ast  \rho^{\delta_n}_t(\bx),\nabla h_t^{\delta_n}(\bx)\> \rho^{\delta_n}_t(\bx)\, \de\bx\,\de t\\
&\phantom{AAAA}- \int_{0}^T\int_{\Omega}\<\nabla K^{\delta_n}\ast  \rho^{\delta_n}_t(\bx),\nabla h_t(\bx)\> K^{\delta_n}\ast \rho^{\delta_n}_t(\bx)\, \de\bx\,\de t\bigg|\\
&\le \bigg| \int_{0}^T\int_{\Omega^{\delta_n}}\<\nabla K^{\delta_n}\ast K^{\delta_n}\ast  \rho^{\delta_n}_t(\bx),\nabla h_t^{\delta_n}(\bx)\> \rho^{\delta_n}_t(\bx)\, \de\bx\,\de t\\
&\phantom{AAAA}- \int_{0}^T\int_{\Omega^{\delta_n}}\<\nabla K^{\delta_n}\ast K^{\delta_n}\ast  \rho^{\delta_n}_t(\bx),\nabla h_t(\bx)\> \rho^{\delta_n}_t(\bx)\, \de\bx\,\de t\bigg|\\
&+\bigg| \int_{0}^T\int_{\Omega^{\delta_n}}\<\nabla K^{\delta_n}\ast K^{\delta_n}\ast  \rho^{\delta_n}_t(\bx),\nabla h_t(\bx)\> \rho^{\delta_n}_t(\bx)\, \de\bx\,\de t\\
&\phantom{AAAA}- \int_{0}^T\int_{\Omega}\<\nabla K^{\delta_n}\ast  \rho^{\delta_n}_t(\bx),\nabla h_t(\bx)\> K^{\delta_n}\ast \rho^{\delta_n}_t(\bx)\, \de\bx\,\de t\bigg|.\\
\end{split}
\end{equation}
The first term is upper bounded using
\begin{align}
&\bigg| \int_{0}^T\int_{\Omega^{\delta_n}}\<\nabla K^{\delta_n}\ast K^{\delta_n}\ast  \rho^{\delta_n}_t(\bx),\nabla h_t^{\delta_n}(\bx)\> \rho^{\delta_n}_t(\bx)\, \de\bx\,\de t\nonumber\\
&\phantom{AAAA}- \int_{0}^T\int_{\Omega}\<\nabla K^{\delta_n}\ast K^{\delta_n}\ast  \rho^{\delta_n}_t(\bx),\nabla h_t(\bx)\> \rho^{\delta_n}_t(\bx)\, \de\bx\,\de t\bigg|\nonumber\\ 
&\le\|\nabla h^{\delta_n}-\nabla h\|_{\Lp^\infty(\Omega\times[0, T])} \bigg\|\,\Big|\nabla K^{\delta_n}\ast K^{\delta_n}\ast  \rho^{\delta_n}_t\Big| \times \rho^{\delta_n}_t \bigg\|_{\Lp^1(\Omega^{\delta_n}\times[0, T])}\, .
\label{eq:InftyOne}
\end{align}
Notice that
\begin{equation}\label{eq:neweqaa}
\begin{split}
& \bigg\|\,\Big|\nabla K^{\delta_n}\ast K^{\delta_n}\ast  \rho^{\delta_n}_t\Big| \times \rho^{\delta_n}_t \bigg\|_{\Lp^1(\Omega^{\delta_n}\times[0, T])}\\
& \stackrel{(a)}{\le}  \bigg\|\,\Big|\nabla K^{\delta_n}\ast K^{\delta_n}\ast  \rho^{\delta_n}_t\Big| \times \sqrt{\rho^{\delta_n}_t} \bigg\|_{\Lp^2(\Omega^{\delta_n}\times[0, T])}\,\bigg\|\sqrt{\rho^{\delta_n}_t} \bigg\|_{\Lp^2(\Omega^{\delta_n}\times[0, T])}\\
& \le \sqrt{T} \bigg\|\,\Big|\nabla K^{\delta_n}\ast K^{\delta_n}\ast  \rho^{\delta_n}_t\Big| \times \sqrt{\rho^{\delta_n}_t} \bigg\|_{\Lp^2(\Omega^{\delta_n}\times[0, T])},\\
\end{split}
\end{equation}
where $(a)$ follows from an application of Cauchy-Schwartz. By Lemma \ref{lemma:SpeedBound}, we deduce that the right-hand side of \eqref{eq:neweqaa} is bounded uniformly in $\delta_n$. Thus, the first term of \eqref{eq:intconvf3_more} converges to $0$ because of Eq.~\eqref{eq:InftyOne}.
As concerns the second term of \eqref{eq:intconvf3_more}, we have that 
\begin{equation}\label{eq:intconvf3}
\begin{split}
&\bigg| \int_{0}^T\int_{\Omega^{\delta_n}}\<\nabla K^{\delta_n}\ast K^{\delta_n}\ast  \rho^{\delta_n}_t(\bx),\nabla h_t(\bx)\> \rho^{\delta_n}_t(\bx)\, \de\bx\,\de t\\
&\phantom{AAAA}- \int_{0}^T\int_{\Omega}\<\nabla K^{\delta_n}\ast  \rho^{\delta_n}_t(\bx),\nabla h_t(\bx)\> K^{\delta_n}\ast \rho^{\delta_n}_t(\bx)\, \de\bx\,\de t\bigg|\\
&\le \bigg|\int_{0}^T\int_{\Omega}\int_{\Omega}\<\nabla K^{\delta_n}\ast  \rho^{\delta_n}_t(\by),\nabla h_t(\bx)\>K^{\delta_n}(\bx-\by) \rho^{\delta_n}_t(\bx)\, \de\by\, \de\bx\,\de t\\
&\phantom{AAAA}-\int_{0}^T\int_{\Omega}\int_{\Omega}\<\nabla K^{\delta_n}\ast  \rho^{\delta_n}_t(\by),\nabla h_t(\by)\>K^{\delta_n}(\bx-\by) \rho^{\delta_n}_t(\bx)\, \de\by\, \de\bx\,\de t\bigg|\\
&=\left|\int_{0}^T\int_{\Omega}\int_{\Omega}\<\nabla K^{\delta_n}\ast  \rho^{\delta_n}_t(\by),\nabla h_t(\bx)-\nabla h_t(\by)\>K^{\delta_n}(\bx-\by) \rho^{\delta_n}_t(\bx)\, \de\by\, \de\bx\,\de t\right|\\
&\le C \int_{0}^T\int_{\Omega}\int_{\Omega} |\bx-\by| \,|\nabla K^{\delta_n}\ast  \rho^{\delta_n}_t(\by)| K^{\delta_n}(\bx-\by) \rho^{\delta_n}_t(\bx)\,\de\by\, \de\bx\,\de t.\\
\end{split}
\end{equation}
Recall that $\rho^{\delta_n}_t$ is supported on $\Omega^{\delta_n}\subseteq \Omega$, and $\Omega$ is bounded. In addition, since the kernel $K$ has bounded support, the diameter of the support of $K^{\delta_n}$ is at most $\delta_n$ times a constant. Consequently, the last term in the right-hand side
of \eqref{eq:intconvf3} is upper bounded by
\begin{equation}\label{eq:intconvf4}
\begin{split}
&\delta_n C_1 \int_{0}^T\int_{\Omega}\int_{\Omega} |\nabla K^{\delta_n}\ast  \rho^{\delta_n}_t(\by)| K^{\delta_n}(\bx-\by) \rho^{\delta_n}_t(\bx)\,\de\by\, \de\bx\,\de t\\
&=\delta_n C_1\int_{0}^T\int_{\Omega} |\nabla K^{\delta_n}\ast  \rho^{\delta_n}_t(\by)| K^{\delta_n}\ast \rho^{\delta_n}_t(\by)\,\de\by\, \de t\\
&\le \delta_n C_1\left(\int_{0}^T\int_{\Omega} |\nabla K^{\delta_n}\ast  \rho^{\delta_n}_t(\by)|^2 \,\de\by\, \de t\right)^{1/2}\\
&\phantom{AAAA}\left(\int_{0}^T\int_{\Omega} ( K^{\delta_n}\ast \rho^{\delta_n}_t(\by))^2\,\de\by\, \de t\right)^{1/2}.
\end{split}
\end{equation}
By using that $K^{\delta_n} \ast \rho^{\delta_n}\in \Lp^2(\Omega\times [0, T])$ and the result of Lemma \ref{lemma:SpeedBound}, we have that the two last integrals are bounded uniformly in $\delta$. As a result, the right-hand side of \eqref{eq:intconvf4} converges to $0$, which implies that the right-hand side of \eqref{eq:intconvf2} also converges to $0$. By putting this fact together with \eqref{eq:fin1} and \eqref{eq:fin2}, the desired result follows.
\end{proof}

We have now proved that $(\rho^{\delta_n})_{n \ge 1}$ converges to \emph{a} weak solution of the limit PDE \eqref{eq:PMEbis}. 
In order to prove the uniqueness of the weak solutions of the limit PDE, we next prove a bound on $\|\rho^{\delta_n}_t\|_{\Lp^4(\Omega)}$, which along with Lemma \ref{lemma:unique} proves the uniqueness claim.

\begin{lemma}[Uniform bound in $\Lp^4$]\label{lemma:convL4}
Assume that $\rho_{\sinit}, f \in \Cont^\infty(\Omega)$ and consider the sequence $(\rho^{\delta_n})_{n \ge 1}$. Then, 
\begin{align}
\sup_{n\ge1, t\in [0,T_0]} \|\rho^{\delta_n}_t\|_{\Lp^4(\Omega)}\le C(\Omega)(1+T)\,,
\end{align}
where 
\begin{equation}
T_0 = \frac{1}{C(\Omega)(1+T)}, 
\end{equation}
for some bounded constant $0<C(\Omega)<\infty$.
\end{lemma}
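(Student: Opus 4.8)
The plan is to run an $\Lp^4$ energy estimate for $\rho^\delta$, feed in the $\delta$-uniform bounds of Lemma~\ref{lemma:SpeedBound} and Corollary~\ref{coro:ell2Bound}, and close a short-time Gronwall inequality whose superlinear character forces the restriction to $[0,T_0]$. First, note that Assumptions {\sf (A3)} and {\sf (A5)} give $f,\rho_{\sinit}\in\Cont^\infty(\Omega)$, so by the strong-solution results of Appendix~\ref{app:reg} the solution $\rho^\delta$ lies in $\Cont^{2,1}(\Omega^\delta,[0,T])$, is nonnegative and has unit mass; hence the computations below are legitimate. As elsewhere we normalize $\nu_0=\tau=1$. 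Since $\rho^\delta_0(\bw)=\lambda_\delta^{-d}\rho_{\sinit}(\bw/\lambda_\delta)$ and $\lambda_\delta\to 1$, we have $\|\rho^\delta_0\|_{\Lp^4(\Omega^\delta)}\le 2\|\rho_{\sinit}\|_{\Lp^4(\Omega)}=:C_0$ for $\delta$ small, a $\delta$-free initial datum.

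Differentiating $\phi_\delta(t):=\|\rho^\delta_t\|_{\Lp^4(\Omega^\delta)}^4$ along \eqref{eq:PDEbis} and integrating by parts — the Neumann condition \eqref{eq:BCbis}, which kills the total flux $\langle\bn,\rho^\delta\nabla\Psi+\nabla\rho^\delta\rangle$ on $\partial\Omega^\delta$, makes the two boundary contributions cancel — yields
\begin{equation*}
\phi_\delta'(t)=-12\int(\rho^\delta)^2|\nabla\rho^\delta|^2\,\de\bw-12\int(\rho^\delta)^3\nabla\rho^\delta\cdot\nabla\Psi(\cdot;\rho^\delta)\,\de\bw ,
\end{equation*}
with dissipation $D_\delta(t):=\int(\rho^\delta)^2|\nabla\rho^\delta|^2=\tfrac{1}{4}\|\nabla((\rho^\delta)^2)\|_{\Lp^2}^2$. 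Splitting $\nabla\Psi=\nabla V+\nabla U\ast\rho^\delta$ as in \eqref{eq:KtoUV}, the potential part is easy: $\nabla V=-K^\delta\ast\nabla f$ obeys $\|\nabla V\|_{\Lp^\infty(\Omega)}\le\|\nabla f\|_{\Lp^\infty(\Omega)}\le C_*$ ($\delta$-free, cf. the remark following Lemma~\ref{lemma:SpeedBound}), so Cauchy--Schwarz and Young give $|12\int(\rho^\delta)^3\nabla\rho^\delta\cdot\nabla V|\le 4D_\delta(t)+C_*\phi_\delta(t)$.

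The crux is the interaction term: after Cauchy--Schwarz and Young against $D_\delta(t)$ it reduces to estimating $\int(\rho^\delta)^4|\nabla U\ast\rho^\delta|^2$ uniformly in $\delta$. The only $\delta$-free information is the weighted and mollified $\Lp^2$ energies $\int_0^T\!\!\int\rho^\delta|\nabla U\ast\rho^\delta|^2$ and $\int_0^T\|\nabla(K^\delta\ast\rho^\delta_t)\|_{\Lp^2(\Omega)}^2\,\de t$ of Lemma~\ref{lemma:SpeedBound} (naive pointwise or Young bounds on $\nabla U\ast\rho^\delta$ all cost negative powers of $\delta$). The route I would take is to write $\nabla U\ast\rho^\delta=K^\delta\ast\nabla(K^\delta\ast\rho^\delta)$, use the nonnegativity of $K^\delta$ and Jensen to push convolutions past squares, interpolate via a Gagliardo--Nirenberg inequality on $\Omega$ — spending $D_\delta(t)$ through $\|\nabla((\rho^\delta)^2)\|_{\Lp^2}$ — and use lower-order $\Lp^p$ norms of $\rho^\delta$ that are themselves bootstrapped (starting from the $\Lp^2$ level, controlled by an analogous energy estimate). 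Absorbing every dissipation-controlled term into $D_\delta(t)$, this is expected to produce
\begin{equation*}
\phi_\delta'(t)\le a_\delta(t)\,\big(1+\phi_\delta(t)\big)^{\gamma},
\end{equation*}
with an exponent $\gamma=\gamma(d)>1$ and a nonnegative $a_\delta$ with $\int_0^T a_\delta(t)\,\de t\le C(\Omega)(1+T)$ uniformly in $\delta$ (using the $\delta$-free bound on the right-hand side of Lemma~\ref{lemma:SpeedBound} supplied by its concluding remark and Remark~\ref{rem:delta-ind1}).

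Finally, comparing $\phi_\delta$ with the scalar ODE $\psi'=a_\delta(t)(1+\psi)^\gamma$, $\psi(0)=\phi_\delta(0)$, a separation-of-variables computation shows $\psi$, hence $\phi_\delta$, stays below $1+2C_0$ as long as $\int_0^t a_\delta$ remains under an explicit threshold depending only on $C_0$ and $\gamma$; since $\int_0^t a_\delta\le C(\Omega)(1+T)$, this holds on $[0,T_0]$ with $T_0:=1/(C(\Omega)(1+T))$ after enlarging $C(\Omega)$. Taking fourth roots and the supremum over $(\delta_n)$ finishes the proof. The hardest point is the $\delta$-uniform interaction estimate of the third paragraph: one has to spend the (single) dissipation term very frugally to push the weighted- and mollified-$\Lp^2$ energies of Lemma~\ref{lemma:SpeedBound} all the way up to the $\Lp^4$-level quantity $\int(\rho^\delta)^4|\nabla U\ast\rho^\delta|^2$ — which is exactly why only a short, $\delta$-independent horizon $T_0$ can be reached and why the resulting Gronwall inequality is superlinear rather than linear.
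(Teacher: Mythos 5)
Your computational skeleton (the $\Lp^4$ energy identity, the vanishing of the boundary terms via the Neumann flux condition, and the treatment of the $\nabla V$ part) is correct, and you have correctly located the crux in the interaction term $\int(\rho^\delta)^4|\nabla U\ast\rho^\delta|^2$. But that is precisely the step you do not carry out, and the route you sketch does not close. After Jensen you are left with $\int (K^{\delta}\ast(\rho^\delta)^4)\,|\nabla(K^{\delta}\ast\rho^\delta)|^2$, and every way of separating the two factors requires either a $\delta$-uniform $\Lp^\infty$ bound on $\rho^\delta$ (Lemma \ref{lemma:boundlinf} gives only $e^{C L^2 t}$ with $L\sim\delta^{-2d-1}$, which is useless here) or an $\Lp^p$ bound on $\nabla(K^\delta\ast\rho^\delta)$ with $p>2$, which is not available. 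Your proposed bootstrap ``starting from the $\Lp^2$ level'' hits the identical obstruction one power down: the $\Lp^2$ energy estimate produces $\int(\rho^\delta)^2|\nabla U\ast\rho^\delta|^2$, which is one power of $\rho^\delta$ above the weighted energy $\int\rho^\delta|\nabla U\ast\rho^\delta|^2$ that Lemma \ref{lemma:SpeedBound} actually controls, so there is no base case from which to bootstrap. In short, the inequality $\phi_\delta'\le a_\delta(t)(1+\phi_\delta)^\gamma$ with $\int_0^T a_\delta$ bounded uniformly in $\delta$ is asserted, not derived, and I do not see how to derive it with the listed tools.

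The paper avoids this impasse by an entirely different mechanism (following Oelschl\"ager): it tracks the high Sobolev norms $\|(-\Delta)^m\rho^{\delta_n}_t\|_2^2$ and $\|(-\Delta)^m(K^{\delta_n}\ast\rho^{\delta_n}_t-f)\|_2^2$ with $m=\lceil 1+d/2\rceil$, and observes that the dangerous interaction pairing $\<\nabla(-\Delta)^m(V+U\ast\rho^{\delta_n}_t),\,\rho^{\delta_n}_t\,\nabla(-\Delta)^m(V+U\ast\rho^{\delta_n}_t)\>$ appears with opposite signs in the two evolution equations; a suitable linear combination (with weight $C_\ast\ge\|\rho^{\delta_n}_t\|_\infty/C$) makes it nonpositive and it is discarded. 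The needed $\Lp^\infty$ bound is obtained self-consistently through a stopping time $T_n$ and Sobolev embedding, the quadratic Bihari--Gronwall inequality then yields the short horizon $T_0\sim 1/(C(1+T))$, and Gagliardo--Nirenberg converts the $H^{2m}$ bound into the $\Lp^4$ statement only at the very end. If you want to salvage a direct $\Lp^4$ argument you would need to reproduce some analogue of this sign cancellation at the $\Lp^4$ level (e.g.\ by coupling your energy identity for $\|\rho^\delta_t\|_4^4$ with one for a mollified quantity involving $K^\delta\ast\rho^\delta_t-f$); as written, the proposal has a genuine gap at its central estimate.
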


\begin{proof}
For simplicity, we indicate the norms $\Lp^p(\Omega)$ by $\|\cdot\|_{p}$. For a function $g\in \Cont^m(\Omega)$, we let $\nabla^{\otimes m} g$ be the vector with coordinates ${\partial ^m}g/{(\partial_{i_1} \dotsc \partial_{i_m})}$, with $1\le i_1, i_2, \dotsc, i_m\le d$.  
The proof strategy to prove this lemma is to first bound $\|\nabla^{\otimes m} \rho^{\delta_n}_t\|_2$, for some $m\ge d/4$, and then apply the Gagliardo-Nirenberg interpolation inequality (cf. Lemma~\ref{lem:GN-interpolation}) to bound $\|\rho^{\delta_n}_t\|_4$. Throughout this proof, we will use $C$, $C_k$ and so on to denote
constants that can depend on the domain $\Omega$, but do not depend on $t$ or $\delta$.

Before proceeding, we need to establish some notations and definitions. 

For a function $g$ and an integer $k\ge 0$, we denote its Sobolev norms by
\begin{align}\label{eq:sobolev-norm}
\|g\|_{(k)} = \left(\sum_{m=0}^k \|\nabla^{\otimes m} g \|_2^2 \right)^{1/2}\,.
\end{align}
We will use the following relations on Sobolev norms (see~\cite[Equation (1.14)]{oelschlager2001sequence}):
\begin{align}\label{sobolev-relation}
\|g\|_{(k)} \le \begin{cases}
\bar{C}_k (\|g\|_2^2 + \|(-\Delta)^{k/2} g\|_2^2)^{1/2} \quad & \text{if } k \text{ is even\,,}\\
\bar{C}_k (\|g\|_2^2 + \|\nabla(-\Delta)^{(k-1)/2} g\|_2^2)^{1/2} \quad & \text{if } k \text{ is odd\,.}
\end{cases}
\end{align}
Instead of bounding $\|\nabla^{\otimes m} \rho^{\delta_n}_t\|_2$, we will bound the dominating quantity $\|\rho^{\delta_n}_t\|_{(m)}$. To this end, we follow a similar strategy as in~\cite{oelschlager2001sequence}. Namely,
we derive descriptions of the evolution of $\|(-\Delta)^m \rho^{\delta_n}_t\|_2$ and $\|(-\Delta)^m (\rho^{\delta_n}_t\ast K^{\delta_n} - f)\|_2$. More precisely, we derive a recursive equation (on $m$) for the evolution of a suitably chosen linear combination of these two quantities.  

Since $\rho^{\delta_n}$ is a solution of the PDE~\eqref{eq:PDEbis}, we have
\begin{align}\label{eq:Dmrho}
\partial_t (-\Delta)^m \rho^{\delta_n}_t(\bx) = -\tau (-\Delta)^{m+1}  \rho^{\delta_n}_t(\bx) + (-\Delta)^m\nabla\cdot\left(\rho^{\delta_n}_t(\bx) \nabla(V + U \ast \rho^{\delta_n}_t) \right)
\end{align}
Following along the same lines as in derivation of~\cite[Equation (3.12)]{oelschlager2001sequence}, we obtain
\begin{align}
\frac{\de}{\de t} &\|(-\Delta)^m \rho^{\delta_n}_t\|_2^2\nonumber\\
&\le (CC_m-2\tau) \|\nabla(-\Delta)^m\rho^{\delta_n}_t\|_2^2\nonumber\\
\,&+\frac{2}{C} \|\rho^{\delta_n}_t\|_\infty \Big\<\nabla(-\Delta)^m (V + U\ast \rho^{\delta_n}_t ), \rho^{\delta_n}_t \nabla(-\Delta)^m (V + U\ast \rho^{\delta_n}_t ) \Big\>\nonumber\\
\,&+\frac{\tilde{C}_m}{C} \sum_{q=1}^m\Big( \Big \|V + U\ast \rho^{\delta_n}_t  \Big\|_{(2m+1-q)}^2 \|\rho^{\delta_n}_t\|^2_{(q+1+d/2)} \nonumber\\
\,&+ \Big \|V + U\ast \rho^{\delta_n}_t  \Big\|_{(q+1+d/2)}^2 \|\rho^{\delta_n}_t\|^2_{(2m+1-q)} \Big)\,,\label{d-rho-1}
\end{align} 
where $C_m$ and $\tilde{C}_m$ are positive constants that depend on $m$ and $C>0$ is a constant which can be chosen arbitrarily.  

We set $m = \lceil 1+d/2\rceil$ for which we can upper bound the right-hand side of~\eqref{d-rho-1} as 
\begin{align}
\frac{\de}{\de t} &\Big\|(-\Delta)^m \rho^{\delta_n}_t\Big\|_2^2\nonumber\\
&\le (CC_m-2\tau) \Big\|\nabla(-\Delta)^m\rho^{\delta_n}_t\Big\|_2^2\nonumber\\
\,&+\frac{2}{C} \|\rho^{\delta_n}_t\|_\infty \Big\<\nabla(-\Delta)^m (V + U \ast \rho_t ), \rho^{\delta_n}_t \nabla(-\Delta)^m (V + U\ast \rho^{\delta_n}_t ) \Big\>\nonumber\\
\,&+\frac{2m\tilde{C}_m}{C} \Big \|V + U\ast \rho^{\delta_n}_t  \Big\|_{(2m)}^2  \|\rho^{\delta_n}_t\|^2_{(2m)} \,.\label{d-rho-1-1}
\end{align} 
We next move to the next quantity. Write
\begin{align}
&\frac{\de}{\de t} \Big\|(-\Delta)^m (K^{\delta_n}\ast \rho^{\delta_n}_t - f )\Big\|_2^2\nonumber\\
&\,\le 2\Big\<(-\Delta)^m (K^{\delta_n}\ast \rho^{\delta_n}_t - f) , \partial_t (-\Delta)^m \rho^{\delta_n}_t \ast K^{\delta_n} \Big\>\nonumber\\
&\,= 2\Big\<(-\Delta)^m (V^{\delta_n} + U^{\delta_n}\ast \rho^{\delta_n}_t ) , \partial_t (-\Delta)^m \rho^{\delta_n}_t \Big\>\nonumber\\
&\,= -2\tau\Big\<(-\Delta)^m (V^{\delta_n} + U^{\delta_n}\ast \rho^{\delta_n}_t ) , (-\Delta)^{m+1} \rho^{\delta_n}_t \Big\> \nonumber\\
&\,+ 2\Big\<(-\Delta)^m (V^{\delta_n} + U^{\delta_n}\ast \rho^{\delta_n}_t ) ,  (-\Delta)^m\nabla\cdot\left(\rho^{\delta_n}_t(\bx) \nabla(V^{\delta_n} + U^{\delta_n}\ast \rho^{\delta_n}_t) \right)  \Big\>\,,\label{d-rho-2}
\end{align}
where the last step follows from~\eqref{eq:Dmrho}. Note that the first term on the right-hand side can be bounded as
\begin{align}
&-2\tau\Big\<(-\Delta)^m (V^{\delta_n} + U^{\delta_n}\ast \rho^{\delta_n}_t ) , (-\Delta)^{m+1} \rho^{\delta_n}_t \Big\>\nonumber\\
&= -2\tau\Big\<(-\Delta)^{m+1} (K^{\delta_n}\ast f) , (-\Delta)^{m} \rho^{\delta_n}_t \Big\> -2\tau \Big\|\nabla(-\Delta)^m  (K^{\delta_n}\ast \rho^{\delta_n}_t )\Big\|_2^2\nonumber\\
&\le 2\tau \|(-\Delta)^{m+1} f\|_2 \| (-\Delta)^{m} \rho^{\delta_n}_t \|_2-2\tau \Big\|\nabla(-\Delta)^m  (K^{\delta_n}\ast \rho^{\delta_n}_t )\Big\|_2^2\,, \label{d-rho-3}
\end{align}
where the last step follows from Young's convolution inequality and the fact that $\|K^{\delta_n}\|_1 = 1$.

The second term in~\eqref{d-rho-2} can be bounded following the same lines as in derivation of~\cite[Equations (3.3) and (3.16)]{oelschlager2001sequence}, which along with~\eqref{d-rho-3} gives 
\begin{align}
\frac{\de}{\de t}  &\Big\|(-\Delta)^m (K^{\delta_n}\ast \rho^{\delta_n}_t - f )\Big\|_2^2\nonumber\\
&\le CC_m \|\nabla(-\Delta)^m(K^{\delta_n}\ast \rho^{\delta_n}_t - f )\|_2^2 +2\tau \|(-\Delta)^{m+1} f\|_2 \| (-\Delta)^{m} \rho^{\delta_n}_t \|_2\nonumber\\
&\phantom{AAAA}-2\tau \Big\|\nabla(-\Delta)^m  (K^{\delta_n}\ast \rho^{\delta_n}_t )\Big\|_2^2 \nonumber\\
\,&-2 \Big\<\nabla(-\Delta)^m (V + U\ast \rho^{\delta_n}_t ), \rho^{\delta_n}_t \nabla(-\Delta)^m (V + U\ast \rho^{\delta_n}_t ) \Big\>\nonumber\\
\,&+\frac{\tilde{C}_m}{C} \sum_{q=1}^m\Big( \Big \|V + U\ast \rho^{\delta_n}_t  \Big\|_{(2m+1-q)}^2 \|\rho^{\delta_n}_t\|^2_{(q+1+d/2)} \nonumber\\
\,&+ \Big \|V + U\ast \rho^{\delta_n}_t  \Big\|_{(q+1+d/2)}^2 \|\rho^{\delta_n}_t\|^2_{(2m+1-q)} \Big)\,.\label{d-rho-4}
\end{align}
Since $f\in \Cont^\infty(\Omega)$, there exists constant $M>0$, such that $\|(-\Delta)^{m+1} f\|_2\le M$, $\|\nabla(-\Delta)^{m} f\|_2\le M$. Using the particular choice of $m$, we can upper bound the right-hand side of~\eqref{d-rho-4} as
\begin{align}
\frac{\de}{\de t}  &\Big\|(-\Delta)^m (K^{\delta_n}\ast \rho^{\delta_n}_t - f )\Big\|_2^2\nonumber\\
&\le (2CC_m-2\tau) \|\nabla(-\Delta)^m(K^{\delta_n}\ast \rho^{\delta_n}_t )\|_2^2 + 2\tau M \| (-\Delta)^{m} \rho^{\delta_n}_t \|_2+ 2M^2 CC_m  \nonumber\\
\,&-2 \Big\<\nabla(-\Delta)^m (V + U\ast \rho^{\delta_n}_t ), \rho^{\delta_n}_t \nabla(-\Delta)^m (V + U\ast \rho^{\delta_n}_t ) \Big\>\nonumber\\
\,&+\frac{2m\tilde{C}_m}{C} \Big \|V + U\ast \rho^{\delta_n}_t  \Big\|_{(2m)}^2 \|\rho^{\delta_n}_t\|^2_{(2m)} \,.\label{d-rho-5}
\end{align}
Define $C_1\equiv 2 \|\rho_{\sinit}\|_{(2m)}$ and let
\[
T_n \equiv \inf\Big\{t\in[0,T]:\, \|\rho^{\delta_n}_t\|_{(2m)} > C_1\Big\} \wedge T\,,
\]
for $n\ge 1$. Clearly, $T_n>0$ by choice of $C_1$. In addition, by applying Sobolev's inequality (see e.g.~\cite[Equation (1.12)]{oelschlager2001sequence}), we have 
\[
\|\rho^{\delta_n}_t\|_\infty \le C_2\| \rho^{\delta_n}_t\|_{(2m)}\le C_1C_2 \,,\quad \text{for } t\in [0,T_n],\, n\ge 1\,.
\] 
where $C_2>0$ is a constant depending on $d$. We let $C_\ast \equiv C_1 C_2/C$. Recall that the constant $C>0$ in~\eqref{d-rho-1-1} and~\eqref{d-rho-5} was arbitrary. We choose it in a way that $C<\tau/(2C_m)$.  We then consider the evolution of the following linear combination of the two quantities we analyzed above. Note that by Equations \eqref{d-rho-1-1} and~\eqref{d-rho-5}, we have for $t\in [0, T_n]$,
\begin{align}
\frac{\de}{\de t} &\left(\Big\|(-\Delta)^m \rho^{\delta_n}_t\Big\|_2^2 + C_\ast \Big\|(-\Delta)^m (K^{\delta_n}\ast \rho^{\delta_n}_t - f )\Big\|_2^2 \right)\nonumber\\
&\le -\frac{3\tau}{2}\Big\|\nabla(-\Delta)^m\rho^{\delta_n}_t\Big\|_2^2 - C_\ast \tau \|\nabla(-\Delta)^m(K^{\delta_n}\ast \rho^{\delta_n}_t )\|_2^2\nonumber \\
&\phantom{AAAA}+ C_\ast\left(2\tau M \| (-\Delta)^{m} \rho^{\delta_n}_t \|_2+ \tau M^2\right)\nonumber\\
&\phantom{AAAA}+\frac{2m\tilde{C}_m}{C} (1+C_\ast) \Big \|V + U\ast \rho^{\delta_n}_t  \Big\|_{(2m)}^2 \|\rho^{\delta_n}_t\|^2_{(2m)} \nonumber\\
&\le C_\ast\left(2\tau M \| (-\Delta)^{m} \rho^{\delta_n}_t \|_2+ \tau M^2\right) \nonumber\\
&\phantom{AAAA}+\frac{2m\tilde{C}_m}{C} (1+C_\ast) \left( \|\rho^{\delta_n}_t\|^2_{(2m)}  +  \Big \|V+ U\ast \rho^{\delta_n}_t  \Big\|_{(2m)}^2 \right)^2\nonumber\\
&\stackrel{(a)}{\le} \tau M^2 C_{\ast} + C_3 \left( \|\rho^{\delta_n}_t\|^2_{(2m)}  +  C_\ast \Big \|V + U\ast \rho^{\delta_n}_t  \Big\|_{(2m)}^2 \right)^2\nonumber\\
&\stackrel{(b)}{\le} \tau M^2 C_{\ast} + C_3 \left( \|\rho^{\delta_n}_t\|^2_{(2m)}  +  C_\ast \Big \|-f+ K^{\delta_n}\ast \rho^{\delta_n}_t  \Big\|_{(2m)}^2 \right)^2 \,,\label{d-rho-6}
\end{align}
where in $(a)$ we use the fact that $\| (-\Delta)^{m} \rho^{\delta_n}_t \|_2\le \|\rho^{\delta_n}_t\|_{(2m)}$, which follows immediately from~\eqref{eq:sobolev-norm}; $(b)$ follows from the 
fact that for any function $g\in \Lp^2(\Omega)$, $\|g\ast K^{\delta_n}\|_2\le \|K^{\delta_n}\|_1 \|g\|_2 = \|g\|_2$, by Young's inequality for convolution.

Another observation that will be used later is that
\begin{align}\label{d-rho-7}
\frac{\de}{\de t} \left( \|\rho^{\delta_n}_t\|_{2}^2 + C_{\ast} \| K^{\delta_n}\ast \rho^{\delta_n}_t - f \|_2^2  \right)\le 0\,.
\end{align}
This claim follows by repeating the same argument we had to derive~\eqref{d-rho-6}, for $m=0$. In this case, we have analogous equations to~\eqref{d-rho-1-1} and~\eqref{d-rho-5},
where only the first two terms appear.
   
Next note that by~\eqref{sobolev-relation}, we have for $t\in [0,T_n]$,
\begin{align}
&\|\rho^{\delta_n}_t\|_{(2m)}^2 + C_\ast \|K^{\delta_n}\ast \rho^{\delta_n}_t - f \|_{(2m)}^2\nonumber\\
&\le \bar{C}_{m}\Bigg(\|\rho^{\delta_n}_t\|_{2}^2 + \|(-\Delta)^{m}\rho^{\delta_n}_t\|_{2}^2 \nonumber\\
&\phantom{AAAA}+ C_\ast \left(\| K^{\delta_n}\ast \rho^{\delta_n}_t - f \|_2^2 + \Big\|(-\Delta)^m (K^{\delta_n}\ast \rho^{\delta_n}_t - f )\Big\|_2^2\right) \Bigg) \nonumber\\
&\le \bar{C}_{m} \left(\|\rho_{\sinit}\|_{(2m)}^2  + C_\ast \| K^{\delta_n}\ast \rho_{\sinit} - f \|_{(2m)}^2 \right) \nonumber\\
&+ \bar{C}_{m} \tau M^2 C_\ast t + \bar{C}_{m}  C_3 \int_0^t \left( \|\rho^{\delta_n}_s\|^2_{(2m)}  +  C_\ast \Big \|K^{\delta_n}\ast \rho^{\delta_n}_s -f  \Big\|_{(2m)}^2 \right)^2 \de s\,, \label{d-rho-7-1}
%
\end{align} 
where the last step is a result of~\eqref{d-rho-7} and~\eqref{d-rho-6}.  Let us stress that $\bar{C}_m$, $C_\ast$, $C_3$ are constants that are independent of $n$.

We further note that
\begin{align}
\|\rho_{\sinit}\|_{(2m)}^2  + C_\ast \| K^{\delta_n}\ast \rho_{\sinit} - f \|_{(2m)}^2 &\le \|\rho_{\sinit}\|_{(2m)}^2 (1+ 2C_\ast)+ 2C_\ast \|f\|_{(2m)}^2\nonumber\\
& \le (1+2C_{\ast}) \frac{C_1^2}{4}+ 2C_\ast \|f\|_{(2m)}^2\,,\label{d-rho-8}
\end{align}
for $n\ge 1$. Here, the first step is a result of triangle inequality and the Young's inequality for convolution along with the fact that $\|K^{\delta_n}\|_1 = 1$. The second step follows from definition of 
$C_1$. Since $f\in \Cont^\infty(\Omega)$, $\|f\|_{(2m)}^2$ is uniformly bounded over $\Omega$. We denote the right-hand side of \eqref{d-rho-8} by the constant $C_4$. Using bound~\eqref{d-rho-8}
into~\eqref{d-rho-7-1} results in
\begin{align}
&\|\rho^{\delta_n}_t\|_{(2m)}^2 + C_\ast \|K^{\delta_n}\ast \rho^{\delta_n}_t - f \|_{(2m)}^2\nonumber\\
&\le \bar{C}_{m} C_4 + \bar{C}_{m} \tau M^2 C_\ast t \nonumber\\
&\phantom{AAAA}+ \bar{C}_{m}  C_3 \int_0^t \left( \|\rho^{\delta_n}_s\|^2_{(2m)}  +  C_\ast \Big \|K^{\delta_n}\ast \rho^{\delta_n}_s -f  \Big\|_{(2m)}^2 \right)^2 \de s\,, \label{d-rho-9}
\end{align}
for $t\in [0,T_n]$. By employing a generalization of Gronwall's inequality (cf. Lemma~\ref{lem:gronwall} and Remark~\ref{rem:gronwall}) we get
\begin{align}
\|\rho^{\delta_n}_t\|_{(2m)}^2 + C_\ast \|K^{\delta_n}\ast \rho^{\delta_n}_t - f \|_{(2m)}^2 \le \frac{\bar{C}_{m} C_4 + \bar{C}_{m} \tau M^2 C_\ast T_n}{1 - (\bar{C}_{m} C_4 + \bar{C}_{m} \tau M^2 C_\ast T_n) \bar{C}_m C_3 t}\,. \label{d-rho-10}
\end{align}
Therefore, for $t\in [0, T_0]$, with
\begin{equation}
T_0 = \frac{1}{2\bar{C}_{m} C_4 + 2\bar{C}_{m} \tau M^2 C_\ast T},
\end{equation}
we have that
\begin{align}
\|\rho^{\delta_n}_t\|_{(2m)}^2 + C_\ast \|K^{\delta_n}\ast \rho^{\delta_n}_t - f \|_{(2m)}^2 \le C_5 + C_6 T\,, \label{d-rho-11}
\end{align}
with $C_5 \equiv \bar{C}_m C_4$ and $C_6\equiv \bar{C}_m \tau M^2  C_\ast$. Note that $C_5$, $C_6$ and $T_0$ are independent of $n$, but depend on $d$.
Let $m_0 = \lceil d/4\rceil$. Then, by the choice of $m = 1+\lceil d/2\rceil$ we have $\|\nabla^{\otimes {m_0}} \rho^{\delta_n}_t\|_2\le \|\rho^{\delta_n}_t\|_{(2m)}$, and hence  as a result of~\eqref{d-rho-11}, we obtain
\begin{align}
\sup_{n\ge1, t\in [0,T_0]} \|\nabla^{\otimes {m_0}} \rho^{\delta_n}_t\|_2  \le C_5 + C_6 T\,, \label{d-rho-12}
\end{align}
Finally, by applying Gagliardo-Nirenberg interpolation inequality (cf. Lemma~\ref{lem:GN-interpolation}) we get 
\[
\sup_{n\ge1, t\in [0,T_0]} \|\rho^{\delta_n}_t\|_4 \le C_7+C_8 T\,,
\]
for some constant $C_7, C_8>0$, which completes the proof. 
\end{proof}

\begin{lemma}[Convergence to the unique weak solution of limit PDE]\label{lemma:convuniquePDE}
Let $\rho^\infty$ be the limit in $\Cont([0,T],\cuP_2(\Omega))$ of the converging sequence $(\rho^{\delta_n})_{n \ge 1}$. Then, $\rho^\infty$ is the unique weak solution of the PDE \eqref{eq:PMEbis}  in $\Lp^4(\Omega\times [0,T])$  with initial and boundary conditions \eqref{eq:PME-BCbis}. 
\end{lemma}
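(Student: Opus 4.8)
The plan is to assemble the final claim from the pieces already in place. By Lemma~\ref{lemma:rcseq}, the family $(\rho^\delta)_{\delta>0}$ is relatively compact in $\Cont([0,T],\cuP_2(\Omega))$, so any sequence $\delta_n\to 0$ has a subsequence along which $\rho^{\delta_n}\to\rho^\infty$ in $\Cont([0,T],\cuP_2(\Omega))$ for some limit point $\rho^\infty$. By Lemma~\ref{lemma:rcseq2} the convolved sequence $(K^{\delta_n}\ast\rho^{\delta_n})_{n\ge1}$ converges to the same limit $\rho^\infty$; by Lemma~\ref{lemma:convL2}, $\rho^\infty$ has a density in $\Lp^2(\Omega\times[0,T])$ and $K^{\delta_n}\ast\rho^{\delta_n}\to\rho^\infty$ in $\Lp^2(\Omega\times[0,T])$; and by Lemma~\ref{lemma:weaksol} this $\rho^\infty$ is a weak solution of the limit PDE \eqref{eq:PMEbis} with initial and boundary conditions \eqref{eq:PME-BCbis}. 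So the remaining job is to upgrade the integrability to $\Lp^4$ and then invoke uniqueness.

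The key step is to promote the $\Lp^2$ bound to an $\Lp^4$ bound using Lemma~\ref{lemma:convL4}. That lemma gives, under Assumption~{\sf(A3)} and {\sf(A5)} (i.e. $f,\rho_{\sinit}\in\Cont^\infty(\Omega)$), a uniform-in-$n$ bound $\sup_{n\ge1,\,t\in[0,T_0]}\|\rho^{\delta_n}_t\|_{\Lp^4(\Omega)}\le C(\Omega)(1+T)$ on some time interval $[0,T_0]$ with $T_0$ independent of $n$. I would first record that, since $\rho^{\delta_n}_t\to\rho^\infty_t$ weakly in $\cuP_2(\Omega)$ for each $t$ and $K^{\delta_n}\ast\rho^{\delta_n}\to\rho^\infty$ in $\Lp^2(\Omega\times[0,T_0])$, the uniform $\Lp^4$ bound passes to the limit by lower semicontinuity of the $\Lp^4$ norm (Fatou, applied along a further subsequence converging a.e.), giving $\rho^\infty\in\Lp^4(\Omega\times[0,T_0])$ with the same constant. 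Then one extends from $[0,T_0]$ to all of $[0,T]$ by iterating Lemma~\ref{lemma:convL4} on consecutive intervals of length $T_0$ (the constants depend only on $\Omega,d,T$, not on the starting time, since the argument only uses the PDE structure and smoothness of $f$, and the value of $\rho^\infty$ at the end of one interval serves as a smooth-enough initial datum for the next — or, more simply, one notes that the bound in Lemma~\ref{lemma:convL4} is stated on $[0,T_0]$ with $T_0\propto 1/(1+T)$ and that rerunning the Gronwall argument on $[jT_0,(j+1)T_0]$ closes because the $\Lp^4$ norm has not blown up). This yields $\rho^\infty\in\Lp^4(\Omega\times[0,T])$.

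Having established that $\rho^\infty$ is a weak solution in $\Lp^4(\Omega\times[0,T])$, I would invoke Lemma~\ref{lemma:unique}: any two weak solutions in $\Lp^4(\Omega\times[0,T])$ with the same initial and boundary data coincide almost everywhere. Hence $\rho^\infty$ is \emph{the} unique such weak solution. In particular, the limit point does not depend on the chosen subsequence $\delta_n$ nor on the sub-subsequence extracted along the way; a standard subsequence argument (every subsequence of $(\rho^\delta)_{\delta>0}$ has a further subsequence converging to the same limit $\rho^\infty$) then shows $\rho^\delta\to\rho^\infty$ in $\Cont([0,T],\cuP_2(\Omega))$ as $\delta\to0$ along the full net, and $K^\delta\ast\rho^\delta\to\rho^\infty$ in $\Lp^2(\Omega\times[0,T])$. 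This proves both that the limit exists and that it is the unique weak solution, which is the statement of Lemma~\ref{lemma:convuniquePDE}.

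The main obstacle is the bookkeeping in passing the uniform $\Lp^4$ bound to the limit and, relatedly, making sure the interval $[0,T_0]$ in Lemma~\ref{lemma:convL4} can be iterated to cover $[0,T]$: one must be a little careful that the constants $C(\Omega)$, $T_0$ genuinely do not deteriorate with each iteration, and that the $\Lp^4$ bound (rather than merely the $\Lp^2$ bound) is inherited by $\rho^\infty$ — this requires combining the $\Cont([0,T],\cuP_2)$ convergence with the $\Lp^2$ convergence of $K^{\delta_n}\ast\rho^{\delta_n}$ to extract an a.e.-convergent subsequence and then apply Fatou. Everything else is a direct concatenation of Lemmas~\ref{lemma:rcseq}, \ref{lemma:rcseq2}, \ref{lemma:convL2}, \ref{lemma:weaksol}, \ref{lemma:convL4}, and \ref{lemma:unique}.
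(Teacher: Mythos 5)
Your proposal is correct and follows essentially the same route as the paper: concatenate Lemmas \ref{lemma:rcseq}--\ref{lemma:weaksol} to get a weak solution with $\Lp^2$ density, use the uniform bound of Lemma \ref{lemma:convL4} to place $\rho^\infty$ in $\Lp^4(\Omega\times[0,T_0])$, invoke Lemma \ref{lemma:unique}, and iterate over intervals of length $T_0$ to cover $[0,T]$. The only (cosmetic) difference is in transferring the $\Lp^4$ bound to the limit: you use a.e. convergence along a subsequence plus Fatou, while the paper extracts a weakly convergent subsequence in the reflexive space $\Lp^4(\Omega)$ via Banach--Alaoglu and identifies the weak limit with the $\Lp^2$ limit; both are standard and valid (in your version, note that the uniform $\Lp^4$ bound must first be passed from $\rho^{\delta_n}_t$ to $K^{\delta_n}\ast\rho^{\delta_n}_t$ by Young's convolution inequality, as the paper does explicitly).
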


\begin{proof}
From Lemma \ref{lemma:convL2}, we have that the sequence $(K^{\delta_n} \ast \rho^{\delta_n})_{n \ge 1}$ converges in $\Lp^2(\Omega\times [0,T])$ to $\rho^\infty$. Furthermore, by Lemma \ref{lemma:convL4}, $\|\rho^{\delta}_t\|_{\Lp^4(\Omega)}\le C(1+T)$ for any $t\in [0, T_0]$, where $C$ is a universal constant. By using Young's convolution inequality, we also deduce that $\|K^{\delta_n} \ast\rho^{\delta}_t\|_{\Lp^4(\Omega)}\le C(1+T)$ for any $t\in [0, T_0]$.

Note that $\Lp^4(\Omega)$ is a reflexive Banach space. Thus, by applying the Banach-Alaoglu theorem, every bounded sequence in $\Lp^4(\Omega)$ has a weakly convergent subsequence. This means that there exist a subsequence $K^{\delta_{n_k}} \ast\rho^{\delta_{n_k}}$ and a function $\tilde{\rho}\in \Lp^4(\Omega)$ such that, for any $g\in \Lp^{4/3}(\Omega)$, we have 
\begin{equation}
\int_{\Omega} (K^{\delta_{n_k}} \ast\rho^{\delta_{n_k}}(\bx) - \tilde{\rho}(\bx)) g(\bx)\,\de\bx \to 0.
\end{equation}
Now, since $\Omega$ is bounded, $K^{\delta_{n_k}} \ast\rho^{\delta_{n_k}}$ and $\tilde{\rho}$ are also in $\Lp^2(\Omega)$ (as they are in $\Lp^4(\Omega)$). Thus, $K^{\delta_{n_k}} \ast\rho^{\delta_{n_k}} - \tilde{\rho}$ is in $\Lp^2(\Omega)$, hence it is also in $\Lp^{4/3}(\Omega)$. As a result, we can pick $g = K^{\delta_{n_k}} \ast\rho^{\delta_{n_k}}- \tilde{\rho}$ and obtain 
\begin{equation}
\int_\Omega |K^{\delta_{n_k}} \ast\rho^{\delta_{n_k}}(\bx) - \tilde{\rho}(\bx)|^2\,\de \bx \to 0.
\end{equation}
Therefore, $\tilde{\rho}$ is the limit in $\Lp^2(\Omega)$ of the sequence $K^{\delta_{n_k}} \ast\rho^{\delta_{n_k}}$. By uniqueness of the limit, we conclude that $\tilde{\rho} = \rho^\infty$. As a result, $\rho^\infty\in \Lp^4(\Omega)$ for any $t\in [0, T_0]$, which implies that $\rho^\infty\in \Lp^4(\Omega\times [0, T_0])$. Thus, by Lemma \ref{lemma:weaksol} and Lemma \ref{lemma:unique}, $\rho^\infty$ is the unique weak solution of the PDE \eqref{eq:PMEbis} for $t\in [0, T_0]$. Note that $T_0$ is decreasing with $T$. Thus, we can repeat the same argument with $T-T_0$ instead of $T$ and obtain that  $\rho^\infty$ is the unique weak solution of the PDE \eqref{eq:PMEbis} for $t\in [T_0, 2\,T_0]$. By iterating this procedure $T/T_0$ times, the result follows.  
\end{proof}

\comm{At this point, we state and prove a lemma showing that the sequence $(\rho_t^{\delta_n})_{n \ge 1}$ converges in $\Lp^2(\Omega)$ to $\rho^\infty_t$.}
 
\comm{\begin{lemma}\label{lemma:newL2}
For almost all $t\in [0, T]$, the measure $\rho^\infty_t$ is the limit in $\Lp^2(\Omega)$ of the sequence $(\rho_t^{\delta_n})_{n \ge 1}$. 
\end{lemma}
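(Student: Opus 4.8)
The plan is to derive Lemma~\ref{lemma:newL2} from Lemma~\ref{lemma:convL2} together with the uniform regularity bounds already obtained in the proof of Lemma~\ref{lemma:convL4}. Since Lemma~\ref{lemma:convL2} gives $\|K^{\delta_n}\ast\rho^{\delta_n}-\rho^\infty\|_{\Lp^2(\Omega\times[0,T])}\to 0$, it suffices to prove
\[
\sup_{t\in[0,T]}\big\|\rho^{\delta_n}_t-K^{\delta_n}\ast\rho^{\delta_n}_t\big\|_{\Lp^2(\Omega)}\to 0\quad\text{as }n\to\infty .
\]
Indeed, this gives $\|\rho^{\delta_n}-\rho^\infty\|_{\Lp^2(\Omega\times[0,T])}\le \sqrt{T}\,\sup_{t\in[0,T]}\|\rho^{\delta_n}_t-K^{\delta_n}\ast\rho^{\delta_n}_t\|_{\Lp^2(\Omega)}+\|K^{\delta_n}\ast\rho^{\delta_n}-\rho^\infty\|_{\Lp^2(\Omega\times[0,T])}\to 0$, and therefore $\rho^{\delta_n}_t\to\rho^\infty_t$ in $\Lp^2(\Omega)$ for almost every $t\in[0,T]$ (extracting a further subsequence if needed, as elsewhere in this appendix).

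First I would record the uniform bounds used below. As established in the proof of Lemma~\ref{lemma:convL4} -- and extended from $[0,T_0]$ to all of $[0,T]$ by the same iteration over consecutive time intervals used in Lemma~\ref{lemma:convuniquePDE} -- one has
\[
\sup_{n\ge 1,\ t\in[0,T]}\Big(\|\rho^{\delta_n}_t\|_{\Lp^\infty(\Omega^{\delta_n})}+\|\nabla\rho^{\delta_n}_t\|_{\Lp^2(\Omega^{\delta_n})}\Big)<\infty ,
\]
the $\Lp^\infty$ bound coming from the Sobolev embedding applied to the bound on $\|\rho^{\delta_n}_t\|_{(2m)}$, $2m=2\lceil 1+d/2\rceil$. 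Extend $\rho^{\delta_n}_t$ by $0$ outside $\Omega^{\delta_n}$ and denote the extension by $\bar\rho^{\delta_n}_t$; since $\rho^{\delta_n}_t\in\Cont^{2,1}(\Omega^{\delta_n},[0,T])$, $\bar\rho^{\delta_n}_t\in{\rm BV}(\reals^d)$ with
\[
\|D\bar\rho^{\delta_n}_t\|_{{\rm TV}(\reals^d)}\le |\Omega|^{1/2}\,\|\nabla\rho^{\delta_n}_t\|_{\Lp^2(\Omega^{\delta_n})}+\|\rho^{\delta_n}_t\|_{\Lp^\infty(\Omega^{\delta_n})}\,{\rm Per}(\Omega^{\delta_n}) ,
\]
and ${\rm Per}(\Omega^{\delta_n})\le{\rm Per}(\Omega)$ because $\Omega^{\delta_n}=\lambda_{\delta_n}\Omega$ with $\lambda_{\delta_n}\in[0,1]$; hence $\sup_{n,t}\|D\bar\rho^{\delta_n}_t\|_{{\rm TV}(\reals^d)}<\infty$.

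Next I would prove the displayed limit. By the definition~\eqref{eq:lambda} of $\lambda_{\delta_n}$ we have $\Omega^{\delta_n}\oplus\Ball(\b0,c_0\delta_n)\subseteq\Omega$, so $K^{\delta_n}\ast\bar\rho^{\delta_n}_t$ is supported in $\Omega$ and $\rho^{\delta_n}_t-K^{\delta_n}\ast\rho^{\delta_n}_t=\big(\bar\rho^{\delta_n}_t-K^{\delta_n}\ast\bar\rho^{\delta_n}_t\big)\big|_{\Omega}$. For $g\in{\rm BV}(\reals^d)$, writing $g(\bx)-K^{\delta}\ast g(\bx)=\int K^{\delta}(\by)\big(g(\bx)-g(\bx-\by)\big)\,\de\by$ and using $\|g-g(\,\cdot-\by)\|_{\Lp^1(\reals^d)}\le|\by|\,\|Dg\|_{{\rm TV}}$ gives
\[
\|g-K^{\delta}\ast g\|_{\Lp^1(\reals^d)}\le\Big(\int|\by|\,K^{\delta}(\by)\,\de\by\Big)\|Dg\|_{{\rm TV}}=\delta\Big(\int|\bz|\,K(\bz)\,\de\bz\Big)\|Dg\|_{{\rm TV}}\le c_0\,\delta\,\|Dg\|_{{\rm TV}} ,
\]
using $\supp(K)\subseteq\Ball(\b0,c_0)$. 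Combining this with $\|g-K^{\delta}\ast g\|_{\Lp^\infty}\le 2\|g\|_{\Lp^\infty}$ and the interpolation $\|u\|_{\Lp^2}^2\le\|u\|_{\Lp^1}\|u\|_{\Lp^\infty}$, applied to $g=\bar\rho^{\delta_n}_t$, yields
\[
\big\|\rho^{\delta_n}_t-K^{\delta_n}\ast\rho^{\delta_n}_t\big\|_{\Lp^2(\Omega)}^2\le 2c_0\,\delta_n\,\|D\bar\rho^{\delta_n}_t\|_{{\rm TV}(\reals^d)}\,\|\bar\rho^{\delta_n}_t\|_{\Lp^\infty(\reals^d)}\le C\,\delta_n ,
\]
with $C$ independent of $n$ and $t$ by the previous step. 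This is the required limit, and the lemma then follows as in the first paragraph.

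The step that needs care is the mollifier estimate across $\partial\Omega^{\delta_n}$: since $\rho^{\delta_n}_t$ obeys a Neumann rather than a Dirichlet condition there, its zero extension jumps at the boundary and is \emph{not} in $H^1(\reals^d)$, so the naive bound $\|g-K^{\delta}\ast g\|_{\Lp^2}\le C\delta\|\nabla g\|_{\Lp^2}$ is unavailable; this is precisely why the argument above routes through the ${\rm BV}/\Lp^\infty$ interpolation. (Alternatively, one could use that the zero extension stays bounded in $H^s(\reals^d)$ for every $s<1/2$ and split in frequency, obtaining a rate $\delta_n^{4s/(d+2s)}$; either route suffices.) Everything else -- the uniform Sobolev bounds of Lemma~\ref{lemma:convL4} and the $\Lp^2(\Omega\times[0,T])$ convergence of the mollified densities of Lemma~\ref{lemma:convL2} -- is already in hand.
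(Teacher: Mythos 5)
Your proof is correct in substance but takes a genuinely different route from the paper. The paper proves, for each fixed $t$, that $(\rho^{\delta_n}_t)_{n\ge 1}$ is Cauchy in $\Lp^2(\Omega)$ by Parseval's theorem, splitting the Fourier integral at a cutoff $\Lambda$: the low frequencies are handled by the $W_1$ convergence from Lemma \ref{lemma:rcseq} plus dominated convergence, and the high frequencies by the uniform bound on $\|\nabla\rho^{\delta_n}_t\|_{\Lp^2(\Omega)}$ from \eqref{d-rho-11}. You instead reduce the lemma to the mollification estimate $\sup_t\|\rho^{\delta_n}_t-K^{\delta_n}\ast\rho^{\delta_n}_t\|_{\Lp^2(\Omega)}\to 0$ and prove it by extending $\rho^{\delta_n}_t$ by zero, controlling the total variation of the extension (correctly noting that the Neumann boundary forces you into ${\rm BV}$ rather than $H^1$), and interpolating $\Lp^2$ between $\Lp^1$ and $\Lp^\infty$. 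Both arguments draw on exactly the same uniform Sobolev bounds from the proof of Lemma \ref{lemma:convL4} (and the same iteration over time intervals of length $T_0$). What your route buys is an explicit, $t$-uniform rate $O(\delta_n^{1/2})$ for the mollification error; what the paper's route buys is that the Cauchy property is established pointwise in $t$, so the conclusion holds for \emph{every} $t$ without any further subsequence extraction.

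That last point is the one soft spot you should tighten. After combining your uniform estimate with Lemma \ref{lemma:convL2} you only have $\rho^{\delta_n}\to\rho^\infty$ in $\Lp^2(\Omega\times[0,T])$, and $\Lp^1([0,T])$ convergence of $t\mapsto\|\rho^{\delta_n}_t-\rho^\infty_t\|^2_{\Lp^2(\Omega)}$ does not by itself give almost-everywhere-in-$t$ convergence of the full sequence (only of a further subsequence). Since your mollification bound is uniform in $t$, the whole issue reduces to showing $K^{\delta_n}\ast\rho^{\delta_n}_t\to\rho^\infty_t$ in $\Lp^2(\Omega)$ for a.e.\ fixed $t$ along the full sequence; this follows by running the Parseval argument of Lemma \ref{lemma:convL2} at fixed $t$ (which is essentially what the paper does), or one can simply accept the further subsequence, which is harmless downstream because the limit $\rho^\infty$ is unique. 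Either fix is one line; as written, the statement you prove is marginally weaker than the lemma as stated.
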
}

\begin{proof}
\comm{The proof is similar to that of Lemma \ref{lemma:convL2}. Suppose that $t\in [0, T_0]$, where $T_0$ is defined in the statement of Lemma \ref{lemma:convL4}. Note that, for any $n\ge 1$, $\rho^{\delta_n}\in \Lp^2(\Omega)$. Let us show that $(\rho^{\delta_n})_{n \ge 1}$ is a Cauchy sequence in $\Lp^2(\Omega)$.}

\comm{As $\rho_t^{\delta_n}\in \Lp^2(\Omega)$ for every $t\in[0,T_0]$, its Fourier transform exists and we denote it by $\widehat{\rho^{\delta_n}}$. Hence, by applying Parseval's theorem, we have 
\begin{equation}\label{eq:parsedecnew}
\begin{split}
\limsup_{n, n'\to\infty}  \| \rho^{\delta_n}- \rho^{\delta_{n'}}\|^2_{\Lp^2(\Omega)}&=\limsup_{n, n'\to\infty}\| \rho_t^{\delta_n}- \rho_t^{\delta_{n'}}\|^2_{\Lp^2(\Omega)}\\
&=\limsup_{n, n'\to\infty}\int_{\mathbb R^d}| \widehat{ \rho_t^{\delta_n}}(\blambda)-\widehat{\rho_t^{\delta_{n'}}}(\blambda)|^2\,\de\blambda.\\
\end{split}
\end{equation}
Fix $\Lambda>1$ and decompose the integral in the right-hand side of \eqref{eq:parsedecnew} as 
\begin{equation}\label{eq:parsedec2new}
\begin{split}
\limsup_{n, n'\to\infty}&\int_{|\blambda|<\Lambda}| \widehat{\rho_t^{\delta_n}}(\blambda)-\widehat{\rho_t^{\delta_{n'}}}(\blambda)|^2\,\de\blambda+\limsup_{n, n'\to\infty}\int_{|\blambda|\ge \Lambda}| \widehat{\rho_t^{\delta_n}}(\blambda)-\widehat{\rho_t^{\delta_{n'}}}(\blambda)|^2\,\de\blambda.
\end{split}
\end{equation}
Consider the first term of \eqref{eq:parsedec2new}. By Lemma \ref{lemma:rcseq}, and since by Jensen's inequality $W_1(\rho_1,\rho_2) \le W_2(\rho_1,\rho_2)$ for any two distributions $\rho_1,\rho_2$, we have $W_1(\rho_t^{\delta_n} - \rho_t^{\delta_{n'}}) \to 0$, as $n,n' \to \infty$. Since for the complex exponential functions $\|e^{i\<\blambda,\bx\>}\|_{{\rm Lip}}\le |\blambda|$, by definition of 1-Wasserstein distance, the integrand in the first term converges pointwise to $0$. Furthermore, the integrand is upper bounded by an integrable function, since $|\widehat{\rho_t^{\delta_n}}(\blambda)|\le \|\rho_t^{\delta_n} \|_{\Lp^2(\Omega)}\le C$ 
for all $n$ and every $t\in[0,T_0]$. Hence, by dominated convergence, the first integral in  \eqref{eq:parsedec2new} converges to $0$.}

\comm{As for the second term of \eqref{eq:parsedec2new}, the following chain of inequalities holds:
\begin{equation}\label{eq:parsedec3new}
\begin{split}
\limsup_{n, n'\to\infty}&\int_{|\blambda|\ge \Lambda}| \widehat{\rho_t^{\delta_n}}(\blambda)-\widehat{\rho_t^{\delta_{n'}}}(\blambda)|^2\,\de\blambda\\
&\le 4\sup_{n\ge 1}\int_{|\blambda|\ge \Lambda}| \widehat{\rho_t^{\delta_n}}(\blambda)|^2\,\de\blambda\\
&\le \frac{4}{\Lambda^2}\sup_{n\ge 1}\int_{|\blambda|\ge \Lambda}|\blambda|^2\,| \widehat{\rho_t^{\delta_n}}(\blambda)|^2\,\de\blambda\\
&\le \frac{4}{\Lambda^2}\sup_{n\ge 1}\int_{\mathbb R^d}|\blambda|^2\,| \widehat{\rho_t^{\delta_n}}(\blambda)|^2\,\de\blambda\\
&= \frac{4}{\Lambda^2}\sup_{n\ge 1}\int_{\mathbb R^d}| \widehat{\nabla\rho_t^{\delta_n}}(\blambda)|^2\,\de\blambda\\
&= \frac{4}{\Lambda^2}\sup_{n\ge 1}\int_{\Omega}| \nabla  \rho_t^{\delta_n}(\bx)|^2\,\de\bx,\\
\end{split}
\end{equation}
where in the last equality we have applied again Parseval's theorem. 
In the proof of Lemma \ref{lemma:convL4}, we provide an upper bound, which does not depend on $n$, on the Sobolev norm of $\rho_t^{\delta_n}$ (see \eqref{d-rho-11}). Thus, as $\Lambda\to \infty$, the second term of \eqref{eq:parsedec2new} converges to $0$.}

\comm{By iterating the argument $T/T_0$ times, we obtain that $(\rho_t^{\delta_n})_{n \ge 1}$ is a Cauchy sequence in $\Lp^2(\Omega)$ for $t\in [0, T]$. Let $\tilde{\rho}_t^{\infty} \in \Lp^2(\Omega)$ be its limit. Furthermore, by Lemma \ref{lemma:rcseq}, $(\rho^{\delta_n})_{n \ge 1}$ has limit $\rho^{\infty}$ in $\Cont([0,T],\cuP_2(\Omega))$. Therefore, the measure $\rho^{\infty}_t$ has for almost every $t\in[0, T]$ the density  $\tilde{\rho}^{\infty}_t \in \Lp^2(\Omega)$, and the proof is complete.}
\end{proof}

\comm{Theorem \ref{TH:CONVPME} follows from Lemma \ref{lemma:unique}, Lemma \ref{lemma:convuniquePDE} and Lemma \ref{lemma:newL2}.}

Let us define the free energy associated to the PDE \eqref{eq:PMEbis} as 
\begin{equation}\label{eq:freeen}
\begin{split}
F(\rho) &= \frac{1}{2}\, R(\rho) -\tau\, S(\rho)\\
& = \frac{\nu_0}{2}\|f- \rho\|_{\Lp^2(\Omega)}^2+\tau\int \rho(\bx, t) \log\rho(\bx, t)\, \de\bx\, .
\end{split}
\end{equation}
As explained in Section~\ref{subsec:PDEdelta0}, this limit free energy is displacement convex, and hence its $W_2$ gradient flow converges to the unique minimizer of \eqref{eq:freeen}. These facts are stated and proved formally in the theorem that follows. 

\begin{theorem}\label{th:convminfree}
Assume that the initial condition $\rho^\infty(0)\in \Cont^{\infty}(\Omega)$. Then, the following results hold:
\begin{enumerate}
\item There exists a unique minimizer in $\cuP_2(\Omega)$, call it $\rho^*$, of the free energy $F$ defined in  \eqref{eq:freeen}.

\item For any $t\ge 0$, we have 
\begin{equation}\label{eq:convfe1}
F(\rho^\infty(t)) - F(\rho^*) \le (F(\rho^\infty(0)) - F(\rho^*))e^{-2\alpha\,t},
\end{equation}
where $\alpha$ is defined in \eqref{eq:concavityf}. 

\item For any $n\ge 1$ and for almost any $t\ge 0$, we have 
\begin{equation}\label{eq:convfe2}
F(\rho^{\delta}(t)) - F(\rho^*) \le (F(\rho^\infty(0)) - F(\rho^*))e^{-2\alpha\,t} + \Delta(\delta, T, d),
\end{equation}
where $\alpha$ is defined in \eqref{eq:concavityf} and $\Delta(\delta, T, d)\to 0$ as $\delta\to 0$.
\end{enumerate}  
\end{theorem}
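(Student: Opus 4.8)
The three parts will be established in order, leveraging the displacement convexity of $F$ and the convergence result of Theorem~\ref{TH:CONVPME}.

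\emph{Part 1 (existence and uniqueness of the minimizer).} The free energy $F(\rho) = \frac{\nu_0}{2}\|f-\rho\|_{\Lp^2(\Omega)}^2 + \tau\int\rho\log\rho$ is a sum of two strongly displacement convex functionals: the quadratic term $\frac{\nu_0}{2}\|f-\rho\|_{\Lp^2}^2$ is $\nu_0\alpha$-strongly displacement convex when $f$ is $\alpha$-strongly concave (this is precisely the content of the McCann-type computation cited after \eqref{eq:strong-dis}, using that $-\rho\mapsto$ the quadratic part picks up the Hessian of $f$ along a displacement interpolation), and the entropy $\tau\int\rho\log\rho$ is displacement convex on a convex domain $\Omega$. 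Hence $F$ is $\nu_0\alpha$-strongly displacement convex on $\cuP_2(\Omega)$. As noted in Section~\ref{subsec:PDEdelta0}, strong displacement convexity implies $F$ has at most one global minimizer; existence follows from lower semicontinuity of $F$ with respect to $W_2$ (weak) convergence together with compactness of $\cuP_2(\Omega)$ (since $\Omega$ is compact). I would verify lower semicontinuity by noting that $\rho\mapsto\|f-\rho\|_{\Lp^2}^2$ is convex and $\Lp^2$-lower semicontinuous and the entropy is weakly lower semicontinuous; both are preserved under the topology at hand. Call the minimizer $\rho^*$.

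\emph{Part 2 (exponential convergence of the limit flow).} This is the core step and invokes the results of \cite{carrillo2001entropy,carrillo2003kinetic,carrillo2006contractions}. Since $(\rho^\infty_t)_{t\ge 0}$ is the $W_2$ gradient flow of the $\nu_0\alpha$-strongly displacement convex functional $F$, standard optimal-transport theory (the HWI inequality / entropy-entropy production estimates, or equivalently the EVI formulation of gradient flows of geodesically convex functionals) yields
\[
F(\rho^\infty_t) - F(\rho^*) \le \big(F(\rho^\infty_0) - F(\rho^*)\big)\, e^{-2\nu_0\alpha t}\, .
\]
Here I would be careful to state the dissipation identity $\frac{\de}{\de t}F(\rho^\infty_t) = -\int |\nabla(\nu_0(\rho_t - f) + \tau\log\rho_t)|^2\,\rho_t\,\de\bx$ (the $\delta=0$ analogue of Lemma~\ref{lemma:FreeEnergyDecrease}) and combine it with the strong displacement convexity inequality to get a Grönwall differential inequality $\frac{\de}{\de t}(F(\rho_t)-F(\rho^*)) \le -2\nu_0\alpha(F(\rho_t)-F(\rho^*))$; integrating gives \eqref{eq:convfe1}. (With the convention $\nu_0=1$ used in the proofs, the exponent is $e^{-2\alpha t}$ as stated.) The regularity hypothesis $\rho^\infty(0)\in\Cont^\infty(\Omega)$ is what guarantees the gradient flow $\rho^\infty_t$ is smooth enough for this computation to be rigorous, and that it coincides with the weak solution of \eqref{eq:PMEbis} whose uniqueness was established in Lemma~\ref{lemma:convuniquePDE}.

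\emph{Part 3 (transferring the rate to the $\delta>0$ flow).} Write
\[
F(\rho^\delta(t)) - F(\rho^*) = \big(F(\rho^\delta(t)) - F(\rho^\infty(t))\big) + \big(F(\rho^\infty(t)) - F(\rho^*)\big),
\]
bound the second bracket by \eqref{eq:convfe1}, and set $\Delta(\delta,T,d) \equiv \sup_{t\in[0,T]}|F(\rho^\delta(t))-F(\rho^\infty(t))|$. It remains to show $\Delta(\delta,T,d)\to 0$ as $\delta\to 0$. By Theorem~\ref{TH:CONVPME}(b) (equivalently Lemma~\ref{lemma:newL2}), $\rho^\delta_t\to\rho^\infty_t$ in $\Lp^2(\Omega)$ for almost every $t$; combined with Lemma~\ref{lemma:convL2} which gives $K^\delta\ast\rho^\delta\to\rho^\infty$ in $\Lp^2(\Omega\times[0,T])$, the quadratic part $\frac{\nu_0}{2}\|f-K^\delta\ast\rho^\delta_t\|_{\Lp^2}^2 \to \frac{\nu_0}{2}\|f-\rho^\infty_t\|_{\Lp^2}^2$ for a.e.\ $t$. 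For the entropy part, I would use the uniform $\Lp^4$ bound from Lemma~\ref{lemma:convL4}: it gives uniform integrability of $\rho^\delta_t\log\rho^\delta_t$ (since $|x\log x|\le C(1+x^{4/3})$ and $\Lp^4\subset\Lp^{4/3}$ on bounded $\Omega$), so $\Lp^2$-convergence of $\rho^\delta_t$ plus a subsequence argument upgrades to convergence of $\int\rho^\delta_t\log\rho^\delta_t$. Hence $F(\rho^\delta(t))\to F(\rho^\infty(t))$ for a.e.\ $t\in[0,T]$, which gives the pointwise-in-$t$ statement \eqref{eq:convfe2} with $\Delta(\delta,T,d)\to 0$.

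\emph{Main obstacle.} The delicate point is Part 3: controlling the entropy term $\tau\int\rho^\delta_t\log\rho^\delta_t$ as $\delta\to 0$, because $x\log x$ is neither bounded nor globally Lipschitz, so $\Lp^2$-convergence alone does not transfer. The uniform $\Lp^4$ estimate of Lemma~\ref{lemma:convL4} is exactly the tool that resolves this via uniform integrability. A secondary subtlety is that Lemma~\ref{lemma:convL4}, and hence the a.e.-in-$t$ convergence, is obtained on a time horizon $[0,T_0]$ and then iterated; one must make sure the constant in $\Delta(\delta,T,d)$ from each sub-interval patches together over $[0,T]$, which is routine since $T_0$ depends only on $T$ and $d$, not on $\delta$ or $n$.
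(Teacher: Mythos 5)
Your proposal is correct and follows the same overall architecture as the paper: Parts 1 and 2 are delegated to the displacement-convexity machinery of Carrillo--McCann--Villani (the paper simply cites \cite[Lemma 6]{carrillo2001entropy} and \cite[Theorem 16]{carrillo2001entropy}, whereas you sketch the underlying entropy-dissipation/Gr\"onwall mechanism and the identification of $\rho^\infty$ with the solution covered by that theorem, which the paper also does), and Part 3 uses the identical decomposition $F(\rho^\delta(t))-F(\rho^*)=\bigl(F(\rho^\delta(t))-F(\rho^\infty(t))\bigr)+\bigl(F(\rho^\infty(t))-F(\rho^*)\bigr)$ together with Lemma \ref{lemma:newL2} for the quadratic term. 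The one genuine point of divergence is the entropy difference $S(\rho^\delta(t))-S(\rho^\infty(t))$, which you correctly single out as the delicate step: the paper handles it by an explicit partition of $\Omega$ into three regions (large discrepancy, both densities small, both densities bounded below) with tailored pointwise bounds on $x\log x$ on each, while you use convergence in measure plus uniform integrability of $\rho^\delta_t\log\rho^\delta_t$ (from the uniform $\Lp^4$ bound of Lemma \ref{lemma:convL4} and $|x\log x|\le C(1+x^{4/3})$) and Vitali's theorem. Both arguments rest on exactly the same two ingredients ($\Lp^2$ convergence and the uniform $\Lp^4$ bound); yours is shorter and more standard, the paper's is more explicit. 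Two minor blemishes in your write-up, neither fatal: the quadratic part of $F(\rho^\delta(t))$ is $\frac{\nu_0}{2}\|f-\rho^\delta_t\|_{\Lp^2}^2$, not $\frac{\nu_0}{2}\|f-K^\delta\ast\rho^\delta_t\|_{\Lp^2}^2$ (the functional being evaluated is the $\delta=0$ free energy, so only Lemma \ref{lemma:newL2} is needed there, not Lemma \ref{lemma:convL2}); and defining $\Delta(\delta,T,d)$ as $\sup_{t\in[0,T]}|F(\rho^\delta(t))-F(\rho^\infty(t))|$ is inconsistent with proving only a.e.-in-$t$ convergence --- though you retreat to the pointwise statement at the end, which is all the theorem (and the paper's own proof) actually delivers.
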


\begin{proof}
The proof follows from the results of \cite{carrillo2001entropy}. The technical assumptions required by \cite{carrillo2001entropy} are satisfied by the PDE \eqref{eq:PMEbis}, since $\Omega$ is convex and bounded, the initial condition $\rho^\infty(0)\in \Lp^{\infty}(\Omega)$, and $f$ satisfies the assumptions {\sf (A2)} and {\sf (A3)}. Note also that the condition $\inf_\Omega V = 0$ coming from assumption (HV3) of \cite{carrillo2001entropy} can be relaxed. In fact, adding a constant to $V$ does not change the entropy functional in \cite[Eq. (3)]{carrillo2001entropy} (which corresponds to the free energy \eqref{eq:freeen}) and the PDE in \cite[Eq. (46)]{carrillo2001entropy} (which corresponds to the PDE \eqref{eq:PMEbis}).

The uniqueness of the minimizer $\rho^*$ follows from \cite[Lemma 6]{carrillo2001entropy}, which proves the first result. Since $\rho^\infty$ is the unique weak solution of the PDE \eqref{eq:PMEbis} with initial and boundary conditions \eqref{eq:PME-BCbis}, then it coincides with the unique, non-negative mass-preserving solution of \cite[Theorem 16]{carrillo2001entropy}. Thus, the inequality \eqref{eq:convfe1} readily follows from \cite[Theorem 16]{carrillo2001entropy}.

\comm{It remains to prove inequality \eqref{eq:convfe2}. By definition of free energy, we obtain
\begin{equation}
F(\rho^{\delta}(t)) - F(\rho^{\infty}(t)) = \frac{1}{2}(R(\rho^{\delta}(t)) - R(\rho^{\infty}(t))) - \tau (S(\rho^{\delta}(t)) - S(\rho^{\infty}(t))).
\end{equation}
Recall that, by Lemma \ref{lemma:newL2}, $\rho^\delta(t)$ converges to $\rho^\infty(t)$ in $\Lp^2(\Omega)$. Consequently, by using the triangle inequality, we have that the term $R(\rho^{\delta}(t)) - R(\rho^{\infty}(t))$ tends to $0$ as $\delta \to 0$.}

\comm{In order to complete the proof, it remains to show that $S(\rho^{\delta}(t)) - S(\rho^{\infty}(t))$ tends to $0$ as $\delta \to 0$. To do so, define
\begin{equation}
\begin{split}
A &= \{\bx \in\Omega\,\,: \,\, |\rho^\delta(\bx, t)-\rho^\infty(\bx, t)|> 1/4\},\\ 
B &= \{\bx \in\Omega\,\,: \,\, \rho^\delta(\bx, t)\in[0, 1/2], \,\rho^\infty(\bx, t)\in[0, 1/2]\},\\ 
C &= \{\bx \in\Omega\,\,: \,\, \rho^\delta(\bx, t)>1/4, \,\rho^\infty(\bx, t)>1/4\}.\\
\end{split}
\end{equation}
Note that $A\cup B\cup C = \Omega$. In fact, suppose that $\bx\not\in B$ and $\bx\not \in C$. Then, one between $\rho^\delta(\bx, t)$ and $\rho^\infty(\bx, t)$ is $\in [0, 1/4]$ and the other is $> 1/2$. Consequently, $|\rho^\delta(\bx, t)-\rho^\infty(\bx, t)|> 1/4$ and $\bx\in A$. This immediately implies that 
\begin{equation}\label{eq:intABC}
\begin{split}
|S(\rho^{\delta}(t)) - S(\rho^{\infty}(t)) |&\le \left|\int_A \left(\rho^\delta(\bx, t)\log\rho^\delta(\bx, t)-\rho^\infty(\bx, t)\log\rho^\infty(\bx, t)\right)\,\de\bx\right|\\
&+ \left|\int_B \left(\rho^\delta(\bx, t)\log\rho^\delta(\bx, t)-\rho^\infty(\bx, t)\log\rho^\infty(\bx, t)\right)\,\de\bx\right|\\
&+ \left|\int_C \left(\rho^\delta(\bx, t)\log\rho^\delta(\bx, t)-\rho^\infty(\bx, t)\log\rho^\infty(\bx, t)\right)\,\de\bx\right|.\\
\end{split}
\end{equation}
We will now upper bound the three integrals in the RHS of \eqref{eq:intABC}. As for the first term, note that
\begin{equation}\label{eq:cardAbis}
\int_\Omega |\rho^\delta(\bx, t)-\rho^\infty(\bx, t)|^2\,\de\bx \ge \int_A |\rho^\delta(\bx, t)-\rho^\infty(\bx, t)|^2\,\de\bx\ge \frac{|A|}{16},
\end{equation}
where $|A|$ denotes the volume of $A$. Furthermore,
\begin{equation}\label{eq:splitintA}
\begin{split}
&\left|\int_A \left(\rho^\delta(\bx, t)\log\rho^\delta(\bx, t)-\rho^\infty(\bx, t)\log\rho^\infty(\bx, t)\right)\,\de\bx\right|\\
&\hspace{3em}\le \left|\int_A \rho^\delta(\bx, t)\log\rho^\delta(\bx, t)\,\de\bx\right|+\left|\int_A \rho^\infty(\bx, t)\log\rho^\infty(\bx, t)\,\de\bx\right|\\
&\hspace{3em}\le |A|^{1/2}\left(\int_A \left(\rho^\delta(\bx, t)\log\rho^\delta(\bx, t)\right)^2\,\de\bx\right)^{1/2}\\
&\hspace{3em}+|A|^{1/2}\left(\int_A \left(\rho^\infty(\bx, t)\log\rho^\infty(\bx, t)\right)^2\,\de\bx\right)^{1/2}. 
\end{split}
\end{equation}
Note that $|t\log t|\le 1$ for $t\in [0, 1]$ and $|\log t|\le t$ for $t\ge 1$. Thus, the RHS of \eqref{eq:splitintA} is upper bounded by 
\begin{equation}
|A|^{1/2}\left(|A|+\|\rho^\delta(t)\|_{\Lp^4(\Omega)}\right)^{1/2}+|A|^{1/2}\left(|A|+\|\rho^\infty(t)\|_{\Lp^4(\Omega)}\right)^{1/2}.
\end{equation}
By Lemma \ref{lemma:newL2}, for almost all $t\in[0, T]$, $\rho^\delta(t)$ converges to $\rho^\infty(t)$ in $\Lp^2(\Omega)$. Thus, by \eqref{eq:cardAbis}, $|A|$ tends to $0$ as $\delta\to 0$. By Lemma \ref{lemma:convuniquePDE}, $\rho^\infty(t)\in \Lp^4(\Omega)$ for almost all $t\in[0, T]$. Furthermore, by Lemma \ref{lemma:convL4}, the quantity $\|\rho^\delta(t)\|_{\Lp^4(\Omega)}$ has a $\delta$-free upper bound for $t\in [0, T_0]$. As a result, for almost all $t\in [0, T_0]$, the first integral in \eqref{eq:intABC} tends to $0$ as $\delta\to 0$. By iterating this argument $T/T_0$ times, we conclude that for almost all $t\in [0, T_0]$, the first integral in \eqref{eq:intABC} tends to $0$ as $\delta\to 0$.}

\comm{In order to bound the second integral in \eqref{eq:intABC}, we write
\begin{equation}\label{eq:splitintB}
\begin{split}
&\left|\int_B \left(\rho^\delta(\bx, t)\log\rho^\delta(\bx, t)-\rho^\infty(\bx, t)\log\rho^\infty(\bx, t)\right)\,\de\bx\right| \\
&\hspace{3em}\le \int_B \left|\rho^\delta(\bx, t)\log\rho^\delta(\bx, t)-\rho^\infty(\bx, t)\log\rho^\infty(\bx, t)\right|\,\de\bx\\
&\hspace{3em}\le \int_B \left|\rho^\delta(\bx, t)-\rho^\infty(\bx, t)\right|\log\frac{1}{\left|\rho^\delta(\bx, t)-\rho^\infty(\bx, t)\right|}\,\de\bx,\\
\end{split}
\end{equation}
where in the last inequality we have applied \cite[Theorem 17.3.3]{cover2006elements}, since $\rho^\delta(\bx, t)$, $\rho^\infty(\bx, t)\in[0, 1/2]$ by definition of $B$. Note that
$$|\log t|\le \max\left(2\sqrt{t}, \frac{1}{t}\right)\le 2\sqrt{t}+\frac{1}{t}.$$
Thus, the RHS of \eqref{eq:splitintB} is upper bounded by
\begin{equation}\label{eq:splitintB2}
\begin{split}
\int_B &\left|\rho^\delta(\bx, t)-\rho^\infty(\bx, t)\right|^2\,\de\bx+2\int_B \left|\rho^\delta(\bx, t)-\rho^\infty(\bx, t)\right|^{1/2}\,\de\bx\\
&\|\rho^\delta(t)-\rho^\infty(t)\|_{\Lp^2(\Omega)}^2+2|\Omega|^{1/2} \|\rho^\delta(t)-\rho^\infty(t)\|_{\Lp^1(\Omega)}^{1/2},
\end{split}
\end{equation}
where in the last step we have used Cauchy-Schwarz inequality. By Lemma \ref{lemma:newL2}, for almost all $t\in[0, T]$, $\rho^\delta(t)$ converges to $\rho^\infty(t)$ in $\Lp^2(\Omega)$. As a result, the second integral in \eqref{eq:intABC} also tends to $0$ as $\delta\to 0$.}

\comm{Finally, let us bound the third integral in \eqref{eq:intABC}. Define $h(x)=x\log x$. Then, for $x>1/4$, 
\begin{equation}
|h'(x)| \le 1+|\log x|\le 1+\log 4 + x.
\end{equation}
Thus,
\begin{equation}
\begin{split}
&\left|\int_C \left(\rho^\delta(\bx, t)\log\rho^\delta(\bx, t)-\rho^\infty(\bx, t)\log\rho^\infty(\bx, t)\right)\,\de\bx\right|\\
&\hspace{3em}\le  \int_C \left|\rho^\delta(\bx, t)\log\rho^\delta(\bx, t)-\rho^\infty(\bx, t)\log\rho^\infty(\bx, t)\right|\,\de\bx\\
&\hspace{3em}\le  \int_C \left|\rho^\delta(\bx, t)-\rho^\infty(\bx, t)\right|\cdot (1+\log 4 + \rho^\delta(\bx, t)+\rho^\infty(\bx, t))\,\de\bx\\
&\hspace{3em}\le  (1+\log 4)\|\rho^\delta(t)-\rho^\infty(t)\|_{\Lp^1(\Omega)} + \|(\rho^\delta(t))^2-(\rho^\infty(t))^2\|_{\Lp^1(\Omega)}\\
&\hspace{3em}\le  (1+\log 4)\|\rho^\delta(t)-\rho^\infty(t)\|_{\Lp^1(\Omega)} \\
&\hspace{3em}+ \|\rho^\delta(t)-\rho^\infty(t)\|_{\Lp^2(\Omega)}\cdot \|\rho^\delta(t)+\rho^\infty(t)\|_{\Lp^2(\Omega)},
\end{split}
\end{equation}
where in the last step we have used Cauchy-Schwarz inequality. By Lemma \ref{lemma:newL2}, for almost all $t\in[0, T]$, $\rho^\delta(t)$ converges to $\rho^\infty(t)$ in $\Lp^2(\Omega)$. By Lemma \ref{lemma:convuniquePDE}, $\rho^\infty(t)\in \Lp^2(\Omega)$ for almost all $t\in[0, T]$. Furthermore, by Lemma \ref{lemma:convL4}, the quantity $\|\rho^\delta(t)\|_{\Lp^2(\Omega)}$ has a $\delta$-free upper bound for $t\in [0, T_0]$. As a result, for almost all $t\in [0, T_0]$, the third integral in \eqref{eq:intABC} tends to $0$ as $\delta\to 0$. By iterating this argument $T/T_0$ times, we conclude that for almost all $t\in [0, T_0]$, the third integral in \eqref{eq:intABC} tends to $0$ as $\delta\to 0$, and the proof is complete.} \end{proof}

At this point, we are ready to provide the proof of Theorem \ref{TH:ENDTOEND}.

\begin{proof}[Proof of Theorem \ref{TH:ENDTOEND}]
By substituting $z$ with $z^{1/2p}$ in Theorem \ref{TH:CONVPDE}, we have that with probability at least $1-1/z$
\begin{equation}\label{eq:end1}
R_N(\bw^k)\le R^\delta(\rho^\delta_{k\eps}) + z^{1/2p}\,\err(N,d,\eps,\delta) \,\,e^{C_*p\delta^{-(d+2)}\, T},
\end{equation}
where $\err(N,d,\eps,\delta)$ is defined in \eqref{eq:errNdz}. The risk $R^\delta(\rho^\delta_{k\eps})$ can be upper bounded as 
\begin{equation}\label{eq:end2}
\begin{split}
R^\delta(\rho^\delta_{k\eps}) &= \nu_0\,\|f-K^\delta \ast \rho^\delta_{k\eps}\|_{\Lp^2(\Omega)}^2 \\
&\le \nu_0\,\left(\|f- \rho^\delta_{k\eps}\|_{\Lp^2(\Omega)} + \|K^\delta \ast \rho^\delta_{k\eps}- \rho^\delta_{k\eps}\|_{\Lp^2(\Omega)}\right)^2 \\
&= R(\rho^\delta_{k\eps}) + \Delta_0(\delta, T, d),
\end{split}
\end{equation}
\comm{where $\Delta_0(\delta, T, d)\to 0$ as  $\delta\to 0$, since both $K^\delta \ast \rho^\delta_{t}$ and $\rho^\delta_{t}$ converge in $\Lp^2(\Omega)$ to $\rho^\infty_t$.} Furthermore, by Theorem \ref{th:convminfree}, 
\begin{equation}\label{eq:end3}
\begin{split}
&R(\rho^\delta_{k\eps}) = 2\,F(\rho^\delta_{k\eps})+2\tau S(\rho^\delta_{k\eps}) \le 2\,F(\rho^\delta_{k\eps}) + 2\tau \log|\Omega|\\
&\le 2\, F(\rho^*) +2\, (F(\rho^\infty(0)) - F(\rho^*))e^{-2\alpha k \eps}+ 2\tau \log|\Omega|+\Delta(\delta, T, d)\\
&= 2\, F(\rho^\infty(0)) e^{-2\alpha k \eps}+2\,(1-e^{-2\alpha k \eps})\, F(\rho^*) + 2\tau \log|\Omega| +\Delta(\delta, T, d),
\end{split}
\end{equation}
where $\Delta(\delta, T, d)\to 0$ as $\delta\to 0$ and we recall that $|\Omega|$ denotes the volume of the set $\Omega$.

Note that 
\begin{equation}\label{eq:ubfrho}
F(\rho^*)\le F(f) =  -\tau S(f),
\end{equation}
since $\rho^*$ is the minimizer of $F$. \comm{By combining \eqref{eq:ubfrho} with \eqref{eq:end3}, we deduce that
\begin{equation}\label{eq:end4}
\begin{split}
&R(\rho^\delta_{k\eps})\le 2\, F(\rho^\infty(0)) e^{-2\alpha k \eps}+ 2\tau \left(\log|\Omega| -(1-e^{-2\alpha k\epsilon})S(f)\right)+\Delta(\delta, T, d)\\
&=R(\rho^\infty(0)) e^{-2\alpha k \eps}+ 2\tau \left(\log|\Omega| -(1-e^{-2\alpha k\epsilon})S(f)-S(\rho^\infty(0))\,e^{-2\alpha k\epsilon}\right) \\
&\hspace{27.5em}+\Delta(\delta, T, d)\\
&\le R_N(\bw^0) e^{-2\alpha k \eps}+ 2\tau \left(\log|\Omega| -(1-e^{-2\alpha k\epsilon})S(f)-S(\rho^\infty(0))\,e^{-2\alpha k\epsilon}\right) \\
&\phantom{AAAAAAAA}+ z\,\err(N,d,\eps,\delta) \,e^{C_*p\delta^{-(d+2)}\, T} \,e^{-2\alpha k\epsilon}+\Delta(\delta, T, d),\\
\end{split}
\end{equation}
where in the last step we use again the result of Theorem \ref{TH:CONVPDE} and the fact that $R(\rho^\infty(0))-R^\delta(\rho^\infty(0))$ tends to $0$ as $\delta\to 0$.}

By optimizing over $p$ in \eqref{eq:end1}, we will set $\Delta_1(N, \eps, T, d, z)$ as in \eqref{eq:delta111}. We also let $\Delta_2(\delta, T, d) = \Delta_0(\delta, T, d) + \Delta(\delta, T, d)$. Then, the result follows by combining \eqref{eq:end1}, \eqref{eq:end2} and \eqref{eq:end4}. 
\end{proof}

\section{Heat kernel in bounded domains with Neumann boundary}
\label{app:Kernel}

Given the domain $D\subseteq\reals^d$ (compact, with $\Cont^2$
boundary $\partial D$), we denote by $G^{D}(\bx,\by;t)$ the
associated heat kernel, with Neumann boundary conditions.
We collect here a few well known facts about this kernel (see, e.g., \cite[Section 6.1]{taylor2013partial}). 

The heat kernel can be defined as a function $G^{D}:D\times D\times \reals_{>0}$ satisfying
\begin{align}
\partial_t G^{D}(\bx,\by;t)& = \Delta_{\by} G^{D}(\bx,\by;t)\, ,\label{eq:defheatbd1}\\
\<\nabla_{\by} G^{D}(\bx,\by;t),\bn(\by)\> & = 0\;\;\;\; \forall \by\in\partial D\, ,\label{eq:defheatbd2}\\
G^{ D}(\bx,\,\cdot\; ;t) &\Rightarrow \delta_{\bx} \, ,\;\;\; \mbox{ as $t\to 0$, $\bx\in  D^{\circ}$.}\label{eq:defheatbd3}
\end{align}
We will also denote by $G(\bx,\by;t)$ the heat kernel on $\reals^{d}$, namely 
\begin{align}
G(\bx,\by;t) \equiv \frac{1}{(4\pi t)^{d/2}}\,\exp\left\{-\frac{\|\bx-\by\|_2^2}{4t}\right\}\, .
\end{align}

The probabilistic interpretation of $G^{ D}$ is as follows (see, e.g., \cite{bakry2013analysis}). Let $\E_{\bx}$ denote expectation with respect to a Brownian motion $\bX_t$,
with initial condition $\bX_0=\bx$, and reflected at $\partial D$ (see Section \ref{app:nonlinear} for definitions of this process, following \cite{tanaka1979stochastic}).
Then, for any bounded continuous function $\varphi: D\to\reals$, 
\begin{align}
\E_{\bx}\big\{\varphi(\bX_t)\big\} &= \int G^{ D}(\bx,\by;t)\, \varphi(\by)\, \de\by\\
& \equiv G^{ D}_{\varphi}(\bx;t)\, .
\end{align}
Finally, $G^{ D}$ can be viewed as the kernel representation of the bounded operator 
$e^{t\Delta/2}$ in $\Lp^2( D, \Unif)$. We  have 
\begin{align}
(e^{t\Delta/2}f)(\by) = \int f(\bx) \, G^{ D}(\bx,\by;t)\, \de\by\, .
\end{align}
Hence $G^{ D}(\bx,\by;t)$ can be represented in terms of the eigenfunctions $\phi_k$, and eigenvalues $\lambda_k$, 
 of $-\Delta$, 
\begin{align}
 G^{ D}(\bx,\by;t) = \sum_{k=0}^{\infty} e^{-\lambda_k t} \phi_k(\bx)\, \phi_k(\by)\, .
\end{align}
Here $0=\lambda_0<\lambda_1\le \lambda_2\le \dots$, with $\lim_{k\to\infty}\lambda_k = \infty$, and
$\phi_0(\bx) = \bfone_{ D}(\bx)/{\rm Vol}( D)^{1/2}$.

\begin{remark}
Since $\Delta$ is self-adjoint in $\Lp^2( D,\Unif)$, it follows that $G^{ D}$ is symmetric, namely 
$G^{ D}(\bx,\by,t) = G^{ D}(\by,\bx;t)$, and therefore it satisfies
\begin{align}
\partial_t G^{ D}(\bx,\by;t)& = \Delta_{\bx} G^{ D}(\bx,\by;t)\, ,\\
\<\nabla_{\bx} G^{ D}(\bx,\by;t),\bn(\bx)\> & = 0\;\;\;\; \forall \bx\in\partial D\, .
\end{align}
\end{remark}

\begin{theorem}\label{thm:Kernel}
The Neumann heat kernel satisfies the following properties:
\begin{enumerate}
\item  We have that
\begin{align}
G^{ D}(\bx,\by;t) = G(\bx,\by;t) + G_R^{ D}(\bx,\by;t)\,,\label{eq:HeatDecomposition}
\end{align}
where $G_R^{ D}\in \Cont^{\infty}( D\times  D\times\reals_{\ge})$.
\item For any $t>0$, $G^{ D}(\;\cdot\; ,\;\cdot\; ; t)\in
  \Cont^{\infty}( D\times D)$.
\item We have that, for a constant $C( D)$,
\begin{align}
\big\|\nabla G^ D(\bx,\,\cdot\,;t)\big\|_{\Lp^1( D)}\le \frac{C( D)}{\sqrt{t}}\, .
\end{align}
\end{enumerate}
\end{theorem}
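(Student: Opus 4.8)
The statement to prove is Theorem~\ref{thm:Kernel}, which collects three standard properties of the Neumann heat kernel $G^D$ on a compact $\Cont^2$ domain $D$: a parametrix-type decomposition $G^D = G + G_R^D$ with smooth remainder, smoothness of $G^D(\cdot,\cdot;t)$ for each fixed $t>0$, and the $\Lp^1$-gradient bound $\|\nabla G^D(\bx,\cdot;t)\|_{\Lp^1(D)} \le C(D)/\sqrt t$.

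\begin{proof}[Proof sketch]
\textbf{Part 1 (parametrix decomposition).} The plan is to invoke the classical parametrix construction for the heat equation on a manifold with boundary. One takes the free-space Gaussian $G(\bx,\by;t)$ as a first approximation to $G^D$; the error it makes comes from two sources: it does not satisfy the Neumann condition on $\partial D$, and near the boundary the geometry differs from $\reals^d$. Following the standard method of images / reflection argument (as in \cite[Section 6.1]{taylor2013partial}, or Friedman's parametrix method), one constructs a correction term that solves a heat equation with smooth right-hand side and smooth boundary data, whose solution is then smooth up to the boundary and for all $t \ge 0$ by interior and boundary Schauder estimates for the heat equation. Setting $G_R^D \equiv G^D - G$ and verifying that it coincides with this correction gives $G_R^D \in \Cont^\infty(D\times D\times\reals_{\ge 0})$. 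I expect this to be the part where one simply cites the literature rather than reproving it.

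\textbf{Part 2 (smoothness of $G^D(\cdot,\cdot;t)$ for $t>0$).} This follows from Part 1 together with the smoothness of the free-space kernel: for $t>0$, $G(\bx,\by;t)$ is a smooth function of $(\bx,\by)$ (it is a Gaussian), and $G_R^D$ is smooth in all variables including $t$; hence their sum $G^D(\cdot,\cdot;t)$ is in $\Cont^\infty(D\times D)$. Alternatively, this can be read off the spectral expansion $G^D(\bx,\by;t) = \sum_k e^{-\lambda_k t}\phi_k(\bx)\phi_k(\by)$: Weyl's law gives $\lambda_k \asymp k^{2/d}$, so the exponential factors $e^{-\lambda_k t}$ decay superpolynomially and, combined with polynomial bounds on the eigenfunctions and their derivatives (elliptic regularity), the series and all its termwise derivatives converge uniformly for each fixed $t>0$.

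\textbf{Part 3 (the $\Lp^1$ gradient bound).} This is the quantitative estimate and the main point of real work. The idea is again to split $\nabla_{\by} G^D(\bx,\by;t) = \nabla_{\by} G(\bx,\by;t) + \nabla_{\by} G_R^D(\bx,\by;t)$. For the free-space part, a direct computation gives $|\nabla_{\by} G(\bx,\by;t)| = \frac{|\bx-\by|}{2t}\,(4\pi t)^{-d/2} e^{-|\bx-\by|^2/4t}$, and integrating over $\by \in D \subseteq \reals^d$ (using $D$ bounded, or just extending the integral to all of $\reals^d$ and rescaling $\by = \bx + \sqrt t\, \bz$) yields a bound of the form $C_d/\sqrt t$, since $\int_{\reals^d} \frac{|\bz|}{2}(4\pi)^{-d/2}e^{-|\bz|^2/4}\,\de\bz$ is a finite dimensional constant. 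For the remainder part, since $G_R^D$ is smooth on the compact set $D\times D\times[0,T]$ (taking $T=1$ without loss of generality since the claimed bound is only interesting for small $t$, and for $t$ bounded away from $0$ the whole kernel is smooth and bounded), $\nabla_{\by} G_R^D$ is bounded on $D\times D\times[0,1]$, and integrating over $\by\in D$ (bounded) gives a constant $C(D)$, which is $\le C(D)/\sqrt t$ for $t\le 1$. Combining the two pieces gives the claimed bound.

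The main obstacle is making the parametrix/reflection construction in Part 1 precise enough that the remainder is genuinely smooth \emph{up to the boundary and down to $t=0$} — this requires the $\Cont^2$ (or better) regularity of $\partial D$ and the boundary Schauder theory, and is the step one genuinely leans on the cited references for; everything else is bookkeeping on top of it.
\end{proof}
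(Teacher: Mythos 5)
Your treatment of Parts 1 and 2 matches the paper's: the paper also derives the equations satisfied by $G_R^D = G^D - G$ (heat equation with inhomogeneous Neumann data $-\<\nabla_{\by}G(\bx,\by;t),\bn(\by)\>$ and zero initial condition) and then cites standard parabolic regularity, with Part 2 read off from the same decomposition. So far, same route.

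Part 3 is where you diverge from the paper and where there is a genuine gap. The paper does not attempt to deduce the gradient bound from the decomposition; it cites a dedicated gradient estimate for the Neumann heat semigroup (Lemma 3.1 of the reference \cite{wang2013gradient}). Your argument instead reduces Part 3 to the claim that $\nabla_{\by}G_R^D$ is bounded on $D\times D\times[0,1]$, i.e.\ that the remainder is uniformly $\Cont^1$ down to $t=0$ and up to the boundary. That claim is false. The method of images that you invoke shows exactly why: for the half-space $\{x_1\ge 0\}$ one has $G_R^D(\bx,\by;t)=G(\bx,\by^*;t)$ with $\by^*$ the reflected point, so on the boundary diagonal ($\bx=\by\in\partial D$, hence $\by^*=\bx$) the remainder equals $(4\pi t)^{-d/2}$ and blows up as $t\to 0$; its $\by$-gradient is even worse. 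The same local behavior persists for a general $\Cont^2$ boundary. Schauder theory gives smoothness of $G_R^D$ in the interior of the space--time domain (which is all the paper's proof of Part 1 actually establishes), but it cannot give uniform bounds down to $t=0$ near $\partial D$, because the boundary data $-\<\nabla_{\by}G(\bx,\by;t),\bn(\by)\>$ is itself singular as $t\to 0$ when $\bx$ approaches $\partial D$. Consequently the step ``$G_R^D$ smooth on a compact set, hence $\int_D|\nabla_{\by}G_R^D|\,\de\by\le C(D)$'' does not go through, and with it the whole deduction of Part 3 collapses.

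To repair Part 3 along your lines you would have to estimate the boundary/image contribution quantitatively --- e.g.\ show that $|\nabla_{\by}G^D(\bx,\by;t)|\le C\,t^{-1/2}\,G(\bx,\by;ct)$ (a Gaussian upper bound on the gradient) and then integrate, which is essentially the content of the cited gradient estimate rather than a consequence of Part 1. Your computation for the free-space piece $\nabla_{\by}G$ is correct; it is the reflected/remainder piece that needs the separate, genuinely quantitative argument.
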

\begin{proof}
Substituting $G^{ D}(\bx,\by;t) = G(\bx,\by;t) + G_R^{ D}(\bx,\by;t)$ into Eqs.~(\ref{eq:defheatbd1}) to (\ref{eq:defheatbd3})
yields, for $\bx\in D$,
\begin{align}
\partial_t G_R^{ D}(\bx,\by;t)& = \Delta_{\by} G_R^{ D}(\bx,\by;t)\, ,\label{eq:defheatbd1_b}\\
\<\nabla_{\by} G_R^{ D}(\bx,\by;t),\bn(\by)\> & = -\<\nabla_{\by} G(\bx,\by;t),\bn(\by)\> 
\;\;\;\; \forall \by\in\partial D\, ,\label{eq:defheatbd2_b}\\
G^{ D}_R(\bx,\by; ;0)& = 0\, ,\;\;\; \mbox{$\bx,\by \in  D^{\circ}$.}\label{eq:defheatbd3_b}
\end{align}
Thus $G_R$ satisfies the heat equation in  $ D\times [0,T]$ and hence $(\by,t)\mapsto G^{ D}_R(\bx,\by;t)$ is $\Cont^{\infty}$ inside this domain
(see, e.g., \cite[Chapter 2, Theorem 8]{evans2009partial}, which refers to Dirichlet boundary condition, but applies equally well to the Neumann case).
By symmetry, we have the claimed continuity in $(\bx,\by)$, thus proving point 1.

Claim 2 follows by the same decomposition.

Finally, claim 3 follows from Lemma 3.1 in \cite{wang2013gradient}.
\end{proof}

\section{Some useful technical lemmas}
\begin{lemma}[Displacement convexity of quadratic functionals]\label{lem:DisplCvx}
Let $U:\reals^d\to\reals^d$ be twice differentiable with $|U(\bx)|\le C(1+|\bx|^2)$, $U(\bx) = U(-\bx)$, and define $\cuU:\cuP_2(\reals^d)\to\reals$ by
$\cuU(\rho)\equiv \int U(\bx-\bx') \, \rho(\de\bx)\,\rho(\de\bx')$. Then $\cuU$ is displacement convex if and only if $U$ is convex.
\end{lemma}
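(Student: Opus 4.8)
The plan is to compute the second derivative of $\cuU$ along a $W_2$ geodesic and identify it with a natural quadratic form in the velocity field that is nonnegative for \emph{all} geodesics iff $\nabla^2 U\succeq 0$ everywhere. Recall from the excerpt that a geodesic between $\rho_0$ and $\rho_1$ is obtained by letting $(\bX_0,\bX_1)\sim\gamma_*$ (the optimal coupling) and setting $\rho_t = \Law(\bX_t)$ with $\bX_t = (1-t)\bX_0 + t\bX_1$. Equivalently, writing $\bY\equiv \bX_1-\bX_0$, we have $\bX_t = \bX_0 + t\bY$, and $\cuU(\rho_t) = \E\{U(\bX_t - \bX_t')\}$ where $(\bX_0',\bX_1')$ is an independent copy of $(\bX_0,\bX_1)$. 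Hence
\begin{align}
\cuU(\rho_t) = \E\big\{U\big((\bX_0-\bX_0') + t(\bY-\bY')\big)\big\}\, .
\end{align}

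\textbf{Key steps.} First I would differentiate twice in $t$ under the expectation (justified by the quadratic growth bound on $U$, which dominates the integrand and its derivatives after noting $\supp\rho_t$ has bounded second moment uniformly in $t$ since $\rho_0,\rho_1\in\cuP_2$):
\begin{align}
\frac{\de^2}{\de t^2}\cuU(\rho_t) = \E\Big\{\big\langle \bY-\bY',\, \nabla^2 U\big((\bX_0-\bX_0')+t(\bY-\bY')\big)\,(\bY-\bY')\big\rangle\Big\}\, .
\end{align}
Second, the ``if'' direction is immediate: if $U$ is convex then $\nabla^2U\succeq 0$ pointwise, so the integrand is nonnegative, giving $\frac{\de^2}{\de t^2}\cuU(\rho_t)\ge 0$ along every geodesic, i.e. displacement convexity. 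Third, for the ``only if'' direction I would argue by contradiction: suppose $\nabla^2 U(\bz_0)\not\succeq 0$ at some point $\bz_0$, so there is a unit vector $\bv$ with $\langle \bv,\nabla^2U(\bz_0)\bv\rangle = -c < 0$; by continuity this persists on a ball $\Ball(\bz_0;r)$. Now I would construct an explicit short geodesic witnessing strict concavity: take $\rho_0 = \Unif$ on a small ball $\Ball(\ba;\eta)$ and $\rho_1$ its translate $\Unif$ on $\Ball(\ba + s\bv;\eta)$, with the optimal coupling being the translation map, so $\bY-\bY' \approx s\bv$ and $(\bX_0-\bX_0')$ concentrated near $\bz_0$ provided we center things so that the typical difference sits in $\Ball(\bz_0;r)$. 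For $\eta, s$ small enough the integrand in the displayed second-derivative formula is negative throughout $[0,1]$, so $t\mapsto\cuU(\rho_t)$ is strictly concave on the geodesic, contradicting displacement convexity. (One should also handle the case where $\nabla^2 U$ is indefinite at $\bz_0$ but $\bz_0$ could be the origin; since $U$ is even, $\nabla^2 U$ is even, so this causes no trouble — the construction only needs \emph{some} point where $\nabla^2 U$ has a negative eigenvalue.)

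\textbf{Main obstacle.} The analytic heart is routine; the one point requiring care is the ``only if'' construction — specifically, verifying that for the translated-uniform-ball geodesic the pair $\bX_0 - \bX_0'$ really does stay inside the ball $\Ball(\bz_0;r)$ on which $\nabla^2 U$ is uniformly negative in the direction $\bv$, \emph{uniformly over $t\in[0,1]$}. This is handled by choosing the centers so that $\ba - \ba = \bz_0$ would be degenerate; instead one works with two balls centered at points $\bc$ and $\bc+s\bv$ and observes $\bX_t - \bX_t' = (\bX_0-\bX_0') + t\,s\bv$ where $\bX_0-\bX_0'$ ranges over $\Ball(\b0;2\eta)$ — this does \emph{not} automatically lie near $\bz_0$. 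The fix is to instead put $\rho_0$ and $\rho_1$ as translates by $\pm \bz_0/2$ plus the perturbation, so that a typical difference is $\bz_0 + O(\eta) + O(ts)$; then for $\eta$ and $s$ small relative to $r$ the difference stays in $\Ball(\bz_0;r)$. Alternatively, and more cleanly, one may cite the standard characterization (McCann) that displacement convexity of $\rho\mapsto\int U(\bx-\bx')\rho(\de\bx)\rho(\de\bx')$ is equivalent to convexity of $U$, and simply assemble the two directions as above. I expect the write-up to be short once the differentiation-under-the-integral and the localized counterexample are set up carefully.
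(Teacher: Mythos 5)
Your overall strategy coincides with the paper's: the ``if'' direction is the standard pointwise-Hessian argument (the paper simply cites Proposition~7.4 of Santambrogio), and the ``only if'' direction is a localized counterexample at a point where $\nabla^2 U$ has a negative eigenvalue. Two remarks on the ``if'' direction: the quadratic growth bound $|U(\bx)|\le C(1+|\bx|^2)$ does \emph{not} control $\nabla^2 U$, so it does not by itself justify differentiating twice under the expectation; but you do not need to differentiate at all, since the pointwise inequality $U((1-t)(\bX_0-\bX_0')+t(\bX_1-\bX_1'))\le (1-t)U(\bX_0-\bX_0')+tU(\bX_1-\bX_1')$ integrates directly to displacement convexity.

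The genuine gap is in your ``only if'' construction. If $\rho_0=\Unif(\Ball(\ba;\eta))$ and $\rho_1$ is its translate, the optimal map is the translation, so $\bX_t-\bX_t'=\bX_0-\bX_0'$ for all $t$ and $\cuU(\rho_t)$ is \emph{constant} along the geodesic: a single translated blob can never witness failure of displacement convexity, because $\cuU$ is translation invariant. Your proposed fix (``translates by $\pm\bz_0/2$ plus the perturbation''), read literally as two single balls, suffers from exactly the same problem. What is needed is a \emph{two-component} measure in which only one component moves: the paper takes $\rho_0=(\delta_{\bfzero}+\delta_{\bx})/2$ and $\rho_1=(\delta_{\bfzero}+\delta_{\bx+\bdelta})/2$ with $|\bdelta|<|\bx|$ (so that fixing one atom and translating the other is the optimal coupling); then $\cuU(\rho_t)=\tfrac12 U(\bfzero)+\tfrac12 U(\bx+t\bdelta)$ by evenness, and a Taylor expansion forces $\<\bdelta,\nabla^2U(\bx)\bdelta\>\ge 0$ for all $|\bdelta|<|\bx|$, hence $\nabla^2U(\bx)\succeq\bfzero$ for $\bx\neq\bfzero$ and convexity by continuity. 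Replacing the atoms by small uniform balls (one fixed, one moving) would also work, but you must make the mixture structure explicit and check optimality of the corresponding coupling; as written, your construction does not produce a geodesic along which $\cuU$ varies.
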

\begin{proof}
Proposition 7.4 in \cite{santambrogio2015optimal} proves that convexity of $U$ implies displacement convexity of $\cuU$.
To prove the converse implication, let $\bx,\bdelta\in \reals^d$, $\bx\neq \bfzero$ and consider the two probability distributions
$\rho_0= (\delta_{\bfzero}+\delta_{\bx})/2$ and $\rho_1= (\delta_{\bfzero}+\delta_{\bx+\bdelta})/2$.
For $|\bdelta|<|\bx|$, the geodesic path connecting these distribution is $\rho_t= (\delta_{\bfzero}+\delta_{\bx+t\bdelta})/2$, $t\in [0,1]$.
Substituting in the definition of $\cuU$, we get
\begin{align}
\cuU(\rho_t) &= \frac{1}{2}\, U(\bfzero) + \frac{1}{2} U(\bx+t\bdelta)\\
& = \cuU(\rho_0) + \frac{t}{2}\<\nabla U(\bx),\bdelta\> +\frac{t^2}{4}\<\bdelta,\nabla^2 U(\bx)\bdelta\> +o(t^2)\, .
\end{align}
Hence, displacement convexity implies $\<\bdelta,\nabla^2 U(\bx)\bdelta\>\ge 0$. Since  this holds for all  $|\bdelta|<|\bx|$,
we obtain $\nabla^2 U(\bx)\succeq \bfzero$ for all $\bx\neq \bfzero$, which in turns imply that $U$ is convex (by a continuity argument,
it is sufficient to lower bound the Hessian everywhere except at a point).
\end{proof}

\begin{lemma}[A Gronwall type inequality~\cite{bihari1956generalization}]\label{lem:gronwall}
Let $u:[0,T] \to \reals_+$ be a continuous function that satisfies the inequality
\[
u(t) \le A + \int_0^t \Psi(s)\omega(u(s)) \de s\,, \quad t\in [0,T]\,,
\]
where $A\ge 0$, $\Psi:[0,T] \to \reals_+$ is continuous and $\omega:\reals_+\to \reals_+$ is continuous and monotone-increasing. Then, the following holds 
\[
u(t) \le \Phi^{-1}\left(\Phi(A) + \int_0^t \Psi(s) \de s \right)\,, \quad t\in[0,T]\,,
\]
with $\Phi:\reals\mapsto \reals$ given by
\[
\Phi(u) \equiv \int_{u_0}^u \frac{\de s}{\omega(s)}\,, \quad u\in \reals, \quad u_0 \equiv \omega(A)\,.
\]
\end{lemma}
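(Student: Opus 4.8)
The plan is to run the classical ``differential majorant'' argument for Bihari--LaSalle inequalities (the statement is essentially that of \cite{bihari1956generalization}, but I would give the short self-contained proof). First I would set $v(t) \equiv A + \int_0^t \Psi(s)\,\omega(u(s))\,\de s$, so that by hypothesis $u(t)\le v(t)$ on $[0,T]$ and $v(0)=A$. Since $\Psi$ is continuous and $s\mapsto\omega(u(s))$ is continuous (composition of continuous maps), the integrand is continuous, hence $v\in\Cont^1([0,T])$ with $v'(t)=\Psi(t)\,\omega(u(t))$. Because $\omega$ is monotone increasing and $u(t)\le v(t)$, this yields the differential inequality $v'(t)\le \Psi(t)\,\omega(v(t))$.

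Next I would divide through by $\omega(v(t))$. This is legitimate because $v(t)\ge A$ and $\omega$ is positive on the relevant range (in the intended application $A>0$, so $\omega(v(t))\ge\omega(A)>0$; the degenerate case $A=0$ is recovered at the end by applying the result with $A$ replaced by $A+\eps$ and letting $\eps\downarrow 0$, using continuity of $\Phi^{-1}$). We obtain $v'(t)/\omega(v(t))\le\Psi(t)$. By the chain rule, $\tfrac{\de}{\de t}\Phi(v(t)) = v'(t)/\omega(v(t))$ since $\Phi'=1/\omega$, so integrating over $[0,t]$ gives
\[
\Phi(v(t)) - \Phi(v(0)) \;\le\; \int_0^t \Psi(s)\,\de s ,
\]
that is, $\Phi(v(t))\le \Phi(A)+\int_0^t\Psi(s)\,\de s$ (the particular lower endpoint $u_0$ in the definition of $\Phi$ is immaterial, since it only shifts $\Phi$ by an additive constant that cancels on the left-hand side). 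As $\Phi'=1/\omega>0$, the map $\Phi$ is strictly increasing, hence invertible with increasing inverse; applying $\Phi^{-1}$ to both sides and using $u(t)\le v(t)$ gives the claimed bound
\[
u(t)\le v(t)\le \Phi^{-1}\!\Big(\Phi(A)+\int_0^t\Psi(s)\,\de s\Big).
\]

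The only genuinely delicate point---and the step I would treat most carefully---is ensuring $\Phi^{-1}$ is evaluated inside its domain, i.e.\ that $\Phi(A)+\int_0^t\Psi(s)\,\de s$ lies in the range of $\Phi$. When $\int^{\infty}\de s/\omega(s)=\infty$ (which covers, e.g., $\omega(s)=s$, the linear Gronwall case) there is nothing to check; when this integral is finite (e.g.\ $\omega(s)=s^2$, the case actually used in Lemma~\ref{lemma:convL4}, where $\Phi(u)\propto -1/u$ and $\Phi^{-1}$ produces the $A/(1-\mathrm{const}\cdot A\,t)$-type bound of Eq.~\eqref{d-rho-10}), the inequality should be read as valid on the maximal subinterval of $[0,T]$ on which the right-hand side remains finite; this is precisely how it is invoked later, since there $t$ is ultimately restricted to $[0,T_0]$ with $T_0$ chosen so that the relevant denominator stays bounded away from zero. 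Everything else in the argument is routine.
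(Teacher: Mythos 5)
Your proof is correct. The paper does not prove this lemma at all---it is imported by citation from Bihari's paper---and your argument is precisely the classical differential-majorant proof given there: majorize $u$ by the $\Cont^1$ function $v$, use monotonicity of $\omega$ to get $v'\le\Psi\,\omega(v)$, divide, and integrate $\Phi(v)$. You also correctly handle the two points worth handling: the possible vanishing of $\omega$ at $A$ (via the $A+\eps$ limit), the irrelevance of the lower endpoint $u_0$ in the definition of $\Phi$ (which is why the paper's slightly odd choice $u_0=\omega(A)$ rather than $u_0=A$ causes no harm), and the domain restriction on $\Phi^{-1}$ when $\int^\infty\de s/\omega(s)<\infty$, which is exactly the regime in which the lemma is invoked in Lemma~\ref{lemma:convL4} to produce the blow-up-type bound of Eq.~\eqref{d-rho-10} on $[0,T_0]$.
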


\begin{remark}\label{rem:gronwall}
To derive Equation~\eqref{d-rho-10}, we use Lemma~\ref{lem:gronwall} with $\omega(u) = u^2$, $\Psi(s) = \bar{C}_m C_3$, $A = \bar{C}_mC_4+\bar{C}_m \tau M^2C_ast T_n$. 
\end{remark}

\begin{lemma}[Gagliardo-Nirenberg interpolation inequality, cf. Theorem 1.5.2 of \cite{cherrierlinear}]\label{lem:GN-interpolation} Fix $1\le q, r \le \infty$ and $m$ a positive integer. 
Let $u\in \Lp^q(\Omega)\cap \Lp^r(\Omega)$ and $\nabla^{\otimes m} u\in \Lp^p(\Omega)$.
For integer $j$, $0\le j\le m$, and $\theta\in [j/m,1]$ (with the exception $\theta\neq 1$ if $m-j-d/2$ is a non-negative integer), define $p$ by 
\[
\frac{1}{p} = \frac{j}{d} + \theta \left(\frac{1}{r} - \frac{m}{d}\right) + \frac{1-\theta}{q}\,.
\]
Then $\nabla^{\otimes j} u \in \Lp^p(\Omega)$ and satisfies
\[
\|\nabla^{\otimes j} u\|_p \le C \|\nabla^{\otimes m} u\|_r^\theta\, \|u\|_q^{1-\theta} + C_1 \|u\|_s\,.
\]
with finite arbitrary $1\le s\le \max(r,q)$ and $C>0$ and $C_1\ge 0$ are independent of $u$. The constant $C$ is independent of $\Omega$, while $C_1\to 0$ as $|\Omega| \to \infty$.
In particular, the choice $C_1 = 0$ is admissible if $\Omega = \reals^d$.
\end{lemma}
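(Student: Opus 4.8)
The plan is to obtain the bounded-domain inequality from the classical homogeneous Gagliardo--Nirenberg inequality on all of $\reals^d$ (the case $\Omega=\reals^d$, $C_1=0$), so I would first recall why the latter holds and then transfer it to $\Omega$ by extension.

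For the homogeneous case on $\reals^d$ I would argue for $u\in\Cont^\infty_c(\reals^d)$ and pass to general $u$ by density. The one-dimensional core estimate is that, for $v\in\Cont^\infty_c(\reals)$ and $0\le j\le m$,
\[
\|v^{(j)}\|_{\Lp^p(\reals)}\le C\,\|v^{(m)}\|_{\Lp^r(\reals)}^{j/m}\,\|v\|_{\Lp^q(\reals)}^{1-j/m},
\]
proved by iterating the elementary bound of one intermediate derivative by the geometric mean of a lower- and a higher-order derivative (integration by parts together with H\"older on dyadic intervals). One then passes to $\reals^d$ by slicing: applying the one-dimensional estimate along each of the $d$ coordinate directions and recombining the resulting mixed norms by H\"older's inequality (a Loomis--Whitney type step) produces the isotropic bound with $\theta=j/m$. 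The general exponent $\theta\in[j/m,1]$ follows by interpolating this endpoint bound against the Sobolev embedding $\|\nabla^{\otimes j}u\|_{\Lp^{p_1}(\reals^d)}\le C\,\|\nabla^{\otimes m}u\|_{\Lp^r(\reals^d)}$, valid when $\tfrac{1}{p_1}=\tfrac{1}{r}-\tfrac{m-j}{d}$, via H\"older's inequality on the scale of $\Lp^p$ spaces. The precise value of $p$ in terms of $(j,m,\theta,q,r,d)$ is then forced by scaling: under $u\mapsto u(\lambda\,\cdot\,)$ both sides of the claimed inequality must carry the same power of $\lambda$, and the displayed identity for $1/p$ is exactly this compatibility condition; any mismatch would contradict finiteness of $C$ on letting $\lambda\to0$ or $\lambda\to\infty$. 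The excluded case ($\theta\ne1$ whenever $m-j-d/2$ is a non-negative integer) is the usual endpoint failure of $L^\infty$-type Sobolev embeddings and is simply avoided.

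For a bounded $\Omega$ with $\Cont^2$ boundary (Assumption {\sf (A1)}), there is a bounded linear extension operator $E\colon W^{k,p}(\Omega)\to W^{k,p}(\reals^d)$ for $0\le k\le m$ with $Eu|_\Omega=u$ and $\|Eu\|_{W^{k,p}(\reals^d)}\le C(\Omega)\,\|u\|_{W^{k,p}(\Omega)}$ (Stein's extension theorem). Fixing $\chi\in\Cont^\infty_c(\reals^d)$ with $\chi\equiv1$ on $\Omega$ and applying the $\reals^d$ inequality to $\chi\,Eu$, the product rule spreads derivatives of $\chi\,Eu$ over all orders $\le m$; H\"older's inequality, the extension bound, absorption of the top-order contribution, and restriction back to $\Omega$ then yield
\[
\|\nabla^{\otimes j}u\|_{\Lp^p(\Omega)}\le C\,\|\nabla^{\otimes m}u\|_{\Lp^r(\Omega)}^\theta\,\|u\|_{\Lp^q(\Omega)}^{1-\theta}+C_1(\Omega)\,\|u\|_{\Lp^s(\Omega)},
\]
where $1\le s\le\max(r,q)$ and the additive term collects precisely the contributions in which a derivative falls on $\chi$ or comes from the non-homogeneity of the extension. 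To see $C_1(\Omega)\to0$ as $|\Omega|\to\infty$, I would rescale: writing $\Omega=R\,\Omega_0$ with $|\Omega_0|$ fixed and $u_R(\bx)=u(R\bx)$, the inequality on $\Omega_0$ (fixed constants) transforms into the inequality on $\Omega$ whose lower-order coefficient carries a negative power of $R$ and hence vanishes as $R\to\infty$; taking $\Omega=\reals^d$ in this limit recovers $C_1=0$.

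The genuinely delicate points are (i) the bookkeeping at the exceptional exponents in the $\reals^d$ step, where the interpolation/embedding chain must be modified exactly when $m-j-d/2$ is a non-negative integer, and (ii) tracking the $|\Omega|$-dependence of $C_1$ carefully enough to conclude that it tends to $0$ --- equivalently, to recover the clean homogeneous inequality as $\Omega\to\reals^d$. The one-dimensional estimate and the slicing recombination are routine. In practice this lemma is simply quoted as Theorem~1.5.2 of \cite{cherrierlinear}, and the above summarizes the argument given there.
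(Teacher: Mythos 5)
The paper does not prove this lemma at all: it is quoted verbatim as Theorem~1.5.2 of \cite{cherrierlinear}, with no proof environment following the statement, so there is no in-paper argument to compare yours against. Your outline is the standard one for this classical result (one-dimensional interpolation, slicing to $\reals^d$ for the endpoint $\theta=j/m$, interpolation against the Sobolev embedding for general $\theta$, scaling to pin down $p$, then Stein extension plus cutoff for bounded $\Omega$), and you correctly acknowledge at the end that in practice the lemma is simply cited. One point you gloss over that would need genuine care if the argument were written out: after applying the extension operator, the terms of intermediate order $0<k<m$ are controlled by $\|Eu\|_{W^{m,r}(\reals^d)}\le C(\Omega)\|u\|_{W^{m,r}(\Omega)}$, and the full $W^{m,r}(\Omega)$ norm involves $\|\nabla^{\otimes k}u\|_{\Lp^r(\Omega)}$ for all $k$ --- bounding those by $\|\nabla^{\otimes m}u\|_r$ and $\|u\|_q$ is itself an instance of the inequality being proved, so the extension step must be organized to avoid circularity (e.g.\ by induction on $m$ or an Ehrling-type absorption). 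Likewise, the claim that $C$ is independent of $\Omega$ while only $C_1$ degrades does not follow from a naive extension argument and requires the rescaling bookkeeping you mention to be carried out uniformly over domains. Neither issue is a flaw in your proposal given that the result is quoted rather than proved, but they are the two places where a self-contained write-up would require real work.
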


\bibliographystyle{amsalpha} 
\bibliography{all-bibliography}

%

\end{document}